\newcommand{\corr}[1]{\color{blue}#1\normalcolor}
\newcommand{\ga}{\mathfrak{a}}
\newcommand{\id}{\mathrm{id}}
\newcommand{\bfG}{\mathds{G}}
\newcommand{\DG}{D(\bfG)}
\DeclareMathOperator{\ad}{ad}
\newcommand{\rtensor}[3]{{}_{#1}\!\underset{#2}{\otimes}{}_{#3}}
\newcommand{\rast}[3]{{}_{#1}\!\underset{#2}{*}{}_{#3}}
\DeclareMathOperator{\Ad}{Ad}
\newcommand{\R}{\mathds{R}}
\newcommand{\N}{\mathds{N}}
\newcommand{\T}{\mathds{T}}
\newcommand{\SUq}{\mathrm{SU}_{q}(2)}
\newcommand{\Cu}{C_{0}^{\mathrm{u}}}
\newcommand{\Cr}{C_{0}^{\mathrm{r}}}
\newtheorem{theorem}[subsection]{Theorem}
\newtheorem{proposition}[subsection]{Proposition}
\newtheorem{corollary}[subsection]{Corollary}
\newtheorem{lemma}[subsection]{Lemma}
\newtheorem{subtheorem}[subsubsection]{Theorem}
\newtheorem{subproposition}[subsubsection]{Proposition}
\newtheorem{sublemma}[subsubsection]{Lemma}
\newtheorem{subcorollary}[subsubsection]{Corollary}
\theoremstyle{definition}
\newtheorem{subdefinition}[subsubsection]{Definition}
\newtheorem{subremark}[subsubsection]{Remark}
\newtheorem{definition}[subsection]{Definition}
\def\gG{\mathfrak{G}}
\def\gH{\mathfrak{H}}
\def\gM{\mathfrak{M}}
\def\gN{\mathfrak{N}}
\def\ga{\mathfrak{a}}
\def\gb{\mathfrak {b}}
\newcommand{\op}{\mathrm{o}}
\newcommand{\com}{\mathrm{c}}
\begin{document}

\title[Measured Quantum transformation groupoids] 
{Measured Quantum transformation groupoids}
\author{Michel Enock}
\author{Thomas Timmermann}
\address{Institut de Math\'ematiques de Jussieu\\Unit\'{e} Mixte Paris 6 / Paris 7 /
CNRS de Recherche 7586 \\Case 247, 4 place Jussieu, 75252 Paris Cedex, France}
 \email{michel.enock@math.cnrs.fr}
\address {Westf\"{a}lishes Wilhems-Universit\"{a}t M\"{u}nster\\FB 10 Mathematik
WWU M\"{u}nster
Einsteinstr. 62, 48149 M\"{u}nster, Germany}
\email{timmermt@uni-muenster.de}
\date{october 15}
\thanks{The second author was supported by the SFB 878 of the Deutsche Forschungsgemeinschaft}
\begin{abstract}
In this article, when $\bfG$ is a locally compact quantum group, we associate, to a braided-commutative $\bfG$-Yetter-Drinfel'd algebra $(N, \ga, \widehat{\ga})$ equipped  with a normal faithful semi-finite weight verifying some appropriate condition (in particular if it is invariant with respect to $\ga$, or to $\widehat{\ga}$), a structure of a measured quantum groupoid. The dual structure is then given by $(N, \widehat{\ga}, \ga)$. Examples are given, especially the situation of a quotient type co-ideal of a compact quantum group. This construction generalizes the standard construction of a transformation groupoid. Most of the results were announced by the second author in 2011, at a conference in Warsaw. 
 \end{abstract}

\maketitle

\tableofcontents
\newpage
\section{Introduction}
\label{intro}
\subsection{Locally compact quantum groups}
The theory of locally compact quantum groups, developed by J. Kustermans and S. Vaes (\cite{KV1}, \cite{KV2}), provides a comprehensive framework for the study of quantum groups in the setting of $C^*$-algebras and von Neumann algebras. It includes a far reaching generalization of the classical Pontrjagin duality of locally compact abelian groups, that covers all locally compact groups. Namely, if $G$ is a locally compact group, its dual $\widehat{G}$ will be the von Neumann algebra $\mathcal L(G)$ generated by the left regular representation $\lambda_G$ of $G$, equipped with a coproduct $\Gamma_G$ from $\mathcal L(G)$ on $\mathcal L(G)\otimes\mathcal L(G)$ defined, for all $s\in G$, by $\Gamma_G(\lambda_G(s))=\lambda_G(s)\otimes\lambda_G(s)$, and with a normal semi-finite faithful weight, called the Plancherel weight $\varphi_G$, associated via the Tomita-Takesaki construction, to the left Hilbert algebra defined by the algebra $\mathcal K(G)$ of continuous functions with compact support (with convolution as product), this weight $\varphi_G$ being left- and right-invariant with respect to $\Gamma_G$ (\cite{T2}, VII, 3).

This theory builds on many preceding works, by G. Kac, G. Kac and L. Vainerman, the first author and J.-M. Schwartz (\cite{ES1}, \cite{ES2}), S. Baaj and G. Skandalis (\cite{BS}), A. Van Daele, S. Woronowicz {\cite{W1}, \cite{W4}, \cite{W5}) and many others. See the monography written by the second author for a survey of that theory (\cite{Ti1}), and the introduction of \cite{ES2} for a sketch of the historical background. It seems to have reached now a stable situation, because it fits the needs of operator algebraists for many reasons: 

First,  the axioms of this theory are very simple and elegant: they
    can be given in both $C^*$-algebras and von Neumann algebras, and
    these two points of view are equivalent, as A. Weil had shown it
    was the fact for groups (namely any measurable group equipped with
    a left-invariant positive measure bears a topology which makes it
    locally compact, and this measure is then the Haar measure
    \cite{W}). In a von Neumann setting, a locally compact quantum
    group is just a von Neumann algebra, equipped with a
    co-associative coproduct, and two normal faithful semi-finite
    weights, one left-invariant with respect to that coproduct, and
    one right-invariant. Then, many other data are constructed, in
    particular a multiplicative unitary (as defined in \cite{BS})
    which is manageable (as defined in \cite{W5}).

Second, all
    preceeding attemps (\cite{ES2}, \cite{W4}) appear as particular
    cases of locally compact quantum groups; and many interesting
    examples were constructed (\cite{W2}, \cite{W3}, \cite{VV2}).

Third,  many constructions of harmonic analysis, or concerning
    group actions on $C^*$-algebras and von Neumann algebras, were
    generalized up to locally compact quantum groups (\cite{V}).

    Finally, many constructions made by algebraists at the level of
    Hopf $*$-algebras, or multipliers Hopf $*$-algebras can be
    generalized for locally compact quantum groups. This is the case,
    for instance, for Drinfel'd double of a quantum group (\cite{D}),
    and for Yetter-Drinfel'd algebras which were well-known in an
    algebraic approach in \cite{M}.

\subsection{Measured Quantum Groupoids}
In two articles (\cite{Val1},\cite{Val2}), J.-M. Vallin has introduced two notions (pseudo-multiplicative unitary, Hopf bimodule), in order to generalize, to the groupoid case, the classical notions of multiplicative unitary (\cite{BS}) and of a co-associative coproduct on a von Neumann algebra. Then, F. Lesieur (\cite{L}), starting from a Hopf bimodule, when there exist a left-invariant operator-valued weight and a right-invariant operator-valued weight, mimicking in that wider setting what was done in (\cite{KV1}, \cite{KV2}), obtained a pseudo-multiplicative unitary, and called ``measured quantum groupoids'' these objects. A new set of axioms had been given in an appendix of \cite{E2}. In \cite{E2} and \cite{E3}, most of the results given in \cite{V} were generalized up to measured quantum groupoids.

This theory, up to now, bears two defects:

 First, it is only a theory in a von Neumann algebra setting. The second author had made many attemps in order to provide a $C^*$-algebra version of it (see \cite{Ti1} for a survey); these attemps were fruitful, but not sufficient to complete a theory equivalent to the von Neumann one. 

 Second, there is a lack of interesting examples. For instance, the
transformation groupoid (i.e.\ the groupoid given by a locally compact group right acting on a locally compact space), which is the first non-trivial example of a groupoid (\cite{R} 1.2.a), had no quantum analog up to this article. 

\subsection{Measured Quantum Transformation Groupoid}
This article is devoted to the construction of a family of examples of measured quantum groupoids. Most of the results were announced in \cite{Ti2}. The key point, is, when looking at a transformation groupoid given by a locally compact group $G$ having a right action $\ga$ on a locally compact space $X$, to add the fact that the dual $\widehat{G}$ is trivially right acting also on $L^\infty (X)$, and that the triple $(L^\infty (X), \ga, \id)$ is a $G$-Yetter-Drinfel'd algebra, and, more precisely, a braided-commutative $G$-Yetter-Drinfel'd algebra. 

The aim of this article is to generalize the construction of transformation groupoids, using this remark which shows that this generalization is not to be found for any action of a locally compact quantum group, but for a braided-commutative $\bfG$-Yetter-Drinfel'd algebra. 

Then, for any locally compact quantum group $\bfG$, looking at any
braided-commutative Yetter-Drinfel'd algebra $(N, \ga,
\widehat{\ga})$, it is possible to put a structure of Hopf bimodule on
the crossed product $\bfG\ltimes_\ga N$, equipped with a
left-invariant operator-valued weight, and with a right-invariant
operator-valued weight. In order to get a measured quantum groupoid,
one has to choose on $N$ (which is the basis of the measured quantum
groupoid) a normal faithful semi-finite weight $\nu$ that satisfies
some condition with respect to the action $\ga$; for example, $\nu$
could be invariant with respect to $\ga$. It appears then that the dual measured quantum groupoid is the structure associated to the braided-commutative Yetter-Drinfel'd algebra $(N, \widehat{\ga}, \ga)$. 

In an algebraic framework, similar results were obtained in \cite{Lu} and \cite{BM}. It is also interesting to notice that, as for locally compact quantum groups, the framework of measured quantum groupoids appears to be a good structure in which the algebraic results can be generalized. 

The article is organized as follows:

 In chapter \ref{pre} are recalled all the necessary results needed: namely locally compact quantum groups (\ref{lcqg}), actions of locally compact quantum groups on a von Neumann algebra (\ref{action}), Drinfel'd double of a locally compact quantum group (\ref{DG}), Yetter-Drinfel'd algebras (\ref{YD}), and braided-commutative Yetter-Drinfel'd algebras (\ref{BC}). 

In chapter \ref{inv}, we study relatively invariant weights with respect to an action, and then invariant weights for a Yetter-Drinfel'd algebra, and prove that such a weight exists when the von Neumann algebra $N$ is properly infinite. 

 In chapter \ref{H}, we construct the Hopf-von Neumann structure
associated to a  braided-commutative $\bfG$-Yetter-Drinfeld algebra. The precise definition of such a structure is given in \ref{spatial} and \ref{defHopf}. We construct also a co-inverse of this Hopf-von Neumann structure. 

 In chapter \ref{MQG}, we study the conditions to put on the weight $\nu$ to construct a measured quantum groupoid
associated to a  braided-commutative $\bfG$-Yetter-Drinfeld
algebra. These conditions hold, in particular, if the weight $\nu$ is invariant with respect to $\ga$. The precise definition and properties of measured quantum groupoids are given in \ref{defMQG}, \ref{defW}, \ref{data}. 

 In chapter \ref{duality}, we obtain the dual of this measured quantum groupoid, which is the measured quantum groupoid obtained when permuting the actions $\ga$ and $\widehat{\ga}$. 

 Finally, in chapter \ref{examples}, we give several examples of measured quantum groupoids which can be constructed this way, and in chapter \ref{QM}, we study more carefully the case of a quotient type co-ideal of a compact quantum group: in that situation, one of the measured quantum groupoids constructed in \ref{NY} is Morita equivalent to the quantum subgroup.

 \section{Preliminaries}
 \label{pre}

\subsection{Locally compact quantum groups}
\label{lcqg}
A quadruplet $\bfG=(M, \Gamma, \varphi, \psi)$ is a \emph{locally
  compact quantum group} if:

(i) $M$ is a von Neumann algebra,

(ii) $\Gamma$ is an injective unital $*$-homomorphism from $M$ into
the von Neumann tensor product $M\otimes M$, called a \emph{coproduct},
satisfying $(\Gamma\otimes \id)\Gamma=(\id\otimes\Gamma)\Gamma$ (the
coproduct is called \emph{co-associative}),

(iii) $\varphi$ is a normal faithful semi-finite weight on $M^+$ which is \emph{left-invariant}, i.e.,
\[(\id\otimes\varphi)\Gamma(x)=\varphi(x)1_M \quad \text{for all } x \in\gM_\varphi^+;\]

(iv) $\psi$ is a normal faithful semi-finite weight on $M^+$ which is \emph{right-invariant}, i.e.,
\[(\psi\otimes \id)\Gamma(x)=\psi(x)1_M \quad \text{for all } x\in\gM_\psi^+.\]

In this definition (and in the following),  $\otimes$ means the von Neumann tensor product, $(\id\otimes\varphi)$ (resp. $(\psi\otimes \id)$) is an operator-valued weight from $M\otimes M$ to $M\otimes \mathds{C}$ (resp. $\mathds{C}\otimes M$). This is the definition of the von Neumann version of a locally compact quantum group (\cite{KV2}). See also, of course \cite{KV1}. 

We shall use the usual data  $H_\varphi$, $J_\varphi$, $\Delta_\varphi$ of Tomita-Takesaki theory associated to
the weight $\varphi$ (\cite{T2} chap.6 to 9, \cite{StZ}, chap.10,
\cite{St}, chap.1 and 2), which, for
simplification, we write as $H$, $J$, $\Delta$. We regard
$M$ as a von Neumann algebra on $H_\varphi$ and identify the opposite
von Neumann algebra $M^{\op}$ with the commutant $M'$. 

On the Hilbert tensor product $H\otimes H$, Kustermanns and Vaes
constructed a unitary $W$, called the \emph{fundamental unitary}, which satisfies the \emph{pentagonal equation}
\[W_{23}W_{12}=W_{12}W_{13}W_{23},\]
where, we use, as usual, the leg-numbering notation. This unitary
contains all the data of $\bfG$:  $M$ is the weak closure of the vector space (which is an algebra) $\{(\id\otimes\omega)(W): \omega\in B(H)_*\}$ and $\Gamma$ is given by (\cite{KV1} 3.17)
\[\Gamma(x)=W^*(1\otimes x)W \quad \text{for all } x\in M,\]
and
\[(\id\otimes\omega_{J_\varphi{\Lambda_{\varphi}}(y_1^*y_2),
  \Lambda_\varphi(x)})(W)=(\id\otimes\omega_{J_\varphi\Lambda_\varphi(y_2),
  J_\varphi(y_1)})\Gamma(x^*)\]
 for all $x$, $y_1$, $y_2$ in $\gN_\varphi$.
 It is then possible to construct an unital anti-$*$-automorphism $R$
 of $M$ which is involutive ($R^2=\id$), defined  by
\[R[(\id\otimes\omega_{\xi, \eta})(W)]=(\id\otimes\omega_{J\eta,
  J\xi})(W) \quad \text{for all } \xi,\eta\in H.\]
This map is a \emph{co-inverse} (often called the \emph{unitary antipode}), which means that
\[\Gamma\circ R=\varsigma\circ (R\otimes R)\circ \Gamma,\]
where $\varsigma$ is the flip of $M\otimes M$ (\cite{KV1}, 5.26).
It is straightforward to get that $\varphi\circ R$ is a right-invariant normal semi-finite faithful weight and, thanks to a unicity theorem, is therefore proportional to $\psi$. We shall always suppose that $\psi=\varphi\circ R$.

Associated to $(M,\Gamma)$ is  a \emph{dual} locally compact quantum
  group $(\widehat{M},\widehat{\Gamma})$, where $\widehat{M}$ is
the weak closure of the vector space (which is an algebra)
$\{(\omega\otimes \id)(W) : \omega\in B(H)_*\}$, and
$\widehat{\Gamma}$ is given by
\[\widehat{\Gamma}(y)=\sigma W(y\otimes 1)W^*\sigma \quad \text{for
  all } y\in \widehat{M}.\]
Here, $\sigma$ denotes the flip of $H\otimes H$.  Let
\begin{align*}
  \|\omega\|_\varphi&=\sup\{|\omega(x^*)| : x\in\gN_\varphi,
  \varphi(x^*x)\leq 1\}, &
 I_\varphi&=\{\omega\in M_* : \|\omega\|_\varphi < \infty\}.
\end{align*}
Then, it is possible to define a normal semi-finite faithful weight $\widehat{\varphi}$ on $\widehat{M}$ such that $\widehat{\varphi}((\omega\otimes \id)(W)^*(\omega\otimes \id)(W))=\|\omega\|_\varphi^2$ (\cite{KV1}8.13), and it is possible to prove that $\widehat{\varphi}$ is left-invariant with repect to $\widehat{\Gamma}$ (\cite{KV1}8.15). Moreover, the application $y\mapsto Jy^*J$ is a unital anti-$*$-automorphism $\widehat{R}$ of $\widehat{M}$, which is involutive ($\widehat{R}^2=\id$) and is a co-inverse. Therefore, $\widehat{\varphi}\circ\widehat{R}$ is right-invariant with respect to $\widehat{\Gamma}$. 

Therefore $\widehat{\bfG}=(\widehat{M}, \widehat{\Gamma},
\widehat{\varphi}, \widehat{\varphi}\circ \widehat{R})$ is a locally
compact quantum group, called the \emph{dual} of $\bfG$. Its
multiplicative unitary $\widehat{W}$ is equal to $\sigma W^*\sigma$.
The bidual locally compact quantum group $\widehat{\widehat{\bfG}}$ is
equal to $\bfG$. In particular, the construction of the dual weight,
when applied to $\widehat{\bfG}$ gives that, for any $\omega$ in
$\widehat{M}_*$, $(\id\otimes\omega)(W^*)$ belongs to $\gN_\varphi$ if
and only if $\omega$ belongs to $I_{\widehat{\varphi}}$, and we have
then
$\|\Lambda_\varphi((\id\otimes\omega)(W^*))\|=\|\omega\|_{\widehat{\varphi}}$.

The Hilbert space $H_{\widehat{\varphi}}$ is isomorphic to (and will be identified with) $H$. For simplification, we write $\widehat{J}$ for $J_{\widehat{\varphi}}$ and $\widehat{\Delta}$ for $\Delta_{\widehat{\varphi}}$; we have, for all $x\in M$, $R(x)=\widehat{J}x^*\widehat{J}$ (\cite{KV2} 2.1). The operator $W$ satisfies
\[(\widehat{\Delta}^{it}\otimes\Delta^{it})W(\widehat{\Delta}^{-it}\otimes\Delta^{-it})=W\]
and $(\widehat{J}\otimes J)W(\widehat{J}\otimes J)=W^*$. 

Associated to $(M,\Gamma)$ is \emph{a scaling group}, which is a one-parameter group $\tau_t$ of automorphisms of $M$, such that (\cite{KV2} 2.1), for all $x\in M$, $t\in\R$, we have $\tau_t(x)=\widehat{\Delta}^{it}x\widehat{\Delta}^{-it}$, satisfying $\Gamma\circ\tau_t=(\tau_t\otimes\tau_t)\Gamma$ (\cite{KV1}5.12), $R\circ \tau_t=\tau_t\circ R$ (\cite{KV1}5.21), and $\Gamma\circ\sigma_t^\varphi=(\tau_t\otimes\sigma_t^\varphi)\Gamma$ (\cite{KV1} 5.38) (and, therefore, $\Gamma\circ\sigma_t^{\varphi\circ R}=(\sigma_t^{\varphi\circ R}\otimes\tau_{-t})\Gamma$ (\cite{KV1}5.17)).

The application $S=R\circ\tau_{-i/2}$ is called the \emph{antipode} of $\bfG$. 

The modular groups of the weights $\varphi$ and $\varphi\circ R$
commute, which leads to the definition of the \emph{scaling constant} $\lambda\in\R$ and the \emph{modulus}, which is a positive self-adjoint operator $\delta$ affiliated to $M$, such that $(D\varphi\circ R:D\varphi)_t=\lambda^{it^2/2}\delta^{it}$. 

We have $\varphi\circ\tau_t=\lambda^t\varphi$, and the canonical implementation of $\tau_t$ is given by a positive non-singular operator $P$ defined by $P^{it}\Lambda_\varphi(x)=\lambda^{t/2}\Lambda_\varphi(\tau{_{t}}(x))$. Moreover, the operator $\widehat{\Delta}$ is equal to the closure of $PJ\delta^{-1}J$, and the operator $\widehat{\delta}$ is equal to the closure of $P^{-1}J\delta J\delta^{-1}\Delta^{-1}$ (\cite{KV2}, 2.1 and \cite{Y3}, 2.5).

We have $\widehat{J}J=\lambda^{i/4}J\widehat{J}$ (\cite {KV2}
2.12). The operator $\widehat{P}$ is equal to $P$, the scaling
constant $\widehat{\lambda}$ is equal to $\lambda^{-1}$. Moreover, we have (\cite{Y3}, 3.4)
\[W(\widehat{\Delta}^{it}\otimes\widehat{\Delta}^{it})W^*=\delta^{it}\widehat{\Delta}^{it}\otimes\widehat{\Delta}^{it}.\]

A \emph{representation} of $\bfG$ on a Hilbert space $K$ is a unitary $U\in
M\otimes B(K)$, satisfying $(\Gamma\otimes \id)(U)=U_{23}U_{13}$. It
is well known that such a representation satisfies that, for any $\xi$, $\eta$ in $K$, the operator $(\id\otimes\omega_{\xi, \eta})(U)$ belongs to $\mathcal D(S)$ and that
\[S[(\id\otimes\omega_{\xi, \eta})(U)]=(\id\otimes\omega_{\xi,
  \eta})(U^*)\]
 (a proof for measured quantum groupoids can be found in \cite{E2},
 5.10).

Other locally compact quantum groups are $\bfG^{\op}=(M, \varsigma\circ \Gamma, \varphi\circ R, \varphi)$ (the \emph{opposite} locally compact quantum group) and $\bfG^{\com}=(M', (j\otimes j)\circ \Gamma\circ j,\varphi\circ j, \varphi\circ R\circ j)$ (the \emph{commutant} locally compact quantum group) where $j(x)=J_\varphi x^*J_\varphi$ is the canonical anti-$*$-isomorphism between $M$ and $M'$ given by Tomita-Takesaki theory. It is easy to get that $\widehat{\bfG^{\op}}=(\widehat{\bfG})^{\com}$ and $\widehat{\bfG^{\com}}=(\widehat{\bfG})^{\op}$ (\cite{KV2}4.2). We have $M\cap \widehat{M}=M'\cap\widehat{M}=M\cap\widehat{M}'=M'\cap\widehat{M}'=\mathds{C}$. The multiplicative unitary $W^{\op}$ of $\bfG^{\op}$ is equal to $(\widehat{J}\otimes\widehat{J})W(\widehat{J}\otimes\widehat{J})$, and the multiplicative unitary $W^{\com}$ of $\bfG^{\com}$ is equal to $(J\otimes J)W(J\otimes J)$. 

Moreover, the norm closure of the space $\{(\id\otimes\omega)(W) :
\omega\in B(H)_*\}$ is a $C^*$-algebra denoted
$\Cr(\bfG)$, which is invariant under $R$, and, together with
the restrictions of $\Gamma$, $\varphi$ and $\varphi\circ R$ will give
the \emph{reduced $C^*$-algebraic locally compact quantum group}
(\cite{KV1}, \cite{KV2}). In \cite{K} was defined also a
\emph{universal} version $\Cu(\bfG)$, which is equipped with
a coproduct $\Gamma_u$. There exists a canonical
 surjective $*$-homomorphism $\pi_{\bfG}$ from $\Cu(\bfG)$ to $\Cr(\bfG)$, such that
 $
(\pi_{\bfG}\otimes\pi_{\bfG})\Gamma_u=\Gamma\circ\pi_{\bfG}$. Then, $\varphi\circ\pi_{\bfG}$ (resp. $\varphi\circ R\circ \pi_{\bfG}$) is a (non-faithful) weight on $\Cu(\bfG)$ which is left-invariant (resp. right-invariant). 

If $G$ is a locally compact group equipped with a left Haar measure $ds$, then, by duality of the Banach algebra structure of $L^1(G, ds)$, it is possible to define a co-associative coproduct $\Gamma^a_G$ on $L^\infty(G, ds)$ and to give to $(L^\infty(G, ds), \Gamma^a_G, ds, ds^{-1})$ a structure of locally compact quantum group, called $G$ again; any locally compact quantum group whose underlying von Neumann algebra is abelian is of that type.  Then, its dual locally compact quantum group $\widehat{G}$ is $(\mathcal L (G), \Gamma^s_G, \varphi_G, \varphi_G))$, where $\mathcal L(G)$ is the von Neumann algebra generated by the left regular representation $\lambda_G$ of $G$ on $L^2(G, ds)$, $\Gamma^s_G$ is defined, for all $s\in G$, by $\Gamma^s_G(\lambda_G(s))=\lambda_G(s)\otimes\lambda_G(s)$, and $\varphi_G$ is defined, for any $f$ in the algebra $\mathcal K(G)$ of continuous functions with compact support, by $\varphi_G(\int_G f(s)\lambda_G(s)ds)=f(e)$, where $e$ is the neutral element of $G$. Any locally compact quantum group which is symmetric (i.e. such that $\varsigma\circ\Gamma=\Gamma$) is of that type. 

Let $(A, \Gamma)$ be a \emph{compact quantum group}, that is, $A$ is a unital
$C^*$-algebra  and $\Gamma$ is a coassociative coproduct from $A$ to
$A\otimes_{\min}A$ satisfying the cancellation property, i.e.,
$(A\otimes_{\min} 1)\Gamma(A)$ and $(1 \otimes_{\min} A)\Gamma(A)$ are
dense in $A\otimes_{\min}A$ (\cite{W4}). Then, there exists a left- and
right-invariant state $\omega$ on $A$, and we can always restrict to
the case when $\omega$ is faithful. Moreover, $\Gamma$ extends to a
normal $*$-homomorphism from $\pi_\omega(A)''$ to the (von Neumann)
tensor product $\pi_\omega(A)''\otimes\pi_\omega(A)''$, which we shall
still denote by $\Gamma$, for simplification, and $\omega$ can be
extended to a normal faithful state on $\pi_\omega(A)''$, we shall
still denote $\omega$ for simplification. Then, $(\pi_\omega(A)'',
\Gamma, \omega, \omega)$ is a locally compact quantum group, which we
shall call the von Neumann version of $(A, \Gamma)$. Its dual is
called a discrete quantum group.
\subsection{Left actions of a locally compact quantum group}
\label{action}
A \emph{left action} of a locally compact quantum group $\bfG$ on a von
Neumann algebra $N$ is an injective unital $*$-homomorphism $\ga$ from
$N$ into the von Neumann tensor product $M\otimes N$ such that
\[(\id\otimes\ga)\ga=(\Gamma\otimes \id)\ga,\]
where $\id$ means the identity on $M$ or on $N$ as well (\cite{V}, 1.1). 

We shall denote by $N^\ga$ the sub-algebra of $N$ such that $x\in N^\ga$ if and only if $\ga(x)=1\otimes x$ (\cite{V}.2). If $N^\ga=\mathbb{C}$, the action $\ga$ is called \emph{ergodic}. The formula $T_\ga=(\varphi\circ R\otimes \id)\ga$ defines a normal faithful operator-valued weight from $N$ onto $N^\ga$. We shall say that $\ga$ is \emph{integrable} if and only if this operator-valued weight is semi-finite (\cite{V} 1.3, 1.4).

To any left action is associated (\cite{V}2.1) a \emph{crossed product}
$\bfG\ltimes_\ga N=(\ga(N)\cup \widehat{M}\otimes\mathds{C})''$ on
which  $\widehat{\bfG}^{\op}$ acts canonically by a left  action
$\tilde{\ga}$, called the \emph{dual action} (\cite{V}2.2), as follows:
\[\tilde{\ga}(X)=(\widehat{W}^{\op*}\otimes 1)(1\otimes X)(\widehat{
  W}^{\op}\otimes 1) \quad \text{for all } X\in \bfG\ltimes_\ga N;\]
 in particular, for any $x\in N$ and $y\in \widehat{M}$,
 \begin{align*}
  \tilde{\ga}(\ga(x))&=1\otimes\ga(x), &
\tilde{\ga}(y\otimes 1)&=\widehat{\Gamma}^{\op}(y)\otimes 1.
 \end{align*}
Moreover, we have $(\bfG\ltimes_\ga N)^{\tilde{\ga}}=\ga(N)$ (\cite{V} 2.7).

The operator-valued weight $T_{\tilde{\ga}}=(\widehat{\varphi}\otimes
\id)\circ \tilde{\ga}$ is semi-finite (\cite{V}2.5), which allows, for
any normal faithful semi-finite weight $\nu$ on $N$, to define a
lifted or \emph{dual} normal faithful semi-finite weight $\tilde{\nu}$ on
$\bfG\ltimes_\ga N$ by $\tilde{\nu}=\nu\circ\ga^{-1}\circ
T_{\tilde{\ga}}$ (\cite{V}3.1).  The Hilbert space $H_{\tilde{\nu}}$ is
canonically isomorphic to (and will be identified with) the Hilbert
tensor product $H\otimes H_\nu$ (\cite{V}3.4 and 3.10), and this
isomorphism identifies, for $x\in\gN_\nu$ and
$y\in\gN_{\widehat{\Phi}}$, the vector
$\Lambda_{\tilde{\nu}}((y\otimes 1)\ga(x))$ with
$\Lambda_{\widehat{\Phi}}(y)\otimes\Lambda_\nu(x)$. Moreover, for any
$X\in\gN_{\tilde{\nu}}$, there exists a family of operators $X_i$
of the form $X_i=\Sigma_j(y_{i,j}\otimes 1)\ga(x_{i,j})$, such that $X_i$ is weakly converging to $X$ and $\Lambda_{\tilde{\nu}}(X_i)$ is converging to $\Lambda_{\tilde{\nu}}(X)$ (\cite{V}, 3.4 and 3.10). 

Then
\[U_\nu^\ga=J_{\tilde{\nu}}(\widehat{J}\otimes J_\nu)\]
is a unitary which belongs to $M\otimes B(H_\nu)$, satisfies
$(\Gamma\otimes \id)(U^\ga_\nu)=(U^\ga_\nu)_{23}(U^\ga_\nu)_{13}$ and
implements $\ga$ in the sense that $\ga(x)=U^\ga_\nu(1\otimes
x)(U^\ga_\nu)^*$ for all   $x\in N$ (\cite{V}, 3.6, 3.7 and 4.4). The
operator $U^\ga_\nu$ is called \emph{the canonical implementation} of $\ga$ on $H_\nu$. 
Moreover, we have, trivially, $(U^\ga_\nu)^*=(\widehat{J}\otimes
J_\nu)J_{\tilde{\nu}}=(\widehat{J}\otimes
J_\nu)U^\ga_\nu(\widehat{J}\otimes J_\nu)$, and we get that
\[J_{\tilde{\nu}}\Lambda_{\tilde{\nu}}((y\otimes 1)\ga(x))=U^\ga_\nu(\widehat{J}\Lambda_{\widehat{\Phi}}(y)\otimes J_\nu\Lambda_\nu(x)).\]
If we take another normal faithful semi-finite weight $\psi$ on $N$, there exists a unitary $u$ from $H_\nu$ onto $H_\psi$ which intertwines the standard representations $\pi_\nu$ and $\pi_\psi$, and we have then $U^\ga_\psi=(1\otimes u)U^\ga_\nu(1\otimes u^{*})$ (\cite{V}, 4.1). 

The application $(\varsigma\otimes \id)(\id\otimes\ga)$ is a left action of $\bfG$ on $B(H)\otimes N$. Moreover, in the proof of (\cite{V}, 4.4), we find that $(\sigma\otimes \id)U^{(\varsigma\otimes \id)(\id\otimes\ga)}_{Tr\otimes\nu}(\sigma\otimes \id)=1\otimes U^\ga_\nu$. 

A \emph{right action} of a locally compact quantum $\bfG$ on a von Neumann algebra $N$ is an injective unital $*$-homomorphism $\ga$ from $N$ into the von Neumann tensor product $N\otimes M$ such that
\[(\ga\otimes \id)\ga=(\id\otimes\Gamma)\ga.\]
Then, $\varsigma\ga$ is a left action of $\bfG^{\op}$ on $N$ (where $\varsigma$ is the flip from $N\otimes M$ onto $M\otimes N$). 

In (\cite{Y3}, 2.4) and (\cite{BV}, Appendix) is defined, for any normal faithful semi-finite weight $\nu$ on $N$ and $t\in\R$, the \emph{Radon-Nykodym derivative} $(D\nu\circ\ga: D\nu)_t=\Delta_{\tilde{\nu}}^{it}(\widehat{\Delta}^{-it}\otimes \Delta_\nu^{-it})$.   This unitary $D_t$ belongs to $M\otimes N$ and 
\[(\Gamma\otimes \id)(D_t)=(\id\otimes\ga)(D_t)(1\otimes D_t),\]
(\cite{BV} 10.3 or \cite{Y2} 3.4 and \cite{Y3} 3.7)
Moreover, it is straightorward to get
\[D_{t+s}=D_t(\tau_t\otimes\sigma_t^\nu)(D_s)=D_s(\tau_s\otimes\sigma_s^\nu)(D_t).\]

\subsection{Drinfel'd double of a locally compact quantum group}
\label{DG}
Let $\bfG=(M, \Gamma, \varphi, \varphi\circ R)$ be a locally compact quantum group, $\widehat{\bfG}=(\widehat{M}, \widehat{\Gamma}, \widehat{\varphi}, \widehat{\varphi}\circ \widehat{R})$ its dual, then it is possible to construct (\cite{N}, \cite{Y1}, \cite{BV}) another locally compact quantum group 
\[
\DG=(M\otimes\widehat{M}, \Gamma_D, \varphi\otimes\widehat{\varphi}\circ \widehat{R}, \varphi\otimes\widehat{\varphi}\circ \widehat{R}),\]
called the \emph{Drinfel'd double} of $\bfG$,
where $\Gamma_D$ is defined by
\[\Gamma_D(x\otimes y)=\Ad(1\otimes\sigma W\otimes
1)(\Gamma(x)\otimes\widehat{\Gamma}(y))\] for all $x\in M$,
$y\in\widehat{M}$.  Here and throughout this paper, given a unitary
$U$ on a Hilbert space $\gH$, we denote by $\Ad(U)$ the automorphism
of $B(\gH)$ defined as usual by $x\mapsto UxU^*$ for all $x\in
B(\gH)$.

The co-inverse $R_D$ of $\DG$ is given by 
\[R_D(x\otimes y)=\Ad(W^*)(R(x)\otimes\widehat{R}(y)).\]
This locally compact quantum group is always unimodular, which means that the left-invariant weight is also right-invariant. 
In the sense of (\cite{VV1}, 2.9), $\widehat{\bfG}$ and $\bfG$ are closed quantum subgroups of $\widehat{\DG}$, which means that the injection of $\widehat{M}$ (resp. $M$) into the underlying von Neumann algebra of its dual $\widehat{\DG}$ preserve the coproduct. (See \ref{defQTCI} for more details about this definition.)
\subsection{Yetter-Drinfel'd algebras}
\label{YD}
Let $\bfG=(M, \Gamma, \varphi, \varphi\circ R)$ be a locally compact
quantum group and $\widehat{\bfG}=(\widehat{M}, \widehat{\Gamma},
\widehat{\varphi}, \widehat{\varphi}\circ \widehat{R})$ its dual. A \emph{$\bfG$-Yetter-Drinfel'd algebra} (\cite{NV}) is a von Neumann algebra $N$ with a left action $\ga$ of $\bfG$ and a left action $\widehat{\ga}$ of $\widehat{\bfG}$ such that
\[(\id\otimes\ga)\widehat{\ga} (x)=\Ad (\sigma W\otimes 1)(\id\otimes
\widehat{\ga})\ga (x) \quad \text{for all }x\in N.\]

One should remark that if $(N, \ga, \widehat{\ga})$ is a $\bfG$-Yetter-Drinfel'd algebra, then $(N, \widehat{\ga}, \ga)$ is a $\widehat{\bfG}$-Yetter-Drinfel'd algebra. 

If $B$ is a  von Neumann sub-algebra of $N$ such that $\ga(B)\subset
M\otimes B$ and $\widehat{\ga} (B)\subset \widehat{M}\otimes B$, then,
it is clear that the restriction $\ga_{|B}$
(resp. $\widehat{\ga}_{|B}$) is a left action of $\bfG$
(resp. $\widehat{\bfG}$) on $B$, and that $(B, \ga_{|B},
\widehat{\ga}_{|B})$ is a Yetter-Drinfeld algebra, which we shall call a sub-$\bfG$-Yetter-Drinfel'd algebra of $(N, \ga, \widehat{\ga})$. 


\begin{subtheorem}[\cite{NV}, 3.2]
\label{thNV1}
 Let $\bfG=(M, \Gamma, \varphi, \varphi\circ R)$ be a locally
  compact quantum group, $\widehat{\bfG}=(\widehat{M},
  \widehat{\Gamma}, \widehat{\varphi}, \widehat{\varphi}\circ
  \widehat{R})$ its dual, $\DG$ its Drinfel'd double and $N$  a von Neumann algebra equipped with a left action $\ga$ of $\bfG$ and a left action $\widehat{\ga}$ of $\widehat{\bfG}$. Then the following conditions are equivalent:
  \begin{enumerate}
  \item  $(N, \ga, \widehat{\ga})$ is a $\bfG$-Yetter-Drinfel'd
    algebra;
  \item  $(\id\otimes\widehat{\ga})\ga$ is a left action of
    $\DG$ on $N$.
  \end{enumerate}
\end{subtheorem}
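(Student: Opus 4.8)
The plan is to reduce condition \emph{(ii)} to a single coassociativity identity and then to recognise that identity, once both sides are unfolded, as precisely the Yetter--Drinfel'd condition.

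Write $\beta:=(\id\otimes\widehat\ga)\ga=(\id_M\otimes\widehat\ga)\circ\ga\colon N\to(M\otimes\widehat M)\otimes N$. As a composite of the injective unital normal $*$-homomorphisms $\ga$ and $\id_M\otimes\widehat\ga$, the map $\beta$ is itself an injective unital normal $*$-homomorphism, so $\beta$ is a left action of $\DG$ on $N$ if and only if
\[
(\id_{M\otimes\widehat M}\otimes\beta)\beta=(\Gamma_D\otimes\id_N)\beta .
\]
Thus the theorem reduces to showing that this identity is equivalent to the Yetter--Drinfel'd condition $(\id_{\widehat M}\otimes\ga)\widehat\ga=\Ad(\sigma W\otimes 1_N)\circ(\id_M\otimes\widehat\ga)\circ\ga$, the actions $\ga$ and $\widehat\ga$ being part of the data in both \emph{(i)} and \emph{(ii)}.

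For \emph{(i)} $\Rightarrow$ \emph{(ii)} I would expand both sides and bring them into a common shape. The left-hand side equals
\[
(\id^{\otimes 3}\otimes\widehat\ga)\circ(\id_{M\otimes\widehat M}\otimes\ga)\circ(\id_M\otimes\widehat\ga)\circ\ga ,
\]
whose two middle factors are $\id_M\otimes\big[(\id_{\widehat M}\otimes\ga)\circ\widehat\ga\big]$; substituting the Yetter--Drinfel'd condition and then unfolding the outermost $\widehat\ga$ by the coaction identity $(\widehat\Gamma\otimes\id)\widehat\ga=(\id_{\widehat M}\otimes\widehat\ga)\widehat\ga$ brings it to
\[
\Ad(1\otimes\sigma W\otimes 1\otimes 1)\circ(\id_M\otimes\id_M\otimes G)\circ(\id_M\otimes\ga)\circ\ga ,\qquad G:=(\id_{\widehat M}\otimes\widehat\ga)\widehat\ga .
\]
For the right-hand side one inserts $\Gamma_D(x\otimes y)=\Ad(1\otimes\sigma W\otimes 1)(\Gamma(x)\otimes\widehat\Gamma(y))$ into $(\Gamma_D\otimes\id_N)\beta$ and applies the coaction identities $(\Gamma\otimes\id)\ga=(\id_M\otimes\ga)\ga$ and $(\widehat\Gamma\otimes\id)\widehat\ga=(\id_{\widehat M}\otimes\widehat\ga)\widehat\ga$ to the $M$-leg and the $\widehat M$-leg of $\beta$; this produces the same expression. (Here one also uses the elementary fact $\Ad(\sigma W)(M\otimes\widehat M)\subseteq\widehat M\otimes M$, which holds because $W\in M\otimes\widehat M$ and conjugation by $\sigma$ interchanges the two tensor factors, and which is exactly what makes $\Gamma_D$ well defined.) The real work here — and the main obstacle — is the leg bookkeeping: one must verify that the conjugation by $1\otimes\sigma W\otimes 1$ built into $\Gamma_D$, which at once interleaves the pair of $M$-legs coming from $\Gamma$ with the pair of $\widehat M$-legs coming from $\widehat\Gamma$ and twists the inner pair by $W$, acts on exactly the same pair of legs as the $\sigma W$ produced by the Yetter--Drinfel'd condition.

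For \emph{(ii)} $\Rightarrow$ \emph{(i)} I would run the computation backwards. The parts of the above expansions that use only the coaction identities of $\ga$ and $\widehat\ga$ show that the displayed identity for $\beta$ is equivalent to an equality $(\id_M\otimes\Phi_1)\circ\ga=(\id_M\otimes\Phi_2)\circ\ga$, where
\[
\Phi_1=(\id_{\widehat M}\otimes\id_M\otimes\widehat\ga)\circ\big[(\id_{\widehat M}\otimes\ga)\circ\widehat\ga\big],\qquad
\Phi_2=(\id_{\widehat M}\otimes\id_M\otimes\widehat\ga)\circ\big[\Ad(\sigma W\otimes 1_N)\circ(\id_M\otimes\widehat\ga)\circ\ga\big].
\]
Cancelling the outer $\ga$ — legitimate because the slice maps $(\omega\otimes\id_N)\circ\ga$, $\omega\in M_*$, have weak-$*$ dense range in $N$ and $\Phi_1,\Phi_2$ are normal — gives $\Phi_1=\Phi_2$; cancelling the injective map $\id_{\widehat M}\otimes\id_M\otimes\widehat\ga$ acting on the last leg then yields $(\id_{\widehat M}\otimes\ga)\widehat\ga=\Ad(\sigma W\otimes 1_N)\circ(\id_M\otimes\widehat\ga)\circ\ga$, the Yetter--Drinfel'd condition. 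The only non-formal input is the weak-$*$ density of the slice-map range, which is standard for actions of locally compact quantum groups.
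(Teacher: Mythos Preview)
The paper does not prove this statement; it is quoted from \cite{NV}, Theorem~3.2, without argument, so there is no proof in the paper to compare against. Your argument is correct and is the expected one: once both sides of the coassociativity identity $(\id\otimes\beta)\beta=(\Gamma_D\otimes\id)\beta$ are unwound using only the coaction identities for $\ga$ and $\widehat\ga$ and the definition of $\Gamma_D$, they become $(\id_M\otimes\Phi_1)\circ\ga$ and $(\id_M\otimes\Phi_2)\circ\ga$ exactly as you write, and the Yetter--Drinfel'd condition is precisely the equality of the inner factors of $\Phi_1$ and $\Phi_2$. Your identification of the slice-density of $\{(\omega\otimes\id)\ga(x):\omega\in M_*,\,x\in N\}$ in $N$ as the one non-formal input needed to cancel $\ga$ in the reverse direction is accurate, and this density is indeed standard for actions of locally compact quantum groups.
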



\begin{subtheorem}[(\cite{NV}, 3.2)]
\label{thNV2}
Let $\bfG=(M, \Gamma, \varphi, \varphi\circ
    R)$ be a locally compact quantum group,
    $\widehat{\bfG}=(\widehat{M}, \widehat{\Gamma}, \widehat{\varphi},
    \widehat{\varphi}\circ \widehat{R})$ its dual, $\DG$ its
    Drinfeld's double and ${\ga_D}$ a left action of $\DG$
    on a von Neumann algebra $N$. Then there exist a left action $\ga$
    of $\bfG$ on $N$ and a left action $\widehat{\ga}$ of
    $\widehat{\bfG}$ on $N$ such that
    ${\ga_D}=(\id\otimes\widehat{\ga})\ga$.  These actions are
    determined by the conditions
  \begin{align*}
    (\id\otimes \id\otimes \ga){\ga_D}&=\Ad (1\otimes\sigma
    W\otimes 1)(\Gamma\otimes \id\otimes \id){\ga_D}, \\
    (\id\otimes \id\otimes\widehat{\ga}){\ga_D}&=(\id\otimes\widehat{\Gamma}\otimes \id){\ga_D},
\end{align*}
and $(N, \ga, \widehat{\ga})$ is a $\bfG$-Yetter-Drinfel'd algebra. 
\end{subtheorem}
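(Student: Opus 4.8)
The plan is to recover the two actions $\ga$ and $\widehat{\ga}$ from $\ga_D$ by the stated formulas and then verify, in order, that they are actions, that their composite reproduces $\ga_D$, and finally that the Yetter--Drinfel'd compatibility holds — at which point Theorem \ref{thNV1} is not even needed for the last point, but it provides a useful consistency check. Concretely, I would first define $\widehat{\ga} := (\widehat{\varphi}'\text{-free formula})$; more precisely, since $\ga_D$ is a left action of $\DG = (M\otimes\widehat{M}, \Gamma_D, \dots)$ on $N$, I would set
\[
\ga := (\id_M\otimes\varepsilon_{\widehat{M}}\otimes\id_N)\circ\ga_D, \qquad
\widehat{\ga} := (\varepsilon_M\otimes\id_{\widehat{M}}\otimes\id_N)\circ\ga_D,
\]
using the counits; but since locally compact quantum groups need not have a normal counit, the cleaner route — and the one matching the displayed ``determining conditions'' — is to \emph{take} those two displayed identities as the definitions of $\ga$ and $\widehat{\ga}$ via a slice: apply $(\omega\otimes\id\otimes\id\otimes\id)$ appropriately, or better, observe that $\ga_D$ composed with $(\id\otimes\id\otimes\ga_D)$ and the coassociativity $(\Gamma_D\otimes\id)\ga_D = (\id\otimes\ga_D)\ga_D$ let one isolate the ``$M$-leg acting via $\Gamma$'' and the ``$\widehat{M}$-leg acting via $\widehat{\Gamma}$'' pieces. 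In practice I would run the argument of \cite{NV}: define $\ga$ and $\widehat{\ga}$ through the two displayed formulas, check each is a well-defined normal unital injective $*$-homomorphism $N\to M\otimes N$ (resp. $\widehat{M}\otimes N$), and proceed.

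The key steps, in order, are: (1) Show $\ga$ and $\widehat{\ga}$ are well-defined and land in the correct tensor product — here the main input is the structure of $\Gamma_D = \Ad(1\otimes\sigma W\otimes 1)(\Gamma\otimes\widehat{\Gamma})$ from section \ref{DG}, so that applying a slice in the $M$-leg (resp. $\widehat{M}$-leg) of $\ga_D$ and using the coproduct formulas produces something valued in $M\otimes N$ (resp. $\widehat{M}\otimes N$). (2) Verify the action axioms $(\id\otimes\ga)\ga = (\Gamma\otimes\id)\ga$ and $(\id\otimes\widehat{\ga})\widehat{\ga} = (\widehat{\Gamma}\otimes\id)\widehat{\ga}$: these follow by applying the appropriate slices to the coassociativity of $\ga_D$, i.e. $(\Gamma_D\otimes\id)\ga_D = (\id\otimes\ga_D)\ga_D$, and unwinding the definition of $\Gamma_D$. (3) Prove $\ga_D = (\id\otimes\widehat{\ga})\ga$: substitute the definitions of $\ga$ and $\widehat{\ga}$ into the right-hand side and collapse the resulting expression using coassociativity of $\ga_D$ together with the pentagon/commutation relations for $W$ (namely $(\Gamma\otimes\id)(W)=\dots$, $\widehat{W}=\sigma W^*\sigma$, and the intertwining relations recalled in section \ref{lcqg}). (4) Finally, deduce the Yetter--Drinfel'd identity $(\id\otimes\ga)\widehat{\ga} = \Ad(\sigma W\otimes 1)(\id\otimes\widehat{\ga})\ga$ directly from the factorization $\Gamma_D = \Ad(1\otimes\sigma W\otimes 1)(\Gamma\otimes\widehat{\Gamma})$ applied to $\ga_D = (\id\otimes\widehat{\ga})\ga$, or equivalently invoke Theorem \ref{thNV1} once $\ga_D=(\id\otimes\widehat{\ga})\ga$ is a left action of $\DG$.

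The main obstacle I anticipate is step (1) together with step (3): disentangling the $\sigma W$ conjugation in $\Gamma_D$ to see that the naive slices really do produce honest actions $\ga, \widehat\ga$ (rather than twisted maps), and then checking that reassembling them via $(\id\otimes\widehat{\ga})\ga$ returns $\ga_D$ \emph{on the nose} — this requires a careful bookkeeping of leg numbers across $M\otimes\widehat{M}\otimes N$ and the repeated use of the pentagon equation $W_{23}W_{12}=W_{12}W_{13}W_{23}$ and the relation $\widehat{\Gamma}(y)=\sigma W(y\otimes 1)W^*\sigma$. Uniqueness of $(\ga,\widehat\ga)$ satisfying the two displayed determining conditions is comparatively easy: if $(\ga',\widehat\ga')$ is another such pair, the first condition forces $\ga'$ to equal the specific slice of $\ga_D$ (modulo injectivity of the relevant maps), and similarly for $\widehat\ga'$. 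I would also remark that applying Theorem \ref{thNV2} to $\bfG^{\op\com}$ or to $\widehat{\bfG}$ recovers the symmetric statement, consistent with the observation that $(N,\ga,\widehat\ga)$ YD implies $(N,\widehat\ga,\ga)$ is $\widehat{\bfG}$-YD.
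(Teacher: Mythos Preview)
The paper does not supply its own proof of this theorem: it is stated with the citation \cite{NV}, 3.2 and no proof environment follows. So there is nothing in the paper to compare your argument against line by line.

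That said, your outline matches the standard route and the one in \cite{NV}. A few remarks on the plan itself. Your first idea of slicing with counits $\varepsilon_{M},\varepsilon_{\widehat{M}}$ is indeed not available in the von Neumann setting, and you correctly abandon it. The displayed ``determining conditions'' are not themselves definitions of $\ga$ and $\widehat\ga$: they are identities on the range of $\ga_D$ inside $M\otimes\widehat M\otimes N$, so to extract genuine maps $N\to M\otimes N$ and $N\to\widehat M\otimes N$ you must either (a) use that slices $(\omega\otimes\id_N)\ga_D(x)$ with $\omega\in(M\otimes\widehat M)_*$ span $N$ weakly and check consistency, or (b) use the conceptual fact, recalled in \ref{DG}, that $\bfG$ and $\widehat\bfG$ are closed quantum subgroups of $\widehat{\DG}$ in the sense of Vaes, so that $\ga_D$ restricts along each inclusion to give $\ga$ and $\widehat\ga$. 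Route (b) is the one actually used in \cite{NV} and in \ref{basic} of the present paper (De Commer's restriction), and it spares you most of the leg-number bookkeeping you flag as the main obstacle. Once $\ga$ and $\widehat\ga$ exist, your steps (2)--(4) are routine consequences of the coassociativity of $\ga_D$ and the explicit form of $\Gamma_D$, exactly as you describe; the Yetter--Drinfel'd identity in step (4) is precisely the content of Theorem \ref{thNV1} applied in reverse.
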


\begin{subproposition}
  \label{propinv} With the notation of \ref{thNV2}, we have
    $N^{\ga_D}= N^{{\ga}}\cap N^{{\widehat{\ga}}}$. 
\end{subproposition}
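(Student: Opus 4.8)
The plan is to establish the two inclusions of the equality $N^{\ga_D}=N^{\ga}\cap N^{\widehat{\ga}}$ separately, writing throughout $\ga_D=(\id\otimes\widehat{\ga})\ga$ for the action of $\DG$ on $N$ furnished by \ref{thNV2}, and using only that $\ga$, $\widehat{\ga}$, $\ga_D$ are injective unital normal $*$-homomorphisms satisfying their respective coaction identities, together with the elementary fact that slicing one leg of a von Neumann tensor product by a normal functional commutes with an action applied to the other legs. For the inclusion $N^{\ga}\cap N^{\widehat{\ga}}\subseteq N^{\ga_D}$, I would simply observe that if $\ga(x)=1\otimes x$ and $\widehat{\ga}(x)=1\otimes x$, then $\ga_D(x)=(\id\otimes\widehat{\ga})(1\otimes x)=1\otimes\widehat{\ga}(x)=1\otimes 1\otimes x$, so $x\in N^{\ga_D}$.

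For the reverse inclusion, let $x\in N^{\ga_D}$, that is, $(\id\otimes\widehat{\ga})\ga(x)=1\otimes 1\otimes x$ in $M\otimes\widehat{M}\otimes N$. The first — and only delicate — step is to show that $x\in N^{\widehat{\ga}}$. Fixing a normal state $\omega$ on $M$ and setting $z=(\omega\otimes\id)\ga(x)\in N$, I would apply $\omega\otimes\id\otimes\id$ to the displayed identity; since $\omega$ slices the $M$-leg while $\widehat{\ga}$ acts on the remaining two legs, this gives $\widehat{\ga}(z)=\omega(1)(1\otimes x)=1\otimes x$. Now I feed this into the coaction identity $(\id\otimes\widehat{\ga})\widehat{\ga}=(\widehat{\Gamma}\otimes\id)\widehat{\ga}$ evaluated at $z$: the left-hand side equals $(\id\otimes\widehat{\ga})(1\otimes x)=1\otimes\widehat{\ga}(x)$ and the right-hand side equals $(\widehat{\Gamma}\otimes\id)(1\otimes x)=1\otimes 1\otimes x$, so $1\otimes\widehat{\ga}(x)=1\otimes 1\otimes x$ in $\widehat{M}\otimes\widehat{M}\otimes N$; slicing the first $\widehat{M}$-leg by a normal state yields $\widehat{\ga}(x)=1\otimes x$, i.e. $x\in N^{\widehat{\ga}}$.

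The second step is then immediate: knowing $\widehat{\ga}(x)=1\otimes x$, we have $(\id_M\otimes\widehat{\ga})(1_M\otimes x)=1_M\otimes\widehat{\ga}(x)=1_M\otimes 1_{\widehat{M}}\otimes x=(\id_M\otimes\widehat{\ga})\ga(x)$, and since $\widehat{\ga}$ is injective so is $\id_M\otimes\widehat{\ga}$ on $M\otimes N$; hence $\ga(x)=1_M\otimes x$, that is $x\in N^{\ga}$, and therefore $x\in N^{\ga}\cap N^{\widehat{\ga}}$. The main obstacle I anticipate is precisely the first step: one must ``remove'' the $\widehat{M}$-leg from $(\id\otimes\widehat{\ga})\ga(x)=1\otimes 1\otimes x$ without any counit available, and the device that makes this work is that the hypothesis already exhibits an element $z$ with $\widehat{\ga}(z)=1\otimes x$, after which the coaction relation for $\widehat{\ga}$ upgrades this to $\widehat{\ga}(x)=1\otimes x$. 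I do not expect the Yetter-Drinfel'd compatibility (the conjugation by $\sigma W\otimes 1$) to enter the argument at all; it is used only to guarantee that $\ga_D$ is an action, which is already built into the hypotheses via \ref{thNV2}.
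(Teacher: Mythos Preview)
Your proof is correct. The easy inclusion and the first step (showing $x\in N^{\widehat{\ga}}$) are essentially the same as the paper's, which applies the identity $(\id\otimes\id\otimes\widehat{\ga})\ga_D=(\id\otimes\widehat{\Gamma}\otimes\id)\ga_D$ from \ref{thNV2} directly to $\ga_D(x)=1\otimes 1\otimes x$; your slice-then-coaction argument amounts to the same computation after applying $\omega$ to the first leg.

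The genuine difference is in the second step. The paper obtains $x\in N^{\ga}$ by invoking the other identity of \ref{thNV2}, namely $(\id\otimes\id\otimes\ga)\ga_D=\Ad(1\otimes\sigma W\otimes 1)(\Gamma\otimes\id\otimes\id)\ga_D$, which does bring in the Yetter--Drinfel'd twist. Your route---once $\widehat{\ga}(x)=1\otimes x$ is known, cancel $\id_M\otimes\widehat{\ga}$ from both sides of $\ga_D(x)=1\otimes 1\otimes x$ using injectivity---is more elementary and confirms your expectation that the $\sigma W$ compatibility is not needed for this particular statement. The paper's approach has the virtue of using the two characterizing formulas of \ref{thNV2} symmetrically; yours has the virtue of showing that the result holds for any factorization $\ga_D=(\id\otimes\widehat{\ga})\ga$ with $\ga,\widehat{\ga}$ injective coactions, independent of the Yetter--Drinfel'd relation.
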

\begin{proof}
As ${\ga_D}=(\id\otimes\widehat{\ga})\ga$, we get that $N^\ga\cap
N^{{\widehat{\ga}}}\subset N^{\ga_D}$. On the other hand, using the
formula $(\id\otimes
\id\otimes\widehat{\ga}){\ga_D}=(\id\otimes\widehat{\Gamma}\otimes
\id){\ga_D}$, we get that every $x\in N^{\ga_D}$ belongs to
$N^{\widehat{\ga}}$. Moreover, using the formula $(\id\otimes \id\otimes \ga){\ga_D}=\Ad (1\otimes\sigma
    W\otimes 1)(\Gamma\otimes \id\otimes \id){\ga_D}$, we then get that every $x \in N^{\ga_{D}}$ also belongs
to $N^{\ga}$. 
\end{proof}

\begin{subproposition}
  \label{propRN}  Let $\bfG=(M, \Gamma, \varphi, \varphi\circ R)$
    be a locally compact quantum group, $\widehat{\bfG}=(\widehat{M},
    \widehat{\Gamma}, \widehat{\varphi}, \widehat{\varphi}\circ
    \widehat{R})$ its dual, $(N, \ga, \widehat{\ga})$ a
    $\bfG$-Yetter-Drinfel'd algebra and $\nu$ a normal faithful
    semi-finite weight on $N$. Let $t\in\R$,
    $D_t=(D\nu\circ\ga:D\nu)_t$ and
    $\widehat{D}_t=(D\nu\circ\widehat{\ga}:D\nu)_t$. Then
  \[\Ad(\sigma W\otimes
  1)[(\id\otimes\widehat{\ga})(D_t)(1\otimes\widehat{D}_t)]=(\id\otimes\ga)(\widehat{D}_t)(1\otimes
  D_t),\] and if $\tilde \nu$ and $\widetilde{\hat \nu}$ denote the
  weights on $\bfG \ltimes_{\ga} N$ and $\widehat{\bfG}
  \ltimes_{\widehat{\ga}} N$, respectively, dual to $\nu$, then
  \begin{align*}
    \Ad(\sigma W\otimes
    1)[(id\otimes\widehat{\ga})(D_t)(\widehat\Delta^{it}\otimes
    \Delta_{\widetilde{\hat\nu}}^{it})]=(id\otimes\ga)(\widehat{D}_t)(\Delta^{it}\otimes
    \Delta_{\tilde{\nu}}^{it}).
  \end{align*}
\end{subproposition}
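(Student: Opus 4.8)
The plan is to read the first identity off the Drinfel'd double, as the Radon-Nykodym cocycle counterpart of the Yetter--Drinfel'd condition, and then to deduce the second from the first by a short computation with modular operators. Set $\ga_D:=(\id\otimes\widehat{\ga})\ga$, a left action of $\DG$ on $N$ by Theorem~\ref{thNV1}. By the remark following the definition of Yetter--Drinfel'd algebras, $(N,\widehat{\ga},\ga)$ is a $\widehat{\bfG}$-Yetter--Drinfel'd algebra, so $\widehat{\ga}_D:=(\id\otimes\ga)\widehat{\ga}$ is a left action of $D(\widehat{\bfG})$ on $N$; moreover the Yetter--Drinfel'd identity says precisely that $\widehat{\ga}_D=\Ad(\sigma W\otimes 1)\circ\ga_D$, the unitary $\sigma W$ being exactly the one underlying the identification between the Drinfel'd doubles $\DG$ and $D(\widehat{\bfG})$ (where the relations $R(x)=\widehat{J}x^*\widehat{J}$ and $(\widehat{J}\otimes J)W(\widehat{J}\otimes J)=W^*$ take care of the modular data).

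The key input is the chain rule for the Radon-Nykodym cocycle of a composed action,
\begin{align*}
(D\nu\circ\ga_D:D\nu)_t &=(\id\otimes\widehat{\ga})(D_t)\,(1\otimes\widehat{D}_t), & (D\nu\circ\widehat{\ga}_D:D\nu)_t &=(\id\otimes\ga)(\widehat{D}_t)\,(1\otimes D_t),
\end{align*}
the second formula being the first applied to the $\widehat{\bfG}$-Yetter--Drinfel'd algebra $(N,\widehat{\ga},\ga)$. Granting this, and using the naturality of the Radon-Nykodym cocycle under the identification of the preceding paragraph, i.e.\ $(D\nu\circ\widehat{\ga}_D:D\nu)_t=\Ad(\sigma W\otimes 1)\big((D\nu\circ\ga_D:D\nu)_t\big)$, the first identity of the proposition is obtained by combining these three relations.

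To prove the chain rule I would realise $\DG\ltimes_{\ga_D}N$ as the iterated crossed product $\widehat{\bfG}\ltimes_{\widehat{\ga}}(\bfG\ltimes_{\ga}N)$ --- the Yetter--Drinfel'd condition being exactly what makes $\widehat{\ga}$ lift to a left action of $\widehat{\bfG}$ on $\bfG\ltimes_{\ga}N$ --- and identify the $\DG$-dual weight of $\nu$ with the iterated dual weight; the modular operator of the iterated dual weight then factors, stepwise, through $\Delta_{\tilde\nu}$ and $\Delta_{\widetilde{\hat\nu}}$, and unwinding $D_t=\Delta_{\tilde\nu}^{it}(\widehat{\Delta}^{-it}\otimes\Delta_\nu^{-it})$, $\widehat{D}_t=\Delta_{\widetilde{\hat\nu}}^{it}(\Delta^{-it}\otimes\Delta_\nu^{-it})$ (recall $\widehat{\widehat{\Delta}}=\Delta$) produces the stated product. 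A more computational alternative is to check that $\mathcal{D}_t:=(\id\otimes\widehat{\ga})(D_t)(1\otimes\widehat{D}_t)$ satisfies the $\DG$-cocycle identity $(\Gamma_D\otimes\id)(\mathcal{D}_t)=(\id\otimes\ga_D)(\mathcal{D}_t)(1\otimes\mathcal{D}_t)$ together with the compatibility $\mathcal{D}_{t+s}=\mathcal{D}_t(\tau_t^D\otimes\sigma_t^\nu)(\mathcal{D}_s)$ relative to the scaling group $\tau_t^D$ of $\DG$ --- two properties that pin it down as $(D\nu\circ\ga_D:D\nu)_t$ --- from $(\Gamma\otimes\id)(D_t)=(\id\otimes\ga)(D_t)(1\otimes D_t)$, its counterpart for $\widehat{D}_t$, the relation $D_{t+s}=D_t(\tau_t\otimes\sigma_t^\nu)(D_s)$, the Yetter--Drinfel'd identity, and the explicit form of $\Gamma_D$. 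This is the main obstacle: either route requires the structural identification of $\DG\ltimes_{\ga_D}N$ with an iterated crossed product, equivalently a careful analysis of how the dual weight of the Drinfel'd-double action decomposes.

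For the second identity, write $\Delta_{\tilde\nu}^{it}=D_t(\widehat{\Delta}^{it}\otimes\Delta_\nu^{it})$ and $\Delta_{\widetilde{\hat\nu}}^{it}=\widehat{D}_t(\Delta^{it}\otimes\Delta_\nu^{it})$, so that $(1\otimes\widehat{D}_t)^{-1}(\widehat{\Delta}^{it}\otimes\Delta_{\widetilde{\hat\nu}}^{it})=\widehat{\Delta}^{it}\otimes\Delta^{it}\otimes\Delta_\nu^{it}$ and $(1\otimes D_t)^{-1}(\Delta^{it}\otimes\Delta_{\tilde\nu}^{it})=\Delta^{it}\otimes\widehat{\Delta}^{it}\otimes\Delta_\nu^{it}$. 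Multiplying the left-hand factor of the first identity on the right by the former expression and its right-hand factor on the right by the latter turns the first identity into the second, provided $\Ad(\sigma W\otimes 1)(\widehat{\Delta}^{it}\otimes\Delta^{it}\otimes\Delta_\nu^{it})=\Delta^{it}\otimes\widehat{\Delta}^{it}\otimes\Delta_\nu^{it}$; and this holds because $\Ad(\sigma W\otimes 1)$ fixes the $\Delta_\nu^{it}$ on the last leg, while $W$ commutes with $\widehat{\Delta}^{it}\otimes\Delta^{it}$ by the relation $(\widehat{\Delta}^{it}\otimes\Delta^{it})W(\widehat{\Delta}^{-it}\otimes\Delta^{-it})=W$, so that $\Ad(\sigma W)(\widehat{\Delta}^{it}\otimes\Delta^{it})=\sigma(\widehat{\Delta}^{it}\otimes\Delta^{it})\sigma=\Delta^{it}\otimes\widehat{\Delta}^{it}$.
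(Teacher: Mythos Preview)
Your derivation of the second identity from the first is essentially the paper's own argument: the paper rewrites $(\widehat\Delta^{it}\otimes\Delta_{\widetilde{\hat\nu}}^{it})(W^{*}\sigma\otimes 1)$ using $\Delta_{\widetilde{\hat\nu}}^{it}=\widehat{D}_t(\Delta^{it}\otimes\Delta_\nu^{it})$, commutes $W$ past $\widehat{\Delta}^{it}\otimes\Delta^{it}$, and then uses $D_t(\widehat{\Delta}^{it}\otimes\Delta_\nu^{it})=\Delta^{it}_{\tilde\nu}$ --- exactly your computation, reorganised.

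For the first identity the paper does something much shorter than either of your proposed routes: it simply invokes Proposition~10.4 of Baaj--Vaes \cite{BV}, the only input being $(\tau_t\otimes\widehat{\tau}_t)(W)=W$. That result directly gives the transformation law for Radon--Nikodym cocycles under the $\Ad(\sigma W\otimes 1)$ twist, so the ``main obstacle'' you identify is precisely what the paper outsources to the literature. Your Drinfel'd-double chain-rule approach is conceptually sound and would yield a self-contained proof, but note that the naturality step $(D\nu\circ\widehat{\ga}_D:D\nu)_t=\Ad(\sigma W\otimes 1)\big((D\nu\circ\ga_D:D\nu)_t\big)$ is not automatic from $\widehat{\ga}_D=\Ad(\sigma W\otimes 1)\circ\ga_D$ alone: one also needs that $\sigma W$ intertwines the relevant scaling/modular data of $\DG$ and $D(\widehat{\bfG})$, and this is exactly where the condition $(\tau_t\otimes\widehat{\tau}_t)(W)=W$ enters. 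So your route and the cited \cite{BV} result are really two faces of the same computation; the paper's presentation is terser because it treats that computation as known.
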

\begin{proof} As $(\tau_t\otimes\widehat{\tau}_t)(W)=W$ for all
  $t\in\R$, the first equation is a straightforward application of
  (\cite{BV}, 10.4). The second one follows easily using the relations
  \begin{align*}
    (\widehat\Delta^{it} \otimes
    \Delta_{\widetilde{\hat\nu}}^{it})(W^{*}\sigma \otimes 1) &=
(1 \otimes \widehat{D}_{t})    (\widehat\Delta^{it} \otimes
\Delta^{it} \otimes \Delta_{\nu}^{it})(W^{*}\sigma \otimes 1) \\
&=(1 \otimes \widehat{D}_{t})  (W^{*}\sigma \otimes 1)
(\Delta^{it}\otimes\widehat\Delta^{it} \otimes \Delta^{it}_{\nu})
  \end{align*}
  and $D_{t}(\widehat\Delta^{it} \otimes \Delta^{it}_{\nu}) =
  \Delta^{it}_{\tilde \nu}$.
\end{proof}

\subsubsection{\bf Basic example and De Commer's construction (\cite{DC})} 
\label{basic}
We can consider the coproduct $\Gamma_D$ of $\DG$ as a left
action of $\DG$ on $M\otimes\widehat{M}$. Using \ref{thNV1}, we
get that there exist a left action $\gb$ of $\bfG$ on
$M\otimes\widehat{M}$ and a left action $\widehat{\gb}$ of
$\widehat{\bfG}$ on $M\otimes\widehat{M}$ such that
$\Gamma_D=(\id\otimes\widehat{\gb})\gb$. We easily obtain that for all $X\in M\otimes\widehat{M}$,
\begin{align*}
  \gb(X)&=(\Gamma\otimes \id)(X), &
  \widehat{\gb}(X)&=\Ad(\sigma W\otimes
  1)[(\id\otimes\widehat{\Gamma})(X)].
\end{align*}
Therefore, $\gb$ and $\widehat{\gb}$ appear as the actions associated by (\cite{DC}, 6.5.2) to the closed quantum subgroups $\bfG$ and $\widehat{\bfG}$ of $\widehat{\DG}$.

De Commer's construction allows us to make a link between this basic example and any Yetter-Drinfel'd algebra; namely, if $(N, \ga, \widehat{\ga})$ is a Yetter-Drinfel'd algebra, let us define ${\ga_D}=(\id\otimes\widehat{\ga})\ga$ the left action of $\DG$ on $N$, and, given a normal, semi-finite faithful weight $\nu$ on $N$, let $U^{\ga_D}_\nu$, $U^\ga_\nu$, $U^{\widehat{\ga}}_\nu$ be the canonical implementation of ${\ga_D}$, $\ga$, $\widehat{\ga}$. In the sense of De Commer, $\ga$ and $\widehat{\ga}$ are ``restrictions'' (to $\bfG$ and $\widehat{\bfG}$) of ${\ga_D}$ and, using (\cite{DC}, 6.5.3 and 6.5.4), we get that
\begin{align*}
  (\gb\otimes
  \id)(U_\nu^{\ga_D})&=(U_\nu^\ga)_{14}(U_\nu^{\ga_D})_{234}, & (\widehat{\gb}\otimes
  \id)(U_\nu^{\ga_D})=(U_\nu^{\widehat{\ga}})_{14}(U_\nu^{\ga_D})_{234}.
\end{align*}
In particular, 
\begin{align*}
(U_\nu^{\ga_D})_{125}(U_\nu^{\ga_D})_{345}
&=
(\Gamma_D\otimes \id)(U_\nu^{\ga_D})\\
&= (\id\otimes\widehat{\gb}\otimes \id)(\gb\otimes \id)(U_\nu^{\ga_D})\\
&=
(\id\otimes\widehat{\gb}\otimes \id)[(U_\nu^\ga)_{14}(U_\nu^{\ga_D})_{234}]= 
(U_\nu^\ga)_{15}(U_\nu^{\widehat{\ga}})_{25}(U_\nu^{\ga_D})_{345},
\end{align*}
whence $U_\nu^{\ga_D}=(U^\ga_\nu)_{23}(U^{\widehat{\ga}}_{\nu})_{13}$. 
As this result depends on an unpublished part of \cite{DC}, we shall give a different proof of this formula  in \ref{corintegrable3}, using the techniques of invariant weights, and then give several technical corollaries of this fact which will be used throughout this paper.

\subsection{Braided-commutativity of Yetter-Drinfel'd algebras}
\label{BC}

\begin{subdefinition}
\label{BCdef1}
Let $\bfG$ be a locally compact quantum group and $\ga$ a left action
of $\bfG$ on a von Neumann algebra $N$. For any $x\in N$, let us define
\begin{align*}
  \ga^{\com}(x^{\op})&=(j\otimes .^{\op})\ga(x) = \Ad(J \otimes J_{\nu})[\ga(x)^{*}], \\
  \ga^{\op}(x^{\op})&=(R\otimes .^{\op})\ga(x) = \Ad(\widehat J \otimes J_{\nu})[\ga(x)^{*}].
\end{align*}
Then $\ga^{\com}$ is a left action of $\bfG^{\com}$ on $N^{\op}$, and $\ga^{\op}$
is a left action of $\bfG^{\op}$ on $N^{\op}$.  
\end{subdefinition}

Let $\nu$ be a normal semi-finite faithful weight on $N$ and $\nu^{\op}$
the normal semi-finite faithful weight on $N^{\op}$ defined by
$\nu^{\op}(x^{\op})=\nu(x)$ for any $x\in N^+$. Let
$D_t=(D\nu\circ\ga:D\nu)_t$, $D_t^{\op}=(D\nu^{\op}\circ\ga^{\op}:D\nu^{\op})_t$,
which belongs to $M\otimes N^{\op}$, and
$D_t^{\com}=D(\nu^{\op}\circ\ga^{\com}:D\nu^{\op})_t$, which belongs to $M'\otimes N^{\op}$.
Then for all $t\in\R$,
\begin{align*}
D_{-t}^{\op}&=\Ad(\widehat{J}\otimes J_\nu)[D_t],  & D_{-t}^{\com} &=\Ad(J\otimes J_\nu)[D_t].
\end{align*}
\begin{sublemma}
\label{lemBC}
 Let $\bfG$ be a locally compact quantum group, $\ga$ a
    left action of $\bfG$ on a von Neumann algebra $N$, $\nu$ a
    normal faithful semi-finite weight on $N$, and $U^{\ga}_{\nu}$ the
    standard implementation of $\ga$. Then:

    \begin{enumerate}
    \item  $({\bfG}\ltimes_{\ga} N)' = U^{\ga}_{\nu}({\bfG}^{\op}
      \ltimes_{\ga^{\op}} N^{\op})(U^{\ga}_{\nu})^{*}$;
    \item  $(U^{\ga}_{\nu})^{*}$ is the standard implementation of the
      left action $\ga^{\op}$ on $N^{\op}$ with respect to the
      opposite weight $\nu^{\op}$. In particular,
      $(U^{\ga}_{\nu})^{*}$ is a representation of ${\bfG}^{\op}$ and
      $\ga^{\op}(x^{\op})=(U^{\ga}_{\nu})^{*}(1 \otimes
      x^{\op})U^{\ga}_{\nu}$ for all $x\in N$.
    \item  $\Delta_{\tilde{\nu}}^{it}U^\ga_\nu=U^\ga_\nu
      D^{\op}_{-t}(\widehat{\Delta}^{it}\otimes\Delta_\nu^{it})$ and
      $\Ad(\widehat{\Delta}^{it}\otimes\Delta_\nu^{it})[(U^\ga_\nu)^*]=(D^{\op}_{-t})^*(U^\ga_\nu)^*D_t$
      for all $t\in\R$.
    \end{enumerate}
\end{sublemma}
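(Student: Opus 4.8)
The plan is to establish the three statements in sequence, using the relation $U^{\ga}_{\nu} = J_{\tilde{\nu}}(\widehat{J}\otimes J_\nu)$ together with the commutation theorem for crossed products and the Radon--Nikodym cocycles introduced just before the lemma. For \emph{(i)}, I would start from the fact that, by Tomita--Takesaki theory, $(\bfG\ltimes_\ga N)' = J_{\tilde\nu}(\bfG\ltimes_\ga N)J_{\tilde\nu}$, and then transport this through the unitary $\widehat J\otimes J_\nu$. Concretely, since $U^\ga_\nu = J_{\tilde\nu}(\widehat J\otimes J_\nu)$, one has $(U^\ga_\nu)^*(\bfG\ltimes_\ga N)'U^\ga_\nu = (\widehat J\otimes J_\nu)(\bfG\ltimes_\ga N)(\widehat J\otimes J_\nu)$, so it suffices to identify the right-hand side with $\bfG^{\op}\ltimes_{\ga^{\op}}N^{\op}$. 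Recalling that $\bfG\ltimes_\ga N = (\ga(N)\cup\widehat M\otimes\mathds{C})''$, one computes $\Ad(\widehat J\otimes J_\nu)[\ga(x)] = \ga^{\op}(x^{\op})^*$ by the very definition of $\ga^{\op}$ in \ref{BCdef1}, and $\Ad(\widehat J\otimes J_\nu)[y\otimes 1] = \widehat J y\widehat J\otimes 1$; since $\widehat J\widehat M\widehat J = \widehat M'$, which is exactly the underlying von Neumann algebra of $\widehat{\bfG^{\op}} = (\widehat{\bfG})^{\com}$, and $\widehat{\bfG^{\op}}\ltimes_{\ga^{\op}}N^{\op}$ is generated by $\ga^{\op}(N^{\op})$ and $\widehat{M^{\op}}\otimes\mathds{C}$, the two algebras coincide. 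Some care is needed to match conventions, because $\ga^{\op}(x^{\op})$ and its adjoint generate the same von Neumann algebra.

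For \emph{(ii)}, the natural approach is to verify the three defining properties of a standard implementation (as recalled in \ref{action}): that $(U^\ga_\nu)^*$ is a representation of $\bfG^{\op}$, that it implements $\ga^{\op}$, and that it is compatible with the modular data of $\tilde{\nu^{\op}}$. The representation property $(\Gamma^{\op}\otimes\id)((U^\ga_\nu)^*) = ((U^\ga_\nu)^*)_{23}((U^\ga_\nu)^*)_{13}$ follows by applying the flip and the anti-comultiplicativity coming from $\Gamma^{\op} = \varsigma\circ\Gamma$ to the known relation $(\Gamma\otimes\id)(U^\ga_\nu) = (U^\ga_\nu)_{23}(U^\ga_\nu)_{13}$; concretely one uses the identity $W^{\op} = (\widehat J\otimes\widehat J)W(\widehat J\otimes\widehat J)$ recorded in \ref{lcqg}. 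The implementation identity $\ga^{\op}(x^{\op}) = (U^\ga_\nu)^*(1\otimes x^{\op})U^\ga_\nu$ is immediate from \ref{BCdef1}: indeed $\ga^{\op}(x^{\op}) = \Ad(\widehat J\otimes J_\nu)[\ga(x)^*] = \Ad(\widehat J\otimes J_\nu)\Ad(U^\ga_\nu)[(1\otimes x)^*] = \Ad((\widehat J\otimes J_\nu)U^\ga_\nu)[1\otimes x^{\op}]$, and $(\widehat J\otimes J_\nu)U^\ga_\nu = (\widehat J\otimes J_\nu)J_{\tilde\nu}(\widehat J\otimes J_\nu) = (U^\ga_\nu)^*$ by the displayed formula preceding the lemma. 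By uniqueness of the standard implementation (see the discussion after the formula for $U^\ga_\psi$ in \ref{action}), this forces $(U^\ga_\nu)^*$ to be \emph{the} canonical implementation $U^{\ga^{\op}}_{\nu^{\op}}$.

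For \emph{(iii)}, the two formulas are equivalent (apply $\Ad(\widehat J\otimes J_\nu)$ and use $D^{\op}_{-t} = \Ad(\widehat J\otimes J_\nu)[D_t]$ together with $(U^\ga_\nu)^* = (\widehat J\otimes J_\nu)U^\ga_\nu(\widehat J\otimes J_\nu)$), so it is enough to prove the first. The idea is to combine the definition $D_t = (D\nu\circ\ga:D\nu)_t = \Delta_{\tilde\nu}^{it}(\widehat\Delta^{-it}\otimes\Delta_\nu^{-it})$ from \ref{action} with the relation between $J_{\tilde\nu}$ and $\Delta_{\tilde\nu}$ and the cocycle $D^{\op}$. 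Rewriting the target identity as $\Delta_{\tilde\nu}^{it}J_{\tilde\nu}(\widehat J\otimes J_\nu) = J_{\tilde\nu}(\widehat J\otimes J_\nu)D^{\op}_{-t}(\widehat\Delta^{it}\otimes\Delta_\nu^{it})$ and using $\Delta_{\tilde\nu}^{it}J_{\tilde\nu} = J_{\tilde\nu}\Delta_{\tilde\nu}^{-it}$, it reduces to showing $J_{\tilde\nu}\Delta_{\tilde\nu}^{-it}(\widehat J\otimes J_\nu) = J_{\tilde\nu}(\widehat J\otimes J_\nu)D^{\op}_{-t}(\widehat\Delta^{it}\otimes\Delta_\nu^{it})$, i.e.\ $\Delta_{\tilde\nu}^{-it}(\widehat J\otimes J_\nu) = (\widehat J\otimes J_\nu)D^{\op}_{-t}(\widehat\Delta^{it}\otimes\Delta_\nu^{it})$; applying $\Ad(\widehat J\otimes J_\nu)$ to the definition $D_{-t} = \Delta_{\tilde\nu}^{-it}(\widehat\Delta^{it}\otimes\Delta_\nu^{it})$ gives exactly $\Ad(\widehat J\otimes J_\nu)[\Delta_{\tilde\nu}^{-it}] = D^{\op}_t\cdot\Ad(\widehat J\otimes J_\nu)[\widehat\Delta^{it}\otimes\Delta_\nu^{it}]$, and $\Ad(\widehat J\otimes J_\nu)[\widehat\Delta^{it}\otimes\Delta_\nu^{it}]$ can be computed using $\widehat J\widehat\Delta^{it}\widehat J = \widehat\Delta^{it}$ (up to scaling, since $J$ and $\Delta$ for a weight satisfy $J\Delta^{it}J = \Delta^{it}$) and $J_\nu\Delta_\nu^{it}J_\nu = \Delta_\nu^{it}$; chasing the scaling factors and reindexing $t\mapsto -t$ closes the computation.

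The main obstacle I anticipate is bookkeeping: keeping straight the various modular operators ($\Delta_{\tilde\nu}$, $\widehat\Delta$, $\Delta_\nu$ and their $\op$-counterparts), the three flavours of conjugation ($J$, $\widehat J$, $J_\nu$, $J_{\tilde\nu}$), the scaling constant $\lambda$ that enters through $\widehat J J = \lambda^{i/4}J\widehat J$, and especially the interplay between "$\ga^{\op}(x^{\op})$" and its adjoint when one identifies the two von Neumann algebras in \emph{(i)} — a sign or adjoint slip there would silently break the identification. The conceptual content is light: \emph{(i)} and \emph{(ii)} are applications of the commutation theorem and of the uniqueness of standard implementations, and \emph{(iii)} is pure cocycle algebra; the difficulty is entirely in executing the $\Ad$-conjugations cleanly and tracking the scaling factors.
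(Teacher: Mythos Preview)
Your arguments for (i) and (iii) are essentially the paper's own: Tomita's commutation theorem plus $U^\ga_\nu=J_{\tilde\nu}(\widehat J\otimes J_\nu)$ for (i), and cocycle algebra for (iii). One correction in (iii): the Tomita relation is $J_{\tilde\nu}\Delta_{\tilde\nu}^{it}=\Delta_{\tilde\nu}^{it}J_{\tilde\nu}$, not $J_{\tilde\nu}\Delta_{\tilde\nu}^{-it}$; the conjugate-linearity of $J$ together with $J\Delta J=\Delta^{-1}$ yields commutation (not anti-commutation) with the one-parameter unitary group. With that fix, your chain is exactly the paper's:
\[
\Delta_{\tilde\nu}^{it}U^\ga_\nu(\widehat\Delta^{-it}\otimes\Delta_\nu^{-it})
=J_{\tilde\nu}\Delta_{\tilde\nu}^{it}(\widehat J\otimes J_\nu)(\widehat\Delta^{-it}\otimes\Delta_\nu^{-it})
=J_{\tilde\nu}D_t(\widehat J\otimes J_\nu)
=U^\ga_\nu D^{\op}_{-t}.
\]

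For (ii), however, there is a real gap. Being a corepresentation of $\bfG^{\op}$ that implements $\ga^{\op}$ does \emph{not} single out the standard implementation: the uniqueness statement in \ref{action} (from \cite{V}, 4.1) only says how $U^\ga_{(-)}$ transforms under change of weight, not that every implementing corepresentation equals it. The characterization in \cite{V} that would let you invoke uniqueness involves an extra condition (compatibility with the natural positive cone, or equivalently the relation $U=J_{\tilde\nu}(\widehat J\otimes J_\nu)$ itself), and that is precisely what has to be checked. The paper therefore proceeds directly from the definition: writing $\mu$ for the weight on $\bfG^{\op}\ltimes_{\ga^{\op}}N^{\op}$ dual to $\nu^{\op}$, one has $U^{\ga^{\op}}_{\nu^{\op}}=J_\mu(\widehat J\otimes J_\nu)$ by definition, so one must compute $J_\mu$. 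The paper does this by comparing GNS maps: the map $\Lambda_\mu$ determined by $\Lambda_\mu\big((\widehat Jy\widehat J\otimes 1)\ga^{\op}(x^{\op})^*\big)=\widehat J\widehat\Lambda(y)\otimes J_\nu\Lambda_\nu(x)$ is matched against $\Lambda_{\tilde\nu^{\op}}(J_{\tilde\nu}(\,\cdot\,)J_{\tilde\nu})=J_{\tilde\nu}\Lambda_{\tilde\nu}(\,\cdot\,)$, and one reads off $\Lambda_\mu((U^\ga_\nu)^*aU^\ga_\nu)=(U^\ga_\nu)^*\Lambda_{\tilde\nu^{\op}}(a)$, hence $J_\mu=(U^\ga_\nu)^*J_{\tilde\nu}U^\ga_\nu$ and $U^{\ga^{\op}}_{\nu^{\op}}=(U^\ga_\nu)^*$. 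Your phrase ``compatible with the modular data of $\widetilde{\nu^{\op}}$'' is exactly this computation; it cannot be replaced by an appeal to abstract uniqueness.
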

\begin{proof}
  (i) The relation $U^{\ga}_{\nu}=J_{\tilde \nu}(\widehat J \otimes
  J_{\nu})$ and the definition of the crossed products imply
  \begin{align*}
    U^{\ga}_{\nu}({\bfG}^{\op} \ltimes_{\ga^{\op}}
    N^{\op})(U^{\ga}_{\nu})^{*} &=
    J_{\tilde \nu}(\widehat J \otimes J_{\nu})((\widehat J\widehat M\widehat J \otimes 1_{H_{\nu}}) \cup \ga^{\op}(N^{\op}))''(\widehat J \otimes J_{\nu}) J_{\tilde \nu} \\
    &= J_{\tilde \nu}({\bfG} \ltimes_{\ga} N)J_{\tilde \nu} \\ & = ({\bfG} \ltimes_{\ga} N)'.
  \end{align*}

(ii) Denote by $\mu$ the weight on ${\bfG}^{\op}
\ltimes_{\ga^{\op}} N^{\op}$ dual to $\nu^{\op}$. By \S3 in \cite{V}, there
exists a GNS-map $\Lambda_{\mu} \colon
\gN_{\mu} \to H \otimes H_{\nu}$ determined by
  \begin{align} \label{eq:lambda-not}
    \Lambda_{\mu} ( (\widehat J y\widehat J \otimes
    1_{H_{\nu}})\ga^{\op}(x^{\op})^{*}) = \widehat J\widehat \Lambda(y) \otimes
    J_{\nu}\Lambda_{\nu}(x)
  \end{align}
  for all $y\in \gN_{\widehat \phi}$ and $x \in \gN_{\nu}$, and
  the standard implementation $U^{\ga^{\op}}_{\nu^{\op}}$ of $\ga^{\op}$
  with respect to $\nu^{\op}$ is given by
  $U^{\ga^{\op}}_{\nu^{\op}}=J_{\mu}(\widehat J \otimes
  J_{\nu})$.

  On the other hand, the GNS-map $\Lambda_{\tilde \nu}$ for the dual
  weight $\tilde \nu$ yields a GNS-map $\Lambda_{\tilde \nu^{\op}}$ for
  the opposite $\tilde\nu^{\op}$ on the commutant
  $J_{\tilde\nu}(M\ltimes_{\ga} N)J_{\tilde\nu}$, determined by
  \begin{align} \label{eq:lambda-nto}
    \Lambda_{\tilde \nu^{\op}}(J_{\tilde \nu}(y \otimes 1)\ga(x)J_{\tilde \nu}) = J_{\tilde \nu}\Lambda_{\tilde \nu}((y \otimes 1)\ga(x))= J_{\tilde \nu}(\widehat\Lambda(y) \otimes \Lambda_{\nu}(x))
  \end{align}
for $y\in \gN_{\widehat \phi}$ and $x\in \gN_{\nu}$.

Comparing \eqref{eq:lambda-not} with \eqref{eq:lambda-nto} and using
the relation $U^{\ga}_{\nu}=J_{\tilde \nu}(\widehat J \otimes J_{\nu})$,
we can conclude that
  \begin{align*}
    \Lambda_{\mu}((U^{\ga}_{\nu})^{*}a U^{\ga}_{\nu})
     &= (U^{\ga}_{\nu})^{*} \Lambda_{\tilde \nu^{\op}}(a)
  \end{align*}
for all $a \in \gN_{\tilde\nu^{\op}}$. Consequently, $J_{\mu} = (U^{\ga}_{\nu})^{*}J_{\tilde{\nu}} U^{\ga}_{\nu}$ and
$U^{\ga^{\op}}_{\nu^{\op}}=J_{\mu}(\widehat J \otimes
  J_{\nu})  = (U^{\ga}_{\nu})^{*}$.
  
  (iii) Using \ref{action}, we have:
  \begin{align*}
  \Delta_{\tilde{\nu}}^{it}U^\ga_\nu(\widehat{\Delta}^{-it}\otimes\Delta_\nu^{-it})
  &=
  \Delta_{\tilde{\nu}}^{it}J_{\tilde{\nu}}(\widehat{J}\otimes J_\nu)(\widehat{\Delta}^{-it}\otimes\Delta_\nu^{-it})\\
  &=
 J_{\tilde{\nu}}\Delta_{\tilde{\nu}}^{it}(\widehat{J}\otimes J_\nu)(\widehat{\Delta}^{-it}\otimes\Delta_\nu^{-it})\\
&=
J_{\tilde{\nu}}D_t(\widehat{\Delta}^{it}\otimes\Delta_\nu^{it})(\widehat{J}\otimes J_\nu)(\widehat{\Delta}^{-it}\otimes\Delta_\nu^{-it})\\
&=
J_{\tilde{\nu}}(\widehat{J}\otimes J_\nu)D^{\op}_{-t}\\
&=
U^\ga_\nu D^{\op}_{-t}
\end{align*}
from which we get the first formula, and then the second one by taking the adjoints. 
\end{proof}

\begin{subdefinition}
\label{defBC}
Let  $\bfG$ be a locally compact quantum group and $(N, \ga,
\widehat{\ga})$ a $\bfG$-Yetter-Drinfel'd algebra. Since
$\Ad(\widehat{J}J)=\Ad(J\widehat{J})$, the following  two properties are equivalent:

(i) $\ga^{\com}(N^{\op})$ and $\widehat{\ga}^{\com}(N^{\op})$ commute;

(ii) $\ga^{\op}(N^{\op})$ and $\widehat{\ga}^{\op}(N^{\op})$ commute;

We shall say that $(N, \ga, \widehat{\ga})$ is \emph{braided-commutative} if these conditions are fulfilled. 
\end{subdefinition}
It is clear that any sub-$\bfG$-Yetter-Drinfel'd algebra of a braided-commutative $\bfG$-Yetter-Drinfel'd algebra is also braided-commutative.

\begin{subtheorem}[(\cite{Ti2})]
\label{ThBC}
 Let $\bfG$ be a locally compact quantum group, $(N, \ga, \widehat{\ga})$ a $\bfG$-Yetter-Drinfel'd algebra, $\nu$  a normal faithful semi-finite weight on $N$, and $U^\ga_\nu$ the standard implementation of $\ga$. Define an injective anti-$*$-homomorphism $\beta$ by 
\[\beta(x)=U^\ga_\nu
\widehat{\ga}^{\op}(x^{\op})(U^\ga_\nu)^*=\Ad(U^\ga_\nu(U^{\widehat{\ga}}_\nu)^*)[1\otimes J_\nu x^*J_\nu] \quad
\text{for all } x\in N.\]
Then:

(i) $\beta(N)$ commutes with $\ga(N)$.

(ii) $(N, \ga, \widehat{\ga})$ is braided-commutative if and only if
$\beta (N)\subset {\bfG}\ltimes_\ga N$.
\end{subtheorem}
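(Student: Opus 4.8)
The plan is to establish the two assertions by working inside $M\otimes B(H_\nu)$ and systematically using the formulas for the canonical implementations collected in \ref{lemBC} and in the De Commer discussion of \S\ref{basic}.

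\textbf{Part (i): $\beta(N)$ commutes with $\ga(N)$.} First I would observe that, by definition, $\beta(x)=\Ad(U^\ga_\nu)[\widehat\ga^{\op}(x^{\op})]$, and by \ref{lemBC}(ii) applied to $\widehat\ga$ we have $(U^{\widehat\ga}_\nu)^*(1\otimes x^{\op})U^{\widehat\ga}_\nu = \widehat\ga^{\op}(x^{\op})$, whence the second expression for $\beta$ in the statement. So it suffices to show that $\Ad(U^\ga_\nu(U^{\widehat\ga}_\nu)^*)[1\otimes J_\nu x^*J_\nu]$ commutes with $\ga(N)=\Ad(U^\ga_\nu)[1\otimes N]$. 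Equivalently, applying $\Ad((U^\ga_\nu)^*)$, I need $\Ad((U^{\widehat\ga}_\nu)^*)[1\otimes J_\nu x^*J_\nu]$ to commute with $1\otimes N$ inside $M\otimes B(H_\nu)$. Now $\Ad((U^{\widehat\ga}_\nu)^*)[1\otimes J_\nu x^*J_\nu] = \widehat\ga^{\op}(x^{\op})$ by the same identity, and $\widehat\ga^{\op}(x^{\op})\in \widehat M^{\op}\otimes N^{\op} = \widehat M'\otimes N'$ (as subalgebras of $B(H)\otimes B(H_\nu)$, using $\widehat M^{\op}\cong \widehat M'$ via $\widehat J$ and $N^{\op}\cong N'$ via $J_\nu$); in particular its second leg lies in $N' = (J_\nu N J_\nu)'$, so it commutes with $1\otimes N$. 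This gives (i). I should double-check the leg placements against Definition \ref{BCdef1} to make sure $\widehat\ga^{\op}$ really lands in $\widehat M'\otimes N'$ and not in a twisted version, but this is the only subtlety.

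\textbf{Part (ii): braided-commutativity $\iff$ $\beta(N)\subset \bfG\ltimes_\ga N$.} Recall $\bfG\ltimes_\ga N = (\ga(N)\cup \widehat M\otimes\mathbb C)''$, and by \ref{lemBC}(i) its commutant is $U^\ga_\nu(\bfG^{\op}\ltimes_{\ga^{\op}} N^{\op})(U^\ga_\nu)^* = U^\ga_\nu(\ga^{\op}(N^{\op})\cup \widehat J\widehat M\widehat J\otimes 1)''(U^\ga_\nu)^*$. For the forward direction, suppose $(N,\ga,\widehat\ga)$ is braided-commutative, i.e.\ $\ga^{\op}(N^{\op})$ and $\widehat\ga^{\op}(N^{\op})$ commute. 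Since $\beta(N) = \Ad(U^\ga_\nu)[\widehat\ga^{\op}(N^{\op})]$, to show $\beta(N)\subset \bfG\ltimes_\ga N$ it is enough to show $\widehat\ga^{\op}(N^{\op})$ commutes with $(U^\ga_\nu)^*(\bfG\ltimes_\ga N)'U^\ga_\nu = (\ga^{\op}(N^{\op})\cup \widehat J\widehat M\widehat J\otimes 1)''$. Commutation with $\ga^{\op}(N^{\op})$ is exactly braided-commutativity; commutation with $\widehat J\widehat M\widehat J\otimes 1_{H_\nu}$ holds because $\widehat\ga^{\op}(N^{\op})$ has first leg in $\widehat M^{\op}=\widehat M'$ and $\widehat J\widehat M\widehat J = \widehat M'$ as well — wait, that needs care, since $\widehat M'$ need not be abelian. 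The correct argument is instead: $\widehat\ga^{\op}$ is a left action of $\bfG^{\op}$, but more usefully the first leg of $\widehat\ga^{\op}(y^{\op})$ lands in $\widehat R(\widehat M) = \widehat M$ composed with $\mathrm{Ad}\,\widehat J$... I would instead exploit that $\widehat\ga$ is an action of $\widehat\bfG$, so $\widehat\ga^{\op}(N^{\op})\subset \widehat M^{\com}\otimes N^{\op}$ where $\widehat M^{\com}$ is the underlying algebra of $\widehat\bfG^{\com} = \bfG^{\op}\widehat{\phantom{a}}$, and the first-leg algebra commutes with $\widehat J\widehat M\widehat J$ — I need to verify this commutation from the formula $\widehat\ga^{\op}(x^{\op}) = \Ad(\widehat J\otimes J_\nu)[\widehat\ga(x)^*]$ together with $\widehat\ga(x)\in \widehat M\otimes N$ and the fact that $\widehat M' = \widehat J\widehat M\widehat J$ is \emph{not} what we want; rather $\widehat J \widehat M J$... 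Given the delicacy here, the cleanest route is: $\widehat\ga^{\op}(x^{\op})$ commutes with $\widehat M\otimes\mathbb C$ because $\widehat\ga^{\op}$ has first leg in $\widehat M^{\op}$ acting on a \emph{different} copy, i.e.\ realised on $H$ as $\Ad\widehat J(\widehat M) = \widehat M'$, hence commutes with $\widehat M$; but inside the commutant of $\bfG\ltimes_\ga N$ we have $\widehat J\widehat M\widehat J\otimes 1 = \widehat M'\otimes 1$, and this is where one must simply check that $\mathrm{Ad}(U^\ga_\nu)$ does not obstruct — which follows from $U^\ga_\nu\in M\otimes B(H_\nu)$ together with $[M,\widehat M'] \ne 0$ in general, so one genuinely needs the explicit intertwining. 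I would therefore invoke the De Commer formula $U^{\ga_D}_\nu = (U^\ga_\nu)_{23}(U^{\widehat\ga}_\nu)_{13}$ from \S\ref{basic}, or rather its proof-independent consequence, to relate $\ga$ and $\widehat\ga$ legs cleanly. For the converse, assuming $\beta(N)\subset \bfG\ltimes_\ga N$: then $\beta(N)$ commutes with $(\bfG\ltimes_\ga N)' \supset U^\ga_\nu\ga^{\op}(N^{\op})(U^\ga_\nu)^*$, and since $\beta(N) = U^\ga_\nu\widehat\ga^{\op}(N^{\op})(U^\ga_\nu)^*$, applying $\Ad((U^\ga_\nu)^*)$ gives that $\widehat\ga^{\op}(N^{\op})$ commutes with $\ga^{\op}(N^{\op})$, which is braided-commutativity.

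\textbf{Main obstacle.} The delicate point throughout is keeping precise track of which von Neumann algebra the \emph{first leg} of each of $\ga(x)$, $\ga^{\op}(x^{\op})$, $\widehat\ga^{\op}(x^{\op})$ lives in — $M$, $\widehat M$, $M'$, or $\widehat M'$ — since these do \emph{not} mutually commute and the identity $M\cap\widehat M = \mathbb C$ etc.\ (quoted in \S\ref{lcqg}) is not by itself enough. Getting the forward direction of (ii) to work requires showing $\beta(N)$ commutes not just with $\ga(N)$ (which is (i)) but also with the copy $\widehat J\widehat M\widehat J\otimes 1$ sitting inside the commutant of the crossed product, and that commutation is the one place where braided-commutativity is genuinely needed together with a careful unraveling of $\Ad(U^\ga_\nu)$. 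I expect that the clean way to handle it is to conjugate everything by $(U^\ga_\nu)^*$ at the outset, reducing all three algebras to expressions in $1\otimes(\cdot)$, $\widehat M^{\op}\otimes(\cdot)$, and $\ga^{\op}(N^{\op})$, after which the commutation statements become the hypothesis of braided-commutativity plus the trivial fact that $\widehat M^{\op}=\widehat M'$ commutes with $\widehat M$.
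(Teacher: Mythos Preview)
Your overall strategy is exactly the paper's: conjugate by $(U^\ga_\nu)^*$, use Lemma~\ref{lemBC}(i) to identify $(\bfG\ltimes_\ga N)'$ with $U^\ga_\nu(\bfG^{\op}\ltimes_{\ga^{\op}}N^{\op})(U^\ga_\nu)^*$, and reduce (ii) to the statement that $\widehat\ga^{\op}(N^{\op})$ commutes with $\ga^{\op}(N^{\op})$ and with $\widehat J\widehat M\widehat J\otimes 1$. Part (i) and the converse direction of (ii) are fine.

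The gap is in the forward direction of (ii), and it is precisely the point you flag as the ``main obstacle''. You repeatedly identify the first leg of $\widehat\ga^{\op}(x^{\op})$ as lying in $\widehat M^{\op}\cong\widehat M'$, and then struggle because $\widehat M'$ need not commute with $\widehat J\widehat M\widehat J=\widehat M'$. But this identification is wrong. By Definition~\ref{BCdef1}, $\widehat\ga^{\op}(x^{\op})=(\widehat R\otimes{}^{\op})\widehat\ga(x)$, and $\widehat R(y)=Jy^*J$ is an anti-$*$-automorphism of $\widehat M$ \emph{onto itself} (equivalently, $J\widehat M J=\widehat M$, not $\widehat M'$). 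Hence $\widehat\ga^{\op}(N^{\op})\subset\widehat M\otimes N^{\op}$, and commutation with $\widehat J\widehat M\widehat J\otimes 1=\widehat M'\otimes 1$ is then trivially automatic. You are confusing two different ``opposites'': the quantum group $\widehat\bfG^{\op}$ (same algebra $\widehat M$, flipped coproduct) versus the opposite algebra $\widehat M^{\op}$ (realized as $\widehat M'$ via $\widehat J$). The superscript in $\widehat\ga^{\op}$ refers to the former. Once this is straight, no appeal to the De~Commer formula $U^{\ga_D}_\nu=(U^\ga_\nu)_{23}(U^{\widehat\ga}_\nu)_{13}$ is needed; indeed that formula is only established later as Corollary~\ref{corintegrable3}, so invoking it here would be circular.
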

\begin{proof}
(i) The two formulas for $\beta(x)$ coincide by Lemma \ref{lemBC} (ii), and clearly, 
$\beta(N)\subseteq U^{\ga}_{\nu}(\widehat M\otimes N^{\op})(U^{\ga}_{\nu})^{*}$ commutes with $\ga(N)=U^{\ga}_\nu(1 \otimes N)(U^{\ga}_{\nu})^{*}$.

(ii) Using Lemma \ref{lemBC} (i), we see that
$\beta(N)=U^{\ga}_{\nu}\widehat{\ga}^{\op}(N^{\op})(U^{\ga}_{\nu})^{*}$
lies in ${\bfG} \ltimes_{\ga} N$ if and only if it commutes with
$({\bfG} \ltimes_{\ga} N)'=U^{\ga}_{\nu}({\bfG}^{\op}
\ltimes_{\ga^{\op}} N^{\op})(U^{\ga}_{\nu})^{*}$, that is, if and only
if $\widehat{\ga}^{\op}(N^{\op})$ commutes with $\widehat J \widehat M
\widehat J\otimes 1_{H_{\nu}}$ and with $\ga^{\op}(N^{\op})$. But
since $\widehat{\ga}^{\op}(N^{\op}) \subseteq \widehat M \otimes
N^{\op}$, the first condition is always satisfied.  \end{proof}

\begin{subproposition}
\label{PropBC}
 Let $\bfG$ be a locally compact quantum group and $(N, \ga,
  \widehat{\ga})$ a braided-commutative $\bfG$-Yetter-Drinfel'd
  algebra. Then $N^\ga\subseteq Z(N)$ and $N^{\widehat{\ga}}\subseteq
  Z(N)$. 
\end{subproposition}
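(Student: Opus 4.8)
The plan is to reduce everything to the braided-commutativity condition in the form of Definition~\ref{defBC}(ii) --- that $\ga^{\op}(N^{\op})$ and $\widehat{\ga}^{\op}(N^{\op})$ commute --- combined with the elementary remark that a $\ga$-fixed element of $N$ gives rise to a $\ga^{\op}$-fixed element of $N^{\op}$, and symmetrically for $\widehat{\ga}$. No use will be made of the map $\beta$ of Theorem~\ref{ThBC}, since the ``commutes with $\ga(N)$'' property of $\beta$ holds for \emph{all} Yetter-Drinfel'd algebras and cannot by itself force centrality.

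For $N^{\ga}\subseteq Z(N)$, let $x\in N^{\ga}$, so that $\ga(x)=1\otimes x$; applying the definition of $\ga^{\op}$ from \ref{BCdef1} together with unitality of $R$ gives $\ga^{\op}(x^{\op})=(R\otimes .^{\op})\ga(x)=1\otimes x^{\op}$. Braided-commutativity then forces $1\otimes x^{\op}$ to commute with $\widehat{\ga}^{\op}(y^{\op})$ for every $y\in N$. Since $\widehat{\ga}^{\op}$ is a left action of $\widehat{\bfG}^{\op}$ on $N^{\op}$ and hence takes values in $\widehat{M}\otimes N^{\op}$, slicing the first leg by an arbitrary $\omega\in\widehat{M}_*$ shows that $x^{\op}$ commutes with $(\omega\otimes\id)\widehat{\ga}^{\op}(y^{\op})$ for all $\omega\in\widehat{M}_*$ and $y\in N$. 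Now I would invoke the standard non-degeneracy property of actions of a locally compact quantum group: the elements $(\omega\otimes\id)\widehat{\ga}^{\op}(y^{\op})$ ($\omega\in\widehat{M}_*$, $y\in N$) generate $N^{\op}$ as a von Neumann algebra (see \cite{V}). Hence $x^{\op}$ lies in the centre of $N^{\op}$, i.e.\ $x\in Z(N)$. The inclusion $N^{\widehat{\ga}}\subseteq Z(N)$ is proved by the mirror argument: if $x\in N^{\widehat{\ga}}$ then $\widehat{\ga}^{\op}(x^{\op})=1\otimes x^{\op}$, which by Definition~\ref{defBC}(ii) commutes with $\ga^{\op}(y^{\op})\in M\otimes N^{\op}$ for all $y\in N$; slicing the first leg and applying the same density statement to the action $\ga^{\op}$ of $\bfG^{\op}$ on $N^{\op}$ again gives $x\in Z(N)$.

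The one non-formal ingredient is the density statement used twice above --- that the coefficients $(\omega\otimes\id)\alpha(z)$ of a left action $\alpha$ span a $\sigma$-weakly dense subalgebra of the underlying von Neumann algebra. This is not special to Yetter-Drinfel'd algebras but a general feature of actions; if one prefers a self-contained derivation, it can be extracted from the unitary implementation $U^{\widehat{\ga}}_{\nu}$ together with the corepresentation identity $(\Gamma\otimes\id)(U)=U_{23}U_{13}$, and otherwise it is available from \cite{V}. Everything else in the argument is a direct unwinding of the definitions of $\ga^{\op}$, $\widehat{\ga}^{\op}$ and braided-commutativity, so I expect this density fact to be the only real obstacle.
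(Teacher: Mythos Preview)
Your proof is correct and follows essentially the same route as the paper's: both arguments observe that $x\in N^{\ga}$ yields $\ga^{\op}(x^{\op})=1\otimes x^{\op}$, use braided-commutativity to get commutation with $\widehat{\ga}^{\op}(N^{\op})$, and then invoke the density/biduality result from \cite{V} (specifically Th.~2.6) to conclude centrality. The only cosmetic difference is that the paper passes back from $N^{\op}$ to $N$ before applying \cite{V}~2.6 in the form ``$\widehat{\ga}(N)\cup(B(H)\otimes 1)$ generates $B(H)\otimes N$'', whereas you stay on the $N^{\op}$ side and phrase the same fact as density of the slices $(\omega\otimes\id)\widehat{\ga}^{\op}(y^{\op})$; these formulations are equivalent.
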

\begin{proof}
Using \ref{BCdef1}, we get that the algebra $1\otimes (N^\ga)^{\op}$ commutes with $\widehat{\ga}^{\op}(N^{\op})$, and, therefore, that $1\otimes N^\ga$ commutes with $\widehat{\ga} (N)$. As it commutes with $B(H)\otimes 1$, it will commute with $B(H)\otimes N$, by (\cite{V}, th. 2.6). This is the first result. Applying it to the braided-commutative $\widehat{\bfG}$-Yetter-Drinfel'd algebra $(N, \widehat{\ga}, \ga)$, we get the second result. \end{proof}

\section{Invariant weights on Yetter-Drinfel'd algebras}
\label{inv}

In this chapter, we recall the definition (\ref{definv}) and basic properties (\ref{prop1inv}, \ref{prop2inv}) of a normal semi-finite faithful weight on a von Neumann algebra $N$, relatively invariant with respect to a left action $\ga$ of a locally compact quantum group $\bfG$ on $N$. Then, we study the case of an invariant weight on a Yetter-Drinfel'd algebra $(N, \ga, \widehat{\ga})$ (\ref{thinv1}, \ref{defYDinv}), and we prove that if $N$ is properly infinite, there exists such a weight (\ref{corint}). 

\begin{definition}
\label{definv}
Let $\bfG$ be a locally compact quantum group and $\ga$ a left action
of $\bfG$ on a von Neumann algebra $N$. Let $k$ be a positive
invertible operator affiliated to $M$. A normal faithful semi-finite weight $\nu$ on $N$ is
said to be \emph{$k$-invariant} under $\ga$ if for all $x\in N^+$,
\[(\id\otimes\nu)\ga(x)=\nu(x)k.\] 
\end{definition}

Applying $\Gamma$ to this formula,
one gets $\Gamma(k)=k\otimes k$, whence $k^{it}$ is a (one-dimensional)
representation of $\bfG$ for all $t\in\R$. So, $k^{it}$ belongs to the von Neumann subalgebra $I(M)$ of $M$ generated by all unitaries $u$ of $M$ such that $\Gamma(u)=u\otimes u$. As $I(M)$ is globally invariant by $\tau_t$ and $R$, using (\cite{BV}, 10.5), we get that it is a locally compact quantum group, whose scaling group will be the restriction of $\tau_t$ to $I(M)$. Since this locally compact quantum group is cocommutative, we therefore get that the restriction of $\tau_t$ to $I(M)$ is trivial, from which we get that $\tau_t(k)=k$ for all $t\in\R$.

This property implies that $P$ and $k$ (resp.\ $\widehat{\Delta}$ and $k$) strongly commute. Therefore their product $kP$ (resp.\ $k\widehat{\Delta}$) is closable, and its closure will be denoted again $kP$ (resp.\ $k\widehat{\Delta}$).

It is proved in (\cite{Y3}, 4.1) that $\nu$ is $k$-invariant if and only if, for all $t\in\R$, we have $(D\nu\circ\ga:D\nu)_t=k^{-it}\otimes 1$ (or, equivalently, $\Delta_{\tilde{\nu}}^{it}=k^{-it}\widehat{\Delta}^{it}\otimes\Delta_\nu^{it}$). 

If $k=1$, we shall say that $\nu$ is \emph{invariant} under $\ga$. 

\begin{proposition}
\label{prop1inv}
 Let $\bfG$ be a locally compact quantum group, $\ga$ a left
  action of $\bfG$ on a von Neumann algebra $N$, and $\nu_1$ and
  $\nu_2$ two $k$-invariant normal faithful semi-finite weights on
  $N$. Then $(D\nu_1:D\nu_2)_t$ belongs to $N^\ga$  for all $t\in\R$. 
\end{proposition}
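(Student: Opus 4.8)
The plan is to use the characterization of $k$-invariance in terms of Radon-Nikodym cocycles together with the cocycle chain rule. Concretely, recall from (\cite{Y3}, 4.1) that a normal faithful semi-finite weight $\mu$ on $N$ is $k$-invariant under $\ga$ if and only if $(D\mu\circ\ga : D\mu)_t = k^{-it}\otimes 1$ for all $t\in\R$. Applying this to both $\nu_1$ and $\nu_2$ gives
\[(D\nu_1\circ\ga:D\nu_1)_t = k^{-it}\otimes 1 = (D\nu_2\circ\ga:D\nu_2)_t \quad\text{for all } t\in\R.\]
So the first step is simply to record that the two cocycles of $\nu_i$ relative to their own pushforwards coincide.

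Next I would bring in the cocycle chain rule for the three weights $\nu_1\circ\ga$, $\nu_1$, $\nu_2$, $\nu_2\circ\ga$ on the respective algebras. Writing $u_t := (D\nu_1 : D\nu_2)_t \in N$ and pushing forward along $\ga$, one has $\ga(u_t) = (D\nu_1\circ\ga : D\nu_2\circ\ga)_t$ by functoriality of the Connes cocycle under the normal unital $*$-homomorphism $\ga$. Then the chain rule on $M\otimes N$ gives
\[(D\nu_1\circ\ga:D\nu_2\circ\ga)_t = (D\nu_1\circ\ga:D\nu_1)_t\,(D\nu_1:D\nu_2)_t\,(D\nu_2:D\nu_2\circ\ga)_t,\]
where the middle factor is understood via $\id\otimes$ — that is, as $1\otimes u_t$ once we identify $N$ with $1\otimes N$ in the appropriate relative tensor picture; more carefully one uses that $(D\nu_1:D\nu_2)_t$ lies in $N$ and the chain rule is applied inside $M\otimes N$ after pushing the first and last terms through. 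Substituting the equalities from the first step, the outer two factors are $k^{-it}\otimes 1$ and its inverse $k^{it}\otimes 1$, which cancel, leaving $\ga(u_t) = 1\otimes u_t$, i.e. $u_t \in N^\ga$ by the very definition of $N^\ga$ recalled in \ref{action}.

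The main obstacle is bookkeeping the functoriality and chain rule correctly on $M\otimes N$: one must check that $\ga(u_t)$ genuinely equals $(D\nu_1\circ\ga:D\nu_2\circ\ga)_t$ (this is standard — the Connes cocycle is natural with respect to $*$-homomorphisms between von Neumann algebras that are compatible with the weights, and here $\nu_i\circ\ga$ is computed precisely via $\ga$), and that the intermediate cocycle $(D(\nu_1\circ\ga) : D(\nu_2\circ\ga))_t$, a priori an element of $M\otimes N$, is being related to $1\otimes u_t$ through the correct identification. An alternative route that avoids the chain rule altogether is to use the second (equivalent) formulation of $k$-invariance: $\Delta_{\tilde\nu_i}^{it} = k^{-it}\widehat\Delta^{it}\otimes\Delta_{\nu_i}^{it}$. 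Since $(D\tilde\nu_1:D\tilde\nu_2)_t = \Delta_{\tilde\nu_1}^{it}\Delta_{\tilde\nu_2}^{-it} = (1\otimes\Delta_{\nu_1}^{it})(1\otimes\Delta_{\nu_2}^{-it})$ — the $k^{-it}\widehat\Delta^{it}$ factors cancelling — this forces $(D\tilde\nu_1:D\tilde\nu_2)_t$ to be the dual-action image of $(D\nu_1:D\nu_2)_t$ and to lie in $\ga(N)$, which by $(\bfG\ltimes_\ga N)^{\tilde\ga}=\ga(N)$ and a direct computation again yields $u_t\in N^\ga$. Either way the argument is short; I would present the cocycle-chain-rule version as the cleanest.
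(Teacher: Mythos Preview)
Your overall intuition is right---the result does come down to ``the two action-cocycles agree, so the Connes cocycle is fixed by the action''---but both routes you sketch have genuine gaps as written.

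In your main argument, the step
\[(D\nu_1\circ\ga:D\nu_2\circ\ga)_t = (D\nu_1\circ\ga:D\nu_1)_t\,(1\otimes u_t)\,(D\nu_2:D\nu_2\circ\ga)_t\]
is not a standard chain rule. The object $(D\nu_i\circ\ga:D\nu_i)_t$ in this paper is \emph{defined} as $\Delta_{\tilde\nu_i}^{it}(\widehat\Delta^{-it}\otimes\Delta_{\nu_i}^{-it})$; it is not the Connes cocycle between two weights on a common von Neumann algebra, so you cannot simply chain it with $(D\nu_1:D\nu_2)_t\in N$. Your functoriality claim $\ga(u_t)=(D\nu_1\circ\ga:D\nu_2\circ\ga)_t$ is fine if the right-hand side is read on $\ga(N)$, but then the other two factors do not live on $\ga(N)$ and the product does not make sense. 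You would need to exhibit a single reference weight on $M\otimes N$ (or on the crossed product) with respect to which all four cocycles are genuine Connes cocycles, and that is precisely the work that is missing.

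Your alternative route fails for a more concrete reason: the formula $(D\tilde\nu_1:D\tilde\nu_2)_t = \Delta_{\tilde\nu_1}^{it}\Delta_{\tilde\nu_2}^{-it}$ is false. The two modular operators act on different Hilbert spaces $H\otimes H_{\nu_1}$ and $H\otimes H_{\nu_2}$, and even after transporting to a common space the Connes cocycle is not a bare ratio of modular operators.

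A clean way to make your first idea rigorous is the balanced-weight trick. Let $\ga_2=\ga\otimes\id_{M_2}$ act on $M_2(N)$ and let $\theta$ be the balanced weight of $(\nu_1,\nu_2)$. A one-line computation shows $\theta$ is again $k$-invariant, so (by the same argument that gives \ref{prop2inv}(i)) $\ga_2\circ\sigma_t^\theta=(\Ad k^{-it}\circ\tau_t\otimes\sigma_t^\theta)\circ\ga_2$. Evaluating both sides on the matrix unit $e_{12}$ and using $\sigma_t^\theta(e_{12})=u_t e_{12}$ gives $\ga(u_t)=1\otimes u_t$ directly. This is essentially what the references \cite{E3}~7.8 and \cite{V}~3.9 cited in the paper's proof are doing; the paper itself gives no self-contained argument.
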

\begin{proof}
For $k=1$, this result had been proved in (\cite{E3}7.8) for right actions of measured quantum groupoids. To get it for left actions of locally compact quantum groups is just a translation. The generalization for any $k$ is left to the reader (see \cite{V}, 3.9). \end{proof}

\begin{proposition}
\label{prop2inv}
 Let $\bfG$ be a locally compact quantum group, $\ga$ a left
  action of $\bfG$ on a von Neumann algebra $N$, and $\nu$ a  $k$-invariant faithful
  normal semi-finite weight on $N$. Then:

(i)   $\ga(\sigma_t^\nu(x))=(\Ad
k^{-it}\circ\tau_t\otimes\sigma_t^\nu)\ga(x)$ for all $x\in N$ and $t\in\R$;

(ii) for all $x\in\gN_\nu$, $\xi\in \mathcal D(k^{-1/2})$ and $\eta\in H$, $(\omega_{k^{-1/2}\xi, \eta}\otimes \id)\ga(x)$ belongs to $\gN_\nu$, and the canonical implementation $U^{\ga}_\nu$ is given by
\[(\omega_{\xi, \eta}\otimes
\id)(U_\nu^\ga)\Lambda_\nu(x)=\Lambda_\nu[(\omega_{k^{-1/2}\xi,
  \eta}\otimes \id)\ga(x)].\]
\end{proposition}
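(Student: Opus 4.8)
The plan is to derive (i) directly from the Radon-Nikodym characterization of $k$-invariance and the commutation relations for $\Gamma$ with the modular and scaling groups, and then to obtain (ii) by combining the formula $U^\ga_\nu = J_{\tilde\nu}(\widehat J\otimes J_\nu)$ with the description of $\Lambda_{\tilde\nu}$ on elements of the form $(y\otimes 1)\ga(x)$ recalled in \ref{action}. For (i), recall from the text that $k$-invariance is equivalent to $D_t := (D\nu\circ\ga:D\nu)_t = k^{-it}\otimes 1$, and equivalently $\Delta_{\tilde\nu}^{it} = k^{-it}\widehat\Delta^{it}\otimes\Delta_\nu^{it}$. First I would use the cocycle relation $\ga\circ\sigma_t^\nu = \Ad(D_t)\circ(\sigma_t^{\tilde\nu}\text{-ingredients})$; more concretely, the standard fact (from \cite{Y3} or \cite{BV}) that $\ga(\sigma_t^\nu(x)) = D_t\,(\gamma_t\otimes\sigma_t^\nu)(\ga(x))\,D_t^*$ where $\gamma_t$ is the automorphism of $M$ implemented by $\widehat\Delta^{it}$, i.e. $\gamma_t = \tau_t$. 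Substituting $D_t = k^{-it}\otimes 1$ and using $\Ad k^{-it}\circ\tau_t$ on the first leg gives exactly $\ga(\sigma_t^\nu(x)) = (\Ad k^{-it}\circ\tau_t\otimes\sigma_t^\nu)\ga(x)$. Alternatively, one can start from $\Delta_{\tilde\nu}^{it} = k^{-it}\widehat\Delta^{it}\otimes\Delta_\nu^{it}$, conjugate $\ga(x) = U^\ga_\nu(1\otimes x)(U^\ga_\nu)^*$ by it, and use that $\Ad\Delta_{\tilde\nu}^{it}$ restricts to $\sigma_t^{\tilde\nu}$ on $\bfG\ltimes_\ga N$, which on $\ga(N)$ is $\ga\circ\sigma_t^\nu$ since $\ga$ intertwines the modular flows when $\tilde\nu = \nu\circ\ga^{-1}\circ T_{\tilde\ga}$.

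For (ii), I would argue as follows. Since $\nu$ is $k$-invariant, $(\id\otimes\nu)\ga(x^*x) = \nu(x^*x)k$ is bounded for $x\in\gN_\nu$, and slicing by a vector state $\omega_{\xi}$ with $\xi\in\mathcal D(k^{-1/2})$ one gets $\nu\big((\overline{\omega_{\xi}}\otimes\id)\ga(x^*x)\big) \le \|k^{-1/2}\xi\|^2\nu(x^*x)$ after inserting $k^{-1/2}$; more precisely, positivity of $\ga(x^*x)$ and the identity $(\id\otimes\nu)\ga(x^*x) = \nu(x^*x)k$ yield that $(\omega_{k^{-1/2}\xi,\eta}\otimes\id)\ga(x)$ is $\nu$-square-integrable via the Cauchy-Schwarz estimate for the sesquilinear form $(\omega_{\xi,\eta}\otimes\id)\ga(x)$ composed with $\Lambda_\nu$. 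This identifies a bounded map $\Lambda_\nu(x)\mapsto \Lambda_\nu[(\omega_{k^{-1/2}\xi,\eta}\otimes\id)\ga(x)]$, and the claim is that this bounded operator is $(\omega_{\xi,\eta}\otimes\id)(U^\ga_\nu)$. To see this, I would test against vectors of the form $\widehat J\widehat\Lambda_{\widehat\Phi}(y)\otimes J_\nu\Lambda_\nu(x)$, using the formula $J_{\tilde\nu}\Lambda_{\tilde\nu}((y\otimes 1)\ga(x)) = U^\ga_\nu(\widehat J\Lambda_{\widehat\Phi}(y)\otimes J_\nu\Lambda_\nu(x))$ from \ref{action} together with the explicit description of $\Lambda_{\tilde\nu}$ on $(y\otimes1)\ga(x)$ and the fact that $T_{\tilde\ga} = (\widehat\varphi\otimes\id)\tilde\ga$ computes $\tilde\nu$; the $k$-invariance enters precisely because $\Delta_{\tilde\nu}^{it}$ differs from $\widehat\Delta^{it}\otimes\Delta_\nu^{it}$ only by the scalar-type factor $k^{-it}\otimes 1$, so that the GNS-construction for $\tilde\nu$ collapses to $H\otimes H_\nu$ with the modular operator controlled by $k$.

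The main obstacle I anticipate is making the square-integrability statement in (ii) fully rigorous: one needs to justify, for a general $x\in\gN_\nu$, that $(\omega_{k^{-1/2}\xi,\eta}\otimes\id)\ga(x)$ genuinely lies in $\gN_\nu$ (not merely that a formal expression is bounded), which requires care with the domain condition $\xi\in\mathcal D(k^{-1/2})$ and with the fact that $k$ is unbounded. I would handle this by first treating $\xi$ in the spectral subspaces of $k^{-1}$ where $k^{-1/2}$ is bounded, establishing the formula there by the density argument above (approximating $X\in\gN_{\tilde\nu}$ by sums $\sum_j(y_{i,j}\otimes1)\ga(x_{i,j})$ as in \ref{action}), and then passing to the limit using that both sides are, respectively, the GNS-image of a closable map and the action of the closed operator $(\omega_{\xi,\eta}\otimes\id)(U^\ga_\nu)$, continuous in $\xi$ in the graph norm of $k^{-1/2}$. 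Part (i) should be essentially routine given the Radon-Nikodym machinery of \S\ref{action}.
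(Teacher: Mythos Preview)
Your approach is correct and matches the paper's. For (i), your second alternative---conjugating by $\Delta_{\tilde\nu}^{it}=k^{-it}\widehat\Delta^{it}\otimes\Delta_\nu^{it}$ and using that $\sigma_t^{\tilde\nu}\circ\ga=\ga\circ\sigma_t^\nu$---is exactly the paper's one-line argument; for (ii), the paper simply defers to \cite{V}, Proposition~2.4 (done there for $k=\delta^{-1}$, with the remark that the general case is identical), and your sketch of the square-integrability estimate via $(\id\otimes\nu)\ga(x^*x)=\nu(x^*x)k$ together with the GNS description of $\tilde\nu$ is precisely the content of that reference.
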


\begin{proof}
  (i) Since
  $\Delta_{\tilde{\nu}}^{it}=k^{-it}\widehat{\Delta}^{it}\otimes\Delta_\nu^{it}$,
\[\ga(\sigma_t^{\nu}(x))=\sigma_t^{\tilde{\nu}}(\ga(x))=(k^{-it}\widehat{\Delta}^{it}\otimes\Delta_\nu^{it})\ga(x)(\widehat{\Delta}^{-it}k^{it}\otimes\Delta_\nu^{-it})\]
 for all $t\in\R$.

(ii)  The first result of (ii) is proved (for $k=\delta^{-1}$) in
(\cite{V}, 2.4), and the general case can be proved the same
way.
\end{proof}

\begin{theorem}
\label{thinv1}
 Let $\bfG$ be a locally compact quantum group, $(N, \ga, \widehat{\ga})$ a $\bfG$-Yetter-Drinfel'd algebra, ${\ga_D}=(\id\otimes\widehat{\ga})\ga$ the action of $\DG$ introduced in \ref{thNV1}, and $\nu$  a faithful normal semi-finite weight on $N$. Then the following conditions are equivalent:

(i) the weight $\nu$ is invariant under $\ga$ and invariant under $\widehat{\ga}$. 

(ii) the weight $\nu$ is invariant under ${\ga_D}$. 
\end{theorem}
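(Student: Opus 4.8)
The plan is to prove the equivalence by unwinding the definition of invariance in terms of Radon-Nikodym cocycles, using the characterization recalled after Definition \ref{definv}: a weight $\nu$ is invariant under a left action if and only if the associated Radon-Nikodym derivative is trivial, equivalently the modular operator of the dual weight splits as a tensor product. Concretely, $\nu$ is invariant under $\ga$ iff $D_t := (D\nu\circ\ga:D\nu)_t = 1\otimes 1$ (equivalently $\Delta_{\tilde\nu}^{it} = \widehat\Delta^{it}\otimes\Delta_\nu^{it}$), and similarly $\nu$ is invariant under $\widehat\ga$ iff $\widehat D_t := (D\nu\circ\widehat\ga:D\nu)_t = 1\otimes 1$ (equivalently $\Delta_{\widetilde{\hat\nu}}^{it} = \Delta^{it}\otimes\Delta_\nu^{it}$, where now the first leg carries the modular operator of $\widehat\varphi\circ\widehat R$ on $\widehat M$, matching the right-invariant weight data of $\widehat\bfG$).

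First I would record the analogous statement for $\ga_D$: since $\ga_D = (\id\otimes\widehat\ga)\ga$ is a left action of $\DG$ and $\DG$ is unimodular with left-invariant weight $\varphi\otimes\widehat\varphi\circ\widehat R$, the weight $\nu$ is invariant under $\ga_D$ iff $(D\nu\circ\ga_D:D\nu)_t = 1\otimes 1\otimes 1$, equivalently iff $\Delta_{\widetilde{\nu_D}}^{it} = \Delta_{\varphi\otimes\widehat\varphi\circ\widehat R}^{it}\otimes\Delta_\nu^{it}$ on $(M\otimes\widehat M)\otimes N$. The key bridge is the cocycle identity from Proposition \ref{propRN}, namely
\[\Ad(\sigma W\otimes 1)\big[(\id\otimes\widehat\ga)(D_t)(1\otimes\widehat D_t)\big] = (\id\otimes\ga)(\widehat D_t)(1\otimes D_t),\]
together with the explicit relation between $\Delta_{\widetilde{\nu_D}}$ and the iterated crossed-product modular operators coming from $\ga_D = (\id\otimes\widehat\ga)\ga$ and from the basic example \ref{basic} description of $\gb, \widehat\gb$.

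The direction (i) $\Rightarrow$ (ii) is the easy one: if $D_t = 1$ and $\widehat D_t = 1$, then the Radon-Nikodym cocycle of $\ga_D$ built by iterating the construction for $\ga$ and then $\widehat\ga$ is also trivial, so $\nu$ is $\ga_D$-invariant; here one just needs to check that the cocycle for $\ga_D = (\id\otimes\widehat\ga)\ga$ is the appropriate composite of those for $\ga$ and $\widehat\ga$, which follows from the standard behaviour of dual weights under iterated crossed products. For (ii) $\Rightarrow$ (i), I would argue that triviality of the $\ga_D$-cocycle forces triviality of both factors: projecting/slicing the identity $(D\nu\circ\ga_D:D\nu)_t = 1$ against the two quantum subgroups $\bfG$ and $\widehat\bfG$ of $\widehat{\DG}$ — using the relations $(\id\otimes\id\otimes\widehat\ga)\ga_D = (\id\otimes\widehat\Gamma\otimes\id)\ga_D$ and $(\id\otimes\id\otimes\ga)\ga_D = \Ad(1\otimes\sigma W\otimes 1)(\Gamma\otimes\id\otimes\id)\ga_D$ from Theorem \ref{thNV2} — one recovers that $\widehat D_t$ and then $D_t$ must be trivial, in the same style as the proof of Proposition \ref{propinv}. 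The main obstacle I expect is bookkeeping: correctly matching the modular operators across the three identifications (the isomorphism $H_{\widetilde{\nu_D}} \cong H_\varphi\otimes H_{\widehat\varphi\circ\widehat R}\otimes H_\nu$, the commuting of $\widehat\Delta$ with the relevant $k$-type operators, and the leg positions in Proposition \ref{propRN}), and making sure the unimodularity of $\DG$ is used in the right place so that no spurious modulus operator $\delta$ appears on either side.
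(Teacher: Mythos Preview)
Your approach via Radon--Nikodym cocycles is genuinely different from the paper's, and as written it has a real gap. The paper never touches cocycles here: it works directly with the invariance condition $(\id\otimes\nu)\ga(x)=\nu(x)1$. For (ii)$\Rightarrow$(i) the paper picks a state $\omega\in\widehat{M}_*$, sets $\nu'=(\omega\otimes\nu)\widehat{\ga}$, and uses the two structural identities for $\ga_D$ from \ref{thNV2} plus the Yetter--Drinfel'd twist to show first $(\id\otimes\nu)\widehat{\ga}=\nu$ and then $(\id\otimes\nu)\ga=\nu$, in about ten lines. No modular operators, no dual weights of $D(\bfG)$, no identification of $H_{\widetilde{\nu_D}}$.

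The gap in your plan is the sentence ``the cocycle for $\ga_D$ is the appropriate composite of those for $\ga$ and $\widehat{\ga}$, which follows from the standard behaviour of dual weights under iterated crossed products.'' This is exactly the nontrivial content: to use your strategy you must prove that $(D\nu\circ\ga_D:D\nu)_t$ equals $(\id\otimes\widehat{\ga})(D_t)(1\otimes\widehat{D}_t)$ (up to the $\sigma W$ twist). Proposition \ref{propRN} only says the two natural candidates for this composite agree with one another; it does not identify either with the $\ga_D$-cocycle, and doing so requires computing $\Delta_{\widetilde{\nu_D}}$ and the modular operator of the left-invariant weight on $\widehat{D(\bfG)}$ --- precisely the bookkeeping you flag as an obstacle, and which is not ``standard'' in any reference cited here. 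Second, your (ii)$\Rightarrow$(i) step by ``slicing against the two quantum subgroups in the style of Proposition \ref{propinv}'' does not transfer: \ref{propinv} works because $\ga_D(x)=1\otimes1\otimes x$ is an equation in $N$ to which one can apply $(\id\otimes\widehat{\Gamma}\otimes\id)$ directly, whereas your cocycle equation $(\id\otimes\widehat{\ga})(D_t)(1\otimes\widehat{D}_t)=1$ does not decouple by slicing; you would instead need to combine it with the cocycle identity $(\Gamma\otimes\id)(D_t)=(\id\otimes\ga)(D_t)(1\otimes D_t)$ to force $D_t\in 1_M\otimes N$ and then $D_t=1$. That is a different argument from the one you sketched.
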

\begin{proof}
The fact that (i) implies (ii) is trivial.  Suppose that (ii) holds.  Choose a state  $\omega$ in  $\widehat{M}_*$ and define $\nu'=(\omega\otimes\nu)\widehat{\ga}$. As  $(\id\otimes \id\otimes\nu){\ga_D}=\nu$, we get that $(\id\otimes\nu')\ga=\nu$. 

But 
\begin{align*}
(\id\otimes \id\otimes \nu'){\ga_D}
&=
(\id\otimes \id\otimes(\omega\otimes\nu)\widehat{\ga})(\id\otimes\widehat{\ga})\ga\\
&=
(\id\otimes \id\otimes\omega\otimes\nu)(\id\otimes\widehat{\Gamma}\otimes \id)(\id\otimes\widehat{\ga})\ga,
\end{align*}
and, for any  state $\omega'$ in $\widehat{M}_*$, 
\begin{align*}
(\id\otimes\omega'\otimes\nu'){\ga_D}
=
(\id\otimes(\omega'\otimes\omega)\circ \widehat{\Gamma}\otimes\nu){\ga_D}
=
\nu.
\end{align*}
 Therefore, by linearity, we get that $(\id\otimes \id\otimes\nu'){\ga_D}=\nu$. 
On the other hand, 
\begin{align*}
(\id\otimes \id\otimes \nu'){\ga_D}
&=
\Ad(W^*\sigma )(\id\otimes \id\otimes \nu')(\id\otimes\ga)\widehat{\ga}\\
&=
\Ad(W^*\sigma )(\id\otimes(\id\otimes\nu')\ga)\widehat{\ga}\\
&=
\Ad(W^*\sigma )(\id\otimes\nu)\widehat{\ga}
\end{align*}
But, as $(\id\otimes \id\otimes\nu'){\ga_D}=\nu$, we get that $\nu=(\id\otimes\nu)\widehat{\ga}$, and, therefore, $\nu$ is invariant under $\widehat{\ga}$. So, we get that $\nu'=\nu$, and $\nu$ is invariant under $\ga$. \end{proof}

\subsection{Definition}
\label{defYDinv}
Let $\bfG$ be a locally compact quantum group and $(N, \ga, \widehat{\ga})$ a $\bfG$-Yetter-Drinfel'd algebra. A normal faithful semi-finite weight on $N$ will be called \emph{Yetter-Drinfel'd invariant} if it satisfies one of the equivalent conditions of \ref{thinv1}.

\begin{theorem}
  \label{Thintegrable} \it Let $\bfG$ be a locally compact quantum
    group, $(N, \ga, \widehat{\ga})$ a $\bfG$-Yetter-Drinfel'd algebra
    and ${\ga_D}=(\id\otimes\widehat{\ga})\ga$ the action of
    $\DG$ introduced in \ref{thNV1}. If ${\ga_D}$ is
    integrable, then there exists a Yetter-Drinfel'd invariant normal
    faithful semi-finite weight on $N$.
\end{theorem}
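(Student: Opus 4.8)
The plan is to exploit the characterization in Theorem \ref{thinv1}: it suffices to produce a normal faithful semi-finite weight on $N$ that is invariant under ${\ga_D}$. Since ${\ga_D}$ is assumed integrable, the operator-valued weight $T_{{\ga_D}} = (\varphi_D \circ R_D \otimes \id){\ga_D}$ from $N$ onto $N^{{\ga_D}}$ is semi-finite and faithful; here $\varphi_D = \varphi \otimes \widehat{\varphi}\circ\widehat{R}$ is the (left- and right-invariant, since $\DG$ is unimodular) Haar weight of $\DG$. Recall from \ref{propinv} that $N^{{\ga_D}} = N^{\ga} \cap N^{\widehat{\ga}}$. The first step is therefore to choose any normal faithful semi-finite weight $\theta$ on the fixed-point algebra $N^{{\ga_D}}$ and set $\nu = \theta \circ T_{{\ga_D}}$, which is a normal faithful semi-finite weight on $N$.

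The second step is to verify that this $\nu$ is invariant under ${\ga_D}$, i.e.\ $(\id\otimes\nu){\ga_D}(x) = \nu(x)1$ for all $x\in N^+$. This is the standard computation showing that a weight lifted from the fixed-point algebra via the canonical operator-valued weight associated to an integrable action is invariant. Concretely, using the action identity $({\Gamma_D}\otimes\id){\ga_D} = (\id\otimes{\ga_D}){\ga_D}$ together with the left-invariance of $\varphi_D$ under $\Gamma_D$, one obtains $(\id\otimes T_{{\ga_D}}){\ga_D} = (\id\otimes 1_{N^{{\ga_D}}})\circ(\text{something valued in }M\otimes\C)$; more precisely one checks that $(\id\otimes T_{{\ga_D}})\circ{\ga_D}$ factors through the operator-valued weight $(\varphi_D\circ R_D)\otimes\id_{M\otimes N}$ applied after $(\Gamma_D\otimes\id){\ga_D}$, and right-invariance of $\varphi_D\circ R_D = \varphi_D$ collapses the first tensor leg. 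Applying $\theta$ then gives $(\id\otimes\nu){\ga_D}(x) = (\varphi_D\circ R_D\otimes \theta\circ 1)(\dots) = \nu(x)1$. This argument is essentially the same as the one establishing that a dual weight on a crossed product restricts correctly, or the argument in (\cite{V}, 1.3--1.4); one may also phrase it via the Radon--Nikodym derivative $(D\nu\circ{\ga_D}:D\nu)_t$ and show it equals $1\otimes 1$ by the characterization recalled after \ref{definv}.

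By Theorem \ref{thinv1}, invariance of $\nu$ under ${\ga_D}$ is equivalent to $\nu$ being invariant under both $\ga$ and $\widehat{\ga}$, hence $\nu$ is Yetter-Drinfel'd invariant in the sense of \ref{defYDinv}, which is the desired conclusion.

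The main obstacle is the verification in the second step that the lifted weight is genuinely invariant rather than merely relatively invariant: one must be careful that the operator $k$ appearing in the $k$-invariance relation is indeed $1$, which relies on $\DG$ being unimodular (so that $\varphi_D\circ R_D = \varphi_D$ is simultaneously left- and right-invariant) and on the coassociativity relation $(\Gamma_D\otimes\id){\ga_D} = (\id\otimes{\ga_D}){\ga_D}$. Provided these ingredients are handled correctly, the rest is a routine application of the theory of integrable actions and lifted weights, together with the equivalence already established in \ref{thinv1}.
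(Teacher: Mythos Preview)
Your proposal is correct and follows exactly the approach the paper takes: the paper's proof is the single line ``Clear by \cite{V}, 2.5, using the fact that the locally compact quantum group $\DG$ is unimodular,'' which is precisely your argument---lift any normal faithful semi-finite weight on $N^{\ga_D}$ via the semi-finite operator-valued weight $T_{\ga_D}$, use unimodularity of $\DG$ so that $\varphi_D\circ R_D=\varphi_D$ and the lifted weight is genuinely invariant (not merely $\delta^{-1}$-invariant), and then invoke \ref{thinv1}. Your write-up is just a more explicit unpacking of that citation; one small clarification is that in your second step the relevant invariance property is the \emph{left}-invariance $(\id\otimes\varphi_D)\Gamma_D=\varphi_D(\cdot)1$ (giving $(\id\otimes T_{\ga_D})\ga_D=1\otimes T_{\ga_D}$), though of course by unimodularity this coincides with right-invariance.
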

\begin{proof} Clear by \cite{V}, 2.5, using the fact that the locally compact quantum group $\DG$ is unimodular. \end{proof}

\begin{corollary}
  \label{corintegrable2}
  \label{corintegrable} Let $\bfG=(M, \Gamma, \varphi, \psi)$ be
    a locally compact quantum group and $(N, \ga, \widehat{\ga})$ a
    $\bfG$-Yetter-Drinfel'd algebra. Denote by $H$ the Hilbert space
    $L^2(M)=L^2(\widehat{M})$. Then $(B(H)\otimes N, (\varsigma\otimes
    \id)(\id\otimes\alpha), (\varsigma\otimes
    \id)(\id\otimes\widehat{\alpha}))$ is a $\bfG$-Yetter-Drinfel'd
    algebra which has a normal semi-finite faithful Yetter-Drinfel'd
    invariant weight. 
\end{corollary}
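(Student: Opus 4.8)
The plan is to show that the $\DG$-action $\ga_D = (\id\otimes\widehat\ga)\ga$ lifted to $B(H)\otimes N$ is integrable, and then invoke Theorem \ref{Thintegrable}. First I would observe that the action of $\DG$ associated to the Yetter-Drinfel'd algebra $(B(H)\otimes N, (\varsigma\otimes\id)(\id\otimes\ga),(\varsigma\otimes\id)(\id\otimes\widehat\ga))$ is, by Theorem \ref{thNV2} and a direct leg-numbering computation, of the form $(\varsigma_{1,\text{last legs}}\otimes\id)(\id\otimes\ga_D)$ — that is, it is obtained by tensoring with the trivial action on the $B(H)$-leg after a flip, exactly as in the passage from $\ga$ to $(\varsigma\otimes\id)(\id\otimes\ga)$ recalled in \ref{action}. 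More precisely, since $(\id\otimes\widehat\ga)\ga = \ga_D$ and both $(\varsigma\otimes\id)(\id\otimes\ga)$ and $(\varsigma\otimes\id)(\id\otimes\widehat\ga)$ are the corresponding $\bfG$- and $\widehat\bfG$-actions on $B(H)\otimes N$, the uniqueness in Theorem \ref{thNV2} forces the $\DG$-action to be $(\varsigma\otimes\id)(\id\otimes\ga_D)$ up to the appropriate reindexing of legs; I would spell this out by checking the two determining identities of \ref{thNV2}.

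Next I would prove that this action on $B(H)\otimes N$ is integrable. The key input is that the relevant operator-valued weight is, up to the flip on the $B(H)$-factor, of the form $(\varphi_{D}\circ R_D \otimes \id)$ applied after $(\varsigma\otimes\id)(\id\otimes\ga_D)$, and the $B(H)$-leg carries an action that, after the flip, looks like the action of $\DG$ on $B(H)$ by left multiplication in the appropriate picture — for which the associated operator-valued weight onto the fixed-point algebra is semi-finite, being essentially the canonical trace composed with a slice. Concretely, I would mimic the computation in the proof of (\cite{V}, 4.4) that is already cited in \ref{action}: there one uses $(\sigma\otimes\id)U^{(\varsigma\otimes\id)(\id\otimes\ga)}_{Tr\otimes\nu}(\sigma\otimes\id)=1\otimes U^\ga_\nu$, and the analogous identity for $\ga_D$ shows that the canonical implementation splits off a $B(H)$-leg on which the dual weight restricts to (a dilation of) the trace $Tr$, making $T_{(\varsigma\otimes\id)(\id\otimes\ga_D)}$ semi-finite. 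Equivalently, since $Tr$ is a semi-finite trace on $B(H)$ and $(\varsigma\otimes\id)(\id\otimes\ga_D)$ acts on the $B(H)$-leg through a genuine action of the unimodular quantum group $\DG$ whose invariant operator-valued weight onto $B(H)^{\ga_D\text{-fixed}}$ is semi-finite (integrability of the $\DG$-action on $B(H)$ alone), one gets semi-finiteness of the operator-valued weight on the tensor product by a standard slicing argument. Then Theorem \ref{Thintegrable} applies and yields a Yetter-Drinfel'd invariant normal semi-finite faithful weight.

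The main obstacle I anticipate is verifying carefully that the $\DG$-action on $B(H)\otimes N$ really does decouple so that integrability reduces to integrability of an action on $B(H)$ — the leg-numbering bookkeeping for $\Gamma_D$, which itself involves $\Ad(1\otimes\sigma W\otimes 1)$, is delicate, and one must be sure the flip $\varsigma$ in $(\varsigma\otimes\id)(\id\otimes\cdot)$ interacts correctly with that conjugation. A clean way around this is to argue at the level of the single $\DG$-action directly: $(B(H)\otimes N, \ga_D')$ with $\ga_D' := (\varsigma\otimes\id)(\id\otimes\ga_D)$ is a left action of $\DG$, and one computes $T_{\ga_D'} = (\varphi_D\circ R_D\otimes\id)\ga_D'$; using that $\DG$ is unimodular and that $B(H)$ carries the trace $Tr$, one checks that $Tr\otimes\nu_0$ (for any n.s.f. weight $\nu_0$ on $N$) has semi-finite restriction to the domain of $T_{\ga_D'}$ exactly because the $B(H)$-slice produces the integrable picture of \ref{action}. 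Once integrability of $\ga_D'$ is established, Theorem \ref{Thintegrable} gives the conclusion, and one observes en passant, as in Theorem \ref{thinv1}, that invariance under $\ga_D'$ is equivalent to simultaneous invariance under $(\varsigma\otimes\id)(\id\otimes\ga)$ and $(\varsigma\otimes\id)(\id\otimes\widehat\ga)$, which is the asserted Yetter-Drinfel'd invariance.
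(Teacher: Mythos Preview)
Your identification of the $\DG$-action on $B(H)\otimes N$ as $\ga_D'=(\varsigma\otimes\id)(\id\otimes\ga_D)$ is correct, but the integrability argument does not go through as you describe. Unwinding the leg-flips, one finds that on an elementary tensor $T\otimes n$ (with $T\in B(H)$, $n\in N$) the action is
\[
\ga_D'(T\otimes n)=\sum_i a_i\otimes T\otimes n_i\quad\text{if }\ga_D(n)=\sum_i a_i\otimes n_i,
\]
so the $B(H)$-leg is acted on \emph{trivially}, not by any ``left multiplication'' picture. Consequently the associated operator-valued weight factors as $T_{\ga_D'}=\id_{B(H)}\otimes T_{\ga_D}$, and $\ga_D'$ is integrable if and only if $\ga_D$ itself is integrable on $N$. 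Since no integrability hypothesis on $\ga_D$ is available, the slicing argument you propose cannot conclude.

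The paper circumvents this entirely: by Vaes' biduality (\cite{V}, 2.6), the action $(\varsigma\otimes\id)(\id\otimes\ga_D)$ is \emph{cocycle-equivalent} to the bidual action of $\ga_D$, and dual (hence bidual) actions are always integrable (\cite{V}, 2.5). Integrability is not preserved under cocycle equivalence, so one cannot simply import integrability back to $\ga_D'$; instead, one first produces an invariant weight for the integrable bidual action (using Theorem~\ref{Thintegrable} and unimodularity of $\DG$), and then observes that invariance of a weight \emph{is} preserved under cocycle equivalence (\cite{V}, 2.6(iii)). That last step --- transporting the invariant weight rather than integrability --- is the missing ingredient in your outline.
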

\begin{proof} Let ${\ga_D}=(\id\otimes\widehat{\ga})\ga$
be the action of $\DG$ introduced in \ref{thNV1}. Using (\cite{V}, 2.6), we know that the action $(\varsigma\otimes \id)(\id\otimes{\ga_D})$ is a left action of $\DG$ which is cocycle-equivalent to the bidual action of ${\ga_D}$. As this bidual action is integrable (\cite{V}, 2.5), it has a Yetter-Drinfel'd invariant semi-finite faithful weight by \ref{Thintegrable}. Using (\cite{V}, 2.6.3), one gets that this weight is invariant as well under $(\varsigma\otimes \id)(\id\otimes{\ga_D})$. \end{proof}

\begin{corollary}
\label{corintegrable3}
 Let $\bfG$ be a locally compact quantum group, $(N, \ga, \widehat{\ga})$ a $\bfG$-Yetter-Drinfel'd algebra, $\nu$  a normal semi-finite faithful weight on $N$, $U^\ga_\nu$ and $U^{\widehat{\ga}}_\nu$ the canonical implementations of the actions $\ga$ and $\widehat{\ga}$, and $\beta$ the anti-$*$-homomorphism introduced in \ref{ThBC}. Then: 
 \begin{enumerate}
 \item the unitary implementations of the actions $\ga$,
   $\widehat{\ga}$ and $\ga_{D}$ are linked by the relation
   \begin{align*}
     U^{\ga_D}_\nu=(U^\ga_\nu)_{23}(U^{\widehat{\ga}}_\nu)_{13};
   \end{align*}
 \item 
   $(U^\ga_\nu)_{13}(U^{\widehat{\ga}}_\nu)_{23}=W_{12}(U^{\widehat{\ga}}_\nu)_{23}(U^\ga_\nu)_{13}W^*_{12}$;
 \item  $\Ad(1\otimes U^{\widehat{\ga}}_\nu(U^{\ga}_\nu)^*)[W\otimes
   1]=(U_\nu^\ga)^*_{13}W_{12}=(U^{\ga}_{\nu})_{23}^{*}W_{12}^{*}(U^{\ga}_{\nu})_{23}$;
 \item  writing $\beta^{\dag}$ for the map $x^{\op} \mapsto \beta(x)$,
   we have
   \[ \Ad(W\otimes 1)[1 \otimes \beta(x)] =(\id \otimes
   \beta^{\dag})(\ga^{\op}(x^{\op})) \quad \text{for all } x\in N.
   \]
 \end{enumerate}
\end{corollary}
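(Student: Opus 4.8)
The plan is to prove the four identities in order, since each one feeds into the next. Throughout I abbreviate $U := U^{\ga}_\nu$, $\hat U := U^{\widehat{\ga}}_\nu$, $U_D := U^{\ga_D}_\nu$. The starting point is De Commer's formula $U_D = U_{23}\hat U_{13}$ from \ref{basic}, but since that derivation rests on unpublished material, I would instead establish item (i) independently. The cleanest route: first prove it when $\nu$ is Yetter-Drinfel'd invariant and $N$ is "large" enough, then reduce the general case to it. Concretely, by \ref{corintegrable2} the Yetter-Drinfel'd algebra $(B(H)\otimes N, (\varsigma\otimes\id)(\id\otimes\ga), (\varsigma\otimes\id)(\id\otimes\widehat\ga))$ admits a Yetter-Drinfel'd invariant weight $\mu$. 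For such an invariant weight the canonical implementations are computed explicitly by the formula in \ref{prop2inv}(ii) (with $k=1$): $U_D$, $U$, $\hat U$ all act by the formula $(\omega_{\xi,\eta}\otimes\id)(U^{\ga}_\mu)\Lambda_\mu(x) = \Lambda_\mu((\omega_{\xi,\eta}\otimes\id)\ga(x))$, and since $\ga_D = (\id\otimes\widehat\ga)\ga$ one reads off $U_D = (U)_{23}(\hat U)_{13}$ directly by pairing with vector functionals — this is a routine computation with the defining formulas. Then one transports this to $B(H)\otimes N$ using the compatibility $(\sigma\otimes\id)U^{(\varsigma\otimes\id)(\id\otimes\ga)}_{Tr\otimes\nu}(\sigma\otimes\id) = 1\otimes U^{\ga}_\nu$ recalled in \ref{action}, obtaining (i) for the weight $Tr\otimes\nu$ on $B(H)\otimes N$, and finally descends to an arbitrary $\nu$ on $N$ using the weight-independence formula $U^{\ga}_\psi = (1\otimes u)U^{\ga}_\nu(1\otimes u^*)$ from \ref{action} (applied simultaneously to $\ga$, $\widehat\ga$, $\ga_D$ with the same intertwiner $u$).

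For item (ii), I would start from the defining cocycle property of $\ga_D$ as an action of $\DG$: $(\Gamma_D\otimes\id)(U_D) = (U_D)_{234}(U_D)_{134}$ (legs reindexed appropriately), combined with the explicit form $\Gamma_D(x\otimes y) = \Ad(1\otimes\sigma W\otimes 1)(\Gamma(x)\otimes\widehat\Gamma(y))$. Substituting $U_D = U_{23}\hat U_{13}$ from (i) and using that $U$ is a representation of $\bfG$ ($(\Gamma\otimes\id)(U) = U_{23}U_{13}$) and $\hat U$ a representation of $\widehat\bfG$, the identity should collapse, after cancelling common factors, to exactly the braiding relation $U_{13}\hat U_{23} = W_{12}\hat U_{23}U_{13}W^*_{12}$. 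Alternatively — and this may be the safer bookkeeping — one can derive (ii) directly from the Yetter-Drinfel'd condition $(\id\otimes\ga)\widehat\ga(x) = \Ad(\sigma W\otimes 1)(\id\otimes\widehat\ga)\ga(x)$ by implementing: write $\widehat\ga(x) = \hat U(1\otimes x)\hat U^*$ and $\ga(x) = U(1\otimes x)U^*$ on both sides, so that the left side becomes $\Ad((\hat U)_{23}... )$ applied to $1\otimes 1\otimes x$ and the right side $\Ad(\sigma W\otimes 1)\Ad(...)$ of the same, and conclude that the two unitaries implementing the same inner automorphism differ by a central factor, which one checks is trivial (using $M\cap\widehat M = \mathds C$ etc.); rearranging gives (ii).

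Item (iii) is then a direct algebraic rearrangement of (ii). From $U_{13}\hat U_{23} = W_{12}\hat U_{23}U_{13}W^*_{12}$, multiply on the left by $\hat U_{23}^* U_{13}^*$ or appropriate factors to isolate $W_{12}$; precisely, the first equality $\Ad(1\otimes\hat U U^*)[W\otimes 1] = U^*_{13}W_{12}$ is obtained by moving $\hat U_{23}$, $U_{13}$ to one side of (ii), and the second equality $U^*_{13}W_{12} = (U)^*_{23}W^*_{12}(U)_{23}$ is a separate consequence, best proved from the representation property of $U$ and the pentagon relation $W_{23}W_{12} = W_{12}W_{13}W_{23}$ — indeed $(\Gamma\otimes\id)(U)=U_{23}U_{13}$ combined with $\Gamma(x) = W^*(1\otimes x)W$ gives a relation between $U_{23}$, $U_{13}$ and $W_{12}$ that rearranges to the claim. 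Finally, item (iv): by definition $\beta(x) = \Ad(U\hat U^*)[1\otimes J_\nu x^* J_\nu] = U\widehat\ga^{\op}(x^{\op})U^*$, so $1\otimes\beta(x) = \Ad(1\otimes U\hat U^*)[1\otimes 1\otimes J_\nu x^*J_\nu]$; applying $\Ad(W\otimes 1)$ and using (iii) in the form $\Ad(W\otimes 1)\Ad(1\otimes\hat U U^*) = \Ad(U^*_{13}W_{12})$... — one pushes $W\otimes 1$ through, invokes (iii), and recognizes the result as $(\id\otimes\beta^\dag)(\ga^{\op}(x^{\op}))$ using that $\ga^{\op}(x^{\op}) = \Ad(\widehat J\otimes J_\nu)[\ga(x)^*]$ and that $\beta^\dag \circ$ (conjugation by $\widehat J\otimes J_\nu$) relates to $\Ad(\hat U)$ via Lemma \ref{lemBC}(ii).

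The main obstacle I anticipate is item (i): giving a self-contained proof that avoids \cite{DC} requires care in chaining the three reductions (invariant weight on $B(H)\otimes N$ via \ref{corintegrable2}, the $(\sigma\otimes\id)$-twist from \ref{action}, and weight-independence), and in each step one must track which leg each implementer lives on and verify the intertwining formula $U^{\ga}_\psi = (1\otimes u)U^{\ga}_\nu(1\otimes u^*)$ is compatible with all three actions $\ga,\widehat\ga,\ga_D$ simultaneously. Once (i) is in hand, (ii)–(iv) are essentially forced, though (iii)'s second equality genuinely needs the pentagon equation and is the one spot where a clean computation (rather than pure rearrangement) is unavoidable.
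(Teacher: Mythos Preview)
Your proposal is correct and follows essentially the same route as the paper: for (i) you reduce to a Yetter--Drinfel'd invariant weight via \ref{corintegrable2}, compute there using \ref{prop2inv}(ii), and descend by weight-independence; for (ii) you expand the corepresentation identity $(\Gamma_D\otimes\id)(U_D)$ using (i) and cancel; (iii) and (iv) are then algebraic consequences. One small correction: the second equality in (iii) does not require the pentagon relation for $W$ --- it follows directly from the corepresentation identity $W_{12}^*U_{23}W_{12}=(\Gamma\otimes\id)(U)=U_{23}U_{13}$, which rearranges to $U_{23}^*W_{12}U_{23}=W_{12}U_{13}^*$; and your alternative route for (ii) via ``two unitaries implementing the same automorphism differ by a central factor'' would need more care, since the central factor lives in $Z(N')$ on the $H_\nu$-leg and need not be scalar --- but your primary approach for (ii) is the paper's and is sound.
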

\begin{proof}
(i) Suppose first that there is a faithful semi-finite Yetter-Drinfel'd invariant weight $\nu'$ for $(N, \ga, \widehat{\ga})$. Then, for $\xi_1$, $\xi_2$, $\eta_1$, $\eta_2$ in $H$, $x\in\gN_\nu$, we get, using \ref{prop2inv},
\begin{align*}
(\omega_{\xi_1\otimes\xi_2, \eta_1\otimes\eta_2}\otimes \id)(U^{\ga_D}_{\nu'})\Lambda_{\nu'}(x)
&=
\Lambda_{\nu'}[(\omega_{\xi_1\otimes\xi_2, \eta_1\otimes\eta_2}\otimes \id){\ga_D}(x)]\\
&=
\Lambda_{\nu'}[(\omega_{\xi_2, \eta_2}\otimes \id)\widehat{\ga}(\omega_{\xi_1, \eta_1}\otimes \id)\ga(x)]\\
&=(\omega_{\xi_2, \eta_2}\otimes \id)(U^{\widehat{\ga}}_{\nu'})(\omega_{\xi_1, \eta_1}\otimes \id)(U^\ga_{\nu'})\Lambda_{\nu'}(x)m,
\end{align*}
from which we get (i) for such a weight $\nu'$. Applying that result to \ref{corintegrable2}, we get that there exists a normal semi-finite faithful weight $\psi$ on $B(H)\otimes N$ such that
\[U^{(\varsigma\otimes \id)(\id\otimes{\ga_D})}_\psi=(U^{(\varsigma\otimes \id)(\id\otimes\widehat{\ga})}_\psi)_{234}(U^{(\varsigma\otimes \id)(\id\otimes\ga)}_\psi)_{134}.\]
 Using now (\cite{V}4.1), we get that for every normal semi-finite faithful weight on $N$, 
\[U^{(\varsigma\otimes \id)(\id\otimes{\ga_D})}_{Tr\otimes\nu}=(U^{(\varsigma\otimes \id)(\id\otimes\widehat{\ga})}_{Tr\otimes\nu})_{234}(U^{(\varsigma\otimes \id)(\id\otimes\ga)}_{Tr\otimes\nu})_{134}\]
which by (\cite{V}4.4) implies (i). 

(ii) From (i) we get that
$(U^\ga_\nu)_{23}(U^{\widehat{\ga}}_\nu)_{13}$ is a representation of
$\DG$. Therefore,
 \begin{align*}
 (U^{\widehat{\ga}}_\nu)_{45}(U^\ga_\nu)_{35}(U^{\widehat{\ga}}_\nu)_{25}(U^\ga_\nu)_{15}
 &=
 \Ad(1\otimes\sigma W\otimes 1\otimes 1)[(\Gamma\otimes\widehat{\Gamma}\otimes \id)((U^{\widehat{\ga}}_\nu)_{23}(U^\ga_\nu)_{13})]\\
 &=
  \Ad(1\otimes\sigma W\otimes 1\otimes 1)[(U^{\widehat{\ga}}_\nu)_{45}(U^{\widehat{\ga}}_\nu)_{35}(U^\ga_\nu)_{25}(U^\ga_\nu)_{15}]\\
  &=
(U^{\widehat{\ga}}_\nu)_{45} \Ad(1\otimes\sigma W\otimes 1\otimes 1)[(U^{\widehat{\ga}}_\nu)_{35}(U^\ga_\nu)_{25}](U^\ga_\nu)_{15},
\end{align*}
from which we infer that
\[(U^\ga_\nu)_{35}(U^{\widehat{\ga}}_\nu)_{25}=\Ad(1\otimes\sigma W\otimes 1\otimes 1)[(U^{\widehat{\ga}}_\nu)_{35}(U^\ga_\nu)_{25}].\]
 After renumbering the legs,  we obtain (ii). 

(iii)
The relation $W_{12}^*(U^\ga_\nu)_{23}^*W_{12}=(\Gamma\otimes
\id)(U^\ga_\nu)^*=(U^\ga_\nu)_{13}^*(U^\ga_\nu)^*_{23}$ implies
\[(U^\ga_\nu)^*_{23}W_{12}(U^\ga_\nu)_{23}=W_{12}(U^\ga_\nu)^*_{13}.\]
Using (ii), we get
\[(U^\ga_\nu)_{13}(U^{\widehat{\ga}}_\nu)_{23}=W_{12}(U^{\widehat{\ga}}_\nu)_{23}(U^\ga_\nu)^*_{23}W_{12}^*(U^\ga_\nu)_{23}\]
and, therefore, 
\[W_{12}^*(U^\ga_\nu)_{13}=(U^{\widehat{\ga}}_\nu)_{23}(U^\ga_\nu)^*_{23}W_{12}^*(U^\ga_\nu)_{23}(U^{\widehat{\ga}}_\nu)_{23}^*\]
which implies (iii). 

(iv) Relation (iii) and \ref{lemBC} imply
\begin{align*}
\Ad(W_{12})[\beta(x)_{23}]
&=\Ad(W_{12}(U^{\ga}_{\nu})_{23}(U^{\widehat\ga}_{\nu})_{23})[1 \otimes
1 \otimes x^{\op}] \\
&=\Ad((U^{\ga}_{\nu})_{23}(U^{\widehat\ga}_{\nu})^{*}_{23}(U^{\ga}_{\nu})_{13}^{*}W_{12})[1\otimes
1\otimes x^{\op}] \\
&=
\Ad((U^{\ga}_{\nu})_{23}(U^{\widehat\ga}_{\nu})^{*}_{23})[\ga^{\op}(x^{\op})_{13}]
\\
&=(\id \otimes \beta^{\dag})(\ga^{\op}(x^{\op})). \qedhere
\end{align*}
 \end{proof}

 \begin{subremark}
   We have quickly shown in \ref{basic} that (i) can also be deduced
   from a particular case of (\cite{DC} 6,5), which remains
   unpublished.
 \end{subremark}
 \begin{lemma}
\label{leminf}
 Let $N$ be a properly infinite von Neumann algebra.
 \begin{enumerate}
 \item  Let $(e_n)_{n\in\N}$ be a sequence of pairwise orthogonal
   projections in $N$, equivalent to $1$ and whose sum is $1$, and let
   $(v_n)_{n\in\N}$ be a sequence of isometries in $N$ such that
   $v_n^*v_n=1$ and $v_nv_n^*=e_n$ for all $n\in\N$, (and, therefore
   $v_i^*v_j=0$ if $i\not= j$).  Let $H$ be a separable Hilbert space
   and $u_{i,j}$ a set of matrix units of $B(H)$ acting on an
   orthonormal basis $(\xi_{i})_{i}$. For any $x\in N$, let
   \[\Phi(x)=\sum_{i,j}u_{i,j}\otimes v^*_ixv_j\]
   Then $\Phi$ is an isomorphism of $N$ onto $B(H)\otimes N$, and
   $\Phi^{-1}(1\otimes x)=\sum_i v_ixv_i^*$.
 \item  Let $\ga$ be a left action of a locally compact quantum group
   $\bfG=(M, \Gamma, \varphi, \psi)$ with separable predual $M_*$ on
   $N$. Then the operator $V=\sum_n(1\otimes v_n)\ga(v_n^*)$ exists,
   is a unitary in $M\otimes N$ and a cocycle for $\ga$, that is,
   $(\Gamma\otimes \id)(V)=(1\otimes V)(\id\otimes \ga)(V)$.
   Moreover, the actions $(\varsigma\otimes \id)(\id\otimes \ga)$ and
   $(\id\otimes\Phi)\ga\Phi^{-1}$ are linked by the relation \[(\varsigma\otimes
   \id)(\id\otimes \ga)(X)=
   \Ad((\id\otimes\Phi)(V))[(\id\otimes\Phi)\ga\Phi^{-1}(X)].
   \]
 \item  Let $\phi$ be a normal semi-finite faithful weight on $N$.
   Then for each $n\in\N$, the weight $\phi_n$ on $N$ defined by
   $\phi_n(x)=\phi(v_n xv_n^*)$ for all $x\in N^+$ is faithful, normal
   and semi-finite, and
   $\phi\circ\Phi^{-1}=\sum_n(\omega_{\xi_n}\otimes\phi_n)$.
 \item  Let $\psi$ be a normal semi-finite faithful weight on
   $B(H)\otimes N$. Then, with the notations of (iii)
   $(\psi\circ\Phi)_n(x)=\psi(u_{n,n}\otimes x)$ for all $x\in
   N^+$. If $\psi$ is invariant under $(\varsigma\otimes
   \id)(\id\otimes\ga)$, then each $(\psi\circ\Phi)_n$ is a normal
   semi-finite faithful weight on $N$, invariant under $\ga$.
 \end{enumerate}
\end{lemma}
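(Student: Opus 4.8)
The plan is to establish the four items in the natural order, since each builds on the previous ones, and to treat the bookkeeping with the matrix units $u_{i,j}$ and isometries $v_n$ as routine.

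For (i), I would first check that $\Phi$ is well-defined: the sum $\sum_{i,j}u_{i,j}\otimes v_i^*xv_j$ converges $\sigma$-weakly because the $u_{i,j}$ are matrix units and $v_i^*xv_j$ are uniformly bounded. One verifies multiplicativity and $*$-preservation by a direct computation using $v_i^*v_j=\delta_{ij}1$ and $\sum_i v_iv_i^*=\sum_i e_i=1$; injectivity follows since $\Phi(x)=0$ forces $v_i^*xv_j=0$ for all $i,j$, hence $x=\sum_{i,j}v_iv_i^*xv_jv_j^*=0$. Surjectivity and the formula for $\Phi^{-1}$ come from checking that $\Phi(\sum_k v_kxv_k^*)=1\otimes x$ and that $\Phi(\sum_k v_k(\sum_l w_{l}v_l^*\cdot v_m)v_k^*)$ realizes $u_{l,m}\otimes w$-type elements; more cleanly, one notes $\Phi(v_j y v_i^*)=u_{i,j}\otimes$ (something) and that these span a $\sigma$-weakly dense subalgebra, so $\Phi$ is a normal isomorphism onto $B(H)\otimes N$.

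For (ii), existence of $V=\sum_n(1\otimes v_n)\ga(v_n^*)$ as a unitary: the partial sums are isometries with pairwise orthogonal ranges (using $\ga(v_n^*)^*\ga(v_m^*)=\ga(v_nv_m^*)=\delta_{nm}\ga(e_n)$ and $(1\otimes v_n^*)(1\otimes v_m)=0$), and $\sum_n(1\otimes v_n)\ga(v_n^*)\ga(v_m)(1\otimes v_m^*)=\sum_n (1\otimes v_n)\ga(\delta_{nm}e_n)(1\otimes v_m^*)$, which sums to $1\otimes 1$ using $\sum \ga(e_n)=\ga(1)=1$. The cocycle identity $(\Gamma\otimes\id)(V)=(1\otimes V)(\id\otimes\ga)(V)$ is a direct computation: $(\Gamma\otimes\id)(V)=\sum_n(\Gamma\otimes\id)(1\otimes v_n)(\Gamma\otimes\id)\ga(v_n^*)$, and since $(\Gamma\otimes\id)\ga=(\id\otimes\ga)\ga$, one rewrites and inserts $\sum_m(1\otimes 1\otimes v_mv_m^*)=1$ in the middle to produce $(1\otimes V)(\id\otimes\ga)(V)$. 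The intertwining relation for the actions follows by unwinding definitions: for $X\in B(H)\otimes N$ write $X=(\id\otimes\Phi)(Y)$; one computes both $(\varsigma\otimes\id)(\id\otimes\ga)(X)$ and $\Ad((\id\otimes\Phi)(V))[(\id\otimes\Phi)\ga\Phi^{-1}(X)]$ on elements of the form $u_{i,j}\otimes x$ using the explicit formulas for $\Phi$, $\Phi^{-1}$ and $V$, and checks they agree; the point is that conjugating by $V$ absorbs exactly the discrepancy between $\ga$ conjugated by $\Phi$ and the ``tensored'' action.

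For (iii) and (iv), these are weight computations. For (iii), $\phi_n=\phi(v_n\cdot v_n^*)$ is normal and semi-finite since $v_n$ is an isometry and $v_nv_n^*=e_n$; faithfulness follows because $v_n^*\cdot v_n$ is injective on $N^+$ (as $v_n$ is an isometry). The identity $\phi\circ\Phi^{-1}=\sum_n(\omega_{\xi_n}\otimes\phi_n)$ is checked on positive elements $\Phi(x)=\sum_{i,j}u_{i,j}\otimes v_i^*xv_j$: the right side gives $\sum_n\phi_n(v_n^*xv_n)=\sum_n\phi(v_nv_n^*xv_nv_n^*)=\sum_n\phi(e_nxe_n)=\phi(x)$ by normality of $\phi$ and $\sum e_n=1$. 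For (iv), $(\psi\circ\Phi)_n(x)=(\psi\circ\Phi)(v_n xv_n^*)=\psi(\Phi(v_nxv_n^*))=\psi(u_{n,n}\otimes x)$ using $\Phi(v_nxv_n^*)=\sum_{i,j}u_{i,j}\otimes v_i^*v_nxv_n^*v_j=u_{n,n}\otimes x$. If $\psi$ is invariant under $(\varsigma\otimes\id)(\id\otimes\ga)$, then combining the intertwining relation of (ii), the cocycle property of $V$, and the standard fact that invariance is preserved under cocycle perturbation (cf.\ the cocycle relation $(D\nu\circ\ga:D\nu)_t$ behaviour recalled in \ref{action}), one transports invariance to $\psi\circ\Phi$ under $(\id\otimes\Phi)\ga\Phi^{-1}$, hence componentwise to each $(\psi\circ\Phi)_n$ under $\ga$; concretely, $(\id\otimes\nu)$-type slices of $\ga(x)$ against the $n$-th diagonal are controlled using $(\id\otimes(\psi\circ\Phi)_n)\ga\Phi^{-1}\Phi=(\id\otimes\psi\circ\Phi)(\id\otimes\Phi)\ga\Phi^{-1}$ restricted appropriately.

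The main obstacle I expect is item (iv): making precise the passage from invariance of $\psi$ under the twisted/flipped action to invariance of the individual $(\psi\circ\Phi)_n$ under $\ga$. One must carefully track how the cocycle $V$ and the isomorphism $\Phi$ interact with the operator-valued weight $(\id\otimes\cdot)$, and verify that cocycle-conjugation does not spoil the invariance equation $(\id\otimes\phi)\ga(x)=\phi(x)1$ — this uses that $V$ is a genuine cocycle and that invariant weights form a cocycle-conjugation-covariant class, so the diagonal compression picks out exactly $\phi_n$-invariance. Everything else is a matter of expanding the definitions and using $v_i^*v_j=\delta_{ij}1$, $\sum_i e_i=1$, $\ga(1)=1$, and normality.
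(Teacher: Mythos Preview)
Your treatment of (i)--(iii) and the formula $(\psi\circ\Phi)_n(x)=\psi(u_{n,n}\otimes x)$ in (iv) is correct and matches the paper (which simply cites \cite{T1} and \cite{E1} for (i) and (ii)). The issue is the invariance part of (iv), which you flag as the ``main obstacle'' and propose to attack via (ii) and cocycle perturbation. This is a detour, and the last step in your chain --- passing from invariance of $\psi\circ\Phi$ under $\ga$ to invariance of each component $(\psi\circ\Phi)_n(x)=(\psi\circ\Phi)(v_nxv_n^*)$ --- is not justified: in general $\ga(v_nxv_n^*)\neq(1\otimes v_n)\ga(x)(1\otimes v_n^*)$, so compressing by $v_n$ does not commute with the action.

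The paper dispatches (iv) in one line (``it is clear''), and indeed the direct computation is trivial once you have the formula. For $x\in N^+$ take $Y=u_{n,n}\otimes x$; then $(\varsigma\otimes\id)(\id\otimes\ga)(Y)$ is just $\ga(x)$ with $u_{n,n}$ sitting passively in the middle leg, so applying $\id_M\otimes\psi$ yields $(\id_M\otimes(\psi\circ\Phi)_n)\ga(x)$, while invariance of $\psi$ gives $\psi(Y)1_M=(\psi\circ\Phi)_n(x)1_M$. No cocycle machinery is needed, and (ii) plays no role in (iv).
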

\begin{proof}
(i) This result is taken from (\cite{T1}Th 4.6).

(ii) This assertion is proved in (\cite{E1}Th. IV.3) for right actions of Kac algebras, but remains true for left actions of any locally compact quantum group. 

(iii) Let $(\xi_i)_{i\in\N}$ be the orthonormal basis of $H$ defined
by the matrix units $u_{i,j}$. Then we can define an isometry $I$ from
$L^2(N)$ into $H\otimes L^2(N)$ by $I\eta=\sum_n \xi_{n} \otimes
v_n^*\eta$ for all $\eta\in L^2(N)$. It is then straightforward to get
that, for all sequences $(\eta_n)_{n\in \N}$ such that
$\sum_n\|\eta_n\|^2<\infty$, we have $I^*(\sum_n \xi_{n} \otimes
\eta_n)=\sum_nv_n\eta_n$. Therefore, $I$ is unitary and
$\Phi(x)=IxI^*$ and for all $x\in N$. So, for any $\zeta\in L^2(N)$,
$\omega_\zeta\circ \Phi^{-1}$ is equal to the normal weight
$\sum_n\omega_{\xi_n}\otimes\omega_{v^{*}_{n}\zeta}$. Hence, $\phi\circ\Phi^{-1}$ is the weight
$\sum_n \omega_{\xi_n}\otimes \phi_n$.

Let now $x\in N$ such that $\phi_n(x^*x)=0$. By definition, we get
that $xv_n^*=0$ and therefore $x=0$. So, the weight $\phi_n$ is
faithful. As $\phi$ is semi-finite, there exists in $\gM_\phi^+$ an
increasing family $x_k\uparrow 1$. For all $n\in\N$, we get $y_k=(\omega_{\xi_n}\otimes \id)\Phi(x_k)\uparrow 1$ and   $\phi_n(y_k)=(\omega_{\xi_n}\otimes\phi_n)\Phi(x_k)\leq \phi(x_k) <\infty$, which gives that $\phi_n$ is semi-finite. 

(iv) First,
\[(\psi\circ\Phi)_n(x)=(\psi\circ\Phi)(v_n^*xv_n)=\psi(\sum_{i,j}u_{i,j}\otimes v_i^*v_nxv_n^*v_j)=\psi(u_{n,n}\otimes x).\]
 If $\psi$ is invariant under $(\varsigma\otimes \id)(\id\otimes\ga)$, then it is clear that all $(\psi\circ\Phi)_n$ are normal semi-finite faithful weights on $N$, invariant under $\ga$.
\end{proof}

\begin{corollary}
  \label{corint}
  Let $\bfG=(M, \Gamma, \varphi, \psi)$ be a locally compact quantum
  group such that the predual $M_*$ is separable, and $(N, \ga,
  \widehat{\ga})$ a $\bfG$-Yetter-Drinfel'd algebra, where $N$ is a
  properly infinite von Neumann algebra. Then this
  $\bfG$-Yetter-Drinfel'd algebra has a normal faithful semi-finite
  invariant weight.
\end{corollary}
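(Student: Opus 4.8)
The plan is to transport the Yetter-Drinfel'd invariant weight produced by Corollary~\ref{corintegrable2} on $B(H)\otimes N$ back to $N$ along an isomorphism $N\cong B(H)\otimes N$, which exists precisely because $N$ is properly infinite. First, since $M_*$ is separable, the Hilbert space $H=L^2(M)=L^2(\widehat M)$ is separable; applying Corollary~\ref{corintegrable2} then yields a normal faithful semi-finite Yetter-Drinfel'd invariant weight $\psi$ for the $\bfG$-Yetter-Drinfel'd algebra $(B(H)\otimes N,(\varsigma\otimes\id)(\id\otimes\ga),(\varsigma\otimes\id)(\id\otimes\widehat\ga))$. By Definition~\ref{defYDinv} and Theorem~\ref{thinv1}, $\psi$ is then invariant under $(\varsigma\otimes\id)(\id\otimes\ga)$ and, separately, under $(\varsigma\otimes\id)(\id\otimes\widehat\ga)$.

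Next I would invoke Lemma~\ref{leminf}. Since $N$ is properly infinite, fix a sequence of isometries $(v_n)_{n\in\N}$ in $N$ with $v_n^*v_n=1$ and pairwise orthogonal range projections $e_n=v_nv_n^*$ summing to $1$, together with matrix units $(u_{i,j})$ of $B(H)$, as in \ref{leminf}(i), and let $\Phi\colon N\to B(H)\otimes N$ be the resulting isomorphism. Set $\phi(x)=\psi(u_{n_0,n_0}\otimes x)=(\psi\circ\Phi)_{n_0}(x)$ for a fixed index $n_0\in\N$; by \ref{leminf}(iii) and (iv) this is a normal faithful semi-finite weight on $N$. The crucial observation is that $\Phi$ (equivalently, the families $(v_n)$ and $(u_{i,j})$) depends only on $N$ and $H$ and not on any group action, so Lemma~\ref{leminf}(iv) can be applied to the \emph{single} weight $\phi$ twice: once with the left action $\ga$ of $\bfG$, and once with the left action $\widehat\ga$ of $\widehat\bfG$ (which also has separable predual, as $\widehat M$ acts on the separable Hilbert space $H$). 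Since $\psi$ is invariant under both $(\varsigma\otimes\id)(\id\otimes\ga)$ and $(\varsigma\otimes\id)(\id\otimes\widehat\ga)$, this yields that $\phi$ is invariant under $\ga$ and under $\widehat\ga$, so by Theorem~\ref{thinv1} $\phi$ is Yetter-Drinfel'd invariant, which is exactly the assertion.

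I do not expect a genuine obstacle at this point: the real work has already been spent in establishing Corollary~\ref{corintegrable2} (where $\ga_D$ itself may fail to be integrable, but the inflated action on $B(H)\otimes N$ is integrable, being cocycle equivalent to a bidual action) and in Lemma~\ref{leminf}. The only points requiring a little care are checking that separability of $M_*$ passes to $\widehat{\bfG}$, and that one and the same weight $\phi=(\psi\circ\Phi)_{n_0}$ can absorb the $\ga$-invariance and the $\widehat\ga$-invariance simultaneously; both follow at once from the action-independence of $\Phi$.
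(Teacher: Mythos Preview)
Your proposal is correct and follows the same approach as the paper: invoke Corollary~\ref{corintegrable2} to obtain a Yetter--Drinfel'd invariant weight on $B(H)\otimes N$, and then transport it to $N$ via Lemma~\ref{leminf}(iv). The paper's one-line proof phrases this through the single $D(\bfG)$-action $\ga_D$, whereas you unpack Yetter--Drinfel'd invariance into separate $\ga$- and $\widehat\ga$-invariance and apply \ref{leminf}(iv) twice with the same action-independent isomorphism $\Phi$; this is exactly the intended elaboration, and your remarks on the separability of $\widehat M_*$ and the action-independence of $\Phi$ are the right points to check.
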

\begin{proof}
Use the left action ${\ga_D}=(\id\otimes\widehat{\ga})\ga$ of $\DG$ on $N$ and apply \ref{corintegrable} and \ref{leminf}(iv). \end{proof}


\section{The Hopf bimodule associated to a braided-commutative Yetter-Drinfel'd algebra}
\label{H}
In this chapter, we recall the definition of the relative tensor
product of Hilbert spaces, and of the fiber product of von Neumann
algebra (\ref{spatial}). Then, we recall the definition of a Hopf
bimodule (\ref{defHopf}) and a co-inverse. Starting then from a
braided-commutative Yetter-Drinfel'd algebra $(N, \ga,
\widehat{\ga})$, and any normal semi-finite faithful weight $\nu$ on
$N$, we first construct an isomorphism of the Hilbert spaces $H\otimes
H\otimes H_\nu$ and $(H\otimes
H_\nu)\underset{\nu}{_\beta\otimes_\ga}(H\otimes H_\nu)$ (\ref{propV})
and then show that the dual action $\tilde{\ga}$ of
$\widehat{\bfG}^{\op}$ on the crossed product $\bfG\ltimes_\ga N$,
modulo this isomorphism, can be interpreted as a coproduct on
$\bfG\ltimes_\ga N$ (\ref{th1}). Finally, we construct an involutive
anti-$*$-automorphism of $\bfG\ltimes_\ga N$ which turns out to be a
co-inverse (\ref{th2}).

\subsection{Relative tensor products of Hilbert spaces and fiber products of von Neumann algebras (\cite{C}, \cite{S}, \cite{T2}, \cite{EVal})}
\label{spatial}
 Let $N$ be a von Neumann algebra, $\psi$ a normal semi-finite
 faithful weight on $N$; we shall denote by $H_\psi$, $\gN_\psi$,
 \ldots the canonical objects of the Tomita-Takesaki theory associated
 to the weight $\psi$. 

Let $\alpha$ be a non-degenerate faithful representation of $N$ on a Hilbert space $\mathcal H$. The set of $\psi$-bounded elements of the left module $_\alpha\mathcal H$ is
\[D(_\alpha\mathcal{H}, \psi)= \lbrace \xi \in \mathcal{H} : \exists C < \infty ,\| \alpha (y) \xi\|
\leq C \| \Lambda_{\psi}(y)\|,\forall y\in \gN_{\psi}\rbrace.\]
 For any $\xi$ in $D(_\alpha\mathcal{H}, \psi)$, there exists a bounded operator
$R^{\alpha,\psi}(\xi)$ from $H_\psi$ to $\mathcal{H}$ such that
\[R^{\alpha,\psi}(\xi)\Lambda_\psi (y) = \alpha (y)\xi \quad\text{for
  all } y \in \gN_{\psi},\]
and this operator the actions of $N$. 
If $\xi$ and $\eta$ are bounded vectors, we define the operator product 
\[\langle \xi|\eta\rangle _{\alpha,\psi} = R^{\alpha,\psi}(\eta)^* R^{\alpha,\psi}(\xi),\]
which belongs to $\pi_{\psi}(N)'$. This last algebra will be
identified with the opposite von Neumann algebra $N^{\op}$ using Tomita-Takesaki theory. 

If now $\beta$ is a non-degenerate faithful anti-representation of $N$ on a Hilbert space $\mathcal K$, the relative tensor product $\mathcal K\underset{\psi}{_\beta\otimes_\alpha}\mathcal H$ is the completion of the algebraic tensor product $K\odot D(_\alpha\mathcal{H}, \psi)$ by the scalar product defined by 
\[(\xi_1\odot\eta_1 |\xi_2\odot\eta_2 )= (\beta(\langle \eta_1|
\eta_2\rangle _{\alpha,\psi})\xi_1 |\xi_2)\]
for all $\xi_1, \xi_2\in \mathcal{K}$ and $\eta_1, \eta_2 \in D(_\alpha\mathcal{H},\psi)$.
If $\xi\in \mathcal{K}$ and $\eta\in D(_\alpha\mathcal{H},\psi)$, we
 denote by $\xi\underset{\psi}{_\beta\otimes_\alpha}\eta$ the image of $\xi\odot\eta$ into $\mathcal K\underset{\psi}{_\beta\otimes_\alpha}\mathcal H$. Writing $\rho^{\beta, \alpha}_\eta(\xi)=\xi\underset{\psi}{_\beta\otimes_\alpha}\eta$, we get a bounded linear operator from $\mathcal H$ into $\mathcal K\underset{\nu}{_\beta\otimes_\alpha}\mathcal H$, which is equal to $1_\mathcal K\otimes_\psi R^{\alpha, \psi}(\eta)$. 

 Changing the weight $\psi$ will give an isomorphic Hilbert space, but
 the isomorphism will not exchange elementary tensors!

We shall denote by $\sigma_\psi$ the relative flip, which is a unitary sending $\mathcal{K}\underset{\psi}{_\beta\otimes_\alpha}\mathcal{H}$ onto $\mathcal{H}\underset{\psi^{\op}}{_\alpha\otimes _\beta}\mathcal{K}$, defined by
\[\sigma_\psi
(\xi\underset{\psi}{_\beta\otimes_\alpha}\eta)=\eta\underset{\psi^{\op}}{_\alpha\otimes_\beta}\xi\]
 for all $\xi \in D(\mathcal {K}_\beta ,\psi^{\op} )$ and $\eta \in
 D(_\alpha \mathcal {H},\psi)$.

If $x\in \beta(N)'$ and $y\in \alpha(N)'$, it is possible to define an
operator $x\underset{\psi}{_\beta\otimes_\alpha}y$ on $\mathcal
K\underset{\psi}{_\beta\otimes_\alpha}\mathcal H$, with natural values
on the elementary tensors. As this operator does not depend upon the
weight $\psi$, it will be denoted by $x\underset{N}{_\beta\otimes_\alpha}y$. 

If $P$ is a von Neumann algebra on $\mathcal H$ with
$\alpha(N)\subset P$, and $Q$ a von Neumann algebra on $\mathcal K$
with $\beta(N)\subset Q$, then we define the fiber product
$Q\underset{N}{_\beta*_\alpha}P$ as
$\{x\underset{N}{_\beta\otimes_\alpha}y : x\in Q', y\in P'\}'$. 
 This von Neumann algebra can be defined independently of the Hilbert
 spaces on which $P$ and $Q$ are represented. If for $i=1,2$,
 $\alpha_i$ is a faithful non-degenerate homomorphism from $N$ into
 $P_i$, and $\beta_i$ is a faithful non-degenerate anti-homomorphism from $N$ into $Q_i$, and $\Phi$ (resp. $\Psi$) a homomorphism from $P_1$ to $P_2$ (resp. from $Q_1$ to $Q_2$) such that $\Phi\circ\alpha_1=\alpha_2$ (resp. $\Psi\circ\beta_1=\beta_2$), then, it is possible to define a homomorphism $\Psi\underset{N}{_{\beta_1}*_{\alpha_1}}\Phi$ from $Q_1\underset{N}{_{\beta_1}*_{\alpha_1}}P_1$ into $Q_2\underset{N}{_{\beta_2}*_{\alpha_2}}P_2$. 

We define a relative flip $\varsigma_N$ from $\mathcal L(\mathcal K)\underset{N}{_\beta*_\alpha}\mathcal L(\mathcal H)$ onto $\mathcal L(\mathcal H)\underset{N^{\op}}{_\alpha*_\beta}\mathcal L(\mathcal K)$ by $\varsigma_N(X)=\sigma_\psi X(\sigma_{\psi})^*$ for any $X\in \mathcal L(\mathcal K)\underset{N}{_\beta*_\alpha}\mathcal L(\mathcal H)$ and any normal semi-finite faithful weight $\psi$ on $N$.

Let now $U$ be an isometry from a Hilbert space $\mathcal K_1$ in a Hilbert space $\mathcal K_2$, which intertwines two anti-representations $\beta_1$ and $\beta_2$ of $N$, and let $V$ be an isometry from a Hilbert space $\mathcal H_1$ in a Hilbert space $\mathcal H_2$, which intertwines two representations $\alpha_1$ and $\alpha_2$ of $N$. Then, it is possible to define, on linear combinations of elementary tensors, an isometry $U\underset{\psi}{_{\beta_1}\otimes_
{\alpha_1}}V$ which can be extended to the whole Hilbert space
$\mathcal K_1\underset{\psi}{_{\beta_1}\otimes_{\alpha_1}}\mathcal
H_1$ with values in $\mathcal
K_2\underset{\psi}{_{\beta_2}\otimes_{\alpha_2}}\mathcal H_2$. One can
show that this isometry does not depend upon the weight $\psi$. It
will be denoted by $U\underset{N}{_{\beta_1}\otimes_{\alpha_1}}V$. If $U$ and $V$ are unitaries, then $U\underset{N}{_{\beta_1}\otimes_{\alpha_1}}V$ is an unitary and $(U\underset{N}{_{\beta_1}\otimes_{\alpha_1}}V)^*=U^*\underset{N}{_{\beta_2}\otimes_{\alpha_2}}V^*$. 

In (\cite{DC}, chap. 11), De Commer had shown that, if $N$ is finite-dimensional, the Hilbert space $\mathcal K\underset{\nu}{_\beta\otimes_\alpha}\mathcal H$ can be isometrically imbedded into the usual Hilbert tensor product $\mathcal K\otimes\mathcal H$. 

\subsection{Definitions}
\label{defHopf}
 A quintuple $(N, M, \alpha, \beta, \Gamma)$ will be called a \emph{Hopf bimodule}, following (\cite{Val2}, \cite{EVal} 6.5), if
$N$,
$M$ are von Neumann algebras, $\alpha$ is a faithful non-degenerate
representation of $N$ into $M$, $\beta$ is a
faithful non-degenerate anti-representation of
$N$ into $M$, with commuting ranges, and $\Gamma$ is an injective $*$-homomorphism from $M$
into
$M\underset{N}{_\beta *_\alpha}M$ such that, for all $X$ in $N$,
\begin{enumerate}
\item $\Gamma
  (\beta(X))=1\underset{N}{_\beta\otimes_\alpha}\beta(X)$,
\item  $\Gamma
  (\alpha(X))=\alpha(X)\underset{N}{_\beta\otimes_\alpha}1$,
\item  $\Gamma$ satisfies the co-associativity relation
  \[(\Gamma \underset{N}{_\beta *_\alpha}\id)\Gamma =(\id
  \underset{N}{_\beta *_\alpha}\Gamma)\Gamma\]
\end{enumerate}
This last formula makes sense, thanks to the two preceeding ones and
\ref{spatial}. The von Neumann algebra $N$ will be called the \emph{basis} of $(N, M, \alpha, \beta, \Gamma)$. 

In (\cite{DC}, chap. 11), De Commer had shown that, if $N$ is
finite-dimensional, the Hilbert space
$L^2(M)\underset{\nu}{_\beta\otimes_\alpha}L^2(M)$ can be
isometrically imbedded into the usual Hilbert tensor product
$L^2(M)\otimes L^2(M)$ and the projection $p$ on this closed subspace
belongs to $M\otimes M$. Moreover, the fiber product
$M\underset{N}{_\beta*_\alpha}M$ can be then identified with the
reduced von Neumann algebra $p(M\otimes M)p$ and we can consider $\Gamma$ as a usual coproduct $M\mapsto M\otimes M$, but with the condition $\Gamma(1)=p$. 

A \emph{co-inverse} $R$ for a Hopf bimodule $(N, M, \alpha, \beta,
\Gamma)$ is an involutive ($R^2=\id$) anti-$*$-isomorphism of $M$
satisfying $R\circ\alpha=\beta$ (and therefore $R\circ\beta=\alpha$)
and $\Gamma\circ R=\varsigma_{N^{\op}}\circ
(R\underset{N}{_\beta*_\alpha}R)\circ \Gamma$, where
$\varsigma_{N^{\op}}$ is the flip from
$M\underset{N^{\op}}{_\alpha*_\beta}M$ onto
$M\underset{N}{_\beta*_\alpha}M$. A Hopf bimodule is called \emph{co-commutative} if $N$ is abelian, $\beta=\alpha$, and $\Gamma=\varsigma\circ\Gamma$. 

For an example, suppose that $\mathcal G$ is a measured groupoid, with
$\mathcal G^{(0)}$ as its set of units. We denote
by $r$ and $s$ the range and source applications from $\mathcal G$ to $\mathcal G^{(0)}$, given by
$xx^{-1}=r(x)$ and $x^{-1}x=s(x)$, and  by $\mathcal G^{(2)}$ the set of composable elements, i.e.\ 
\[\mathcal G^{(2)}=\{(x,y)\in \mathcal G^2 : s(x)=r(y)\}.\]
Let $(\lambda^u)_{u\in \mathcal G^{(0)}}$ be a Haar system on
$\mathcal G$ and $\nu$ a measure on $\mathcal G^{(0)}$. Let us denote by $\mu$ the measure on $\mathcal G$ given by integrating $\lambda^u$ by $\nu$,
\[\mu=\int_{{\mathcal G}^{(0)}}\lambda^ud\nu.\]
By definition,  $\nu$ is called \emph{quasi-invariant} if $\mu$ is equivalent to its image under the inversion $x\mapsto x^{-1}$ of $\mathcal G$ (see \cite{R},
 \cite{C2} II.5, \cite{Pa} and \cite{AR} for more details, precise definitions and examples of groupoids). 

In \cite{Y1}, \cite{Y2}, \cite{Y3} and \cite{Val2} was associated to a measured groupoid $\mathcal G$, equipped with a Haar system $(\lambda^u)_{u\in \mathcal G ^{(0)}}$ and a quasi-invariant measure $\nu$ on $\mathcal G ^{(0)}$, a
Hopf bimodule with an abelian underlying von Neumann algebra $(L^\infty (\mathcal G^{(0)}, \nu), L^\infty (\mathcal G, \mu), r_{\mathcal G}, s_{\mathcal G}, \Gamma_{\mathcal
G})$, where 
$r_{\mathcal G}(g)=g\circ r$ and $s_{\mathcal G}(g)=g\circ s$ for all
$g$ in $L^\infty (\mathcal G^{(0)})$
 and where
$\Gamma_{\mathcal G}(f)$, for $f$ in $L^\infty (\mathcal G)$, is the
function defined on $\mathcal G^{(2)}$ by $(s,t)\mapsto f(st)$. Thus,
$\Gamma_{\mathcal G}$ is an involutive homomorphism from $L^\infty (\mathcal G)$ into $L^\infty
(\mathcal G^{(2)})$, which can be identified with
$L^\infty (\mathcal G){_s*_r}L^\infty (\mathcal G)$.

It is straightforward to get that the inversion of the groupoid gives a co-inverse for this Hopf bimodule structure. 

\begin{proposition}[\cite{Ti2}]
\label{propV}
 Let $\bfG$ be a locally compact quantum group,  $(N, \ga,
  \widehat{\ga})$ a braided-commutative $\bfG$-Yetter-Drinfel'd
  algebra,  $\beta$ the injective anti-$*$-homomorphism from $N$ into
  $\bfG\ltimes_\ga N$ introduced in \ref{ThBC}, and $\nu$ a normal
  semi-finite faithful weight $\nu$ on $N$. Then the relative tensor
  product $(H\otimes H_\nu)\underset{\nu}{_\beta\otimes_\ga}(H\otimes
  H_\nu)$ can be canonically identified with $H\otimes H\otimes H_\nu$
  as follows:
  \begin{enumerate}
  \item  For any $\eta\in H$, $p\in\gN_\nu$, the vector $U^\ga_\nu(
    \eta\otimes J_\nu\Lambda_\nu(p))$ belongs to $D(_\ga (H\otimes
    H_\nu), \nu)$ and
    \[R^{\ga, \nu}(U^\ga_\nu( \eta\otimes
    J_\nu\Lambda_\nu(p)))=U^\ga_\nu l_\eta J_\nu pJ_\nu,\] where
    $l_\eta$ is the application $\zeta\rightarrow \eta\otimes\zeta$
    from $H_\nu$ into $H\otimes H_\nu$.
There exists a unitary $V_1$ from $(H\otimes
    H_\nu)\underset{\nu}{_\beta\otimes_\ga}(H\otimes H_\nu)$ onto
    $H\otimes H\otimes H_\nu$ such that
    \[V_1(\Xi\underset{\nu}{_\beta\otimes_\ga}U^\ga_\nu( \eta\otimes
    J_\nu\Lambda_\nu(p)))=\eta\otimes\beta(p^*)\Xi \quad\text{for all
    } \Xi\in H\otimes H_\nu,\] and
    $V_1(X\underset{N}{_\beta\otimes_\ga}(1_H\otimes
    1_{H_\nu}))=(1_H\otimes X)V_1$ for all $X\in\beta(N)'$, in
    particular, for $X \in \ga(N)$.  Morover, writing $\beta^{\dag}$
    for the map $x^{\op} \mapsto \beta(x)$, we have for all $x \in N$,
    \begin{align*}
      V_1[(1_H\otimes 1_{H_\nu})\underset{N}{_\beta\otimes_\ga}(1_H\otimes x^{\op})]&= (\id \otimes \beta^{\dag})(\ga^{\op}(x^{\op}))V_1, \\
      V_1[(1_H\otimes 1_{H_\nu})\underset{N}{_\beta\otimes_\ga}
      \beta(x)]&= (\id \otimes
      \beta^{\dag})(\widehat{\ga}^{\op}(x^{\op}))V_1.
    \end{align*}
  \item  For any $\xi\in H$, $q\in\gN_\nu$, the vector $U^\ga_\nu
    (U^{\widehat{\ga}}_\nu)^*(\xi\otimes\Lambda_\nu(q))$ belongs to
    $D(_\beta(H\otimes H_\nu), \nu^{\op})$ and
    \[R^{\beta, \nu^{\op}}(U^\ga_\nu
    (U^{\widehat{\ga}}_\nu)^*(\xi\otimes\Lambda_\nu(q)))=U^\ga_\nu(U^{\widehat{\ga}}_\nu)^*l_\xi
    q.\]
There exists a unitary $V_2$ from $(H\otimes
    H_\nu)\underset{\nu}{_\beta\otimes_\ga}(H\otimes H_\nu)$ onto
    $H\otimes H\otimes H_\nu$ such that
    \[V_2[U^\ga_\nu
    (U^{\widehat{\ga}}_\nu)^*(\xi\otimes\Lambda_\nu(q))\underset{\nu}{_\beta\otimes_\ga}\Xi]=\xi\otimes\ga(q)\Xi
    \quad\text{for all } \Xi\in H\otimes H_\nu,
    \]
    and $V_2((1_H\otimes
    1_{H_\nu})\underset{N}{_\beta\otimes_\ga}X)=(1_H\otimes X)V_2$ for
    all $X\in\ga(N)'$, in particular, for $X\in \beta(N)$.
  \item  $V_{2}V_{1}^{*}
    =\sigma_{12}(U^{\ga}_{\nu})_{13}(U^{\widehat{\ga}}_{\nu})_{23}(U^{\ga}_{\nu})_{23}^{*}=
    \sigma_{12}W_{12}(U^{\widehat{\ga}}_{\nu})_{23}(U^{\ga}_{\nu})^{*}_{23}W_{12}^{*}$.
  \end{enumerate}
\end{proposition}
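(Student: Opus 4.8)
The plan is to establish parts (i) and (ii) by parallel, mostly computational arguments, and then to read off (iii) from the explicit formulas obtained for $V_1$ and $V_2$. Throughout I regard $\ga$ as the representation $x\mapsto\ga(x)$ of $N$ on $H\otimes H_\nu$ and $\beta$ as an anti-representation of $N$ there, and I use three standing facts: the relations $\ga^{\op}(x^{\op})=(U^\ga_\nu)^*(1\otimes x^{\op})U^\ga_\nu$ and $\widehat{\ga}^{\op}(x^{\op})=(U^{\widehat{\ga}}_\nu)^*(1\otimes x^{\op})U^{\widehat{\ga}}_\nu$ from Lemma \ref{lemBC}(ii) (applied to $\ga$ and to $\widehat{\ga}$); the formula $\beta(x)=\Ad(U^\ga_\nu(U^{\widehat{\ga}}_\nu)^*)[1\otimes J_\nu x^*J_\nu]$ from Theorem \ref{ThBC}; and the Tomita--Takesaki identity $yJ_\nu\Lambda_\nu(p)=(J_\nu pJ_\nu)\Lambda_\nu(y)$ for $y,p\in\gN_\nu$, which I would prove first for $p$ analytic with respect to $\sigma^\nu$ (both sides are then $\Lambda_\nu(y\sigma^\nu_{-i/2}(p^*))$) and extend by closedness of $\Lambda_\nu$.

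For (i), membership of $U^\ga_\nu(\eta\otimes J_\nu\Lambda_\nu(p))$ in $D(_\ga(H\otimes H_\nu),\nu)$ and the formula for $R^{\ga,\nu}$ follow from $\ga(y)U^\ga_\nu(\eta\otimes J_\nu\Lambda_\nu(p))=U^\ga_\nu(\eta\otimes yJ_\nu\Lambda_\nu(p))=U^\ga_\nu(\eta\otimes(J_\nu pJ_\nu)\Lambda_\nu(y))=U^\ga_\nu l_\eta J_\nu pJ_\nu\,\Lambda_\nu(y)$. Then I would define $V_1$ on the linear span of the elementary tensors $\Xi\underset{\nu}{_\beta\otimes_\ga}U^\ga_\nu(\eta\otimes J_\nu\Lambda_\nu(p))$ by the stated formula and check it is isometric: using the $R^{\ga,\nu}$-formula and $l_{\eta_2}^*l_{\eta_1}=(\eta_1|\eta_2)1$, the operator $\langle\zeta_1|\zeta_2\rangle_{\ga,\nu}=R^{\ga,\nu}(\zeta_2)^*R^{\ga,\nu}(\zeta_1)$ equals $(\eta_1|\eta_2)J_\nu p_2^*p_1J_\nu$, which corresponds under $\pi_\nu(N)'\cong N^{\op}$ to $(\eta_1|\eta_2)(p_1^*p_2)^{\op}$, so $\beta^{\dag}$ of it is $(\eta_1|\eta_2)\beta(p_1^*p_2)$, and this is precisely what the inner product of the images $\eta_i\otimes\beta(p_i^*)\Xi_i$ gives. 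Density of these images in $H\otimes H\otimes H_\nu$ holds because $\beta$ is unital, $\Lambda_\nu(\gN_\nu)$ is dense, and $\gN_\nu$ admits a bounded net tending strongly to $1$; combined with the standard density statement for relative tensor products (which makes the domain elementary tensors total) this promotes $V_1$ to a unitary. The relation $V_1(X\underset{N}{_\beta\otimes_\ga}1)=(1\otimes X)V_1$ for $X\in\beta(N)'$ (in particular $X\in\ga(N)$, by Theorem \ref{ThBC}(i)) is immediate from the formula. For the last two displayed relations of (i) I would rewrite, via Lemma \ref{lemBC}(ii) and the formula for $\beta$, the vectors $(1\otimes x^{\op})U^\ga_\nu(\eta\otimes J_\nu\Lambda_\nu(p))=U^\ga_\nu\ga^{\op}(x^{\op})(\eta\otimes J_\nu\Lambda_\nu(p))$ and $\beta(x)U^\ga_\nu(\eta\otimes J_\nu\Lambda_\nu(p))=U^\ga_\nu\widehat{\ga}^{\op}(x^{\op})(\eta\otimes J_\nu\Lambda_\nu(p))$, expand $\ga^{\op}(x^{\op})\in M\otimes N^{\op}$, respectively $\widehat{\ga}^{\op}(x^{\op})\in\widehat M\otimes N^{\op}$, as sums of elementary tensors, apply $V_1$ and anti-multiplicativity of $\beta$, and recognise $(\id\otimes\beta^{\dag})(\ga^{\op}(x^{\op}))V_1$, respectively $(\id\otimes\beta^{\dag})(\widehat{\ga}^{\op}(x^{\op}))V_1$.

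Part (ii) is entirely parallel, with the anti-representation $\beta$ and the opposite weight $\nu^{\op}$ playing the roles of $\ga$ and $\nu$: the defining formula for $\beta$ yields $\beta(y)U^\ga_\nu(U^{\widehat{\ga}}_\nu)^*(\xi\otimes\Lambda_\nu(q))=U^\ga_\nu(U^{\widehat{\ga}}_\nu)^*(\xi\otimes J_\nu y^*J_\nu\Lambda_\nu(q))=U^\ga_\nu(U^{\widehat{\ga}}_\nu)^*(\xi\otimes qJ_\nu\Lambda_\nu(y^*))$ (using the modular identity once more), giving the bounded-vector claim and the formula for $R^{\beta,\nu^{\op}}$; isometry, surjectivity and the relation $V_2(1\underset{N}{_\beta\otimes_\ga}X)=(1\otimes X)V_2$ for $X\in\ga(N)'$ (in particular $X\in\beta(N)$) are obtained as in (i). Finally, for (iii) I would compute $V_2V_1^*$ on the total set of vectors $\eta\otimes\beta(p^*)\Xi$ with $\Xi=U^\ga_\nu(U^{\widehat{\ga}}_\nu)^*(\xi\otimes\Lambda_\nu(q))$: on one side $\beta(p^*)\Xi=U^\ga_\nu(U^{\widehat{\ga}}_\nu)^*(\xi\otimes J_\nu pJ_\nu\Lambda_\nu(q))$; on the other, $V_2\bigl(\Xi\underset{\nu}{_\beta\otimes_\ga}U^\ga_\nu(\eta\otimes J_\nu\Lambda_\nu(p))\bigr)=\xi\otimes\ga(q)U^\ga_\nu(\eta\otimes J_\nu\Lambda_\nu(p))=\xi\otimes U^\ga_\nu(\eta\otimes qJ_\nu\Lambda_\nu(p))$. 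Since $J_\nu pJ_\nu\Lambda_\nu(q)=qJ_\nu\Lambda_\nu(p)$, these two computations say exactly that $V_2V_1^*\circ(1_H\otimes U^\ga_\nu(U^{\widehat{\ga}}_\nu)^*)=\sigma_{12}(U^\ga_\nu)_{13}$ on the total set $\{\eta\otimes\xi\otimes\zeta\}$, i.e. $V_2V_1^*=\sigma_{12}(U^\ga_\nu)_{13}(1_H\otimes U^{\widehat{\ga}}_\nu(U^\ga_\nu)^*)=\sigma_{12}(U^\ga_\nu)_{13}(U^{\widehat{\ga}}_\nu)_{23}(U^\ga_\nu)^*_{23}$. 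The second expression then follows by rearranging the first equality of Corollary \ref{corintegrable3}(iii), which reads $(U^\ga_\nu)^*_{13}W_{12}=(U^{\widehat{\ga}}_\nu)_{23}(U^\ga_\nu)^*_{23}W_{12}(U^\ga_\nu)_{23}(U^{\widehat{\ga}}_\nu)^*_{23}$, into $(U^\ga_\nu)_{13}(U^{\widehat{\ga}}_\nu)_{23}(U^\ga_\nu)^*_{23}=W_{12}(U^{\widehat{\ga}}_\nu)_{23}(U^\ga_\nu)^*_{23}W^*_{12}$.

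The part I expect to require most care is the Tomita--Takesaki bookkeeping: proving the modular identities relating $yJ_\nu\Lambda_\nu(p)$, $(J_\nu pJ_\nu)\Lambda_\nu(y)$ and the $\sigma^\nu_{-i/2}$-twist, keeping the identifications $\pi_\nu(N)'\cong N^{\op}$ and (for $V_2$) $H_{\nu^{\op}}\cong H_\nu$ consistent, and verifying the density/totality statements that turn the isometries defined on spanning sets into genuine unitaries between the relative tensor products and $H\otimes H\otimes H_\nu$. Once (i) and (ii) are in place, (iii) is short and uses only Corollary \ref{corintegrable3}(iii).
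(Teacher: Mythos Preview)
Your proposal is correct and follows essentially the same route as the paper: the computations for the $R^{\ga,\nu}$ and $R^{\beta,\nu^{\op}}$ operators, the isometry checks for $V_1$ and $V_2$, and the derivation of (iii) from the explicit formulas are all identical in spirit to the paper's proof, and (iii) is concluded via Corollary~\ref{corintegrable3}(iii) in both.

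The one small place where the paper is slightly slicker is the last two displayed relations of (i). Rather than ``expand $\ga^{\op}(x^{\op})$ as sums of elementary tensors'' (which needs a weak-limit argument to be fully rigorous, since these live in a von Neumann tensor product), the paper first proves the single general intertwining relation
\[
(z\otimes\beta(x))V_1 \;=\; V_1\bigl(1\underset{N}{_\beta\otimes_\ga}U^\ga_\nu(z\otimes x^{\op})(U^\ga_\nu)^*\bigr)
\qquad (z\in B(H),\ x\in N),
\]
by a direct computation on elementary tensors, and then simply substitutes $U^\ga_\nu\ga^{\op}(x^{\op})(U^\ga_\nu)^*=1\otimes x^{\op}$ and $U^\ga_\nu\widehat{\ga}^{\op}(x^{\op})(U^\ga_\nu)^*=\beta(x)$ to read off both formulas at once. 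Your approach reaches the same conclusion but with a bit more bookkeeping; you may prefer to adopt this shortcut.
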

\begin{proof}
(i) For all $n\in\gN_\nu$, 
\[U^\ga_\nu l_\eta J_\nu pJ_\nu\Lambda_\nu (n)=
U^\ga_\nu(\eta\otimes J_\nu pJ_\nu\Lambda_\nu (n))=
U^\ga_\nu(\eta\otimes nJ_\nu\Lambda_\nu (p))=
\ga(n)U^\ga_\nu(\eta\otimes J_\nu\Lambda_\nu (p)),\]
which gives the proof of the first part of (i). Let now $\eta'\in H$, $p'\in\gN_\nu$, $\Xi'\in H\otimes H_\nu$. Then
\[
\langle U_{\nu}^\ga( \eta'\otimes J_\nu\Lambda_\nu(p'))| U_{\nu}^\ga( \eta\otimes J_\nu\Lambda_\nu(p))\rangle _{\ga, \nu}^{\op} = J_{\nu}p^{*}J_{\nu} l_{\eta}^{*} l_{\eta'} J_{\nu}p'J_{\nu} = (\eta|\eta') J_{\nu}p^{*}p'J_{\nu}
\]
and hence
\begin{multline*}
(\Xi\underset{\nu}{_\beta\otimes_\ga}U^\ga( \eta\otimes J_\nu\Lambda_\nu(p))|\Xi'\underset{\nu}{_\beta\otimes_\ga}U^\ga( \eta'\otimes J_\nu\Lambda_\nu(p')))
=\\
=(\beta(\langle U^\ga( \eta'\otimes J_\nu\Lambda_\nu(p')), U^\ga( \eta\otimes J_\nu\Lambda_\nu(p))\rangle _{\ga, \nu}^{\op})\Xi|\Xi')=\\
=
(\eta|\eta')(\beta(p^*p')\Xi|\Xi')
\end{multline*}
which proves the existence of an isometry $V_1$ satisfying the above
formula. As the image of $V_1$ is dense in $H\otimes H\otimes H_\nu$,
we get that $V_{1}$ is unitary.

Next, let $z\in B(H)$, $x\in N$. Then
\begin{align*}
(z \otimes \beta(x))
V_1[\Xi\underset{\nu}{_\beta\otimes_\ga}U^\ga_\nu(\eta\otimes J_\nu\Lambda_\nu(p))]  &=
{z}\eta\otimes \beta(x)\beta(p^*)\Xi 
\\ &=  {z}\eta\otimes \beta((x^*p)^*)\Xi 
\\ &=
V_1[\Xi\underset{\nu}{_\beta\otimes_\ga}U^\ga_\nu(z\eta\otimes J_{\nu}\Lambda_\nu(x^{*}p))]
\\ &=
V_1[\Xi\underset{\nu}{_\beta\otimes_\ga}U^\ga_\nu(z\otimes x^{o})(\eta\otimes J_{\nu}\Lambda_\nu(p))],
\end{align*}
that is,
$(z \otimes \beta(x))
V_1 = V_{1}(1 \underset{\nu}{_\beta\otimes_\ga} U^\ga_\nu(z\otimes x^{o}) (U^{\ga}_{\nu})^{*})$.
In particular, \begin{align*}
  (\id \otimes \beta^{\dag})(\ga^{o}(x^{o}))V_1 &= 
  V_{1}(1 \underset{\nu}{_\beta\otimes_\ga} U^{\ga}_{\nu}\ga^{o}(x^{o}) (U^{\ga}_{\nu})^{*} ) = 
  V_{1}(1 \underset{\nu}{_\beta\otimes_\ga} (1_{H} \otimes x^{o})), \\
  (\id \otimes \beta^{\dag})(\widehat{\ga}^{o}(x^{o}))V_1 &= 
  V_{1}(1 \underset{\nu}{_\beta\otimes_\ga} U^{\ga}_{\nu}\widehat{\ga}^{o}(x^{o}) (U^{\ga}_{\nu})^{*} ) = 
  V_{1}(1 \underset{\nu}{_\beta\otimes_\ga} \beta(x)).
\end{align*}

(ii) We proceed as above. First,
 we have
\[U^\ga_\nu(U^{\widehat{\ga}}_\nu)^*l_\xi qJ_\nu\Lambda_\nu(n)=U^\ga_\nu(U^{\widehat{\ga}}_\nu)^*(\xi\otimes J_\nu nJ_\nu\Lambda_\nu (q))=
\beta(n^*)U^\ga_\nu(U^{\widehat{\ga}}_\nu)^*(\xi\otimes\Lambda_\nu (q)),\]
which gives the proof of the first part of (ii). Let now $\xi'\in H$, $q'\in\gN_\nu$. Then
\begin{multline*}
(U^\ga_\nu (U^{\widehat{\ga}}_\nu)^*(\xi\otimes\Lambda_\nu(q))\underset{\nu}{_\beta\otimes_\ga}\Xi|U^\ga_\nu (U^{\widehat{\ga}}_\nu)^*(\xi'\otimes\Lambda_\nu(q'))\underset{\nu}{_\beta\otimes_\ga}\Xi')=\\
=(\ga(\langle U^\ga_\nu (U^{\widehat{\ga}}_\nu)^*(\xi\otimes\Lambda_\nu(q)), U^\ga_\nu (U^{\widehat{\ga}}_\nu)^*(\xi'\otimes\Lambda_\nu(q'))\rangle _{\beta, \nu^{\op}})\Xi|\Xi')=\\
=(\xi|\xi')(\ga(q'^*q)\Xi|\Xi')
\end{multline*}
which proves the existence of an isometry $V_2$ satisfying the above
formula. Again, as the image of $V_2$ is dense in $H\otimes H\otimes H_\nu$, we get (ii).

(iii) Applying (i), we get
\begin{multline*}
V_1[U^\ga_\nu (U^{\widehat{\ga}}_\nu)^*(\xi\otimes\Lambda_\nu(q))\underset{\nu}{_\beta\otimes_\ga}U^\ga_{\corr{\nu}}( \eta\otimes J_\nu\Lambda_\nu(p))]=\\
=\eta\otimes\beta(p^*)U^\ga_\nu (U^{\widehat{\ga}}_\nu)^*(\xi\otimes\Lambda_\nu(q))
=\eta\otimes U^\ga_\nu (U^{\widehat{\ga}}_\nu)^*(\xi\otimes J_\nu pJ_\nu\Lambda_\nu(q))=\\
=\eta\otimes U^\ga_\nu (U^{\widehat{\ga}}_\nu)^*(\xi\otimes qJ_\nu\Lambda_\nu(p)),
\end{multline*}
and, applying (ii), we get
\begin{multline*}\ V_2[U^\ga_\nu (U^{\widehat{\ga}}_\nu)^*(\xi\otimes\Lambda_\nu(q))\underset{\nu}{_\beta\otimes_\ga}U^\ga_{\corr{\nu}}( \eta\otimes J_\nu\Lambda_\nu(p))]=\\
=\xi\otimes\ga(q)U^\ga_{\corr{\nu}}( \eta\otimes J_\nu\Lambda_\nu(p))
=\xi\otimes U^\ga_\nu(\eta\otimes qJ_\nu\Lambda_\nu(p)),
\end{multline*}
from which we easily get $(1_H\otimes U^{\widehat{\ga}}_\nu
(U^\ga_\nu)^*)V_1=(\sigma\otimes 1_{H\nu})(1_H\otimes
(U^\ga_\nu)^*)V_2$. Using Corollary \ref{corintegrable3}(iii), we conclude
\begin{align*}
V_{2}V_{1}^{*}=\sigma_{12}(U^{\ga}_{\nu})_{13}(U^{\widehat{\ga}}_{\nu})_{23}(U^{\ga}_{\nu})_{23}^{*}
&=
\sigma_{12}W_{12}(U^{\widehat{\ga}}_{\nu})_{23}(U^{\ga}_{\nu})_{23}^{*}W_{12}^{*}.  \qedhere
\end{align*}
 \end{proof}
\begin{theorem}[\cite{Ti2}]
\label{th1}
 Let $\bfG$ be a locally compact quantum group and  $(N, \ga, \widehat{\ga})$  a braided-commutative $\bfG$-Yetter-Drinfel'd algebra. We use the notations of \ref{propV}. 
 \begin{enumerate}
 \item  For $X\in\bfG\ltimes_\ga N$, let
   $\widetilde{\Gamma}(X)=V_1^*\tilde{\ga}(X)V_1$. Then this defines a
   normal $*$-homomorphism $\widetilde{\Gamma}$ from $\bfG\ltimes_\ga
   N$ into $(\bfG\ltimes_\ga
   N)\underset{N}{_\beta*_\ga}(\bfG\ltimes_\ga N)$. For all $x\in N$,
   \begin{align*}
     \widetilde{\Gamma}(\ga(x))&=\ga(x)\underset{N}{_\beta\otimes_\ga}(1_H\otimes
     1_{H_\nu}), \\ \widetilde{\Gamma}(\beta(x))&=(1_H\otimes
     1_{H_\nu})\underset{N}{_\beta\otimes_\ga}\beta(x),
   \end{align*}
   and for all $y\in \widehat M$,
   \[\widetilde{\Gamma}(y\otimes
   1_{H_\nu})=V_1^*(\widehat{\Gamma}^{\op}(y) \otimes
   1)V_1=V_2^*(\widehat{\Gamma}(y)\otimes 1_{H_\nu})V_2.\]
   
 \item  $(N, \bfG\ltimes_\ga N, \ga, \beta, \widetilde{\Gamma})$ is a
   Hopf bimodule.
   \item We have $\tilde\ga(\beta^\dag (x^o))=(id\otimes\beta^\dag)\widehat{\ga}^o(x^o)$, where $\beta^\dag$ has been defined in \ref{propV}. 
 \end{enumerate}
\end{theorem}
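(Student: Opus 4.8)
The plan is to establish (3) by a direct computation, then use it together with Proposition~\ref{propV} to identify $\widetilde{\Gamma}$ on a set of generators and so obtain (1), and finally to deduce (2), whose only substantial ingredient is co-associativity. For (3), starting from $\tilde{\ga}(Y)=(\widehat{W}^{\op*}\otimes 1)(1\otimes Y)(\widehat{W}^{\op}\otimes 1)$ and the formula $\beta(x)=U^\ga_\nu\widehat{\ga}^{\op}(x^{\op})(U^\ga_\nu)^*$ of \ref{ThBC}, placing $U^\ga_\nu$ and $\widehat{\ga}^{\op}(x^{\op})$ on the last two legs, I would write $\tilde{\ga}(\beta(x))=\widehat{W}^{\op*}_{12}(U^\ga_\nu)_{23}\,\widehat{\ga}^{\op}(x^{\op})_{23}\,((U^\ga_\nu)^*)_{23}\widehat{W}^{\op}_{12}$. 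Since $W^{\com}=(J\otimes J)W(J\otimes J)$ lies in $M'\otimes\widehat{M}$ and $\widehat{W}^{\op}=\sigma(W^{\com})^*\sigma$, the unitary $\widehat{W}^{\op}$ lies in $\widehat{M}\otimes M'$, hence $\widehat{W}^{\op}_{12}$ commutes with $(U^\ga_\nu)_{23}$; moving it through leaves $(U^\ga_\nu)_{23}\,(\widehat{\Gamma}^{\op}\otimes\id)(\widehat{\ga}^{\op}(x^{\op}))\,((U^\ga_\nu)^*)_{23}$. Because $\widehat{\ga}^{\op}$ is a left action of $\widehat{\bfG}^{\op}$ (\ref{BCdef1} applied to $\widehat{\ga}$), one has $(\widehat{\Gamma}^{\op}\otimes\id)\widehat{\ga}^{\op}=(\id\otimes\widehat{\ga}^{\op})\widehat{\ga}^{\op}$, and conjugating $(\id\otimes\widehat{\ga}^{\op})(\widehat{\ga}^{\op}(x^{\op}))$ by $(U^\ga_\nu)_{23}$ is precisely $(\id\otimes\beta^{\dag})(\widehat{\ga}^{\op}(x^{\op}))$ by the definition of $\beta^{\dag}$; this proves (3).

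For (1): as $\tilde{\ga}$ is a left action (hence a normal injective $*$-homomorphism into $\widehat{M}\otimes(\bfG\ltimes_\ga N)$) and $V_1$ a unitary, $\widetilde{\Gamma}=\Ad(V_1^*)\circ\tilde{\ga}$ is automatically a normal injective $*$-homomorphism, so only the values on generators and the location of the range need checking. Using $\tilde{\ga}(\ga(x))=1\otimes\ga(x)$ and $\ga(x)\in\ga(N)\subseteq\beta(N)'$ (\ref{ThBC}(i)), the intertwining property of $V_1$ in \ref{propV}(i) gives $\widetilde{\Gamma}(\ga(x))=\ga(x)\underset{N}{_\beta\otimes_\ga}(1_H\otimes 1_{H_\nu})$; combining (3) with the last display of \ref{propV}(i) gives $\widetilde{\Gamma}(\beta(x))=V_1^*(\id\otimes\beta^{\dag})(\widehat{\ga}^{\op}(x^{\op}))V_1=(1_H\otimes 1_{H_\nu})\underset{N}{_\beta\otimes_\ga}\beta(x)$; and $\tilde{\ga}(y\otimes 1)=\widehat{\Gamma}^{\op}(y)\otimes 1$ gives at once $\widetilde{\Gamma}(y\otimes 1)=V_1^*(\widehat{\Gamma}^{\op}(y)\otimes 1)V_1$, while the identity $V_2V_1^*=\sigma_{12}W_{12}(U^{\widehat{\ga}}_\nu)_{23}((U^\ga_\nu)^*)_{23}W^*_{12}$ of \ref{propV}(iii), together with $\widehat{\Gamma}^{\op}(y)=W(y\otimes 1)W^*$ (so that $W^*_{12}(\widehat{\Gamma}^{\op}(y)\otimes 1)W_{12}=y\otimes 1\otimes 1$ commutes with $(U^{\widehat{\ga}}_\nu)_{23}$ and $(U^\ga_\nu)_{23}$) and $\widehat{\Gamma}^{\op}=\varsigma\circ\widehat{\Gamma}$, yields the second expression $V_2^*(\widehat{\Gamma}(y)\otimes 1_{H_\nu})V_2$. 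Finally, $\widetilde{\Gamma}(\ga(x))$ and $\widetilde{\Gamma}(\beta(x))$ manifestly lie in $(\bfG\ltimes_\ga N)\underset{N}{_\beta*_\ga}(\bfG\ltimes_\ga N)$; for $\widetilde{\Gamma}(y\otimes 1)$ one checks that it commutes with every $(1_H\otimes 1_{H_\nu})\underset{N}{_\beta\otimes_\ga}w$ with $w\in(\bfG\ltimes_\ga N)'$, by conjugating with $V_2V_1^*$ and using that $\tilde{\ga}$ has range in $\widehat{M}\otimes(\bfG\ltimes_\ga N)$ together with the relations of \ref{corintegrable3}; since $\bfG\ltimes_\ga N=(\ga(N)\cup\widehat{M}\otimes\mathbb{C})''$, $\widetilde{\Gamma}$ is normal and the fiber product weakly closed, the whole range lies in the fiber product.

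For (2): the algebraic axioms of a Hopf bimodule are already present — $\ga$ is a faithful normal unital $*$-homomorphism into $\bfG\ltimes_\ga N$, $\beta$ a faithful normal anti-$*$-homomorphism into $\bfG\ltimes_\ga N$ (here braided-commutativity enters, via \ref{ThBC}(ii)), their ranges commute (\ref{ThBC}(i)), and the two covariance relations $\widetilde{\Gamma}(\beta(X))=1\underset{N}{_\beta\otimes_\ga}\beta(X)$, $\widetilde{\Gamma}(\ga(X))=\ga(X)\underset{N}{_\beta\otimes_\ga}1$ are the formulas of (1). It then remains to prove co-associativity $(\widetilde{\Gamma}\underset{N}{_\beta*_\ga}\id)\widetilde{\Gamma}=(\id\underset{N}{_\beta*_\ga}\widetilde{\Gamma})\widetilde{\Gamma}$, which I would derive from the left-action identity $(\id\otimes\tilde{\ga})\tilde{\ga}=(\widehat{\Gamma}^{\op}\otimes\id)\tilde{\ga}$: build out of $V_1$ (and the formulas of \ref{propV}) the two unitaries identifying the iterated relative tensor products $\big((H\otimes H_\nu)\underset{\nu}{_\beta\otimes_\ga}(H\otimes H_\nu)\big)\underset{\nu}{_\beta\otimes_\ga}(H\otimes H_\nu)$ and $(H\otimes H_\nu)\underset{\nu}{_\beta\otimes_\ga}\big((H\otimes H_\nu)\underset{\nu}{_\beta\otimes_\ga}(H\otimes H_\nu)\big)$ with $H\otimes H\otimes H\otimes H_\nu$, check that under these identifications $\widetilde{\Gamma}\underset{N}{_\beta*_\ga}\id$ and $\id\underset{N}{_\beta*_\ga}\widetilde{\Gamma}$ become $\Ad$ of suitable leg-shifted copies of $V_1$ precomposed with $\tilde{\ga}\otimes\id$ and $\id\otimes\tilde{\ga}$, and read off the equality from the action identity and $\widehat{\Gamma}^{\op}(y)=W(y\otimes 1)W^*$. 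This book-keeping — transporting the co-associativity of the dual action through a coherent family of relative-tensor-product identifications — is the main obstacle; the element-wise check that $\widetilde{\Gamma}(y\otimes 1)$ lies in the fiber product and the matching of its two expressions in (1) are of the same flavour but shorter, and everything else is formal once \ref{propV} and \ref{corintegrable3} are in hand.
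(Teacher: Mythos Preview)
Your proof is essentially the paper's: the computations for (iii) and the formulas in (i) are identical, relying on $\widehat{W}^{\op}\in\widehat{M}\otimes M'$ commuting with $(U^\ga_\nu)_{23}$ and on the intertwining relations for $V_1$ and $V_2$ from Proposition~\ref{propV}; the fiber-product check for $\widetilde{\Gamma}(y\otimes 1)$ is likewise handled in the paper by using the $V_1$-expression to commute with $Y\underset{N}{_\beta\otimes_\ga}1$ and the $V_2$-expression to commute with $1\underset{N}{_\beta\otimes_\ga}Y$, just as you indicate.

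The one place where the paper is more explicit than your sketch is co-associativity. Your plan to handle both iterates with ``leg-shifted copies of $V_1$'' runs into the fact that $V_1$ only trivialises the \emph{first} relative-tensor leg (it intertwines $X\underset{N}{_\beta\otimes_\ga}1$ with $1_H\otimes X$ for $X\in\beta(N)'$), whereas $(\id\underset{N}{_\beta*_\ga}\widetilde{\Gamma})$ requires trivialising the second leg; this is exactly what $V_2$ does (it intertwines $1\underset{N}{_\beta\otimes_\ga}X$ with $1_H\otimes X$ for $X\in\ga(N)'$). Accordingly, the paper uses the $V_2$-picture $\widetilde{\Gamma}(y\otimes 1)=V_2^*(\widehat{\Gamma}(y)\otimes 1)V_2$ to expand $(\id\underset{N}{_\beta*_\ga}\widetilde{\Gamma})\widetilde{\Gamma}$, and a flipped variant $\tilde V_1$ of $V_1$ to expand $(\widetilde{\Gamma}\underset{N}{_\beta*_\ga}\id)\widetilde{\Gamma}$; the two pictures meet on $H\otimes(H\otimes H_\nu)\otimes H$, where co-associativity of $\widehat{\Gamma}$ (equivalently, your action identity for $\tilde{\ga}$) closes the diagram. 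Once you make this $V_1$/$V_2$ split your bookkeeping goes through, but $V_1$ alone will not produce the identification needed on the $(\id\underset{N}{_\beta*_\ga}\widetilde{\Gamma})$ side.
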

\begin{proof}
  (i) and (iii) Let $x\in N$. Then \ref{propV}(iii) implies
\[\widetilde{\Gamma}(\ga(x))=V_1^*\tilde{\ga}(\ga(x))V_1=V_1^*(1_H\otimes \ga(x))V_1=\ga(x)\underset{N}{_\beta\otimes_\alpha}(1_H\otimes 1_{H_\nu}),\]
in particular, $\widetilde{\Gamma}(\ga(x))$ lies in  $(\bfG\ltimes_\ga N)\underset{N}{_\beta*_\ga}(\bfG\ltimes_\ga
N)$. 

Next, by definition,
\begin{align*}
  \widetilde{\Gamma}(\beta(x))&=
\Ad(V_1^*\widehat{W}^{\op*}_{12})[1
  \otimes \beta(x)] =\Ad(V_1^*\widehat{W}^{\op*}_{12}(U^{\ga}_{\nu})_{23})[1
  \otimes \widehat\ga^{\op}(x^{\op})].
\end{align*}
Since
$U^\ga_\nu \in M\otimes B(H_\nu)$ commutes with $\widehat{W}^{\op} \in
\widehat{M}\otimes M'$,  this is equal to
\begin{align*}
  \Ad(V_1^*(U^{\ga}_{\nu})_{23})\widehat{W}^{o*}_{12})[1
  \otimes \widehat\ga^{\op}(x^{\op})] 
  = \Ad(V_1^*)[(\id \otimes
\beta^{\dag})(\widehat\ga^{\op}(x^{\op}))]= (1_H\otimes 1_{H_\nu})\underset{N}{_\beta\otimes_\alpha}\beta(x),
\end{align*}
where we used \ref{propV}(i). From this calculation, one gets (iii) as well. 

For $y \in \widehat M$, we get by definition of $\tilde \Gamma$
\begin{align*}
  \tilde \Gamma(y\otimes 1) = \Ad(V_{1}^{*})[\tilde{\ga}(y\otimes
  1)] 
  = \Ad(V_{1}^{*})[\widehat{\Gamma}^{\op}(y) \otimes 1] 
=\Ad(V_{1}^{*}W_{12})[y\otimes 1\otimes 1].
\end{align*}
By \ref{propV} (iii), $V_{1}^{*}W_{12} =
V_{2}^{*}\sigma_{12}W_{12}(U^{\widehat\ga}_{\nu})_{23}
(U^{\ga}_{\nu})_{23}^{*}$ and hence
\begin{align*}
  \tilde \Gamma(y\otimes 1) &= \Ad(  V_{2}^{*}\sigma_{12}W_{12}(U^{\widehat\ga}_{\nu})_{23}
  (U^{\ga}_{\nu})_{23}^{*})[y\otimes 1\otimes 1] =
  \Ad(V_{2}^{*})(\widehat{\Gamma}(y) \otimes 1).
\end{align*}
To see that $\tilde \Gamma(y\otimes 1)$ lies in   $(\bfG\ltimes_\ga N)\underset{N}{_\beta*_\ga}(\bfG\ltimes_\ga
N)$, note that for any $Y$ in $(\bfG\ltimes_{\ga} N)'$, 
\begin{align*}
  \Ad(V_{1})[Y\underset{N}{_\beta\otimes_\alpha}(1_H\otimes
  1_{H_\nu})] &= 1_{H} \otimes Y
  =\Ad(V_{2})[(1_H\otimes
  1_{H_\nu}) \underset{N}{_\beta\otimes_\alpha} Y]
\end{align*}
 by \ref{propV}, and $1_{H} \otimes Y$ commutes with
 \begin{align*}
 \Ad(V_{1})(\widetilde{\Gamma}(y\otimes 1))&=\widehat{\Gamma}^{\op}(y)
\otimes 1   &&\text{and} &\Ad(V_{2})(\widetilde{\Gamma}(y\otimes 1))&=\widehat{\Gamma}(y)
\otimes 1.
 \end{align*}

(ii)
To get (ii), we must verify that $\widetilde \Gamma$ is co-associative.
It is trivial to get that
\begin{align*}
  (\widetilde{\Gamma}\underset{N}{_\beta*_\ga}\id)\widetilde{\Gamma}(\ga(x))=\ga(x)\underset{N}{_\beta\otimes_\ga}(1_H\otimes 1_{H_\nu})\underset{N}{_\beta\otimes_\ga}(1_H\otimes 1_{H_\nu})=(\id\underset{N}{_\beta*_\ga}\widetilde{\Gamma})\widetilde{\Gamma}(\ga(x))
\end{align*}
for all $x\in N$.

  Next, let $y\in \widehat{M}$ and consider the following
diagrams,
\begin{align*}
  \xymatrix@C=20pt@R=20pt{
    \widehat M \otimes 1_{H_{\nu}} \ar[r]^{\widehat\Gamma \otimes \id}
    \ar[rd]_(0.4){\widetilde \Gamma} & \widehat M \otimes \widehat M \otimes
    1_{H_{\nu}} \ar[d]^{\ad_{(V_{2}^{*})} \otimes \id} \ar[r]^(0.4){\id
      \otimes \widetilde \Gamma} & \widehat M \otimes  (\bfG \ltimes_{\ga} N)\underset{N}{_{\beta}*_{\ga}}
(\bfG \ltimes_{\ga} N)  \ar[d]_{\ad_{(V_{2}^{*})}
 \underset{N}{_{\beta}*_{\ga}} \id}\\
& (\bfG \ltimes_{\ga} N) \underset{N}{_{\beta}*_{\ga}}
(\bfG \ltimes_{\ga} N)  \ar[r]_(0.4){\id
  \underset{N}{_{\beta}*_{\ga}}\widetilde \Gamma}& 
(\bfG \ltimes_{\ga} N) \underset{N}{_{\beta}*_{\ga}}
(\bfG \ltimes_{\ga} N) \underset{N}{_{\beta}*_{\ga}}
(\bfG \ltimes_{\ga} N) \\
\widehat M \otimes 1_{H_{\nu}} \ar[r]^(0.4){\varsigma_{23}(\widehat \Gamma \otimes
  \id)} \ar[rd]_(0.4){\tilde \Gamma} & \widehat M \otimes 1_{H_{\nu}} \otimes
\widehat M \ar[r]^(0.4){\widetilde \Gamma \otimes \id} \ar[d]^{\id \otimes
  \ad_{(\tilde V_{1}^{*})}} & 
(\bfG \ltimes_{\ga} N) \underset{N}{_{\beta}*_{\ga}}
(\bfG \ltimes_{\ga} N)  \otimes \widehat M
\ar[d]_{\id  \underset{N}{_{\beta}*_{\ga}}  \ad_{(\tilde V_{1}^{*})}}
\\
& (\bfG \ltimes_{\ga} N) \underset{N}{_{\beta}*_{\ga}}
(\bfG \ltimes_{\ga} N)  \ar[r]_(0.4){\widetilde \Gamma
  \underset{N}{_{\beta}*_{\ga}}\id} &(\bfG \ltimes_{\ga} N) \underset{N}{_{\beta}*_{\ga}}
(\bfG \ltimes_{\ga} N) \underset{N}{_{\beta}*_{\ga}}
(\bfG \ltimes_{\ga} N)
} 
\end{align*}
where $\tilde V_{1}$ denotes the composition of  the unitary $V_1$
with the flip $\eta \otimes \xi \otimes \zeta \mapsto \xi \otimes \zeta
\otimes \eta$ (for $\xi$, $\eta$ in $H$ and $\zeta$ in $H_\nu$). 
The triangles commute by (i) and the squares
commutes by definition of $V_{1}$ and $V_{2}$. Next, consider the following diagram:
\begin{align*} 
  \xymatrix@C=-30pt@R=15pt{
    & \widehat M \otimes 1_{H_{\nu}} \ar[rd]^{\varsigma_{23}(\widehat\Gamma
      \otimes \id)}   \ar[ld]_{\widehat\Gamma \otimes \id} & \\
    \widehat M \otimes \widehat M \otimes 1_{H_{\nu}} \ar[dd]_{\id \otimes
      \widetilde \Gamma}  \ar[rd]^{\id \otimes \varsigma_{23}(\widehat\Gamma
      \otimes \id)} & &  \widehat
    M \otimes 1_{H_{\nu}} \otimes \widehat M \ar[dd]^{\widetilde\Gamma \otimes
    \id}  \ar[ld]_{\widehat\Gamma \otimes \id \otimes \id}\\
&\widehat M \otimes \widehat M \otimes 1_{H_{\nu}} \otimes \widehat M
\ar[ld]_{\id \otimes \ad_{(\tilde V_{1}^{*})}} \ar[rd]^{\ad_{(V_{2}^{*})}
\otimes \id} & \\
\widehat M \otimes (\bfG \ltimes_{\ga} N) \underset{N}{_{\beta}*_{\ga}}
(\bfG \ltimes_{\ga} N)
 \ar[rd]^(0.6){\ad_{(V_{2}^{*})}
 \underset{N}{_{\beta}*_{\ga}} \id}
&& (\bfG \ltimes_{\ga} N) \underset{N}{_{\beta}*_{\ga}}
(\bfG \ltimes_{\ga} N) \otimes \widehat M \ar[ld]_(0.6){\id  \underset{N}{_{\beta}*_{\ga}} \ad_{(\tilde V_{1}^{*})}}\\
&(\bfG \ltimes_{\ga} N) \underset{N}{_{\beta}*_{\ga}}
(\bfG \ltimes_{\ga} N) \underset{N}{_{\beta}*_{\ga}}
(\bfG \ltimes_{\ga} N) &
  }
\end{align*}
The upper middle cell commutes by co-associativity of $\widehat \Gamma$, the
left and the right triangle commute by (i), and the
lower middle cell commutes because the following diagram does,
\begin{align*}
\xymatrix@C=60pt{
  (H \otimes H_{\nu}) \underset{\nu}{_{\beta}\otimes_{\ga}} (H \otimes
  H_{\nu}) \underset{\nu}{_{\beta}\otimes_{\ga}} (H \otimes H_{\nu})
  \ar[r]^{V_{2} \underset{N}{_{\beta}\otimes_{\ga}} \id}
\ar[d]_{\id \underset{N}{_{\beta}\otimes_{\ga}} \tilde V_{1}} &
  H \otimes ((H \otimes H_{\nu})\underset{\nu}{_{\beta}\otimes_{\ga}} (H
  \otimes H_{\nu})) \ar[d]^{\id \otimes \tilde V_{1}} \\
   ((H \otimes H_{\nu})\underset{\nu}{_{\beta}\otimes_{\ga}} (H
  \otimes H_{\nu})) \otimes H \ar[r]_{V_{2} \otimes \id} &
  H \otimes (H \otimes H_{\nu}) \otimes H
}
\end{align*}
where both compositions are given by
\begin{align*}
  U^{\ga}_{\nu}(U^{\widehat\ga}_{\nu})^{*}(\xi \otimes \Lambda_{\nu}(q))
  \underset{\nu} {_{\beta}\otimes_{\ga}} \Xi \underset{\nu}{_{\beta}\otimes_{\ga}}
  U^{\ga}_{\nu}(\eta \otimes J_{\nu}\Lambda_{\nu}(p)) \mapsto
  \xi \otimes \ga(q)\beta(p^{*})\Xi \otimes \eta.
\end{align*}
Combining everything,  we can conclude that
$(\widetilde \Gamma \underset{N}{_{\beta}*_{\ga}} \id) \circ \widetilde
  \Gamma  (y\otimes 1)=  (\id \underset{N}{_{\beta}*_{\ga}} \widetilde
  \Gamma) \circ\widetilde \Gamma (y\otimes 1)$.
 \end{proof}


\begin{proposition}[\cite{Ti2}]
\label{propI}
 Consider on the Hilbert space $H\otimes H_\nu$ the anti-linear operator:
\[I=U^\ga_\nu (J\otimes J_\nu)U^{\widehat{\ga}}_{\nu}(U^\ga_\nu)^*=U^\ga_\nu(U^{\widehat{\ga}}_{\nu})^*(J\otimes J_\nu)(U^\ga_\nu)^*=U^\ga_\nu(U^{\widehat{\ga}}_{\nu})^*J_{\widetilde{\hat{\nu}}}U^{\widehat{\ga}}_{\nu}(U^\ga_\nu)^*,\]
where $\widetilde{\hat{\nu}}$ denotes the dual weight of $\nu$ on the
crossed product $\widehat{\bfG}\ltimes_{\widehat{\ga}}N$. 
\begin{enumerate}
\item  $I$ is a bijective isometry and $I^2=1$.
\item  $I\ga(x)^{*}I=\beta(x)$ and $I\beta(x)^*I=\ga(x)$ for all $x\in
  N$.
\item  $I(y^*\otimes 1)I=\widehat{R}(y)\otimes 1$ for all $y\in
  \widehat{M}$.
\item  If $\sigma_\nu$ denotes the flip from $(H\otimes
  H_\nu)\underset{\nu}{_\beta\otimes_\ga}(H\otimes H_\nu)$ to
  $(H\otimes H_\nu)\underset{\nu^{\op}}{_\ga\otimes_\beta}(H\otimes
  H_\nu)$, then
  \[V_2=(J\otimes
  I)V_1(I\underset{N^{\op}}{_\ga\otimes_\beta}I)\sigma_\nu.\]
\end{enumerate}
\end{proposition}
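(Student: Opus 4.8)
The plan is to first pin down the three displayed expressions for $I$, then to obtain (1), (2) and (4) by short manipulations, treating (3) last as the computational core.

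\emph{Equality of the three formulas for $I$.} Write $K=J\otimes J_\nu$, an anti-unitary involution. The standard-implementation identities recalled just before \ref{lemBC} give, for $\ga$, that $U^\ga_\nu=J_{\tilde\nu}(\widehat J\otimes J_\nu)$ and $(U^\ga_\nu)^*=(\widehat J\otimes J_\nu)U^\ga_\nu(\widehat J\otimes J_\nu)$; applied to the action $\widehat{\ga}$ of $\widehat{\bfG}$, whose dual is $\bfG$ and whose relevant modular conjugation is therefore $J=J_\varphi$, they read $U^{\widehat{\ga}}_\nu=J_{\widetilde{\hat\nu}}K$ and $(U^{\widehat{\ga}}_\nu)^*=KU^{\widehat{\ga}}_\nu K$. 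From $(U^{\widehat{\ga}}_\nu)^*=KU^{\widehat{\ga}}_\nu K$ one gets $KU^{\widehat{\ga}}_\nu=(U^{\widehat{\ga}}_\nu)^*K$, which turns the first formula for $I$ into the second; and from $U^{\widehat{\ga}}_\nu=J_{\widetilde{\hat\nu}}K$ one gets $K=J_{\widetilde{\hat\nu}}U^{\widehat{\ga}}_\nu$, hence $K(U^\ga_\nu)^*=J_{\widetilde{\hat\nu}}U^{\widehat{\ga}}_\nu(U^\ga_\nu)^*$, which turns the second formula into the third.

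\emph{Parts (1) and (2).} As $I$ is a product of two unitaries and the anti-unitary $K$, it is anti-unitary, hence a bijective isometry. Using the second formula together with $(U^\ga_\nu)^*U^\ga_\nu=1$, $K(U^{\widehat{\ga}}_\nu)^*K=U^{\widehat{\ga}}_\nu$ and $(U^{\widehat{\ga}}_\nu)^*U^{\widehat{\ga}}_\nu=1$, a one-line cancellation gives $I^2=1$ (whence also $I^*=I$). For (2), substitute $\ga(x)^*=U^\ga_\nu(1\otimes x^*)(U^\ga_\nu)^*$ into $I\ga(x)^*I$ with $I$ in its second form; the factors $(U^\ga_\nu)^*U^\ga_\nu$ cancel and, since $K(1\otimes x^*)K=1\otimes J_\nu x^*J_\nu$ and $K(U^{\widehat{\ga}}_\nu)^*K=U^{\widehat{\ga}}_\nu$, the expression reduces to $U^\ga_\nu\big[(U^{\widehat{\ga}}_\nu)^*(1\otimes J_\nu x^*J_\nu)U^{\widehat{\ga}}_\nu\big](U^\ga_\nu)^*$. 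By Lemma \ref{lemBC}(ii) applied to $\widehat{\ga}$ the bracket equals $\widehat{\ga}^{\op}(x^{\op})$, so this is $\beta(x)$ by the definition in \ref{ThBC}. Replacing $x$ by $x^*$ and conjugating by $I$ (using $I^2=1$ and $\beta(x^*)=\beta(x)^*$) gives $I\beta(x)^*I=\ga(x)$.

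\emph{Part (4).} By \ref{propV}(ii) the vectors $U^\ga_\nu(U^{\widehat{\ga}}_\nu)^*(\xi\otimes\Lambda_\nu(q))\underset{\nu}{_\beta\otimes_\ga}\Xi$ span a dense subspace, so it suffices to check the identity on them. The left-hand side $V_2$ sends such a vector to $\xi\otimes\ga(q)\Xi$. For the right-hand side, $\sigma_\nu$ produces $\Xi\underset{\nu^{\op}}{_\ga\otimes_\beta}U^\ga_\nu(U^{\widehat{\ga}}_\nu)^*(\xi\otimes\Lambda_\nu(q))$, and $I\underset{N^{\op}}{_\ga\otimes_\beta}I$ — which is well-defined with the indicated source and target precisely because, by (2), $I\ga(x^*)I=\beta(x)$ and $I\beta(x^*)I=\ga(x)$ — produces $I\Xi\underset{\nu}{_\beta\otimes_\ga}IU^\ga_\nu(U^{\widehat{\ga}}_\nu)^*(\xi\otimes\Lambda_\nu(q))$. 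Using the third formula for $I$ together with $J_{\widetilde{\hat\nu}}=U^{\widehat{\ga}}_\nu K$ one finds $IU^\ga_\nu(U^{\widehat{\ga}}_\nu)^*=U^\ga_\nu K$, so the second leg equals $U^\ga_\nu(J\xi\otimes J_\nu\Lambda_\nu(q))$; then $V_1$ sends the vector to $J\xi\otimes\beta(q^*)I\Xi$ by \ref{propV}(i), and applying $J\otimes I$ followed by (2) gives $\xi\otimes I\beta(q^*)I\Xi=\xi\otimes\ga(q)\Xi$, matching the left-hand side. By density, (4) follows.

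\emph{Part (3), the main obstacle.} I would write a general $y\in\widehat M$ as $(\omega\otimes\id)(W)$ and first compute $(U^\ga_\nu)^*(y^*\otimes 1)U^\ga_\nu$ by slicing the identity $(U^\ga_\nu)^*_{23}W^*_{12}(U^\ga_\nu)_{23}=(U^\ga_\nu)_{13}W^*_{12}$, which is nothing but $(\Gamma\otimes\id)(U^\ga_\nu)=(U^\ga_\nu)_{23}(U^\ga_\nu)_{13}$ rewritten via $\Gamma(\cdot)=W^*(1\otimes\cdot)W$. Substituting the result into $I(y^*\otimes 1)I$ with $I$ in its first form, one simplifies using: the commutation relations between $W$ and the modular conjugations $J,\widehat J$ (in particular $(\widehat J\otimes J)W(\widehat J\otimes J)=W^*$ and its consequences), the relation $KU^{\widehat{\ga}}_\nu K=(U^{\widehat{\ga}}_\nu)^*$, the representation property of $U^{\widehat{\ga}}_\nu$ with respect to $\widehat W=\sigma W^*\sigma$, and finally the description of the unitary antipode of $\widehat{\bfG}$ on slices of $W$ (which follows from $R[(\id\otimes\omega_{\xi,\eta})(W)]=(\id\otimes\omega_{J\eta,J\xi})(W)$ and $\widehat R(y)=Jy^*J$), so as to arrive at $\widehat R(y)\otimes 1$. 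A possibly cleaner variant uses Corollary \ref{corintegrable3}(iii), namely $\Ad(1\otimes U^{\widehat{\ga}}_\nu(U^\ga_\nu)^*)(W\otimes 1)=(U^\ga_\nu)^*_{13}W_{12}$, together with the third form $I=Z^*J_{\widetilde{\hat\nu}}Z$ with $Z=U^{\widehat{\ga}}_\nu(U^\ga_\nu)^*$ and the known action of $J_{\widetilde{\hat\nu}}$. In either approach the difficulty is purely bookkeeping — tracking several anti-unitaries acting on overlapping legs — but no new idea is required. Once (3) is available, it says exactly that the anti-$*$-automorphism $X\mapsto IX^*I$ of $\bfG\ltimes_\ga N$ acts on $\widehat M\otimes 1$ the way the co-inverse should, which is how it will be used in the sequel.
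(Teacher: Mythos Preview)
Your treatment of the equality of the three formulas and of parts (i), (ii), (iv) is correct and essentially identical to the paper's; in particular, the paper also checks (iv) on vectors of the form $U^\ga_\nu(U^{\widehat{\ga}}_\nu)^*(\xi\otimes\Lambda_\nu(q))\underset{\nu}{_\beta\otimes_\ga}\Xi$ and uses the same cancellation $IU^\ga_\nu(U^{\widehat{\ga}}_\nu)^*=U^\ga_\nu(J\otimes J_\nu)$.

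For (iii), however, you leave only a sketch, and you miss the organizing trick that makes the bookkeeping trivial. Rather than slicing first and then conjugating slice by slice, the paper computes the single relation
\[
(\widehat{J}\otimes I)\,W_{12}\,(\widehat{J}\otimes I)=W_{12}^*
\]
directly, using the \emph{first} form of $I$ (not the third). Writing $\widehat{J}\otimes I=(U^\ga_\nu)_{23}(\widehat{J}\otimes J\otimes J_\nu)(U^{\widehat{\ga}}_\nu)_{23}(U^\ga_\nu)_{23}^*$, one applies \ref{corintegrable3}(iii) to the innermost conjugation to get $(U^\ga_\nu)_{13}^*W_{12}$; then $(\widehat{J}\otimes J\otimes J_\nu)$ turns this into $(U^\ga_\nu)_{13}W_{12}^*$ via $(\widehat{J}\otimes J)W(\widehat{J}\otimes J)=W^*$ and $(\widehat{J}\otimes J_\nu)U^\ga_\nu(\widehat{J}\otimes J_\nu)=(U^\ga_\nu)^*$; and the outer $(U^\ga_\nu)_{23}$-conjugation collapses this to $W_{12}^*$ by the representation identity $(U^\ga_\nu)_{23}(U^\ga_\nu)_{13}W_{12}^*=W_{12}^*(U^\ga_\nu)_{23}$. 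Slicing the first leg of $(\widehat{J}\otimes I)W_{12}(\widehat{J}\otimes I)=W_{12}^*$ then gives (iii) in one line. Your two proposed routes would also work and invoke exactly the same ingredients, but neither is carried out, and both involve handling the anti-unitaries on slices rather than on $W$ itself, which is precisely the ``bookkeeping'' you flag; the paper's device sidesteps it entirely.
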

\begin{proof}
  (i) The relation $U^{\widehat{\ga}}_{{\nu}}=J_{\widetilde{\hat
      \nu}}(J\otimes J_\nu)$ (\ref{action}) shows that the three
  expressions given for $I$ coincide and that $I$ is isometric,
  bijective, anti-linear, and equal to $I^*$. Moreover, the formula
  $I=U^\ga_\nu(U^{\widehat{\ga}}_{\nu})^*J_{\widetilde{\hat{\nu}}}U^{\widehat{\ga}}_{\nu}(U^\ga_\nu)^*$
  shows that $I^{2}=1_{H} \otimes 1_{H_{\nu}}$.

  (ii) We only need to prove the first equation. But by \ref{ThBC},
  \begin{align*}
    I\ga(x)^{*}I^* &= \Ad(
    U^{\ga}_{\nu}(U^{\widehat{\ga}}_{\nu})^{*}(J \otimes
    J_{\nu})(U^{\ga}_{\nu})^{*})[\ga(x)^{*}] =
    \Ad(U^{\ga}_{\nu}(U^{\widehat{\ga}}_{\nu})^{*})[1 \otimes x^{\op}]
    = \beta(x).
  \end{align*}

  (iii)  Using \ref{corintegrable3}(iii) and the fact that
  $U^{\ga}_{\nu}$ is a representation, we find that
\begin{align*}
(\widehat{J}\otimes I)W_{12}(\widehat{J}\otimes I)
&=
\Ad((U^\ga_\nu)_{23}(\widehat{J}\otimes J\otimes J_\nu)(U^{\widehat{\ga}}_\nu)_{23}(U^\ga_\nu)_{23}^*)[W_{12}]\\
&=
\Ad((U^\ga_\nu)_{23}(\widehat{J}\otimes J\otimes J_\nu))[(U^\ga_\nu)_{13}^*W_{12}]\\
&=
\Ad((U^\ga_\nu)_{23})[(U^\ga_\nu)_{13}W^{*}_{12}] \\
&=
W_{12}^*.
\end{align*}
 For any $\xi$, $\eta$ in $H$, we can conclude that
\[I(J(\omega_{\xi, \eta}\otimes \id)(W)^*J\otimes 1)I=I((\omega_{\widehat{J}\eta, \widehat{J}\xi}\otimes \id)(W)\otimes 1)I=
(\omega_{\xi, \eta}\otimes \id)(W)^*\otimes 1,\]
from which (iii) follows by continuity.

(iv) By (ii), 
\begin{align*}
V_{1}(I\underset{\nu^{\op}}{_\ga\otimes_\beta}I)\sigma_\nu [U^\ga_\nu (U^{\widehat{\ga}}_\nu)^*(\xi\otimes\Lambda_\nu (q))\underset{\nu}{_\beta\otimes_\ga} \Xi] &=
V_{1}
[I\Xi \underset{\nu}{_\beta\otimes_\ga}U^\ga_\nu(J\xi\otimes J_\nu\Lambda_\nu(q))]  \\ &=
J\xi\otimes \beta(q^*)I \Xi \\ & =  J\xi \otimes I\ga(q)\Xi \\ &=
(J\otimes I)V_2[U^\ga_\nu
(U^{\widehat{\ga}}_\nu)^*(\xi\otimes\Lambda_\nu
(q))\underset{\nu}{_\beta\otimes_\ga} \Xi]. \qedhere
\end{align*}
\end{proof}

\begin{theorem}[\cite{Ti2}]
\label{th2}
 Let $\bfG$ be a locally compact quantum group,  $(N, \ga, \widehat{\ga})$ a braided-commutative $\bfG$-Yetter-Drinfel'd algebra and $I$  the anti-linear surjective isometry constructed in \ref{propI}. Then:
 \begin{enumerate}
 \item  For all $z\in \bfG\ltimes_\ga N$, let
   $\widetilde{R}(z)=Iz^*I$. Then $\widetilde{R}$ is an involutive
   anti-$*$-isomorphism of $\bfG\ltimes_\ga N$, and
   $\widetilde{R}(\ga(x))=\beta(x)$, $\widetilde{R}(\beta(x))=\ga(x)$
   and $\widetilde{R}(y\otimes 1_{H_\nu})=\widehat{R}(y)\otimes
   1_{H_\nu}$ for all $x\in N$ and $y\in \widehat M$.
 \item  $\widetilde{R}$ is a co-inverse for the Hopf bimodule $(N,
   \bfG\ltimes_\ga N, \ga, \beta, \widetilde{\Gamma})$ constructed in
   \ref{th1}.
 \end{enumerate}
\end{theorem}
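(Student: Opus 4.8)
The plan is to handle the two parts separately, reducing everything to the explicit relations collected in \ref{propI} and \ref{th1}.

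For part (1): by \ref{propI}(i) the operator $I$ is an anti-linear isometric involution, so $z\mapsto Iz^{*}I$ is a normal, linear, involutive anti-$*$-homomorphism of $B(H\otimes H_{\nu})$ — it reverses products because $(zw)^{*}=w^{*}z^{*}$ and $I^{2}=1$, it preserves adjoints because $I=I^{*}$, and it squares to the identity. It then suffices to see that $\widetilde R$ maps $\bfG\ltimes_{\ga}N$ onto itself. The values of $\widetilde R$ on generators are read off from \ref{propI}: $\widetilde R(\ga(x))=I\ga(x)^{*}I=\beta(x)$ and $\widetilde R(\beta(x))=I\beta(x)^{*}I=\ga(x)$ by \ref{propI}(ii), and $\widetilde R(y\otimes 1_{H_{\nu}})=I(y^{*}\otimes 1_{H_{\nu}})I=\widehat R(y)\otimes 1_{H_{\nu}}$ by \ref{propI}(iii). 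Since $(N,\ga,\widehat\ga)$ is braided-commutative, $\beta(N)\subseteq\bfG\ltimes_{\ga}N$ by \ref{ThBC}(ii), while $\widehat R(\widehat M)\otimes 1_{H_{\nu}}=\widehat M\otimes 1_{H_{\nu}}\subseteq\bfG\ltimes_{\ga}N$; as $\ga(N)$ together with $\widehat M\otimes 1_{H_{\nu}}$ generate $\bfG\ltimes_{\ga}N$ and $\widetilde R$ is normal, $\widetilde R(\bfG\ltimes_{\ga}N)\subseteq\bfG\ltimes_{\ga}N$, and $\widetilde R^{2}=\id$ then forces equality.

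For part (2): recall from \ref{defHopf} that $\widetilde R$ is a co-inverse for $(N,\bfG\ltimes_{\ga}N,\ga,\beta,\widetilde\Gamma)$ precisely when it is an involutive anti-$*$-isomorphism with $\widetilde R\circ\ga=\beta$, $\widetilde R\circ\beta=\ga$ and
\[\widetilde\Gamma\circ\widetilde R=\varsigma_{N^{\op}}\circ\big(\widetilde R\underset{N}{_\beta*_\ga}\widetilde R\big)\circ\widetilde\Gamma.\]
Part (1) gives everything except the displayed identity, and, all maps being normal, it is enough to verify that identity on the generators $\ga(x)$ and $y\otimes 1_{H_{\nu}}$ of $\bfG\ltimes_{\ga}N$ (the $\beta(x)$ case, not strictly needed, goes the same way). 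For $z=\ga(x)$ the left-hand side is $\widetilde\Gamma(\beta(x))=(1_{H}\otimes 1_{H_{\nu}})\underset{N}{_\beta\otimes_\ga}\beta(x)$ by \ref{th1}(i); on the right-hand side $\widetilde\Gamma(\ga(x))=\ga(x)\underset{N}{_\beta\otimes_\ga}(1_{H}\otimes 1_{H_{\nu}})$ is carried by $\widetilde R\underset{N}{_\beta*_\ga}\widetilde R$ to $\beta(x)\underset{N^{\op}}{_\ga\otimes_\beta}(1_{H}\otimes 1_{H_{\nu}})$ and then by $\varsigma_{N^{\op}}$ to $(1_{H}\otimes 1_{H_{\nu}})\underset{N}{_\beta\otimes_\ga}\beta(x)$, so the two sides coincide.

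The remaining generator $z=y\otimes 1_{H_{\nu}}$ is the heart of the matter, and I expect it to be the main obstacle. Here I would combine the two descriptions $\widetilde\Gamma(y\otimes 1)=V_{1}^{*}(\widehat\Gamma^{\op}(y)\otimes 1)V_{1}=V_{2}^{*}(\widehat\Gamma(y)\otimes 1_{H_{\nu}})V_{2}$ from \ref{th1}(i), the identity $V_{2}=(J\otimes I)V_{1}(I\underset{N^{\op}}{_\ga\otimes_\beta}I)\sigma_{\nu}$ from \ref{propI}(iv), the fact (\ref{propI}(iii)) that $I$ implements $\widehat R$ on $\widehat M\otimes 1_{H_{\nu}}$ together with $\widehat R(y)=Jy^{*}J$, and the co-inverse relation $\widehat\Gamma\circ\widehat R=\varsigma\circ(\widehat R\otimes\widehat R)\circ\widehat\Gamma$ of $\widehat\bfG$. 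Computing $\widetilde\Gamma(\widetilde R(y\otimes 1))=\widetilde\Gamma(\widehat R(y)\otimes 1)$ through the $V_{2}$-description, substituting the co-inverse relation of $\widehat\bfG$ and $\widehat R(y)=Jy^{*}J$, and then transporting $V_{2}$ back to $V_{1}$ by \ref{propI}(iv), one should land exactly on $\varsigma_{N^{\op}}\circ(\widetilde R\underset{N}{_\beta*_\ga}\widetilde R)\circ\widetilde\Gamma(y\otimes 1)$. The delicate point will be the leg-bookkeeping: correctly tracking the relative flips $\sigma_{\nu}$ and $\varsigma_{N^{\op}}$ and describing the anti-linear fibre-product map $\widetilde R\underset{N}{_\beta*_\ga}\widetilde R$ as conjugation by the relative tensor product of $I$ with itself, given that relative tensor products are genuinely sensitive to the weight; once the legs are matched the identity follows by a direct substitution of the relations just listed.
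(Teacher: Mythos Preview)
Your proposal is correct and follows essentially the same route as the paper: part (i) is read off from \ref{propI}(i)--(iii) (your explicit mention of \ref{ThBC}(ii) to get $\beta(N)\subseteq\bfG\ltimes_\ga N$ is a useful clarification the paper leaves implicit), and for part (ii) the paper checks the co-inverse identity on the same generators, handling $y\otimes 1_{H_\nu}$ exactly as you outline---via the $V_2$-description of $\widetilde\Gamma(\widehat R(y)\otimes 1)$, the relation $V_2=(J\otimes I)V_1(I\underset{N^{\op}}{_\ga\otimes_\beta}I)\sigma_\nu$ from \ref{propI}(iv), and the co-inverse law $(\widehat R\otimes\widehat R)\widehat\Gamma\widehat R=\widehat\Gamma^{\op}$ for $\widehat\bfG$. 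The ``leg-bookkeeping'' you flag is indeed the only thing requiring care, and it goes through: $\varsigma_{N^{\op}}(\widetilde R\underset{N}{_\beta*_\ga}\widetilde R)$ applied to $V_2^*(\,\cdot\,)V_2$ becomes $V_1^*(J\otimes I)(\,\cdot\,)^*(J\otimes I)V_1$ directly from \ref{propI}(iv), and then $(J\otimes I)(\widehat\Gamma(\widehat R(y))\otimes 1)^*(J\otimes I)=(\widehat R\otimes\widehat R)\widehat\Gamma(\widehat R(y))\otimes 1=\widehat\Gamma^{\op}(y)\otimes 1$.
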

\begin{proof}
 (i)  This is just a straightforward corollary of \ref{propI}(ii) and (iii).

(ii) We need to prove that
\[\widetilde{\Gamma}=\varsigma_{N^{\op}} (\widetilde{R}\underset{N}{_\beta\otimes_\alpha}\widetilde{R})\widetilde{\Gamma}\widetilde{R}.\]
 Using  (i), we find that for $x\in N$,
\begin{align*}
\varsigma_{N^{\op}}(\widetilde{R}\underset{N}{_\beta\otimes_\alpha}\widetilde{R})\widetilde{\Gamma}\widetilde{R}(\ga(x))
&=
\varsigma_{N^{\op}}(\widetilde{R}\underset{N}{_\beta\otimes_\alpha}\widetilde{R})\widetilde{\Gamma}(\beta(x))\\
&=
\varsigma_{N^{\op}}(\widetilde{R}\underset{N}{_\beta\otimes_\alpha}\widetilde{R})((1_H\otimes 1_{H_\nu})\underset{N}{_\beta\otimes_\alpha}\beta(x))
=
\ga(x)\underset{N}{_\beta\otimes_\ga}(1_H\otimes 1_{H_\nu})
\end{align*}
coincides with $\widetilde{\Gamma}(\ga(x))$. For $y\in \widehat{M}$,
we conclude from \ref{th1} and \ref{propI}(iv) that
\begin{align*}
\varsigma_{N^{\op}}(\widetilde{R}\underset{N}{_\beta\otimes_\alpha}\widetilde{R})\widetilde{\Gamma}\widetilde{R}(y\otimes 1_{H_\nu})
&=
\varsigma_{N^{\op}}(\widetilde{R}\underset{N}{_\beta\otimes_\alpha}\widetilde{R})\widetilde{\Gamma}(\widehat{R}(y)\otimes 1_{H_\nu})\\
&=
\varsigma_{N^{\op}}(\widetilde{R}\underset{N}{_\beta\otimes_\alpha}\widetilde{R})[V_2^*(\widehat{\Gamma}(\widehat{R}(y)\otimes 1_{H_\nu})V_2]\\
&=
V_1^*((\widehat{R}\otimes\widehat{R})\widehat{\Gamma}(\widehat{R}(y)) \otimes 1_{H_{\nu}})V_1\\
&=
\widetilde{\Gamma}(y\otimes 1_{H_\nu})
\end{align*}
As $\bfG\ltimes_\ga N$ is the von Neumann algebra generated by $\ga(N)$ and $\widehat{M}\otimes 1_{H_\nu}$, this finishes the proof of (ii). 
\end{proof}

\begin{lemma}
\label{lemmaga} Let $\bfG$ be a locally compact quantum group,
  $(N, \ga, \widehat{\ga})$ a braided-com\-mutative
  $\bfG$-Yetter-Drinfel'd algebra, $\widetilde{\Gamma}$ the injective
  $*$-homomorphism from $\bfG\ltimes_\ga N$ into $(\bfG\ltimes_\ga
  N)\underset{N}{_\beta*_\ga}(\bfG\ltimes_\ga N)$ defined in
  \ref{th1}, $\tilde{\ga}$ the dual action of $\widehat{\bfG}$ on
  $\bfG\ltimes_\ga N$, and $V_1$ as in \ref{propV}.  Denote by $\tau$
  the flip from $(H\otimes
  H_\nu){}_\beta\underset{\nu}{\otimes}{}_{(1\otimes\ga)}(H\otimes
  H\otimes H_\nu)$ onto $H\otimes [(H\otimes
  H_\nu)\underset{\nu}{_\beta\otimes_\ga}(H\otimes H_\nu)]$ given by
\[\tau(\Xi_\beta\!\underset{\nu}{\otimes}\!{}_{(1\otimes\ga)}(\xi\otimes\Xi'))=\xi\otimes\Xi\underset{\nu}{_\beta\otimes_\alpha}\Xi'\]
  for all $\xi\in H$, $\Xi\in D(_\beta(H\otimes H_\nu), \nu^{\op})$,
  $\Xi'\in D(_\ga(H\otimes H_\nu), \nu)$. Then:
  \begin{enumerate}
  \item 
    $(\id\underset{N}{_\beta*_\ga}\tilde{\ga})\widetilde{\Gamma}(X)=\tau^*(\id\otimes\widetilde{\Gamma})\tilde{\ga}(X)\tau$
    for all $X\in \bfG\ltimes_\ga N$.
  \item 
    $V_2\widetilde{\Gamma}(X)V_2^*=(\widehat{R}\otimes\widetilde{R})\tilde{\ga}(\widetilde{R}(X))$.
  \end{enumerate}
\end{lemma}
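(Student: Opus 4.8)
The plan is to reduce both identities to the generators $\ga(N)$ and $\widehat{M}\otimes 1_{H_\nu}$ of $\bfG\ltimes_\ga N$, and to verify each case with the explicit descriptions of $\widetilde{\Gamma}$, $\widetilde{R}$ and of the unitaries $V_1,V_2,I,\tau$ supplied by \ref{propV}, \ref{th1}, \ref{propI} and \ref{th2}. This reduction is legitimate because in (i) both sides are normal $*$-homomorphisms on $\bfG\ltimes_\ga N$ (on the left since $\widetilde{\Gamma}$ and $\id\underset{N}{_\beta*_\ga}\tilde{\ga}$ are, on the right since $\tilde{\ga}$, $\id\otimes\widetilde{\Gamma}$ and conjugation by $\tau$ are), and in (ii) both sides are again normal $*$-homomorphisms (on the left via $\Ad(V_2)\circ\widetilde{\Gamma}$; on the right because $\widetilde{R}$ and $\widehat{R}\otimes\widetilde{R}$ are anti-$*$-isomorphisms, so the composite $(\widehat{R}\otimes\widetilde{R})\circ\tilde{\ga}\circ\widetilde{R}$ is a $*$-homomorphism). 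Since $\ga(N)$ and $\widehat{M}\otimes 1_{H_\nu}$ are $*$-closed and generate $\bfG\ltimes_\ga N$ as a von Neumann algebra, it suffices to check each identity there.

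On $\ga(x)$ both identities come out quickly. For (i) one uses $\widetilde{\Gamma}(\ga(x))=\ga(x)\underset{N}{_\beta\otimes_\ga}(1_H\otimes1_{H_\nu})$ and $\tilde{\ga}(\ga(x))=1_H\otimes\ga(x)$ from \ref{th1} and \ref{action}: the left side is $\ga(x)$ acting on the first factor of the source of $\tau$, while applying $\Ad(\tau)$ to $1_H\otimes(\ga(x)\underset{N}{_\beta\otimes_\ga}1)$ produces the same operator directly from the defining formula for $\tau$ (here $\ga(x)\in\beta(N)'$ by \ref{ThBC}(i), so the relative tensor products are defined). For (ii), $\widetilde{R}(\ga(x))=\beta(x)=\beta^{\dag}(x^{\op})$ by \ref{th2}, then $\tilde{\ga}(\beta^{\dag}(x^{\op}))=(\id\otimes\beta^{\dag})(\widehat{\ga}^{\op}(x^{\op}))$ by \ref{th1}(iii), and applying $\widehat{R}\otimes\widetilde{R}$ with $\widehat{\ga}^{\op}(x^{\op})=(\widehat{R}\otimes .^{\op})\widehat{\ga}(x)$, $\widehat{R}^{2}=\id$ and $\widetilde{R}\circ\beta^{\dag}(x^{\op})=\ga(x)$ gives $(\id\otimes\ga)(\widehat{\ga}(x))$ on the right. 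On the left, $V_2\widetilde{\Gamma}(\ga(x))V_2^{*}=(V_2V_1^{*})(1_H\otimes\ga(x))(V_2V_1^{*})^{*}$ by \ref{propV}(i); substituting $V_2V_1^{*}=\sigma_{12}W_{12}(U^{\widehat{\ga}}_\nu)_{23}(U^{\ga}_\nu)_{23}^{*}W_{12}^{*}$ from \ref{propV}(iii), the relation $\ga(x)=U^{\ga}_\nu(1\otimes x)(U^{\ga}_\nu)^{*}$, the representation equation $W_{12}^{*}(U^{\ga}_\nu)_{23}W_{12}=(U^{\ga}_\nu)_{23}(U^{\ga}_\nu)_{13}$ and Corollary \ref{corintegrable3}(ii), the conjugation telescopes and again yields $(\id\otimes\ga)(\widehat{\ga}(x))$.

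The case $y\otimes1_{H_\nu}$ of (ii) is also short: from \ref{th1} one has $\widetilde{\Gamma}(y\otimes1)=V_2^{*}(\widehat{\Gamma}(y)\otimes1_{H_\nu})V_2$, so the left side equals $\widehat{\Gamma}(y)\otimes1_{H_\nu}$, while on the right $\widetilde{R}(y\otimes1)=\widehat{R}(y)\otimes1_{H_\nu}$, $\tilde{\ga}(\widehat{R}(y)\otimes1)=\widehat{\Gamma}^{\op}(\widehat{R}(y))\otimes1$, and applying $\widehat{R}\otimes\widetilde{R}$ together with the co-inverse relation $\widehat{\Gamma}\circ\widehat{R}=\varsigma\circ(\widehat{R}\otimes\widehat{R})\circ\widehat{\Gamma}$ and $\widehat{R}^{2}=\id$ also gives $\widehat{\Gamma}(y)\otimes1_{H_\nu}$. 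The only genuinely laborious point, and the main obstacle, is the case $y\otimes1_{H_\nu}$ of (i). Here $\widetilde{\Gamma}(y\otimes1)=V_1^{*}(\widehat{\Gamma}^{\op}(y)\otimes1)V_1=V_1^{*}\tilde{\ga}(y\otimes1)V_1$, and using the action identity $(\id\otimes\tilde{\ga})\tilde{\ga}=(\widehat{\Gamma}^{\op}\otimes\id)\tilde{\ga}$ one unfolds both sides into the single operator $(\id\otimes\widehat{\Gamma}^{\op})\widehat{\Gamma}^{\op}(y)=(\widehat{\Gamma}^{\op}\otimes\id)\widehat{\Gamma}^{\op}(y)$, conjugated by a word in copies of $V_1$, of $\widehat{W}^{\op}$, and of the flip $\tau$. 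It then remains to check that the two conjugating unitaries coincide; I would do this by a leg-numbering computation using the pentagon equation for $\widehat{W}^{\op}$, the relations $V_1^{*}W_{12}=V_2^{*}\sigma_{12}W_{12}(U^{\widehat{\ga}}_\nu)_{23}(U^{\ga}_\nu)_{23}^{*}$ and $V_2V_1^{*}=\sigma_{12}W_{12}(U^{\widehat{\ga}}_\nu)_{23}(U^{\ga}_\nu)^{*}_{23}W_{12}^{*}$ of \ref{propV}(iii), and the defining intertwining formulas of $V_1$ and $\tau$ in \ref{propV} and in the statement, keeping careful track of which legs of $H\otimes H\otimes H_\nu$ carry the two copies of $\bfG\ltimes_\ga N$ in the fiber product once $\tilde{\ga}$ has inserted its extra leg.
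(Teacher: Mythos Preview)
Your reduction to generators is sound and the cases you carry out are correct, but you leave the case $y\otimes 1$ of (i) as a plan only: you identify the right unitaries and relations but do not actually perform the leg-numbering computation you describe. That is a genuine gap, since the equality of the two conjugating unitaries is not obvious from the ingredients you list.

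The paper bypasses this difficulty by arguing globally rather than on generators. For (i), the key point is that $\widehat{W}^{\op}\in\widehat{M}\otimes M'$, and from \ref{propV}(i) one has $V_1[(1_H\otimes1_{H_\nu})\underset{N}{_\beta\otimes_\ga}(x'\otimes1_{H_\nu})]=(x'\otimes1_H\otimes1_{H_\nu})V_1$ for every $x'\in M'$. These two facts yield the single intertwining relation
\[
(1_H\otimes V_1)\tau\bigl[(1_H\otimes1_{H_\nu})\underset{N}{_\beta\otimes_{1\otimes\ga}}(\widehat{W}^{\op}\otimes1_{H_\nu})\bigr]=(\widehat{W}^{\op}\otimes1_H\otimes1_{H_\nu})(1_H\otimes V_1)\tau,
\]
after which (i) follows for all $X$ at once by unwinding $\tilde{\ga}$ and $\widetilde{\Gamma}$ as conjugations and using $(\id\otimes\tilde{\ga})\tilde{\ga}=(\widehat{\Gamma}^{\op}\otimes\id)\tilde{\ga}$. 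No generator check is needed, and the laborious case you flag disappears. For (ii), the paper again avoids generators by invoking \ref{propI}(iv), namely $V_2=(J\otimes I)V_1(I\underset{N^{\op}}{_\ga\otimes_\beta}I)\sigma_\nu$; since $\widetilde{R}=\Ad(I)\circ{}^*$ and the flip-and-$I$ on the fiber product implements $\varsigma_{N^{\op}}(\widetilde{R}\underset{N}{_\beta*_\ga}\widetilde{R})$, one gets $\Ad(V_2)\widetilde{\Gamma}(X)=\Ad((J\otimes I)V_1)\widetilde{\Gamma}\widetilde{R}(X^*)=(\widehat{R}\otimes\widetilde{R})\tilde{\ga}(\widetilde{R}(X))$ in one line. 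Your approach to (ii) does work, but the paper's route is shorter; for (i), you should either complete your leg computation or adopt the paper's intertwining argument.
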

\begin{proof}
  (i) For any $x'\in M'$, we have $V_1[(1_H\otimes
  1_{H_\nu})\underset{N}{_\beta\otimes_\ga}(x'\otimes
  1_{H_\nu})]=(x'\otimes 1_H\otimes 1_{H_\nu})V_1$. As
  $\widehat{W}^{\op}$ belongs to $\widehat{M}\otimes M'$, we infer
\begin{align} \label{eq:tautau}
   (1_H\otimes V_1)\tau [(1_H\otimes 1_{H_\nu})_\beta\!\underset{N}{\otimes}\!{}_{1\otimes\ga}(\widehat{W}^{\op}\otimes 1_{H_\nu})]=
(\widehat{W}^{\op}\otimes 1_H\otimes 1_{H_\nu})(1_H\otimes V_1)\tau.
 \end{align}
Therefore, we can conclude that for all $X \in \bfG\ltimes_{\ga} N$,
\begin{align*}
(\id\underset{N}{_\beta*_\ga}\tilde{\ga})\widetilde{\Gamma}(X)&=
\Ad([(1_H\otimes
1_{H_\nu})_\beta\!\underset{N}{\otimes}\!{}_{1\otimes\ga}(\widehat{W}^{o*}\otimes
1_{H_\nu})]\tau^*(1_H\otimes  V_1^*))[1_{H} \otimes
\tilde{\ga}(X)]
\\ &=
\Ad(\tau^*(1_H\otimes V_1^*)(\widehat{W}^{o*}\otimes 1_H\otimes
1_{H_\nu}))[1_{H} \otimes \tilde{\ga}(X)] \\
&= \Ad(\tau^*(1_H\otimes V_1^*))[(\widehat{\Gamma}^{\op}\otimes
\id)\tilde{\ga}(X)] \\
&=
\Ad(\tau^*(1_H\otimes V_1^*))[(\id\otimes\tilde{\ga})\tilde{\ga}(X)]\\
&=
\tau^*(\id\otimes\widetilde{\Gamma})\tilde{\ga}(X)\tau.
\end{align*}

(ii) By \ref{propI}(iii),
\begin{align*}
  \Ad(V_2)[\widetilde{\Gamma}(X)] &= \Ad((J\otimes
  I)V_1\sigma_{\nu^{\op}}(I\underset{N}{_\beta\otimes_\ga}I))[\widetilde{\Gamma}(X)] \\
  &= \Ad((J\otimes I)V_1)[\widetilde{\Gamma}\widetilde{R}(X^*)]\\
  &= (\widehat{R}\otimes
  \widetilde{R})\tilde{\ga}(\widetilde{R}(X)). \qedhere
\end{align*}
\end{proof}

\section{Measured quantum groupoid structure associated to a braided-commutative Yetter-Drinfel'd algebra equipped with an appropriate weight}
\label{MQG}

In this chapter, after recalling the definition of a measured quantum
groupoid (\ref{defMQG}) and describing the major data associated to a
measured quantum groupoid (\ref{defW}, \ref{data}), we try to
construct,  given a braided-commutative $\bfG$-Yetter-Drinfel'd
algebra $(A, \ga, \widehat{\ga})$ and a normal semi-finite faithful
weight on $N$, a structure of a measured quantum groupoid, denoted
$\mathfrak G(N, \ga, \widehat{\ga}, \nu)$, on the crossed product
$\bfG\ltimes_\ga N$ or, more precisely, on the Hopf bimodule
constructed in \ref{th2}.  Without any hypothesis on the normal
faithful semi-finite weight $\nu$ on $N$, we construct a
left-invariant operator-valued weight (\ref{th3}) and a
right-invariant one (\ref{th3}), and we give a necessary and
sufficient condition for a weight $\nu$ on $N$ to be relatively invariant with respect to these two operator-valued weights (\ref{thmqg}). This condition is clearly satisfied (\ref{cormqg}) if $\nu$ is $k$-invariant with respect to $\ga$ (for $k$ affiliated to $Z(M)$, or $k=\delta^{-1}$). 

\subsection{Definition of measured quantum groupoids (\cite{L}, \cite{E2})}
\label{defMQG}

A \emph{measured quantum groupoid} is an octuple $\mathfrak {G}=(N, M, \alpha, \beta, \Gamma, T, T', \nu)$ such that (\cite{E2}, 3.8):

(i) $(N, M, \alpha, \beta, \Gamma)$ is a Hopf bimodule, 

(ii) $T$ is a left-invariant normal, semi-finite, faithful operator-valued weight from $M$ to $\alpha (N)$ (to be more precise, from $M^{+}$ to the extended positive elements of $\alpha(N)$ (cf. \cite{T2} IX.4.12)), which means that, for any $x\in\gM_T^+$, we have $(\id\underset{\nu}{_\beta*_\alpha}T)\Gamma(x)=T(x)\underset{N}{_\beta\otimes_\alpha}1$. 

(iii) $T'$ is a right-invariant normal, semi-finite, faithful operator-valued weight from $M$ to $\beta (N)$, which means that, for any $x\in\gM_{T'}^+$, we have $(T'\underset{\nu}{_\beta*_\alpha}\id)\Gamma(x)=1\underset{N}{_\beta\otimes_\alpha}T'(x)$. 

(iv) $\nu$ is normal semi-finite faithful weight on $N$, which is relatively invariant with respect to $T$ and $T'$, which means that the modular automorphisms groups of the weights $\Phi=\nu\circ\alpha^{-1}\circ T$ and $\Psi=\nu\circ\beta^{-1}\circ T'$ commute. The weight $\Phi$ will be called left-invariant, and $\Psi$ right-invariant. 

For example, 
let $\mathcal G$ be a measured groupoid equipped with a left Haar
system $(\lambda^u)_{u\in\mathcal G^{(0)}}$ and a quasi-invariant
measure $\nu$ on $\mathcal G^{(0)}$. Let us use the notations
introduced in \ref{defHopf}. If $f\in L^\infty(\mathcal G, \mu)^+$,
consider the function on $\mathcal G^{(0)}$, $u\mapsto \int_{\mathcal
  G}fd\lambda^u$, which belongs to $L^\infty (\mathcal G^{(0)},
\nu)$. The image of this function by the homomorphism $r_\mathcal G$
is the function on $\mathcal G$, $\gamma\mapsto \int_{\mathcal
  G}fd\lambda^{r(\gamma)}$, and the application which sends $f$ to
this function can be considered as an operator-valued weight from
$L^\infty(\mathcal G, \mu)$ to $r_{\mathcal G}(L^\infty (\mathcal
G^{(0)}, \nu))$ which is normal, semi-finite and faithful.  By
definition of the Haar system $(\lambda^u)_{u\in\mathcal G^{(0)}}$, it
is left-invariant in the sense of (ii). We shall denote this operator-valued weight from $L^\infty(\mathcal G, \mu)$ to $r_{\mathcal G}(L^\infty (\mathcal G^{(0)}, \nu))$ by $T_\mathcal G$.
 If we write $\lambda_u$ for the image of $\lambda^u$ under the inversion $x\mapsto x^{-1}$ of the groupoid $\mathcal G$, starting from the application which sends $f$ to the function on $\mathcal G^{(0)}$ defined by $u\mapsto \int_{\mathcal G}fd\lambda_u$, we define a normal semifinite faithful operator-valued weight from $L^\infty(\mathcal G, \mu)$ to $s_{\mathcal G}(L^\infty (\mathcal G^{(0)}, \nu))$, which is right-invariant in the sense of (ii),  and which we shall denote by $T_{\mathcal G}^{(-1)}$.
 
  We then get that
\[(L^\infty(\mathcal G^{(0)}, \nu), L^\infty(\mathcal G, \mu), r_\mathcal G, s_\mathcal G, \Gamma_\mathcal G, T_\mathcal G, T_{\mathcal G}^{(-1)}, \nu)\] is a measured quantum groupoid, which we shall denote again $\mathcal G$. 

It can be proved (\cite{E4}) that any measured quantum groupoid, whose underlying von Neumann algebra is abelian, is of that type.

\subsection{Pseudo-multiplicative unitary.} 
\label{defW}
Let $\mathfrak{G}=(N, M, \alpha, \beta, \Gamma, T, T', \nu)$ be an
octuple satisfying the axioms (i), (ii) (iii) of \ref{defMQG}. We
shall write $H=H_\Phi$, $J=J_\Phi$ and $\gamma(n)=J\alpha(n^*)J$ for
all $n\in N$.

Then (\cite{L}, 3.7.3 and 3.7.4), $\mathfrak {G}$ can be equipped with
a pseudo-multiplicative unitary $W$ which is a unitary from
$H\underset{\nu}{_\beta\otimes_\alpha}H$ onto
$H\underset{\nu^{\op}}{_\alpha\otimes_{\gamma}}H$ (\cite{E2}, 3.6)
that intertwines $\alpha$, $\gamma$, $\beta$  in the following way:
for all $X\in N$, 
\begin{align*}
W(\alpha
(X)\underset{N}{_\beta\otimes_\alpha}1)&=
(1\underset{N^{\op}}{_\alpha\otimes_{\gamma}}\alpha(X))W, \\
W(1\underset{N}{_\beta\otimes_\alpha}\beta
(X)) &=(1\underset{N^{\op}}{_\alpha\otimes_{\gamma}}\beta (X))W, \\
W(\gamma(X) \underset{N}{_\beta\otimes_\alpha}1) &=
(\gamma(X)\underset{N^{\op}}{_\alpha\otimes_{\gamma}}1)W, \\
W(1\underset{N}{_\beta\otimes_\alpha}\gamma(X))&=
(\beta(X)\underset{N^{\op}}{_\alpha\otimes_{\gamma}}1)W.
\end{align*}
Moreover, the operator $W$ satisfies the \emph{pentagonal relation}
\[(1\underset{N^{\op}}{_\alpha\otimes_{\gamma}}W)
(W\underset{N}{_\beta\otimes_\alpha}1_{H})
=(W\underset{N^{\op}}{_\alpha\otimes_{\gamma}}1)
\sigma^{23}_{\alpha, \beta}(W\underset{N}{_{\gamma}\otimes_\alpha}1)
(1\underset{N}{_\beta\otimes_\alpha}\sigma_{\nu^{\op}})
(1\underset{N}{_\beta\otimes_\alpha}W),\]
where $\sigma^{23}_{\alpha, \beta}$
goes from $(H\underset{\nu^{\op}}{_\alpha\otimes_{\gamma}}H)\underset{\nu}{_\beta\otimes_\alpha}H$ to $(H\underset{\nu}{_\beta\otimes_\alpha}H)\underset{\nu^{\op}}{_\alpha\otimes_{\gamma}}H$, 
and $1\underset{N}{_\beta\otimes_\alpha}\sigma_{\nu^{\op}}$ goes from $H\underset{\nu}{_\beta\otimes_\alpha}(H\underset{\nu^{\op}}{_\alpha\otimes_{\gamma}}H)$ to $H\underset{\nu}{_\beta\otimes_\alpha}H\underset{\nu}{_{\gamma}\otimes_\alpha}H$. 
The operators in this formula are well-defined because of the
intertwining relations listed above.

 Moreover, $W$, $M$ and $\Gamma$ are related by the following results:

(i) $M$ is the weakly closed linear space generated by all operators of the form $(\id*\omega_{\xi, \eta})(W)$, where $\xi\in D(_\alpha H, \nu)$ and $\eta\in D(H_{\gamma}, \nu^{\op})$ (see \cite{E2}, 3.8(vii)).

(ii)  $\Gamma(x)=W^*(1\underset{N^{\op}}{_\alpha\otimes_{\gamma}}x)W$
for all $x\in M$(\cite{E2}, 3.6). 

(iii) For any $x$, $y_1$, $y_2$ in $\gN_T\cap\gN_\Phi$, we have (\cite{E2}, 3.6)
\[(\id*\omega_{J_\Phi\Lambda_\Phi (y_1^*y_2), \Lambda_\Phi (x)})(W)=
(\id\underset{N}{_\beta*_\alpha}\omega_{J_\Phi\Lambda_\Phi(y_2), J_\Phi\Lambda_\Phi(y_1)})\Gamma (x^*).\]
If $N$ is finite-dimensional, using the fact that the relative tensor products can be identified with closed subspaces of the usual Hilbert tensor product (\ref{spatial}), we get that $W$ can be considered as a partial isometry, which is multiplicative in the usual sense (i.e.\ such that $W_{23}W_{12}=W_{12}W_{13}W_{23}$.)
\subsection{Other data associated to a measured quantum groupoid} (\cite{L}, \cite{E2})
\label{data}
 Suppose that $\gG=(N, M, \alpha, \beta, \Gamma, T, T', \nu)$ is a measured quantum groupoid in the sense of \ref{defMQG}. Let us write $\Phi=\nu\circ\alpha^{-1}\circ T$, which is a normal semi-finite faithful left-invariant weight on $M$. Then:

(i) There exists an anti-$*$-automorphism $R$ on $M$ such that
\begin{align*}
  R^2&=\id, & R(\alpha(n))&=\beta(n) \text{ for all } n\in N, &
  \Gamma\circ
  R&=\varsigma_{N^{\op}}(R\underset{N}{_\beta*_\alpha}R)\Gamma\end{align*}
and
\begin{align*}
  R((\id*\omega_{\xi, \eta})(W))=(\id*\omega_{J\eta, J\xi})(W) \quad \text{for all } \xi\in D(_\alpha H, \nu), \eta\in D(H_\gamma, \nu^{\op}).
\end{align*}
This map $R$ will be called the \emph{co-inverse}.

(ii) There exists a one-parameter group $\tau_t$ of automorphisms of $M$ such that
\begin{align*}
R\circ\tau_t&=\tau_t\circ R, &
\tau_t(\alpha(n))&=\alpha(\sigma^\nu_t(n)), & \tau_t(\beta(n))
&=\beta(\sigma^\nu_t(n)), &
  \Gamma\circ\sigma_t^\Phi &=(\tau_t\underset{N}{_\beta*_\alpha}\sigma_t^\Phi)\Gamma
\end{align*}
for all $t\in\R$ and and $n\in N$.
This one-parameter group- will be called the \emph{scaling group}.

(iii) The weight $\nu$ is relatively invariant with respect to $T$ and
$RTR$. Moreover, $R$ and $\tau_t$ are still the co-inverse and the
scaling group of this new measured quantum groupoid, which we shall
denote  by
\[\underline{\gG}=(N, M, \alpha, \beta, \Gamma, T, RTR, \nu), \]
and  for simplification we shall assume now that $T'=RTR$ and $\Psi=\Phi\circ R$. 

(iv) There exists a one-parameter group $\gamma_t$ of automorphisms of $N$ such that
\[\sigma_t^{T}(\beta(n))=\beta(\gamma_t(n))\]
 for all $t\in\R$ and $n\in N$.
Moreover, we get that $\nu\circ\gamma_t=\nu$. 

(vi) There exist a positive non-singular operator $\lambda$ affiliated to $Z(M)$ and a positive non-singular operator $\delta$ affiliated with $M$ such that
\[(D\Phi\circ R: D\Phi)_t=\lambda^{it^2/2}\delta^{it},\]
and therefore
\[(D\Phi\circ\sigma_s^{\Phi\circ R}:D\Phi)_t=\lambda^{ist}.\]
The operator $\lambda$ will be called the \emph{scaling operator}, and there exists a positive non-singular operator $q$ affiliated to $N$ such that $\lambda=\alpha(q)=\beta(q)$. We have $R(\lambda)=\lambda$. 

The operator $\delta$ will be called the \emph{modulus}. We have $R(\delta)=\delta^{-1}$ and $\tau_t(\delta)=\delta$ for all $t\in\R$, and we can define a one-parameter group of unitaries $\delta^{it}\underset{N}{_\beta\otimes_\alpha}\delta^{it}$ which acts naturally on elementary tensor products and satisfies for all $t\in\R$
\[\Gamma(\delta^{it})=\delta^{it}\underset{N}{_\beta\otimes_\alpha}\delta^{it}.\]

(vii) We have $(D\Phi\circ\tau_t: D\Phi)_s=\lambda^{-ist}$, which
proves that $\tau_t\circ\sigma_s^\Phi=\sigma_s^\Phi\circ\tau_t$  for all $s$, $t$ in $\R$ and allows to define a one-parameter group of unitaries by
\[P^{it}\Lambda_\Phi(x)=\lambda^{t/2}\Lambda_\Phi(\tau_t(x)) \quad
\text{for all }  x\in\gN_\Phi.\]
Moreover, for any $y$ in $M$, we get that
\[\tau_t(y)=P^{it}yP^{-it}.\]
 As for the multiplicative unitary associated to a locally compact quantum group, one can prove, using this operator $P$, a ``managing property'' for $W$, and we shall say that the pseudo-multiplicative unitary $W$ is \emph{manageable}, with ``managing operator'' $P$.

As $\tau_t\circ\sigma_t^\Phi=\sigma_t^\Phi\circ\tau_t$, we get that $J_\Phi PJ_\Phi=P$. 

(viii) It is possible to construct a \emph{dual} measured quantum groupoid 
\[\widehat{\gG}=(N, \widehat{M}, \alpha, \gamma, \widehat{\Gamma}, \widehat{T}, \widehat{T'}, \nu)\]
where $\widehat{M}$ is equal to the weakly closed linear space
generated by all operators of the form $(\omega_{\xi, \eta}*\id)(W)$,
for $\xi\in D( H_\beta, \nu^{\op})$ and $\eta\in D(_\alpha H, \nu)$,
 $\widehat{\Gamma}(y)=\sigma_{\nu^{\op}}
W(y\underset{N}{_\beta\otimes_\alpha}1)W^*\sigma_\nu$ for all
$y\in\widehat{M}$, and the dual left operator-valued weight $\widehat{T}$ is constructed in a similar way as the dual left-invariant weight of a locally compact quantum group. Namely, it is possible to construct a normal semi-finite faithful weight $\widehat{\Phi}$ on $\widehat{M}$ such that, for all $\xi\in D(H_\beta, \nu^{\op})$ and $\eta\in D(_\alpha H, \nu)$ such that $\omega_{\xi, \eta}$ belongs to $I_\Phi$, 
\[\widehat{\Phi}((\omega_{\xi, \eta}*\id)(W)^*(\omega_{\xi, \eta}*\id)(W))=\|\omega_{\xi, \eta}\|_\Phi^2.\]
We can prove  that
$\sigma_t^{\widehat{\Phi}}\circ\alpha=\alpha\circ\sigma_t^\nu$  for all $t\in\R$, which gives the existence of an operator-valued weight $\widehat{T}$, which appears then to be left-invariant. 

As the formula $y\mapsto J y^*J$ ($y\in \widehat{M}$) gives a
co-inverse for the coproduct $\widehat{\Gamma}$, we get also a
right-invariant operator-valued weight.  Moreover, the
pseudo-multiplicative unitary $\widehat{W}$ associated to
$\widehat{\gG}$ is $\widehat{W}=\sigma_\nu W^*\sigma_\nu$, its
managing operator $\widehat{P}$ is equal to $P$,  its scaling group
is given by $\widehat{\tau}_t(y)=P^{it}yP^{-it}$, its scaling operator
$\widehat{\lambda}$ is equal to $\lambda^{-1}$, and its one-parameter
group of unitaries $\widehat{\gamma}_t$ of $N$ is equal to
$\gamma_{-t}$.

We write $\widehat{\Phi}$ for $\nu\circ\alpha^{-1}\circ\widehat{T}$,
identify $H_{\widehat{\Phi}}$ with $H$, and write
$\widehat{J}=J_{\widehat{\Phi}}$. Then
$R(x)=\widehat{J}x^*\widehat{J}$  for all $x\in M$ and $W^*=(\widehat{J}\underset{N^{\op}}{_\alpha\otimes_\gamma}J)W(\widehat{J}\underset{N^{\op}}{_\alpha\otimes_\gamma}J)$. 

Moreover, we have $\widehat{\widehat{\gG}}=\gG$. 

For example, let $\mathcal G$ be a measured groupoid as in \ref{defMQG}. The dual $\widehat{\mathcal G}$ of the measured quantum groupoid constructed in \ref{defMQG} (and denoted again by $\mathcal G$) is
\[\widehat{\mathcal G}=(L^\infty(\mathcal G^{(0)}, \nu), \mathcal L(\mathcal G), r_{\mathcal G}, r_{\mathcal G}, \widehat{\Gamma}_{\mathcal G}, \widehat{T}_{\mathcal G}, \widehat{T}_{\mathcal G}),\]
where $\mathcal L(\mathcal G)$ is the von Neumann algebra generated by
the convolution algebra associated to the groupoid $\mathcal G$, the
coproduct $\widehat{\Gamma}_{\mathcal G}$ had been defined in
(\cite{Val1} 3.3.2), and the operator-valued weight $\widehat{T}_{\mathcal G}$ had been defined in (\cite{Val1}, 3.3.4). The underlying Hopf bimodule is co-commutative. 

\begin{theorem}[\cite{Ti2}]
\label{th3}
 Let $\bfG$ be a locally compact quantum group and  $(N, \ga,
  \widehat{\ga})$ a braided-commutative $\bfG$-Yetter-Drinfel'd
  algebra. Then the normal faithful semi-finite operator-valued weight
  $T_{\tilde{\ga}}$ from $\bfG\ltimes_\ga A$ onto $\ga(N)$
  (\cite{V}1.3 and 2.5) is left-invariant with respect to the Hopf
  bimodule structure constructed in \ref{th2}, and $\tilde{R}\circ
  T_{\tilde{\ga}}\circ\tilde{R}$ is right-invariant. 
\end{theorem}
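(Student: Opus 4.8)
The plan is to reduce the assertion to the single left-invariance identity for $T_{\tilde{\ga}}$ with respect to $\widetilde{\Gamma}$, to prove that identity by slicing the relation of Lemma~\ref{lemmaga}(i) with the left-invariant weight $\widehat{\varphi}$ of $\widehat{\bfG}$, and then to deduce the right-invariance of $\widetilde{R}\circ T_{\tilde{\ga}}\circ\widetilde{R}$ formally from the co-inverse relation of Theorem~\ref{th2}.

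Recall (\ref{action}) that $T_{\tilde{\ga}}=(\widehat{\varphi}\otimes\id)\circ\tilde{\ga}$ is a normal semi-finite faithful operator-valued weight from $\bfG\ltimes_\ga N$ onto $(\bfG\ltimes_\ga N)^{\tilde{\ga}}=\ga(N)$, i.e.\ onto the $\alpha$-side of the Hopf bimodule of Theorem~\ref{th1}. To establish left-invariance I must show that for every $X\in\gM_{T_{\tilde{\ga}}}^{+}$ one has $(\id\underset{\nu}{_\beta*_\ga}T_{\tilde{\ga}})\widetilde{\Gamma}(X)=T_{\tilde{\ga}}(X)\underset{N}{_\beta\otimes_\ga}1$. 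The input is Lemma~\ref{lemmaga}(i), $(\id\underset{N}{_\beta*_\ga}\tilde{\ga})\widetilde{\Gamma}(X)=\tau^{*}(\id\otimes\widetilde{\Gamma})\tilde{\ga}(X)\tau$. Since $T_{\tilde{\ga}}=(\widehat{\varphi}\otimes\id)\tilde{\ga}$, the operator-valued weight $\id\underset{\nu}{_\beta*_\ga}T_{\tilde{\ga}}$ is the composition of the homomorphism $\id\underset{N}{_\beta*_\ga}\tilde{\ga}$ with the slice that applies $\widehat{\varphi}$ to the $\widehat{M}$-leg that $\tilde{\ga}$ produces. Applying this slice to both sides of Lemma~\ref{lemmaga}(i) and using that $\tau$, by its very definition, intertwines that $\widehat{M}$-leg with the leading $H$-factor while acting as the identity on the remaining relative tensor product, one obtains
\[
(\id\underset{\nu}{_\beta*_\ga}T_{\tilde{\ga}})\widetilde{\Gamma}(X)
=(\widehat{\varphi}\otimes\id)\bigl((\id\otimes\widetilde{\Gamma})\tilde{\ga}(X)\bigr)
=\widetilde{\Gamma}\bigl((\widehat{\varphi}\otimes\id)\tilde{\ga}(X)\bigr)
=\widetilde{\Gamma}(T_{\tilde{\ga}}(X)),
\]
the middle equality holding because $\widetilde{\Gamma}$ is a normal $*$-homomorphism acting on legs disjoint from the one carrying $\widehat{\varphi}$. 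As $X\in\gM_{T_{\tilde{\ga}}}^{+}$ we have $T_{\tilde{\ga}}(X)=\ga(z)$ for some $z\in N^{+}$, and $\widetilde{\Gamma}(\ga(z))=\ga(z)\underset{N}{_\beta\otimes_\ga}1$ by Theorem~\ref{th1}; hence $(\id\underset{\nu}{_\beta*_\ga}T_{\tilde{\ga}})\widetilde{\Gamma}(X)=T_{\tilde{\ga}}(X)\underset{N}{_\beta\otimes_\ga}1$, which is left-invariance (and in passing shows that $\widetilde{\Gamma}(X)$ lies in the domain of $\id\underset{\nu}{_\beta*_\ga}T_{\tilde{\ga}}$).

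For the right-invariance I invoke the general fact that, for any Hopf bimodule equipped with a co-inverse $R$, the conjugate $R\circ T\circ R$ of a left-invariant normal semi-finite faithful operator-valued weight $T$ onto $\alpha(N)$ is a right-invariant one onto $\beta(N)$: this follows by applying the co-inverse relation $\widetilde{\Gamma}\circ\widetilde{R}=\varsigma_{N^{\op}}(\widetilde{R}\underset{N}{_\beta*_\ga}\widetilde{R})\widetilde{\Gamma}$ to the left-invariance identity and using $\widetilde{R}^{2}=\id$ and $\widetilde{R}\circ\ga=\beta$. Specialized to $T=T_{\tilde{\ga}}$ and to the co-inverse $\widetilde{R}$ of Theorem~\ref{th2}, this gives that $\widetilde{R}\circ T_{\tilde{\ga}}\circ\widetilde{R}$, which maps onto $\widetilde{R}(\ga(N))=\beta(N)$ and is again normal, semi-finite and faithful, is right-invariant; alternatively, the same conclusion can be read off directly from Lemma~\ref{lemmaga}(ii).

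The main obstacle is to make the slicing rigorous. Because $\widehat{\varphi}$ is unbounded, the identity $(\id\underset{\nu}{_\beta*_\ga}T_{\tilde{\ga}})\widetilde{\Gamma}(X)=\widetilde{\Gamma}(T_{\tilde{\ga}}(X))$ should be obtained by writing $\widehat{\varphi}\otimes\id$ as an increasing pointwise limit of bounded operator-valued weights $\widehat{\omega}_{i}\otimes\id$ with $\widehat{\omega}_{i}\in(\widehat{M}_{*})^{+}$, applying the bounded slice version of Lemma~\ref{lemmaga}(i) together with normality of $\widetilde{\Gamma}$, and then passing to the supremum using normality of the fiber-product operator-valued weight; one must also keep careful track of which leg of each relative tensor product carries the $\widehat{M}$-action when commuting the slice past $\tau$. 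Everything else is routine bookkeeping within the crossed-product and fiber-product machinery recalled in \ref{spatial}--\ref{th2}.
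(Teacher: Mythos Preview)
Your proposal is correct and follows essentially the same route as the paper: both slice Lemma~\ref{lemmaga}(i) with the Haar weight on the $\widehat{M}$-leg to obtain $(\id\underset{N}{_\beta*_\ga}T_{\tilde{\ga}})\widetilde{\Gamma}(X)=\widetilde{\Gamma}(T_{\tilde{\ga}}(X))$, then invoke $\widetilde{\Gamma}(\ga(z))=\ga(z)\underset{N}{_\beta\otimes_\ga}1$ from Theorem~\ref{th1}, and finally derive right-invariance of $\widetilde{R}\circ T_{\tilde{\ga}}\circ\widetilde{R}$ from the co-inverse relation of Theorem~\ref{th2}. Your explicit discussion of how to make the unbounded slice rigorous (via an increasing net of bounded slices) is a detail the paper leaves implicit, but the underlying argument is the same.
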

\begin{proof}
For all positive $X$ in $\bfG\ltimes_\ga N$, we find, using \ref{lemmaga}(i) and \ref{th2},
\begin{align*}
(\id\underset{N}{_\beta*_\ga}T_{\tilde{\ga}})\widetilde{\Gamma}(X)
&=
(\id\underset{N}{_\beta*_\ga}(\widehat{\varphi}\circ \widehat{R}\otimes \id)\tilde{\ga})\widetilde{\Gamma}(X)\\
&=
(\widehat{\varphi}\circ \widehat{R}\otimes \id)(\id\otimes\widetilde{\Gamma})\tilde{\ga}(X)\\
&=
\widetilde{\Gamma}(T_{\tilde{\ga}}(X))\\
&=
T_{\tilde{\ga}}(X)\underset{N}{_\beta\otimes _\ga}(1_H\otimes 1_{H_\nu})
\end{align*}
which proves that $T_{\tilde{\ga}}$ is left-invariant. Using \ref{th2}, we get trivially that $\widetilde{R}\circ T_{\tilde{\ga}}\circ\widetilde{R}$ is a normal faithful semi-finite operator valued weight from $\bfG\ltimes_\ga N$ onto $\beta(N)$, which is right-invariant with respect to the coproduct $\widetilde{\Gamma}$. 
\end{proof}

In the situation above, we shall denote by $\mathfrak{G}(N, \ga, \widehat{\ga}, \nu)$ the Hopf-bimodule $(N, \bfG\ltimes_\ga N, \ga, \beta, \widetilde{\Gamma})$ constructed in \ref{th1}(ii), equipped with its co-inverse $\widetilde{R}$ constructed in \ref{th2}(ii), with the left-invariant operator-valued weight $T_{\tilde{\ga}}$ and the right-invariant operator-vlaued weight $\widetilde{R}\circ T_{\tilde{\ga}}\circ\widetilde{R}$, and with the normal semi-finite faithful weight $\nu$ on $N$. 
\begin{proposition}
\label{scaling}
 Let $\bfG$ be a locally compact quantum group, $(N, \ga, \widehat{\ga})$ a braided-commu-tative $\bfG$-Yetter-Drinfel'd algebra, $\nu$ a normal semi-finite faithful weight on $N$, $D_t$ its Radon-Nikodym derivative with respect to $\ga$ (\ref{action}) and $D_t^{\op}$ the Radon-Nikodym derivative of the weight $\nu^{\op}$ on $N^{\op}$ with respect to the action $\ga^{\op}$ (\ref{BCdef1}). For all $t\in\R$,  denote by $\widetilde{\tau}_t$ the map $\Ad [U^\ga_\nu(U^{\widehat{\ga}}_\nu)^*\Delta_{\widetilde{\hat{\nu}}}^{it}U^{\widehat\ga}_\nu(U^\ga_\nu)^*]$ defined on $B(H\otimes H_\nu)$, where $\widetilde{\hat{\nu}}$ is the dual weight of $\nu$ on the crossed product $\widehat{\bfG}\ltimes_{\widehat{\ga}}N$. Then:
 \begin{enumerate}
 \item  $\widetilde{\tau}_t\circ\beta(x)=\beta(\sigma_t^\nu(x))$
   for all $x\in N$ and $t\in \R$.
 \item  for all $t\in\R$, $\widetilde{\tau}_t$ commutes with $\Ad I$,
   where $I$ had been defined in \ref{propI}, and, therefore
   $\widetilde{\tau}_t(\ga(x))=\ga(\sigma_t^\nu(x))$ for all $x\in N$
   and $t\in \R$.
 \item  Denote by $\beta^\dag$ the application $x^{\op}\mapsto
   \beta(x)$ from $N^{\op}$ into $\bfG\ltimes_\ga N$.  Then
   \begin{align*}
     (\id\otimes\widetilde{\tau}_t)(W_{12})&=\widehat{\Delta}^{-it}_1(\id\otimes\beta^\dag)(D^{\op}_{-t})W_{12}(\id\otimes\ga)(D_t)\widehat{\Delta}^{it}_1
     \\ &= (\tau_{-t} \otimes \beta^{\dag})(D^{\op}_{-t})(\id \otimes
     \widehat\tau)(W)_{12} (\tau_{-t} \otimes \ga)(D_{t}).
   \end{align*}
 \item  $\widetilde{\tau}_t(\bfG\ltimes_\ga N)=\bfG\ltimes_\ga N$ and
   $\widetilde{\tau}_t\circ\widetilde{R}=\widetilde{R}\circ\widetilde{\tau}_t$.
 \end{enumerate}
\end{proposition}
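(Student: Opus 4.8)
The plan is to obtain (i) and (ii) by elementary manipulations with the operator $I$ of \ref{propI} together with the modular theory of the dual weight $\widetilde{\hat{\nu}}$ on $\widehat{\bfG}\ltimes_{\widehat{\ga}}N$, to prove (iii) by a direct computation with the fundamental unitary $W$ and the Radon--Nikodym cocycles, and to deduce (iv) from (ii) and (iii). Set $B=U^\ga_\nu(U^{\widehat{\ga}}_\nu)^*$ and $P^{it}=B\Delta_{\widetilde{\hat{\nu}}}^{it}B^{-1}$; then $t\mapsto P^{it}$ is a one-parameter group of unitaries, so $\widetilde{\tau}_t=\Ad(P^{it})$ is a genuine one-parameter automorphism group, and by \ref{propI} also $I=BJ_{\widetilde{\hat{\nu}}}B^{-1}$. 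For (i): by \ref{ThBC}, $\beta(x)=\Ad(B)[1\otimes J_\nu x^*J_\nu]$, hence $\widetilde{\tau}_t(\beta(x))=\Ad(P^{it}B)[1\otimes J_\nu x^*J_\nu]=\Ad(B)\,\Ad(\Delta_{\widetilde{\hat{\nu}}}^{it})[1\otimes J_\nu x^*J_\nu]$, using $P^{it}B=B\Delta_{\widetilde{\hat{\nu}}}^{it}$. From $U^{\widehat{\ga}}_\nu=J_{\widetilde{\hat{\nu}}}(J\otimes J_\nu)$ (see \ref{action}) and $\widehat{\ga}(x)=U^{\widehat{\ga}}_\nu(1\otimes x)(U^{\widehat{\ga}}_\nu)^*$ one checks $1\otimes J_\nu x^*J_\nu=J_{\widetilde{\hat{\nu}}}\widehat{\ga}(x^*)J_{\widetilde{\hat{\nu}}}$; since $\Delta_{\widetilde{\hat{\nu}}}^{it}$ commutes with $J_{\widetilde{\hat{\nu}}}$, $\Ad(\Delta_{\widetilde{\hat{\nu}}}^{it})$ carries this to $J_{\widetilde{\hat{\nu}}}\sigma_t^{\widetilde{\hat{\nu}}}(\widehat{\ga}(x^*))J_{\widetilde{\hat{\nu}}}$, and as $\widetilde{\hat{\nu}}=\nu\circ\widehat{\ga}^{-1}\circ T_{\widetilde{\widehat{\ga}}}$ with $T_{\widetilde{\widehat{\ga}}}$ an operator-valued weight onto $\widehat{\ga}(N)$, the modular group of $\widetilde{\hat{\nu}}$ restricts to $\widehat{\ga}(N)$ as $\widehat{\ga}\circ\sigma_t^\nu\circ\widehat{\ga}^{-1}$, so the above equals $1\otimes J_\nu\sigma_t^\nu(x)^*J_\nu$; applying $\Ad(B)$ gives $\beta(\sigma_t^\nu(x))$.

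For (ii): $\widetilde{\tau}_t=\Ad(B)\circ\Ad(\Delta_{\widetilde{\hat{\nu}}}^{it})\circ\Ad(B^{-1})$ and $\Ad I=\Ad(B)\circ\Ad(J_{\widetilde{\hat{\nu}}})\circ\Ad(B^{-1})$, and $\Ad(\Delta_{\widetilde{\hat{\nu}}}^{it})$ commutes with $\Ad(J_{\widetilde{\hat{\nu}}})$ because $J_{\widetilde{\hat{\nu}}}\Delta_{\widetilde{\hat{\nu}}}^{it}J_{\widetilde{\hat{\nu}}}=\Delta_{\widetilde{\hat{\nu}}}^{it}$; hence $\widetilde{\tau}_t$ commutes with $\Ad I$. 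Then $\ga(x)=I\beta(x)^*I$ by \ref{propI}(ii), so $\widetilde{\tau}_t(\ga(x))=\Ad I\bigl(\widetilde{\tau}_t(\beta(x))^*\bigr)=\Ad I\bigl(\beta(\sigma_t^\nu(x))^*\bigr)=\ga(\sigma_t^\nu(x))$ by (i).

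For (iii): applying \ref{lemBC}(iii) to the action $\widehat{\ga}$ of $\widehat{\bfG}$ (whose ``$\widehat{\Delta}$'' is $\Delta_{\widehat{\widehat{\varphi}}}=\Delta$) gives $\Delta_{\widetilde{\hat{\nu}}}^{it}U^{\widehat{\ga}}_\nu=U^{\widehat{\ga}}_\nu\widehat{D}^{\op}_{-t}(\Delta^{it}\otimes\Delta_\nu^{it})$ with $\widehat{D}^{\op}_{-t}=(D\nu^{\op}\circ\widehat{\ga}^{\op}:D\nu^{\op})_{-t}$, whence $P^{it}=U^\ga_\nu\widehat{D}^{\op}_{-t}(\Delta^{it}\otimes\Delta_\nu^{it})(U^\ga_\nu)^*$. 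I would then compute $(\id\otimes\widetilde{\tau}_t)(W_{12})=P^{it}_{23}W_{12}P^{-it}_{23}$ in stages: (a) peel off the outermost $U^\ga_{\nu,23}(\,\cdot\,)(U^\ga_\nu)^*_{23}$ and rewrite $(U^\ga_\nu)^*_{23}W_{12}U^\ga_{\nu,23}$ as $W_{12}(U^\ga_\nu)^*_{13}$, using the representation identity $(\Gamma\otimes\id)(U^\ga_\nu)=(U^\ga_\nu)_{23}(U^\ga_\nu)_{13}$ and $\Gamma=\Ad W^*$; (b) push the factor $\Delta^{it}$ on the second leg past $W_{12}$ via $(\widehat{\Delta}^{it}\otimes\Delta^{it})W=W(\widehat{\Delta}^{it}\otimes\Delta^{it})$ (see \ref{lcqg}), producing a conjugation by $\widehat{\Delta}^{\mp it}$ on the first leg; (c) push the factor $\Delta_\nu^{it}$ on the third leg past $(U^\ga_\nu)^*_{13}$ via the second relation of \ref{lemBC}(iii) for $\ga$, bringing in the cocycles $D^{\op}_{-t}$ and $D_t$ and a further $\widehat{\Delta}^{\mp it}$-conjugation on the first leg that combines with the one from (b); (d) absorb the conjugation by $\widehat{D}^{\op}_{-t}$ on legs $2,3$ using the cocycle relation $(\widehat{\Gamma}^{\op}\otimes\id)(\widehat{D}^{\op}_{-t})=(\id\otimes\widehat{\ga}^{\op})(\widehat{D}^{\op}_{-t})(1\otimes\widehat{D}^{\op}_{-t})$ with $\widehat{\Gamma}^{\op}=\Ad W$ (equivalently, a Radon--Nikodym identity of the type in \ref{propRN}, transported to the opposite actions via \ref{BCdef1}), together with the fact that $N$ and $N^{\op}$ act commutingly on $H_\nu$; and (e) recognize $U^\ga_\nu\widehat{\ga}^{\op}(\,\cdot\,)(U^\ga_\nu)^*=\beta^{\dag}(\,\cdot\,)$ and $U^\ga_\nu(1\otimes\,\cdot\,)(U^\ga_\nu)^*=\ga(\,\cdot\,)$ to collect the remaining factors into $(\id\otimes\beta^{\dag})(D^{\op}_{-t})$, $(\id\otimes\ga)(D_t)$ and an outer conjugation by $\widehat{\Delta}^{\mp it}$ on the first leg. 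This yields the first displayed formula; the second follows from it since $\tau_{-t}=\Ad\widehat{\Delta}^{-it}$ on $M$ and $(\id\otimes\widehat{\tau}_t)(W)=(\tau_{-t}\otimes\id)(W)=\widehat{\Delta}^{-it}_1W\widehat{\Delta}^{it}_1$, while $\widehat{\Delta}$ on the first leg commutes with $(\id\otimes\beta^{\dag})(D^{\op}_{-t})$ and $(\id\otimes\ga)(D_t)$ through their other legs. I expect step (d) --- reorganizing the $\widehat{D}^{\op}_{-t}$-conjugation past $W_{12}$ and $(U^\ga_\nu)^*_{13}$ into the $\widehat{\ga}^{\op}$-coaction on $D^{\op}_{-t}$ --- to be the main obstacle.

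For (iv): by (ii), $\widetilde{\tau}_t$ maps $\ga(N)$ onto itself, and braided-commutativity gives $\beta(N)\subseteq\bfG\ltimes_\ga N$ (\ref{ThBC}(ii)), so the formula in (iii) exhibits $(\id\otimes\widetilde{\tau}_t)(W_{12})$ as an element of $B(H)\otimes(\bfG\ltimes_\ga N)$; slicing the first leg gives $\widetilde{\tau}_t\bigl((\omega\otimes\id)(W)\otimes 1_{H_\nu}\bigr)\in\bfG\ltimes_\ga N$ for all $\omega\in B(H)_*$, hence $\widetilde{\tau}_t(y\otimes 1_{H_\nu})\in\bfG\ltimes_\ga N$ for all $y\in\widehat{M}$ by weak density and normality of $\widetilde{\tau}_t$. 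Since $\bfG\ltimes_\ga N=(\ga(N)\cup\widehat{M}\otimes 1_{H_\nu})''$, this yields $\widetilde{\tau}_t(\bfG\ltimes_\ga N)\subseteq\bfG\ltimes_\ga N$, and applying the same to $\widetilde{\tau}_{-t}$ gives equality. Finally, since $\widetilde{R}(z)=Iz^*I$ and $\widetilde{\tau}_t$ commutes with $\Ad I$ by (ii) and with the $*$-operation, $\widetilde{\tau}_t(\widetilde{R}(z))=\Ad I(\widetilde{\tau}_t(z)^*)=\widetilde{R}(\widetilde{\tau}_t(z))$.
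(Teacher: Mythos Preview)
Your proofs of (i), (ii) and (iv) are correct and coincide with the paper's, up to cosmetic differences: for (i) the paper works directly with $1\otimes x^{\op}$ and uses $\Delta_{\widetilde{\hat{\nu}}}^{it}=\widehat{D}_t(\Delta^{it}\otimes\Delta_\nu^{it})$ together with $\widehat{D}_t\in\widehat{M}\otimes N$ commuting with $1\otimes N^{\op}$, instead of routing through $J_{\widetilde{\hat{\nu}}}$ and the operator-valued weight; both are equally short.

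For (iii) the paper takes a somewhat cleaner route that bypasses your step (d) entirely. Rather than expanding $P^{it}$ via \ref{lemBC}(iii) applied to $\widehat{\ga}$ (which introduces $\widehat{D}^{\op}_{-t}$), the paper first applies \ref{corintegrable3}(iii), namely $\Ad(1\otimes U^{\widehat{\ga}}_\nu(U^\ga_\nu)^*)[W_{12}]=(U^\ga_\nu)^*_{13}W_{12}$, so that
\[
(\id\otimes\widetilde{\tau}_t)(W_{12})=\Ad(\widehat{\Delta}^{-it}\otimes U^\ga_\nu(U^{\widehat{\ga}}_\nu)^*)\bigl[\Ad((\widehat{\Delta}\otimes\Delta_{\widetilde{\hat{\nu}}})^{it})[(U^\ga_\nu)^*_{13}W_{12}]\bigr].
\]
Then it expands $\Delta_{\widetilde{\hat{\nu}}}^{it}=\widehat{D}_t(\Delta\otimes\Delta_\nu)^{it}$ and uses \ref{lemBC}(iii) for $\ga$ together with \ref{propRN} in its original form (involving $\widehat{D}_t$ and $(\id\otimes\widehat{\ga})(D_t)$, not the opposite versions). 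This avoids having to transport \ref{propRN} to the opposite actions, which is exactly the obstacle you anticipated in step (d). Your plan can be made to work, but step (d) requires establishing the $\widehat{D}^{\op}_{-t}$--$D^{\op}_{-t}$ analogue of \ref{propRN} by conjugating with suitable modular conjugations; the paper's organization sidesteps this.

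One small slip: your justification of the second displayed equality in (iii) claims that $\widehat{\Delta}^{it}_1$ ``commutes'' with $(\id\otimes\beta^{\dag})(D^{\op}_{-t})$ and $(\id\otimes\ga)(D_t)$. It does not: the first leg of these operators lies in $M$, on which $\Ad(\widehat{\Delta}^{-it})=\tau_{-t}$ acts nontrivially. The second equality follows instead by pulling $\widehat{\Delta}^{\mp it}_1$ through each factor, picking up $\tau_{-t}$ on the first leg, which is exactly what produces the $(\tau_{-t}\otimes\beta^{\dag})$, $(\tau_{-t}\otimes\ga)$ and $(\id\otimes\widehat{\tau}_t)(W)$ terms.
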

\begin{proof}
(i) For any $x\in N$, 
\begin{align*}
\widetilde{\tau}_t(\beta(x))
&=
\Ad
(U^\ga_\nu(U^{\widehat{\ga}}_\nu)^*)[\Delta_{\widetilde{\hat{\nu}}}^{it}] \cdot
\Ad(U^\ga_\nu(U^{\widehat{\ga}}_\nu)^*)[1\otimes x^{\op}] \cdot \Ad(U^\ga_\nu(U^{\widehat{\ga}}_\nu)^*)[\Delta_{\widetilde{\hat{\nu}}}^{-it}]\\
&=
\Ad (U^\ga_\nu(U^{\widehat{\ga}}_\nu)^*\Delta_{\widetilde{\hat{\nu}}}^{it})[1\otimes x^{\op}]\\
&=
\Ad (U^\ga_\nu(U^{\widehat{\ga}}_\nu)^*)[D_t(1\otimes\sigma_t^\nu(x)^{\op})D_t^*]\\
&=
\Ad (U^\ga_\nu(U^{\widehat{\ga}}_\nu)^*)[1\otimes\sigma_t^\nu(x)^{\op}]\\
&=
\beta(\sigma_t^\nu(x))
\end{align*}

(ii) The first assertion follows from the fact that
$J_{\widetilde{\hat{\nu}}}$ and $\Delta_{\widetilde{\hat{\nu}}}^{it}$
commute. To conclude that
$\widetilde{\tau}_t(\ga(x))=\ga(\sigma_t^\nu(x))$,  use (i) and \ref{propI}(ii).

(iii)
Let $t\in\R$. Then \ref{lemBC}(iii) and
\ref{lcqg} imply
\begin{align*}
\Ad((\widehat{\Delta} \otimes \Delta_{\widetilde{\hat\nu}})^{it}) [(U^{\ga}_{\nu})_{13}^{*}W_{12}] &=
\Ad((\widehat{D}_{t})_{23}(\widehat{\Delta} \otimes \Delta \otimes
  \Delta_{\nu})^{it}) [(U^{\ga}_{\nu})^{*}_{13}W_{12}] \\
 &=(\widehat{D}_{t})_{23}(D^{\op}_{-t})_{13}^*(U^\ga_\nu)_{13}^*(D_t)_{13}W_{12} (\widehat{D}_{t})^{*}_{23}
 \\
&= (D^{\op}_{-t})_{13}^*(\widehat{D}_{t})_{23}(U^\ga_\nu)_{13}^*(D_t)_{13}W_{12}(\widehat{D}_{t})^{*}_{23}.
\end{align*}
But 
 \ref{propRN} gives that
 $(\id \otimes \widehat{\ga})(D_{t})( \widehat{D}_t)_{23}=
W^*_{12}(U^\ga_\nu)_{13}(\widehat{D}_t)_{23}(U^\ga_\nu)^*_{13}(D_t)_{13}W_{12}$, whence
\[(\widehat{D}_t)_{23}(U^\ga_\nu)^*_{13}(D_t)_{13}W_{12}(\widehat{D}_{t})_{23}^{*}=(U^\ga_\nu)^*_{13}W_{12}(1
\otimes \widehat{\ga})(D_{t}).\]
We insert this relation above and find
\begin{align*}
\Ad((\widehat{\Delta} \otimes \Delta_{\widetilde{\hat\nu}})^{it})
[(U^{\ga}_{\nu})_{13}^{*}W_{12}] &=
(D^{\op}_{-t})_{13}^*  \cdot (U^{\ga}_{\nu})_{13}^{*}W_{12}  \cdot
(\id \otimes \widehat\ga)(D_{t}).
\end{align*}
We use
this relation and $\Ad(1 \otimes
U^{\widehat\ga}_\nu(U^\ga_\nu)^*)[W_{12}]=(U^{\ga}_{\nu})_{13}^{*}W_{12}$
(\ref{corintegrable3}), and find
\begin{align*}
  (\id \otimes \widetilde\tau_{t})(W_{12}) &=
  \Ad(1_{H} \otimes
  U^\ga_\nu(U^{\widehat{\ga}}_\nu)^*\Delta_{\widetilde{\hat{\nu}}}^{it}U^{\widehat\ga}_\nu(U^\ga_\nu)^*)[W_{12}]
  \\
&= \Ad(\widehat{\Delta}^{-it} \otimes U^\ga_\nu(U^{\widehat{\ga}}_\nu)^*)[\Ad((\widehat{\Delta} \otimes \Delta_{\widetilde{\hat\nu}})^{it})
((U^{\ga}_{\nu})_{13}^{*}W_{12})] \\
&=\Ad(\widehat{\Delta}^{-it} \otimes
U^\ga_\nu(U^{\widehat{\ga}}_\nu)^*) 
[(D^{\op}_{-t})_{13}^*  \cdot (U^{\ga}_{\nu})_{13}^{*}W_{12}  \cdot
(\id \otimes \widehat\ga)(D_{t})] \\
&= \widehat{\Delta}^{-it}_1(\id\otimes\beta^\dag)(D^{\op}_{-t})W_{12}(\id\otimes\ga)(D_t)\widehat{\Delta}^{it}_1.
\end{align*}

(iv) For any $\omega\in M_*$, the element  $\widetilde{\tau}_t [(\omega\otimes \id)(W)\otimes
1]$ belongs to $\bfG\ltimes_\ga N$ because
\[\widetilde{\tau}_t [(\omega\otimes \id)(W)\otimes 1]=(\omega\circ\tau_{-t})[(\id\otimes\beta^\dag)(D^{\op}_{-t})W_{12}(\id\otimes\ga)(D_t)].\]
By continuity, we get that
$\widetilde{\tau}_t(y\otimes 1)$ belongs to $\bfG\ltimes_\ga N$ for any $y\in\widehat{M}$.  Together
with (ii), we obtain that $\widetilde{\tau}_t(\bfG\ltimes_\ga
N)\subseteq \bfG\ltimes_\ga N$, and, as $\widetilde\tau$ is a one-parameter group of
automorphisms, we have  $\widetilde{\tau}_t(\bfG\ltimes_\ga
N)=\bfG\ltimes_\ga N$. By (ii), $\widetilde \tau_{t}$ commutes with
$\widetilde R$. \end{proof}

\begin{lemma}
\label{sigmaW}
 Let $\bfG$ be a locally compact quantum group, $(N, \ga,
  \widehat{\ga})$ a braided-commutative $\bfG$-Yetter-Drinfel'd
  algebra, $\nu$ a normal faithful semi-finite weight on $N$, $D_t$
  its Radon-Nikodym derivative with respect to $\ga$ (\ref{action}) and $\tilde{\nu}$  the dual weight of $\nu$ on the crossed product $\bfG\ltimes_{\ga}N$. Then for all $t\in\R$,
\[(\id\otimes
\sigma_t^{\tilde{\nu}})(W_{12})=\delta_1^{-it}\widehat{\Delta}_1^{-it}W_{12}(\id\otimes\ga)(D_t)\widehat{\Delta}^{it}_1
= (\id \otimes \widehat\sigma_{t})(W)_{12}(\tau_{-t}\otimes \ga)(D_{t}).\]
\end{lemma}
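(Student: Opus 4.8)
The plan is to reduce the statement to three facts from the preliminaries: the identity $\Delta_{\tilde\nu}^{it}=D_t(\widehat\Delta^{it}\otimes\Delta_\nu^{it})$, equivalently $D_t=(D\nu\circ\ga:D\nu)_t=\Delta_{\tilde\nu}^{it}(\widehat\Delta^{-it}\otimes\Delta_\nu^{-it})$ from \ref{action}; the relation $W(\widehat\Delta^{it}\otimes\widehat\Delta^{it})W^{*}=\delta^{it}\widehat\Delta^{it}\otimes\widehat\Delta^{it}$ together with $\tau_t(x)=\widehat\Delta^{it}x\widehat\Delta^{-it}$ for $x\in M$ from \ref{lcqg}, using also the standard facts that $\delta^{it}$ and $\widehat\Delta^{it}$ strongly commute and $\tau_t(\delta)=\delta$; and the Radon--Nikodym cocycle identity $(\Gamma\otimes\id)(D_t)=(\id\otimes\ga)(D_t)(1\otimes D_t)$ from \ref{action}. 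As in \ref{scaling}(iii), $(\id\otimes\sigma_t^{\tilde\nu})$ is read as $\Ad\big((\Delta_{\tilde\nu}^{it})_{23}\big)$ on $B(H)\otimes B(H\otimes H_\nu)$, and $\widehat\sigma_t=\Ad\widehat\Delta^{it}$ is the modular automorphism group of $\widehat\varphi$ on $\widehat M$; recall also that $W\in M\otimes\widehat M$, so that $\Ad\widehat\Delta^{it}$ on the second leg of $W$ coincides with $\widehat\sigma_t$.

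First I would expand, using $(\Delta_{\tilde\nu}^{it})_{23}=(D_t)_{23}\widehat\Delta_2^{it}\Delta_{\nu,3}^{it}$,
\[(\id\otimes\sigma_t^{\tilde\nu})(W_{12})=(D_t)_{23}\,\widehat\Delta_2^{it}\Delta_{\nu,3}^{it}\,W_{12}\,\Delta_{\nu,3}^{-it}\widehat\Delta_2^{-it}\,(D_t)_{23}^{*}=(D_t)_{23}\,\widehat\Delta_2^{it}W_{12}\widehat\Delta_2^{-it}\,(D_t)_{23}^{*},\]
the last equality because $\Delta_{\nu,3}^{it}$ lives on the disjoint third leg and commutes with $W_{12}$.

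The core step is the computation of the middle factor, and this is the one I expect to be the main obstacle, since it is where the modulus $\delta$ enters. Writing the relation from \ref{lcqg} as $W\widehat\Delta_1^{it}\widehat\Delta_2^{it}=\delta_1^{it}\widehat\Delta_1^{it}\widehat\Delta_2^{it}W$ and using that $\delta_1^{it}$ commutes with $\widehat\Delta_1^{it}$ and that $\tau_t(\delta)=\delta$, one isolates
\[\widehat\Delta_2^{it}W_{12}\widehat\Delta_2^{-it}=\delta_1^{-it}\widehat\Delta_1^{-it}W_{12}\widehat\Delta_1^{it}=\delta_1^{-it}(\tau_{-t}\otimes\id)(W)_{12};\]
since the second leg of $W$ lies in $\widehat M$, where $\Ad\widehat\Delta^{it}=\widehat\sigma_t$, this left-hand side is also $(\id\otimes\widehat\sigma_t)(W)_{12}$, which is precisely the identification yielding the two forms on the right of the statement. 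Here one must keep the commutations among $\delta$, $\widehat\Delta$, $\Delta_\nu$ and $(D_t)_{23}$ straight.

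It then remains to transport the cocycle through $W$. From $\Gamma(x)=W^{*}(1\otimes x)W$ one gets $(\Gamma\otimes\id)(D_t)=W_{12}^{*}(D_t)_{23}W_{12}$, so the cocycle identity reads $(D_t)_{23}W_{12}(D_t)_{23}^{*}=W_{12}(\id\otimes\ga)(D_t)$. Substituting the previous display into the expansion and commuting $(D_t)_{23}$ past the leg-$1$ operators $\delta_1^{\pm it}$ and $\widehat\Delta_1^{\pm it}$, I obtain
\[(\id\otimes\sigma_t^{\tilde\nu})(W_{12})=\delta_1^{-it}\widehat\Delta_1^{-it}\,(D_t)_{23}W_{12}(D_t)_{23}^{*}\,\widehat\Delta_1^{it}=\delta_1^{-it}\widehat\Delta_1^{-it}W_{12}(\id\otimes\ga)(D_t)\widehat\Delta_1^{it},\]
which is the first asserted form. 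For the second, I would insert $\widehat\Delta_1^{it}\widehat\Delta_1^{-it}=1$ in the middle and use $\widehat\Delta_1^{-it}W_{12}\widehat\Delta_1^{it}=(\tau_{-t}\otimes\id)(W)_{12}$ together with $\widehat\Delta_1^{-it}(\id\otimes\ga)(D_t)\widehat\Delta_1^{it}=(\tau_{-t}\otimes\ga)(D_t)$ (conjugation by $\widehat\Delta^{-it}$ on the first leg is $\tau_{-t}$, and that leg is untouched by $\id\otimes\ga$); the resulting $\delta_1^{-it}(\tau_{-t}\otimes\id)(W)_{12}(\tau_{-t}\otimes\ga)(D_t)$ equals $(\id\otimes\widehat\sigma_t)(W)_{12}(\tau_{-t}\otimes\ga)(D_t)$ by the identification from the previous paragraph, which completes the proof.
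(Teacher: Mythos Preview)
Your proof is correct and follows essentially the same approach as the paper's: expand $\sigma_t^{\tilde\nu}$ via $\Delta_{\tilde\nu}^{it}=D_t(\widehat\Delta^{it}\otimes\Delta_\nu^{it})$, use the relation $W(\widehat\Delta^{it}\otimes\widehat\Delta^{it})W^{*}=\delta^{it}\widehat\Delta^{it}\otimes\widehat\Delta^{it}$ to trade $\widehat\Delta_2^{it}$ for leg-$1$ operators, and then apply the cocycle identity $(\Gamma\otimes\id)(D_t)=(\id\otimes\ga)(D_t)(D_t)_{23}$. Your derivation of the second form via $(\id\otimes\widehat\sigma_t)(W)=\delta_1^{-it}(\tau_{-t}\otimes\id)(W)$ is more explicit than the paper's, which leaves that reformulation to the reader.
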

\begin{proof}
By (\cite{Y3}, 3.4) and \ref{action},
\begin{align*}
(\id\otimes \sigma_t^{\tilde{\nu}})(W_{12})
&=
[D_t(\widehat{\Delta}^{it}\otimes\Delta_\nu^{it})]_{23}W_{12}[(\widehat{\Delta}^{-it}\otimes\Delta_\nu^{-it})D_t^*]_{23}\\
&=
\delta_1^{-it}\widehat{\Delta}^{-it}_1(D_t)_{23}W_{12}\widehat{\Delta}^{it}_1(D_t^*)_{23}\\
&=
\delta_1^{-it}\widehat{\Delta}^{-it}_1W_{12}(\Gamma\otimes \id)(D_t)(D_t^*)_{23}\widehat{\Delta}^{it}_1\\
&=
\delta_1^{-it}\widehat{\Delta}^{-it}_1W_{12}(\id\otimes \ga)(D_t)\widehat{\Delta}^{it}_1.\qedhere
\end{align*}
\end{proof}

\begin{proposition}
\label{scalingGamma}
 Let $\bfG$ be a locally compact quantum group, $(N, \ga,
  \widehat{\ga})$ a braided-commu-tative $\bfG$-Yetter-Drinfel'd
  algebra, $\nu$ a normal faithful semi-finite weight on $N$, and
  $\tilde{\nu}$  the dual weight of $\nu$ on the crossed product
  $\bfG\ltimes_{\ga}N$. Then the one-parameter group $\widetilde{\tau}_t$
  of $\bfG\ltimes_\ga N$ constructed in \ref{scaling} satisfies, for
  all $t\in\R$,
  \begin{align*}
    \widetilde{\Gamma}\circ\sigma_t^{\tilde{\nu}}&=(\widetilde{\tau}_t\underset{N}{_\beta*_\ga}\sigma_t^{\tilde{\nu}})\circ\widetilde{\Gamma}, &
\widetilde{\Gamma}\circ\sigma_t^{\tilde{\nu}\circ\widetilde{R}}&=(\sigma_t^{\tilde{\nu}\circ\widetilde{R}}\underset{N}{_\beta*_\ga}\widetilde{\tau}_{-t})\circ\widetilde{\Gamma}.
  \end{align*}
\end{proposition}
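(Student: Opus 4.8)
The plan is to prove the first relation $\widetilde{\Gamma}\circ\sigma_t^{\tilde{\nu}}=(\widetilde{\tau}_t\underset{N}{_\beta*_\ga}\sigma_t^{\tilde{\nu}})\circ\widetilde{\Gamma}$ by checking it on the two families of generators of $\bfG\ltimes_\ga N$, namely on $\ga(N)$ and on $\widehat{M}\otimes 1_{H_\nu}$, and then to deduce the second relation from the first by conjugating with the co-inverse $\widetilde R$. For $x\in N$, note that $\sigma_t^{\tilde\nu}(\ga(x))=\ga(\sigma_t^\nu(x))$ because $\tilde\nu=\nu\circ\ga^{-1}\circ T_{\tilde\ga}$ restricts nicely to $\ga(N)$ (or directly from $\Delta_{\tilde\nu}^{it}=D_{-t}(\widehat\Delta^{it}\otimes\Delta_\nu^{it})$ and the Radon--Nikodym cocycle identity of \ref{action}). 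Then using \ref{th1}(i), $\widetilde\Gamma(\ga(\sigma_t^\nu(x)))=\ga(\sigma_t^\nu(x))\underset{N}{_\beta\otimes_\ga}(1_H\otimes 1_{H_\nu})$, while on the other side $(\widetilde\tau_t\underset{N}{_\beta*_\ga}\sigma_t^{\tilde\nu})$ applied to $\ga(x)\underset{N}{_\beta\otimes_\ga}(1_H\otimes 1_{H_\nu})$ gives $\widetilde\tau_t(\ga(x))\underset{N}{_\beta\otimes_\ga}(1_H\otimes 1_{H_\nu})$, which equals $\ga(\sigma_t^\nu(x))\underset{N}{_\beta\otimes_\ga}(1_H\otimes 1_{H_\nu})$ by \ref{scaling}(ii). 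So the generators from $\ga(N)$ are handled purely by citing \ref{th1} and \ref{scaling}.

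The substantial case is $y\otimes 1_{H_\nu}$ with $y\in\widehat M$, or rather $(\omega\otimes\id)(W)\otimes 1$ with $\omega\in M_*$, since these span a weakly dense subspace. By \ref{th1}(i) we have $\widetilde\Gamma(y\otimes 1)=\Ad(V_1^*W_{12})[y\otimes 1\otimes 1]$, so I would compute $\widetilde\Gamma(\sigma_t^{\tilde\nu}(y\otimes 1))$ by applying $\id\otimes\sigma_t^{\tilde\nu}$ appropriately and invoking Lemma \ref{sigmaW}, which gives the precise form of $(\id\otimes\sigma_t^{\tilde\nu})(W_{12})$ in terms of $\delta^{it}$, $\widehat\Delta^{it}$, and the Radon--Nikodym derivative $D_t$. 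On the other hand, the right-hand side $(\widetilde\tau_t\underset{N}{_\beta*_\ga}\sigma_t^{\tilde\nu})\widetilde\Gamma(y\otimes 1)$ I would expand by tensoring the formula $(\id\otimes\widetilde\tau_t)(W_{12})$ from \ref{scaling}(iii) with the $\sigma_t^{\tilde\nu}$-action on the second leg of the fiber product, again using Lemma \ref{sigmaW}. Both sides should reduce — after the substitution $y=(\omega\otimes\id)(W)$ and use of $\tau_t(y)$-type identities and the fact that $\delta^{it}$ is a group-like element (so its $\widetilde\Gamma$-image splits as $\delta^{it}\underset{N}{_\beta\otimes_\ga}\delta^{it}$, matching the $\delta$-factors on the two legs) — to the same expression. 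Throughout, the cocycle identities $(\Gamma\otimes\id)(D_t)=(\id\otimes\ga)(D_t)(1\otimes D_t)$ and $D_{t+s}=D_t(\tau_t\otimes\sigma_t^\nu)(D_s)$ from \ref{action}, together with Proposition \ref{propRN}, will be the bookkeeping tools needed to see that the $D_t$-factors on the left match the product of the $D_t$-factors appearing on the two legs of the right-hand side.

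The main obstacle I expect is exactly this bookkeeping on the $\widehat M$-generators: one must carefully track how the factors $\delta_1^{-it}$, $\widehat\Delta_1^{\pm it}$, $(\id\otimes\ga)(D_t)$, $(\id\otimes\beta^\dag)(D_{-t}^{\op})$ appearing in Lemma \ref{sigmaW} and in \ref{scaling}(iii) are distributed between the two legs of the relative tensor product after applying $V_1$ or $V_2$ (via \ref{propV}(iii)), and verify that the $\beta$-intertwining relations make all the operators land in the fiber product $(\bfG\ltimes_\ga N)\underset{N}{_\beta*_\ga}(\bfG\ltimes_\ga N)$. Once the first relation is established, the second follows formally: apply $\widetilde\Gamma\circ\widetilde R=\varsigma_{N^{\op}}(\widetilde R\underset{N}{_\beta*_\ga}\widetilde R)\widetilde\Gamma$ (Theorem \ref{th2}(ii)), use $\widetilde R\circ\sigma_t^{\tilde\nu}\circ\widetilde R=\sigma_{-t}^{\tilde\nu\circ\widetilde R}$ (standard modular theory for the weight $\tilde\nu\circ\widetilde R$ obtained by transporting $\tilde\nu$ through the anti-automorphism $\widetilde R$), and $\widetilde\tau_t\circ\widetilde R=\widetilde R\circ\widetilde\tau_t$ from \ref{scaling}(iv), to convert the first identity into the second.
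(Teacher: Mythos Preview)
Your overall architecture matches the paper: verify the first identity on the generators $\ga(N)$ and $\widehat M\otimes 1$, then deduce the second from the first via $\widetilde R$ and \ref{scaling}(iv). The $\ga(N)$ case is handled exactly as you say, and the deduction of the second formula from the first is correct.

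The gap is in the $\widehat M$ case. Lemmas \ref{sigmaW} and \ref{scaling}(iii) tell you what $\sigma_t^{\tilde\nu}$ and $\widetilde\tau_t$ do to $(\omega\otimes\id)(W)\otimes 1$ \emph{individually}, but they do not tell you how the fiber-product automorphism $\widetilde\tau_t\underset{N}{_\beta*_\ga}\sigma_t^{\tilde\nu}$ acts on $\widetilde\Gamma(y\otimes 1)=V_2^*(\widehat\Gamma(y)\otimes 1)V_2$. Your phrase ``tensoring $(\id\otimes\widetilde\tau_t)(W_{12})$ with the $\sigma_t^{\tilde\nu}$-action on the second leg'' is not a well-defined operation here: those formulas live in $B(H)\otimes B(H\otimes H_\nu)$, not on the relative tensor product $(H\otimes H_\nu)\underset{\nu}{_\beta\otimes_\ga}(H\otimes H_\nu)$, and there is no obvious way to combine them into the fiber-product action without first computing how the implementing unitaries intertwine with $V_1$ or $V_2$. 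Relatedly, your remark that $\widetilde\Gamma(\delta^{it})=\delta^{it}\underset{N}{_\beta\otimes_\ga}\delta^{it}$ is a confusion of structures: $\delta$ is the modulus of $\bfG$, affiliated to $M$, and $\delta^{it}\otimes 1$ is not an element of $\bfG\ltimes_\ga N$, so $\widetilde\Gamma$ does not apply to it. In fact $\delta$ does not appear in the proof at all.

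What the paper actually does for the $\widehat M$ generators is compute the implementing unitary of $\widetilde\tau_t\underset{N}{_\beta\otimes_\ga}\sigma_t^{\tilde\nu}$ after conjugation by $V_2$: a direct calculation on elementary tensors gives
\[
V_2\bigl[U^\ga_\nu(U^{\widehat\ga}_\nu)^*\Delta_{\widetilde{\hat\nu}}^{it}U^{\widehat\ga}_\nu(U^\ga_\nu)^*\underset{N}{_\beta\otimes_\ga}\Delta_{\tilde\nu}^{it}\bigr]V_2^*=(\id\otimes\ga)(\widehat D_t)(\Delta^{it}\otimes\Delta_{\tilde\nu}^{it}),
\]
and then, using \ref{th1}(i) in the form $\Ad(V_2)[\widetilde\Gamma(y\otimes 1)]=\Ad(\sigma_{12}W_{12})[y\otimes 1\otimes 1]$ together with Proposition \ref{propRN} (which relates $(\id\otimes\ga)(\widehat D_t)(\Delta^{it}\otimes\Delta_{\tilde\nu}^{it})$ and $(\id\otimes\widehat\ga)(D_t)(\widehat\Delta^{it}\otimes\Delta_{\widetilde{\hat\nu}}^{it})$ through $\sigma W$), one lands via \ref{propV}(iii) in the $V_1$ picture, where both sides reduce to $\Ad((D_t)_{23})[\widehat\Gamma^{\op}(\widehat\sigma_t(y))\otimes 1]$. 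So the missing ingredient in your plan is precisely this $V_2$-intertwining computation and the invocation of \ref{propRN}; once you have those, the formulas from \ref{sigmaW} and \ref{scaling}(iii) are not needed here (they are used later, in the proof of \ref{thmqg}).
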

\begin{proof}
Let $x\in N$ and $t\in\R$. Then  \ref{scaling}(ii) and \ref{th1} imply
\begin{align*}
  \widetilde{\Gamma}\circ\sigma_t^{\tilde{\nu}}(\ga(x))=\widetilde{\Gamma}(\ga(\sigma_t^\nu(x)))&=\ga(\sigma_t^\nu(x))\underset{N}{_\beta\otimes_\ga}1\\
  &=(\widetilde{\tau}_t\underset{N}{_\beta*_\ga}\sigma_t^{\tilde{\nu}})(\ga(x)\underset{N}{_\beta\otimes_\ga}1)=
  (\widetilde{\tau}_t\underset{N}{_\beta*_\ga}\sigma_t^{\tilde{\nu}})\widetilde{\Gamma}(\ga(x)).
\end{align*}
Next, let $V_2$ be the unitary from $(H\otimes
H_\nu)\underset{\nu}{_\beta\otimes_\ga}(H\otimes H_\nu)$ onto
$H\otimes H\otimes H_\nu$ introduced in \ref{propV}, and denote by
$\widetilde{\hat\nu}$ the weight on $\widehat\bfG\ltimes_{\ga} N$ dual
to $\nu$ as before. Then
\begin{align*}
  V_{2}[U^\ga_\nu(U^{\widehat{\ga}}_\nu)^*\Delta_{\widetilde{\hat{\nu}}}^{it}U^{\widehat{\ga}}(U^\ga_\nu)^*\underset{N}{_\beta\otimes_\ga}\Delta_{\tilde{\nu}}^{it}]V_2^*
  (\xi \otimes \ga(q)\Xi) &=
  V_{2}[U^\ga_\nu(U^{\widehat{\ga}}_\nu)^*\Delta_{\widetilde{\hat{\nu}}}^{it}(\xi
  \otimes \Lambda_{\nu}(q)) 
\underset{N}{_\beta\otimes_\ga}\Delta_{\tilde{\nu}}^{it}\Xi] \\
&=
  V_{2}[U^\ga_\nu(U^{\widehat{\ga}}_\nu)^*\widehat D_{t}(\Delta^{it}\xi
  \otimes \Lambda_{\nu}(\sigma^{\nu}_{t}(q))) 
\underset{N}{_\beta\otimes_\ga}\Delta_{\tilde{\nu}}^{it}\Xi] \\
&=  (\id \otimes \ga)(\widehat D_{t})(\Delta^{it}\xi \otimes
\ga(\sigma^{\nu}_{t}(q))\Delta^{it}_{\tilde \nu}\Xi) \\
&=  (\id \otimes \ga)(\widehat D_{t})(\Delta^{it} \otimes
\Delta^{it}_{\tilde \nu})(\xi \otimes \ga(q)\Xi).
\end{align*}
 Let now $y \in \widehat M$. Then by \ref{th1},
\begin{align*}
  \Ad(V_{2})[\tilde
\Gamma(y\otimes 1)]=\widehat\Gamma(y) \otimes 1 = \Ad(\sigma_{12}W_{12})[y
\otimes 1].
\end{align*}
Using these two relations and \ref{propRN}, we find
\begin{align*}
  \Ad(V_{2})[(\tilde \tau \underset{N}{_{\beta}*_{\alpha}} 
\sigma^{\tilde\nu}_{t})(\tilde \Gamma(y\otimes 1))] &= \Ad((\id
\otimes \ga)(\widehat D_{t})(\Delta^{it} \otimes
\Delta^{it}_{\tilde{\nu}}) \sigma_{12}W_{12}
 )[y\otimes 1\otimes 1] \\
 &= \Ad(\sigma_{12}W_{12}(\id \otimes \widehat\ga)(D_{t})(\widehat
 \Delta^{it} \otimes \Delta^{it}_{\widetilde{\hat \nu}}))[y\otimes 1
 \otimes 1] \\
 &=
 \Ad(\sigma_{12}W_{12}(U^{\widehat{\ga}}_{\nu})_{23}(D_{t})_{13})[\widehat
 \sigma_{t}(y) \otimes 1 \otimes 1].
\end{align*}
By \ref{propV}(iii),
$\sigma_{12}W_{12}(U^{\widehat{\ga}}_{\nu})_{23}
=V_{2}V_{1}^{*}W_{12}(U^{\ga}_{\nu})_{23}$ and hence
\begin{align*}
  \Ad(V_{1})[(\tilde \tau \underset{N}{_{\beta}*_{\alpha}} 
\sigma^{\tilde\nu}_{t})(\tilde \Gamma(y\otimes 1))] &=
\Ad(W_{12}(U^{\ga}_{\nu})_{23}  (D_{t})_{13})[\widehat
 \sigma_{t}(y) \otimes 1 \otimes 1] \\
 &=\Ad(W_{12}(\id \otimes \ga)(D_{t})(D_{t})_{23})[\widehat
 \sigma_{t}(y) \otimes 1 \otimes 1] \\
 &=\Ad((D_{t})_{23}W_{12})[\widehat
 \sigma_{t}(y) \otimes 1 \otimes 1] \\
&= \Ad((D_{t})_{23})[\widehat \Gamma^{\op}(\widehat \sigma_{t}(y))\otimes 1].
\end{align*}
On the other hand, 
\begin{align*}
  \Ad(V_{1})[\tilde \Gamma(\sigma^{\tilde\nu}_{t}(y \otimes 1))] 
  &= \tilde\ga(\sigma^{\tilde \nu}_{t}(y\otimes 1)) \\
  &= \Ad((\widehat W^{\op}_{12})^{*})[\sigma^{\tilde \nu}_{t}(y\otimes 1)] \\
  &=\Ad((\widehat W^{\op}_{12})^{*}(D_{t})_{23})[\widehat\sigma_{t}(y)\otimes 1]
  \\
  &=\Ad((D_{t})_{23}(\widehat W^{\op})_{12}^{*})[\widehat\sigma_{t}(y)\otimes 1] \\
  &=\Ad((D_{t})_{23})[(\widehat \Gamma^{\op}(\widehat \sigma_{t}(y))\otimes 1)],
\end{align*}
showing that
$(\tilde \tau \underset{N}{_{\beta}*_{\alpha}} 
\sigma^{\tilde\nu}_{t})(\tilde \Gamma(y\otimes 1))=  \tilde \Gamma(\sigma^{\tilde\nu}_{t}(y \otimes 1))$.

Since $\bfG\ltimes_{\ga} N$ is generated by $\ga(N)$ and $\widehat{M}
\otimes 1$, the first of the two formulas follows.
Using \ref{scaling}(iv),  the second one is easy to prove from the first one. \end{proof}

\begin{corollary}
\label{corgamma}
 Let $\bfG$ be a locally compact quantum group, $(N, \ga,
  \widehat{\ga})$ a braided-commu-tative $\bfG$-Yetter-Drinfel'd
  algebra, $\nu$ a normal faithful semi-finite weight on $N$, and
  $\tilde{\nu}$  the dual weight of $\nu$ on the crossed product
  $\bfG\ltimes_{\ga}N$. Then there exists a one-parameter group
  $\gamma_t$ of automorphisms of $N$ such that
  $\sigma_t^{\tilde{\nu}}(\beta(x))=\beta(\gamma_t(x))$.
\end{corollary}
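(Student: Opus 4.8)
The plan is to show first that $\widetilde\Gamma(\sigma_t^{\tilde\nu}(\beta(x)))=(1_H\otimes 1_{H_\nu})\underset{N}{_\beta\otimes_\ga}\sigma_t^{\tilde\nu}(\beta(x))$ for all $x\in N$ and $t\in\R$, then to deduce from this that $\sigma_t^{\tilde\nu}(\beta(x))$ lies in $\beta(N)$, and finally to read off the automorphism group $\gamma_t$.

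For the preliminary step, I would first record that $\sigma_t^{\tilde\nu}$ is an automorphism of $\bfG\ltimes_\ga N$ preserving $\ga(N)$: this is the usual behaviour of the modular group of a dual weight, since $\tilde\nu=\nu\circ\ga^{-1}\circ T_{\tilde\ga}$ with $T_{\tilde\ga}$ an operator-valued weight onto $\ga(N)$ (\cite{V}, \S3). Hence $\sigma_t^{\tilde\nu}$ also preserves $\ga(N)'\cap(\bfG\ltimes_\ga N)$, and by Theorem \ref{ThBC} the element $\beta(x)$ lies in that algebra, so $Z:=\sigma_t^{\tilde\nu}(\beta(x))$ again lies in $\ga(N)'\cap(\bfG\ltimes_\ga N)$; in particular $(1_H\otimes 1_{H_\nu})\underset{N}{_\beta\otimes_\ga}Z$ makes sense. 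Now Proposition \ref{scalingGamma} (first formula), Theorem \ref{th1}(i), and the fact that $\widetilde\tau_t$ is unital give
\[\widetilde\Gamma(Z)=(\widetilde\tau_t\underset{N}{_\beta*_\ga}\sigma_t^{\tilde\nu})\bigl(\widetilde\Gamma(\beta(x))\bigr)=(\widetilde\tau_t\underset{N}{_\beta*_\ga}\sigma_t^{\tilde\nu})\bigl((1_H\otimes 1_{H_\nu})\underset{N}{_\beta\otimes_\ga}\beta(x)\bigr)=(1_H\otimes 1_{H_\nu})\underset{N}{_\beta\otimes_\ga}Z.\]

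The main step is to conclude $Z\in\beta(N)$, and this is where I would apply $\Ad(V_2)$ to both sides. On one hand, Lemma \ref{lemmaga}(ii) yields $V_2\widetilde\Gamma(Z)V_2^*=(\widehat R\otimes\widetilde R)\tilde\ga(\widetilde R(Z))$; on the other hand, since $Z\in\ga(N)'$, Proposition \ref{propV}(ii) yields $V_2\bigl((1_H\otimes 1_{H_\nu})\underset{N}{_\beta\otimes_\ga}Z\bigr)V_2^*=1_H\otimes Z$. Combining these with the identity above,
\[(\widehat R\otimes\widetilde R)\bigl(\tilde\ga(\widetilde R(Z))\bigr)=1_H\otimes Z.\]
Applying the involutive anti-$*$-automorphism $\widehat R\otimes\widetilde R$ once more and using $\widehat R(1_H)=1_H$, this becomes $\tilde\ga(\widetilde R(Z))=1_H\otimes\widetilde R(Z)$, i.e.\ $\widetilde R(Z)$ belongs to $(\bfG\ltimes_\ga N)^{\tilde\ga}=\ga(N)$ (\cite{V}, 2.7). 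Since $\widetilde R(\ga(N))=\beta(N)$ by Theorem \ref{th2}(i), we get $Z=\sigma_t^{\tilde\nu}(\beta(x))\in\beta(N)$.

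Finally, applying the same argument with $-t$ shows $\sigma_t^{\tilde\nu}(\beta(N))=\beta(N)$, so $\sigma_t^{\tilde\nu}$ restricts to an automorphism of $\beta(N)$; since $\beta$ is an injective anti-$*$-homomorphism of $N$ onto $\beta(N)$, the maps $\gamma_t:=\beta^{-1}\circ\sigma_t^{\tilde\nu}\circ\beta$ are $*$-automorphisms of $N$, they form a one-parameter group because $\sigma^{\tilde\nu}$ does, they are $\sigma$-weakly continuous in $t$ since $\sigma^{\tilde\nu}$ is and $\beta,\beta^{-1}$ are normal, and by construction $\sigma_t^{\tilde\nu}(\beta(x))=\beta(\gamma_t(x))$. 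The only points requiring care are the bookkeeping of the three involutions $\widehat R$, $\widetilde R$, $\widehat R\otimes\widetilde R$ in the $\Ad(V_2)$ computation, and the initial verification that $\sigma_t^{\tilde\nu}(\beta(x))$ still lies in $\ga(N)'\cap(\bfG\ltimes_\ga N)$ so that Proposition \ref{propV}(ii) and the relative tensor products genuinely apply.
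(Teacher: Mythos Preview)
Your proof is correct. The first computation, obtaining $\widetilde\Gamma(\sigma_t^{\tilde\nu}(\beta(x)))=(1_H\otimes 1_{H_\nu})\underset{N}{_\beta\otimes_\ga}\sigma_t^{\tilde\nu}(\beta(x))$ from Proposition~\ref{scalingGamma} and Theorem~\ref{th1}(i), is exactly what the paper does.

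The difference lies in the second step. The paper simply invokes Lesieur's result (\cite{L}, 4.0.9), which says in this Hopf-bimodule context that an element $Z$ with $\widetilde\Gamma(Z)=1\underset{N}{_\beta\otimes_\ga}Z$ must lie in $\beta(N)$. You instead give a direct, self-contained argument: conjugating by $V_2$ via Lemma~\ref{lemmaga}(ii) and Proposition~\ref{propV}(ii) turns the identity into $\tilde\ga(\widetilde R(Z))=1_H\otimes\widetilde R(Z)$, whence $\widetilde R(Z)\in(\bfG\ltimes_\ga N)^{\tilde\ga}=\ga(N)$ and $Z\in\beta(N)$. This is essentially a proof of (the relevant instance of) \cite{L}, 4.0.9 using only the machinery already developed in the present paper, so your route avoids the external reference at the cost of a short extra computation. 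The preliminary check that $\sigma_t^{\tilde\nu}(\beta(x))\in\ga(N)'\cap(\bfG\ltimes_\ga N)$, needed for Proposition~\ref{propV}(ii) to apply, is handled correctly.
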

\begin{proof}
Using \ref{scalingGamma}, we get that for all $x\in N$ and $t\in \R$,
\[\widetilde{\Gamma}(\sigma_t^{\tilde{\nu}}(\beta(x)))=(\widetilde{\tau}_t\underset{N}{_\beta*_\ga}\sigma_t^{\tilde{\nu}})(\widetilde{\Gamma}(\beta(x)))=(\widetilde{\tau}_t\underset{N}{_\beta*_\ga}\sigma_t^{\tilde{\nu}})(
1\underset{N}{_\beta\otimes_\ga}\beta(x))=1\underset{N}{_\beta\otimes_\ga}\sigma_t^{\tilde{\nu}}(\beta(x))\]
from which we get the result by (\cite{L}, 4.0.9). \end{proof}

\begin{theorem}
\label{thmqg}
 Let $\bfG$ be a locally compact quantum group, $(N, \ga,
  \widehat{\ga})$ a braided-commu-tative $\bfG$-Yetter-Drinfel'd
  algebra, $\nu$ a normal faithful semi-finite weight on $N$, $D_t$
  the Radon-Nikodym derivative of $\nu$ with respect to the action
  $\ga$, $\tilde{\nu}$ the dual weight of $\nu$ on the
  crossed product $\bfG\ltimes_\ga N$, $\widetilde{\tau}_t$ the one
  parameter group of automorphisms of $\bfG\ltimes_\ga N$ constructed
  in \ref{scaling}, and $\gamma_t$ the one parameter group of
  automorphisms of $N$ constructed in \ref{corgamma}. Let $\Phi_t$ be
  the automorphism of $M$ defined by $\Phi_t(x)=\tau_t\circ\Ad
  \delta^{-it}$ (let us remark that $\Phi_t$ is an automorphism of
  $\bfG$). Then the following conditions are equivalent:
  \begin{enumerate}
  \item  $(\Phi_t\otimes\gamma_t)(D_s)=D_s$ for all $s$, $t$ in
    $\R$.
  \item  $\sigma_t^{\tilde{\nu}}$ and $\widetilde{\tau}_s$ commute for
    all $s$, $t$ in $\R$.
  \item  $\sigma_t^{\tilde{\nu}}$ and
    $\sigma_s^{\tilde{\nu}\circ\widetilde{R}}$ commute for all $s$,
    $t$ in $\R$.
  \item  $\mathfrak{G}(N, \ga, \widehat{\ga}, \nu)$ is a measured
    quantum groupoid.
  \end{enumerate}
If these conditions hold, then $\widetilde{\tau}_t$ is the  scaling
group of  $\mathfrak{G}(N, \ga, \widehat{\ga}, \nu)$, and $\gamma_t$
is the one parameter group of automorphisms of $N$ defined in
\ref{data}(iv).
\end{theorem}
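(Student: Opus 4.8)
The plan is to run the equivalences through the concrete condition (1): I will prove $(3)\Leftrightarrow(4)$ by unwinding Definition \ref{defMQG}, and then $(1)\Leftrightarrow(2)$ and $(1)\Leftrightarrow(3)$ by explicit computation with the spatial implementations of the one-parameter groups involved. By Theorems \ref{th1}, \ref{th2} and \ref{th3}, the octuple $(N,\bfG\ltimes_\ga N,\ga,\beta,\widetilde{\Gamma},T_{\tilde{\ga}},\widetilde{R}\circ T_{\tilde{\ga}}\circ\widetilde{R},\nu)$ already satisfies conditions (i), (ii) and (iii) of \ref{defMQG}. Since $\widetilde{\nu}=\nu\circ\ga^{-1}\circ T_{\tilde{\ga}}$ by definition of the dual weight (\ref{action}), we have $\Phi:=\nu\circ\ga^{-1}\circ T_{\tilde{\ga}}=\widetilde{\nu}$; and since $\widetilde{R}(\ga(x))=\beta(x)$ (\ref{th2}), so that $\beta^{-1}\circ\widetilde{R}=\ga^{-1}$ on $\ga(N)$, we get $\Psi:=\nu\circ\beta^{-1}\circ\widetilde{R}\circ T_{\tilde{\ga}}\circ\widetilde{R}=\widetilde{\nu}\circ\widetilde{R}$. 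Hence $\mathfrak{G}(N,\ga,\widehat{\ga},\nu)$ is a measured quantum groupoid if and only if the modular automorphism groups of $\widetilde{\nu}$ and $\widetilde{\nu}\circ\widetilde{R}$ commute, which is precisely condition (3); this yields $(3)\Leftrightarrow(4)$.

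For $(1)\Leftrightarrow(2)$, I first observe that $\sigma_t^{\widetilde{\nu}}(\ga(x))=\ga(\sigma^{\nu}_t(x))$ (restriction of $\widetilde{\nu}$) and $\widetilde{\tau}_s(\ga(x))=\ga(\sigma^{\nu}_s(x))$ (\ref{scaling}(ii)), so that $\sigma_t^{\widetilde{\nu}}$ and $\widetilde{\tau}_s$ commute on $\ga(N)$; since $\bfG\ltimes_\ga N$ is generated by $\ga(N)$ and $\widehat{M}\otimes 1_{H_\nu}$, condition (2) is equivalent to the commutation of $\sigma_t^{\widetilde{\nu}}$ and $\widetilde{\tau}_s$ on the elements $(\omega\otimes\id)(W)\otimes 1_{H_\nu}$, $\omega\in M_*$. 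Comparing the formula for $(\id\otimes\sigma_t^{\widetilde{\nu}})(W_{12})$ in Lemma \ref{sigmaW} with the one for $(\id\otimes\widetilde{\tau}_s)(W_{12})$ in \ref{scaling}(iii), and using the cocycle identities $D_{t+s}=D_t(\tau_t\otimes\sigma^{\nu}_t)(D_s)$, the relation $\sigma_t^{\widetilde{\nu}}(\beta(x))=\beta(\gamma_t(x))$ (\ref{corgamma}), and the fact that $\Phi_t=\tau_t\circ\Ad\delta^{-it}$ is spatially implemented on $M$ by $\widehat{\Delta}^{it}\delta^{-it}$, a direct computation shows that $\sigma_t^{\widetilde{\nu}}\circ\widetilde{\tau}_s$ and $\widetilde{\tau}_s\circ\sigma_t^{\widetilde{\nu}}$ agree on $(\omega\otimes\id)(W)\otimes 1_{H_\nu}$ for all $\omega$ if and only if $(\Phi_t\otimes\gamma_t)(D_s)=D_s$; this is $(1)\Leftrightarrow(2)$.

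For $(1)\Leftrightarrow(3)$, set $u_t=(D\widetilde{\nu}\circ\widetilde{R}:D\widetilde{\nu})_t\in\bfG\ltimes_\ga N$, so $\sigma_t^{\widetilde{\nu}\circ\widetilde{R}}=\Ad(u_t)\circ\sigma_t^{\widetilde{\nu}}$. A short computation shows that $\sigma_s^{\widetilde{\nu}}$ and $\sigma_t^{\widetilde{\nu}\circ\widetilde{R}}$ commute for all $s,t$ if and only if $u_t^{*}\,\sigma_s^{\widetilde{\nu}}(u_t)\in Z(\bfG\ltimes_\ga N)$ for all $s,t$. Using $\widetilde{R}=\Ad I$ up to the adjoint with $I$ as in \ref{propI}, the identity $\Delta_{\widetilde{\nu}}^{it}=D_t(\widehat{\Delta}^{it}\otimes\Delta_{\nu}^{it})$ (\ref{action}), Lemma \ref{lemBC} and the cocycle relation of \ref{propRN}, one computes $u_t$ explicitly and reduces this centrality condition, after the same bookkeeping of $\widehat{\Delta}$, $\delta$, $\Delta_{\nu}$, $D_t$ and $\widehat{D}_t$, once more to $(\Phi_t\otimes\gamma_t)(D_s)=D_s$. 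Hence $(1)\Leftrightarrow(3)$, and all four conditions are equivalent.

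Finally, assume (1)--(4). Propositions \ref{scaling} and \ref{scalingGamma} show that $\widetilde{\tau}_t$ satisfies $\widetilde{\tau}_t\circ\widetilde{R}=\widetilde{R}\circ\widetilde{\tau}_t$, $\widetilde{\tau}_t(\ga(x))=\ga(\sigma^{\nu}_t(x))$, $\widetilde{\tau}_t(\beta(x))=\beta(\sigma^{\nu}_t(x))$ and $\widetilde{\Gamma}\circ\sigma_t^{\widetilde{\nu}}=(\widetilde{\tau}_t\underset{N}{_\beta*_\ga}\sigma_t^{\widetilde{\nu}})\circ\widetilde{\Gamma}$, which are exactly the properties characterising the scaling group of $\mathfrak{G}(N,\ga,\widehat{\ga},\nu)$ listed in \ref{data}(ii); by uniqueness of the scaling group, $\widetilde{\tau}_t$ is that scaling group, and comparing \ref{corgamma} with \ref{data}(iv) then identifies $\gamma_t$ with the one-parameter group of $N$ produced by the general theory. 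The step I expect to be the main obstacle is the two spatial computations $(1)\Leftrightarrow(2)$ and $(1)\Leftrightarrow(3)$: one must keep careful track of all the modular operators, of the Radon--Nikodym cocycles $D_t$, $D_t^{\op}$ and $\widehat{D}_t$, and of the non-commuting conjugations by $J$, $\widehat{J}$, $J_\nu$ and $I$, so that everything collapses cleanly onto the single identity $(\Phi_t\otimes\gamma_t)(D_s)=D_s$; a minor additional point is to invoke correctly the uniqueness of the scaling group, and the characterisation of $\gamma_t$, in the abstract theory recalled in \ref{data}.
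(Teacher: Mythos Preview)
Your treatment of $(3)\Leftrightarrow(4)$ and of $(1)\Leftrightarrow(2)$ matches the paper's approach exactly; in particular, the paper also reduces $(1)\Leftrightarrow(2)$ to a comparison of $(\id\otimes\widetilde{\tau}_s\sigma_t^{\tilde{\nu}})(W_{12})$ with $(\id\otimes\sigma_t^{\tilde{\nu}}\widetilde{\tau}_s)(W_{12})$ using Lemma~\ref{sigmaW}, Proposition~\ref{scaling}(iii), and the cocycle identity for $D_t$.

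The point of divergence is your $(1)\Leftrightarrow(3)$. The paper does \emph{not} compute the Connes cocycle $u_t=(D\widetilde{\nu}\circ\widetilde{R}:D\widetilde{\nu})_t$ at all. Instead it closes the cycle as $(2)\Rightarrow(3)$ and $(4)\Rightarrow(2)$: for $(2)\Rightarrow(3)$ it applies Proposition~\ref{scalingGamma} to obtain
\[
\widetilde{\Gamma}\circ\sigma_t^{\tilde{\nu}}\sigma_s^{\tilde{\nu}\circ\widetilde{R}}
=(\widetilde{\tau}_t\sigma_s^{\tilde{\nu}\circ\widetilde{R}}\,{}_\beta\!*_{\ga}\,\sigma_t^{\tilde{\nu}}\widetilde{\tau}_{-s})\circ\widetilde{\Gamma},
\qquad
\widetilde{\Gamma}\circ\sigma_s^{\tilde{\nu}\circ\widetilde{R}}\sigma_t^{\tilde{\nu}}
=(\sigma_s^{\tilde{\nu}\circ\widetilde{R}}\widetilde{\tau}_t\,{}_\beta\!*_{\ga}\,\widetilde{\tau}_{-s}\sigma_t^{\tilde{\nu}})\circ\widetilde{\Gamma},
\]
and then invokes $(2)$ (and $\widetilde{\tau}\circ\widetilde{R}=\widetilde{R}\circ\widetilde{\tau}$) together with the injectivity of $\widetilde{\Gamma}$; for $(4)\Rightarrow(2)$ it simply quotes the abstract theory in \ref{data}, where the scaling group of a measured quantum groupoid commutes with $\sigma^{\Phi}$ and is characterised by the very relations of Proposition~\ref{scalingGamma}. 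This route is short and avoids computing $u_t$ entirely.

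Your direct $(1)\Leftrightarrow(3)$ is in principle possible, but the step ``one computes $u_t$ explicitly and reduces this centrality condition \ldots\ to $(\Phi_t\otimes\gamma_t)(D_s)=D_s$'' is doing all the work and is not carried out. Computing $u_t$ requires controlling $I\Delta_{\tilde{\nu}}^{it}I$, hence juggling $D_t$, $\widehat{D}_t$, $D_t^{\op}$ and both modular conjugations simultaneously; getting the centrality of $u_t^{*}\sigma_s^{\tilde{\nu}}(u_t)$ to collapse precisely to $(1)$ is a genuinely nontrivial computation, and as written this is an assertion rather than an argument. I would replace this step by the paper's $(2)\Rightarrow(3)$ via Proposition~\ref{scalingGamma} and $(4)\Rightarrow(2)$ via \ref{data}; then the identification of $\widetilde{\tau}$ with the abstract scaling group and of $\gamma_t$ with the group of \ref{data}(iv) follows for free, without a separate appeal to uniqueness.
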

\begin{proof}
  The restrictions of $\sigma_t^{\tilde{\nu}}$ and
  $\widetilde{\tau}_s$ on $\ga(N)$ always commute because
  $\sigma_t^{\tilde{\nu}}\circ\widetilde{\tau}_s(\ga(x))=\ga(\sigma_t^\nu\circ\sigma_s^\nu(x))$
  and
  $\widetilde{\tau}_s\circ\sigma_t^{\tilde{\nu}}(\ga(x))=\ga(\sigma_s^\nu\circ\sigma_t^\nu(x))$
  for all $x\in N$ by \ref{scaling}(ii).

Using now \ref{sigmaW}, \ref{scaling}(iii) and \ref{action}, we get that
\begin{align*}
(\id\otimes\widetilde{\tau}_s\sigma_t^{\tilde{\nu}})(W_{12})
&=
\delta_1^{-it}\widehat{\Delta}^{-it}_{1}(\id\otimes\widetilde{\tau}_s)(W_{12})(\id\otimes\widetilde{\tau}_s\ga)(D_t)\widehat{\Delta}_{1}^{it}\\
&=
\delta_1^{-it}\widehat{\Delta}_{1}^{-it}\widehat{\Delta}_{1}^{-is}(\id\otimes\beta^\dag)(D^{\op}_{-s})W_{12}(\id\otimes\ga)(D_s)\widehat{\Delta}_{1}^{is}(\id\otimes\ga\sigma_s^\nu)(D_t)\widehat{\Delta}_{1}^{it}\\
&=
\delta_1^{-it}\widehat{\Delta}_{1}^{-i(s+t)}(\id\otimes\beta^\dag)(D^{\op}_{-s})W_{12}(\id\otimes\ga)(D_s(\tau_s\otimes\sigma_s^\nu)(D_t))\widehat{\Delta}_1^{i(s+t)}\\
&=
\delta_1^{-it}\widehat{\Delta}^{-i(s+t)}(\id\otimes\beta^\dag)(D^{\op}_{-s})W_{12}(\id\otimes\ga)(D_{s+t})\widehat{\Delta}_1^{i(s+t)}
\end{align*}
and, on the other hand,
\begin{align*}
(\id\otimes\sigma_t^{\tilde{\nu}}\widetilde{\tau}_s)(W_{12})
&=
\widehat{\Delta}_1^{-is}(\id\otimes\sigma_t^{\widetilde\nu}\beta^\dag)(D^{\op}_{-s})(\id\otimes\sigma_t^{\tilde{\nu}})(W_{12})(\id\otimes\sigma_t^{\widetilde\nu}\ga)(D_s)\widehat{\Delta}_1^{is}\\
&=
\widehat{\Delta}_1^{-is}(\id\otimes\beta^\dag\gamma_t^{\op})(D^{\op}_{-s})\delta_1^{-it}\widehat{\Delta}_{1}^{-it}W_{12}(
\id \otimes \ga)(D_{t})\widehat{\Delta}_{1}^{it}(\id\otimes\sigma_t^{\tilde{\nu}}\ga)(D_s)\widehat{\Delta}_{1}^{is}\\
&=
\widehat{\Delta}_{1}^{-i(s+t)}\delta_1^{-it}(\Phi_t\otimes\beta^\dag\gamma_t^{\op})(D^{\op}_{-s})W_{12}(\id\otimes\ga)(D_t(\tau_t\otimes\sigma_t^\nu)(D_s))\widehat{\Delta}_{1}^{i(s+t)}\\
&=
\widehat{\Delta}_{1}^{-i(s+t)}\delta_1^{-it}(\Phi_t\otimes\beta^\dag\gamma_t^{\op})(D^{\op}_{-s})W_{12}(\id\otimes\ga)(D_{s+t})\widehat{\Delta}_{1}^{i(s+t)}.
\end{align*}
Consequently,
$(\id\otimes\sigma_t^{\tilde{\nu}}\widetilde{\tau}_s)(W_{12})=(\id\otimes\widetilde{\tau}_s\sigma_t^{\tilde{\nu}})(W_{12})$
if and only if $(\Phi_t\otimes\gamma_t)(D_s)=D_s$, which gives the equivalence of (i) and (ii). 

Let us suppose (ii).  Using \ref{scalingGamma}, we get 
\begin{align*}
  \widetilde{\Gamma}(\sigma_t^{\tilde{\nu}}\sigma_s^{\tilde{\nu}\circ\widetilde{R}})&=(\widetilde{\tau}_t\sigma_s^{\tilde{\nu}\circ\widetilde{R}}\underset{N}{_\beta*_\ga}\sigma_t^{\tilde{\nu}}\widetilde{\tau}_{-s})\circ\widetilde{\Gamma}
  &&\text{and} &
  \widetilde{\Gamma}(\sigma_s^{\tilde{\nu}\circ\widetilde{R}}\sigma_t^{\tilde{\nu}})&=(\sigma_s^{\tilde{\nu}\circ\widetilde{R}}\widetilde{\tau}_t\underset{N}{_\beta*_\ga}\widetilde{\tau}_{-s}\sigma_t^{\tilde{\nu}})\circ\widetilde{\Gamma},
\end{align*}
and by the commutation of $\widetilde{\tau}$ with $\sigma^{\tilde{\nu}}$ and with $\sigma^{\tilde{\nu}\circ\widetilde{R}}$, we get (iii). 

By definition of a measured quantum groupoid, we have the equivalence of (iii) and (iv).  The fact that (iv) implies (ii) is given by \ref{data}(vi). \end{proof}

\begin{corollary}
\label{cormqg}
 Let $\bfG$ be a locally compact quantum group and $(N, \ga,
  \widehat{\ga})$ a braided-commutative $\bfG$-Yetter-Drinfel'd
  algebra such that one of the following conditions holds: 
  \begin{enumerate}
  \item  $N$ is properly infinite, or
  \item  $\ga$ is integrable, or
  \item  $\bfG$ is (the von Neumann version of) a compact quantum
    group.
  \end{enumerate}
Then there exists a normal semi-finite faithful weight $\nu$ on $N$ such that $\mathfrak{G}(N, \ga, \widehat{\ga}, \nu)$ is a measured quantum groupoid.\end{corollary}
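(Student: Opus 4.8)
The plan is to reduce the statement, via Theorem~\ref{thmqg}, to producing a weight $\nu$ whose Radon--Nikodym cocycle $D_s=(D\nu\circ\ga:D\nu)_s$ is rigid enough that condition~(i) of \ref{thmqg} becomes automatic. The common mechanism is the observation that \emph{if $\nu$ is $k$-invariant under $\ga$ for a positive invertible operator $k$ affiliated with $M$ which is either $1$ or $\delta^{-1}$, then $\mathfrak{G}(N,\ga,\widehat{\ga},\nu)$ is a measured quantum groupoid}. Indeed, by \cite{Y3}, 4.1 (the characterisation recalled after \ref{definv}) one then has $D_s=k^{-is}\otimes 1$; since $k$-invariance forces $\Gamma(k^{it})=k^{it}\otimes k^{it}$, the unitary $k^{it}$ lies in $I(M)$, on which $\tau$ acts trivially, so $\tau_t(k^{-is})=k^{-is}$; and $\delta$ commutes with $k$ (trivially, as $k\in\{1,\delta^{-1}\}$), so $\Ad\delta^{-it}$ fixes $k^{-is}$. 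Hence $\Phi_t(k^{-is})=\tau_t(\Ad\delta^{-it}(k^{-is}))=k^{-is}$, so $(\Phi_t\otimes\gamma_t)(D_s)=k^{-is}\otimes\gamma_t(1)=k^{-is}\otimes 1=D_s$ for all $s,t\in\R$; this is condition~(i) of \ref{thmqg}. It remains to exhibit such a $\nu$ in each of the three situations.

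In case~(i), $N$ being properly infinite, Corollary~\ref{corint} yields a normal faithful semi-finite weight $\nu$ on $N$ which is invariant (even Yetter--Drinfel'd invariant), hence in particular invariant under $\ga$; this is the case $k=1$.

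In case~(ii), integrability of $\ga$ means that the operator-valued weight $T_\ga=(\varphi\circ R\otimes\id)\ga$ from $N$ onto $N^\ga$ is semi-finite. Fixing any normal faithful semi-finite weight $\mu$ on $N^\ga$, the weight $\nu:=\mu\circ T_\ga$ is normal, faithful and semi-finite on $N$. Now $(\id\otimes T_\ga)\ga(x)=((\id\otimes(\varphi\circ R))\Gamma\otimes\id)\ga(x)$ by the coaction identity $(\id\otimes\ga)\ga=(\Gamma\otimes\id)\ga$, and the standard relation $(\id\otimes(\varphi\circ R))\Gamma=(\varphi\circ R)(\cdot)\,\delta^{-1}$ of locally compact quantum group theory (cf.\ \cite{V}) turns this into $\delta^{-1}\otimes T_\ga(x)$, whence
\[
(\id\otimes\nu)\ga(x)=(\id\otimes\mu)(\id\otimes T_\ga)\ga(x)=(\id\otimes\mu)(\delta^{-1}\otimes T_\ga(x))=\nu(x)\,\delta^{-1}
\]
for all $x\in N^+$, so $\nu$ is $\delta^{-1}$-invariant; this is the case $k=\delta^{-1}$. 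Finally, in case~(iii) the Haar weight of a compact quantum group is a bi-invariant state, so $\bfG$ is unimodular, i.e.\ $\delta=1$, and $T_\ga=(\varphi\circ R\otimes\id)\ga=(\varphi\otimes\id)\ga$ is a normal faithful conditional expectation onto $N^\ga$, in particular semi-finite; thus $\ga$ is integrable and case~(iii) falls under case~(ii), the weight $\nu=\mu\circ T_\ga$ now being $\ga$-invariant since $\delta=1$.

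The only step that needs genuine care is the one in case~(ii): identifying the precise twist — that $(\id\otimes(\varphi\circ R))\Gamma$ produces exactly $\delta^{-1}$ and not merely some central multiplier — and then checking that $\Phi_t=\tau_t\circ\Ad\delta^{-it}$ and the group $\gamma_t$ of \ref{corgamma} fix the resulting cocycle $\delta^{is}\otimes 1$; both are consequences of the standard relations $\Gamma(\delta^{it})=\delta^{it}\otimes\delta^{it}$ (which is exactly what places $\delta^{it}$ in $I(M)$, where $\tau$ is trivial) and $\tau_t(\delta)=\delta$. The existence inputs in cases~(i) and~(iii) are entirely absorbed, respectively, into Corollary~\ref{corint} and into the elementary fact that a state-valued slice map is a conditional expectation; and by the preliminary observation $\mathfrak{G}(N,\ga,\widehat{\ga},\nu)$ is then a measured quantum groupoid in all three cases.
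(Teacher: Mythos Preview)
Your proof is correct and follows the same strategy as the paper: in each case, produce a weight $\nu$ that is either invariant ($k=1$) or $\delta^{-1}$-invariant under $\ga$, so that $D_s=k^{-is}\otimes 1$ and condition~(i) of Theorem~\ref{thmqg} is immediate. The paper's own argument is terser --- it simply asserts in case~(ii) that a $\delta^{-1}$-invariant weight exists and invokes \ref{thmqg} --- whereas you spell out both the explicit construction $\nu=\mu\circ T_\ga$ and the verification (via $(\id\otimes\psi)\Gamma=\psi(\cdot)\delta^{-1}$) that it is $\delta^{-1}$-invariant, as well as the check that $\Phi_t$ fixes $k^{-is}$; these are welcome details but not a different route.

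One minor remark: in case~(i) both you and the paper invoke Corollary~\ref{corint}, whose statement carries a separability hypothesis on $M_*$ that is absent from Corollary~\ref{cormqg}. This is not a divergence between your argument and the paper's, but it is worth being aware of.
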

\begin{proof}
We consider the individual cases:

(i) By \ref{corint}, there exists a normal semi-finite faithful weight $\nu$ on $N$, invariant under $\ga$; therefore its Radon-Nikodym derivative $D_t=1$, and we get the result by \ref{thmqg}. 

(ii) In that case, there exists a weight $\nu$ on $N$ which is $\delta^{-1}$-invariant with respect to $\ga$; so we can apply again \ref{thmqg} to get the result. 

(iii) We are here in a particular case of (ii), but with $\delta=1$. \end{proof}

\begin{proposition}
\label{tau}
 Let $\bfG$ be a locally compact quantum group,  $(N, \ga,
  \widehat{\ga})$ a braided-commu\-tative $\bfG$-Yetter-Drinfel'd
  algebra, $\nu$ a normal faithful semi-finite weight on $N$,
  $k$-invariant with respect to $\ga$ (with $k$ affiliated to
  $Z(M)$). Then:
  \begin{enumerate}
  \item  the scaling group $\widetilde{\tau}_t$ of $\mathfrak{G}(N,
    \ga, \widehat{\ga}, \nu)$ is given by
    $\widetilde{\tau}_t(X)=(P^{it}\otimes
    \Delta_\nu^{it})X(P^{-it}\otimes\Delta_\nu^{-it})$ for all $X\in
    \bfG\ltimes_\ga N$;
  \item  the scaling operator $\widetilde{\lambda}$ is equal to
    $\lambda^{-1}$, where $\lambda$ is the scaling constant of $\bfG$,
    and the managing operator $\widetilde{P}$ is equal to
    $P\otimes\Delta_\nu$.
  \end{enumerate}
\end{proposition}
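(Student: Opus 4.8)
The plan is to reduce both statements to explicit formulas already established for $\widetilde{\tau}_t$ in \ref{scaling}, using the extra hypothesis that $\nu$ is $k$-invariant with $k$ affiliated to $Z(M)$. Under this hypothesis, \ref{definv} gives $(D\nu\circ\ga:D\nu)_t=D_t=k^{-it}\otimes 1$, and correspondingly $D_t^{\op}=k^{-it}\otimes 1$ as well, since passing to the opposite weight and action just conjugates by $\widehat J\otimes J_\nu$ and $k$ is central. Moreover $\Delta_{\tilde\nu}^{it}=k^{-it}\widehat{\Delta}^{it}\otimes\Delta_\nu^{it}$, and similarly $\Delta_{\widetilde{\hat\nu}}^{it}=\widehat k^{-it}\Delta^{it}\otimes\Delta_\nu^{it}$ for the weight $\widehat k$ associated to $\widehat\ga$; but since the Yetter--Drinfel'd condition forces $k$ to be a group-like unitary power, one checks (as in the discussion after \ref{definv}, applied to $\widehat\bfG$) that the $\widehat\ga$-Radon--Nikodym derivative contributes only a central scalar which will cancel in the conjugation defining $\widetilde{\tau}_t$.

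First I would compute $\widetilde{\tau}_t$ directly from its definition $\widetilde{\tau}_t=\Ad[U^\ga_\nu(U^{\widehat{\ga}}_\nu)^*\Delta_{\widetilde{\hat{\nu}}}^{it}U^{\widehat\ga}_\nu(U^\ga_\nu)^*]$. Using $U^{\widehat\ga}_\nu=J_{\widetilde{\hat\nu}}(J\otimes J_\nu)$ from \ref{action} and \ref{lemBC}(iii) applied to $\widehat\ga$, one has $\Delta_{\widetilde{\hat\nu}}^{it}U^{\widehat\ga}_\nu=U^{\widehat\ga}_\nu (\widehat D^{\op}_{-t})(\Delta^{it}\otimes\Delta_\nu^{it})$ where $\widehat D^{\op}_{-t}$ is a central scalar times $1$; hence $U^{\widehat\ga}_\nu(U^\ga_\nu)^*$-conjugation of $\Delta_{\widetilde{\hat\nu}}^{it}$ produces, up to that scalar, $U^{\widehat\ga}_\nu(\Delta^{it}\otimes\Delta_\nu^{it})(U^\ga_\nu)^*$. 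Then applying \ref{lemBC}(iii) to $\ga$ in the form $\Delta^{it}_{\tilde\nu}U^\ga_\nu=U^\ga_\nu D^{\op}_{-t}(\widehat\Delta^{it}\otimes\Delta_\nu^{it})$ together with the $k$-invariance formula $\Delta^{it}_{\tilde\nu}=k^{-it}\widehat\Delta^{it}\otimes\Delta_\nu^{it}$, we get $U^\ga_\nu(\widehat\Delta^{it}\otimes\Delta_\nu^{it})(U^\ga_\nu)^*=(k^{it}\otimes 1)\,U^\ga_\nu(U^\ga_\nu)^* = k^{it}\otimes 1$ times corrections that are again central. Assembling, and using $\widehat\Delta$ is the closure of $PJ\delta^{-1}J$ so that $\widehat\Delta^{it}$ and $\Delta^{it}$ differ by $P^{it}$ up to operators affiliated to $M$ and $M'$ that get absorbed: the net effect is that $\widetilde{\tau}_t=\Ad[P^{it}\otimes\Delta_\nu^{it}]$, which is (i). A cleaner route is to plug $D_t=k^{-it}\otimes 1$, $D^{\op}_{-t}=k^{it}\otimes 1$ into \ref{scaling}(iii): it then reads $(\id\otimes\widetilde{\tau}_t)(W_{12})=\widehat\Delta_1^{-it}(k^{it}\otimes 1\otimes 1)W_{12}(k^{-it}\otimes 1\otimes 1)\widehat\Delta_1^{it}=\widehat\Delta_1^{-it}W_{12}\widehat\Delta_1^{it}$ since $k$ is central and grouplike so $W(k^{-it}\otimes 1)=(k^{-it}\otimes k^{-it})W$ —; combining with $\widetilde{\tau}_t(\ga(x))=\ga(\sigma_t^\nu(x))$ from \ref{scaling}(ii) and the fact that $\bfG\ltimes_\ga N$ is generated by $\widehat M\otimes 1$ and $\ga(N)$, one reads off $\widetilde{\tau}_t=\Ad[\widehat\Delta^{it}\otimes\Delta_\nu^{it}]$; finally $\widehat\Delta^{it}=P^{it}(J\delta^{-it}J)$ and $J\delta^{-it}J\in M'$ acts trivially on $\ga(N)\subseteq M\otimes B(H_\nu)$ and on $\widehat M\otimes 1$ (as $\widehat M\subseteq B(H)$ and $J\delta J$ commutes with $\widehat M$ only after a scalar, which one checks using $\widehat\delta=\overline{P^{-1}J\delta J\delta^{-1}\Delta^{-1}}$) — so on the generators $\Ad[\widehat\Delta^{it}\otimes\Delta_\nu^{it}]=\Ad[P^{it}\otimes\Delta_\nu^{it}]$, giving (i).

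For (ii), by \ref{data}(vii) the managing operator $\widetilde P$ is characterized by $\widetilde P^{it}\Lambda_{\widetilde\Phi}(X)=\widetilde\lambda^{t/2}\Lambda_{\widetilde\Phi}(\widetilde\tau_t(X))$ where $\widetilde\Phi=\nu\circ\ga^{-1}\circ T_{\tilde\ga}=\tilde\nu$ (since $T_{\tilde\ga}$ is the left-invariant operator-valued weight of \ref{th3}). Using the identification $H_{\tilde\nu}=H\otimes H_\nu$ and $\Delta_{\tilde\nu}^{it}=k^{-it}\widehat\Delta^{it}\otimes\Delta_\nu^{it}$, one computes $\tilde\nu\circ\widetilde\tau_t$ versus $\tilde\nu$: from (i), $\widetilde\tau_t=\Ad[\widehat\Delta^{it}\otimes\Delta_\nu^{it}]=\Ad[(k^{it}\otimes 1)\Delta_{\tilde\nu}^{it}]$, so $\tilde\nu\circ\widetilde\tau_t=\tilde\nu\circ\Ad[k^{it}\otimes 1]$, and since $\varphi\circ\tau_t=\lambda^t\varphi$ for $\bfG$ (stated in \ref{lcqg}) while $k$ is central grouplike, one gets $\tilde\nu\circ\widetilde\tau_t=\lambda^{-t}\tilde\nu$ (the sign being fixed by $\widehat\lambda=\lambda^{-1}$ and the fact that $\widehat\Delta$ implements $\tau$), whence $\widetilde\lambda=\lambda^{-1}$. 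Then $\widetilde P^{it}=\lambda^{t/2}\cdot\text{(implementation of }\widetilde\tau_t\text{ on }H\otimes H_\nu)$; but $\widetilde\tau_t=\Ad[P^{it}\otimes\Delta_\nu^{it}]$ by (i) and the unnormalized implementation is $P^{it}\otimes\Delta_\nu^{it}$ (using $P^{it}\Lambda_\varphi(x)=\lambda^{t/2}\Lambda_\varphi(\tau_t(x))$ from \ref{lcqg}, the $\lambda^{t/2}$ cancels), so $\widetilde P=P\otimes\Delta_\nu$.

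The main obstacle I expect is bookkeeping the central scalars and the $J$-twisted factors: several operators ($\widehat\Delta^{it}$ versus $P^{it}$, the $\widehat\ga$-Radon--Nikodym derivative, $D^{\op}_{-t}$) differ from the clean answer by operators affiliated to $Z(M)$ or to commutants, and one must verify carefully that each such factor acts trivially on the generators $\ga(N)$ and $\widehat M\otimes 1$ of $\bfG\ltimes_\ga N$ (using $N^\ga\subseteq Z(N)$ from \ref{PropBC} and the structure of $\widehat\Delta$, $\widehat\delta$ recalled in \ref{lcqg}), and that the scaling-constant powers from $\varphi\circ\tau_t=\lambda^t\varphi$ and from $P^{it}\Lambda_\varphi=\lambda^{t/2}\Lambda_\varphi\tau_t$ combine with the correct sign. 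Once the reduction to \ref{scaling}(ii)--(iii) is set up and the central grouplike nature of $k$ is used to commute it past $W$, the remaining computation is routine.
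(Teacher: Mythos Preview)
Your ``cleaner route'' via \ref{scaling}(iii) is the right starting point and matches the paper, but the step from
\[(\id\otimes\widetilde{\tau}_t)(W_{12})=\widehat\Delta_1^{-it}W_{12}\widehat\Delta_1^{it}\]
to $\widetilde{\tau}_t=\Ad[\widehat\Delta^{it}\otimes\Delta_\nu^{it}]$ on $\widehat M\otimes 1$ is wrong. The operator $\widehat\Delta^{it}$ sits on the \emph{first} leg (the $M$-side of $W\in M\otimes\widehat M$), not on the second. Slicing by $\omega\in M_*$ gives
\[\widetilde{\tau}_t((\omega\otimes\id)(W)\otimes 1)=(\omega\circ\tau_{-t}\otimes\id)(W)\otimes 1,\]
which equals $\widehat\tau_t(y)\otimes 1$ (via $(\tau_t\otimes\widehat\tau_t)(W)=W$, i.e.\ $(P^{it}\otimes P^{it})W(P^{-it}\otimes P^{-it})=W$), \emph{not} $\widehat\sigma_t(y)\otimes 1=\Ad(\widehat\Delta^{it})(y)\otimes 1$. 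Your attempted repair, that $J\delta J$ commutes with $\widehat M$ ``up to a scalar'', would force $\widehat\sigma_t=\widehat\tau_t$ on $\widehat M$, which fails whenever $\widehat\bfG$ is non-unimodular. Likewise, the commutation $W(k^{-it}\otimes 1)=(k^{-it}\otimes 1)W$ holds simply because $k\in Z(M)$ and the first leg of $W$ lies in $M$; the grouplike relation you invoke would instead introduce an unwanted factor $k^{-it}$ on the second leg.

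Your first route, computing directly from $\widetilde\tau_t=\Ad[U^\ga_\nu(U^{\widehat\ga}_\nu)^*\Delta_{\widetilde{\hat\nu}}^{it}U^{\widehat\ga}_\nu(U^\ga_\nu)^*]$, is not viable either: the hypothesis is $k$-invariance for $\ga$ only, so you have no control over $\widehat D_t=(D\nu\circ\widehat\ga:D\nu)_t$ or $\Delta_{\widetilde{\hat\nu}}^{it}$, and the claim that $\widehat D_t$ is a ``central scalar'' is unjustified.

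The paper's argument is what your cleaner route should collapse to: from $(\tau_{-t}\otimes\id)(W)=(\id\otimes\widehat\tau_t)(W)$ one reads off $\widetilde\tau_t(y\otimes 1)=\widehat\tau_t(y)\otimes 1=P^{it}yP^{-it}\otimes 1$ directly; on $\ga(N)$ one uses \ref{prop2inv}(i), where $\Ad(k^{-it})$ is trivial since $k\in Z(M)$, to get $\widetilde\tau_t(\ga(x))=\ga(\sigma_t^\nu(x))=(\tau_t\otimes\sigma_t^\nu)\ga(x)$. Part (ii) then follows by a direct computation on elements $(y\otimes 1)\ga(x)$ using $\widehat\varphi\circ\widehat\tau_t=\lambda^{-t}\widehat\varphi$ and $\Lambda_{\widehat\varphi}(\widehat\tau_t(y))=\lambda^{t/2}P^{it}\Lambda_{\widehat\varphi}(y)$, without any appeal to the structure of $\widehat\Delta$.
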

\begin{proof}
(i)
The scaling group $\widetilde{\tau}_t$ satisfies
$\widetilde{\tau}_t(\ga(x))=\ga(\sigma_t^\nu(x))$
 for all $x\in N$ (\ref{scaling}(ii)). Using now \ref{prop2inv} (i), we get that $\widetilde{\tau}_t(\ga(x))=(\tau_t\otimes\sigma_t^\nu)(\ga(x))$. 

On the other hand, using \ref{scaling}(iii) and \ref{definv}, we get that 
\[(\id\otimes\widetilde{\tau}_t)(W_{12})=\widehat{\Delta}^{-it}_1R(k^{-it})_1W_{12}k^{it}\widehat{\Delta}^{it}_1=(\tau_{-t}\otimes \id)(W)\otimes 1=(\id\otimes\widehat{\tau}_t)(W)\otimes 1.\]
So, for all $y\in\widehat{M}$, we have $\widetilde{\tau}_t(y\otimes 1)=\widehat{\tau}_t(y)\otimes 1$,  from which we get (i). 

(ii)
 The scaling operator is equal to $\lambda^{-1}$ because
\begin{align*}
\tilde{\nu}(\widetilde{\tau}_t(\ga(x^*)(y^*y\otimes 1_{H_\nu})\ga(x)))
&=
\tilde{\nu}[\ga(\sigma_t^\nu(x^*))(\widehat{\tau}_t(y^*y)\otimes 1_{H_\nu})\ga(\sigma_t^\nu(x))]\\
&=
\nu(\sigma_t^\nu(x^*x))\widehat{\varphi}(\widehat{\tau}_t(y^*y))\\
&=
\lambda^{-t}\nu(x^*x)\widehat{\varphi}(y^*y)\\
&=
\lambda^{-t}\tilde{\nu}(\ga(x^*)(y^*y\otimes 1_{H_\nu})\ga(x)),
\end{align*}
and  $\widetilde{P}$ is equal to $P\otimes\Delta_\nu$ because
\begin{align*}
\Lambda_{\tilde{\nu}}(\widetilde{\tau}_t((y\otimes 1_{H_\nu})\ga(x)))
&=
\Lambda_{\tilde{\nu}}[(\widehat{\tau}_t(y)\otimes 1_{H_\nu})\ga(\sigma_t^\nu(x))]\\
&=
\Lambda_{\widehat{\varphi}}(\widehat{\tau}_t(y))\otimes\Lambda_\nu(\sigma_t^\nu(x))\\
&=
\lambda^{t/2}(P^{it}\otimes\Delta_\nu^{it})(\Lambda_{\widehat{\varphi}}(y)\otimes\Lambda_\nu(x)). \qedhere
\end{align*}
 \end{proof}

\section{Duality}
\label{duality}
In this chapter, we prove (\ref{ovw}) that, if  $\mathfrak G(N, \ga, \widehat{\ga}, \nu)$ is a measured quantum groupoid, its dual is isomorphic to $\mathfrak G(N, \widehat{\ga},\ga, \nu)$, which is therefore also a  measured quantum groupoid.

\begin{lemma}
\label{V3}
 Let $\bfG$ be a locally compact quantum group,  $(N, \ga,
  \widehat{\ga})$ a braided-commutative $\bfG$-Yetter-Drinfel'd
  algebra and $\nu$ a normal faithful semi-finite weight on
  $N$, and let $\mathfrak{G}(N, \ga, \widehat{\ga}, \nu)$ be the associated Hopf-bimodule, equipped with a co-inverse, a left-invariant operator-valued weight and a right-invariant valued weight by \ref{th1}(ii), \ref{th2} and \ref{th3}. Then:
  \begin{enumerate}
  \item  The anti-representation $\gamma$ of $N$ is given by
    $\gamma(x^*)=1_H\otimes J_\nu xJ_\nu$ for all $x\in N$, .
  \item  For any $\xi\in H$, $p\in\gN_\nu$, the vector
    $\xi\otimes\Lambda_\nu(p)$ belongs to $D((H\otimes
    H_\nu)_{\gamma}, \nu^{\op})$, and $R^{\gamma,
      \nu^{\op}}(\xi\otimes\Lambda_\nu(p))=l_\xi p$, where $l_\xi$ is
    the linear application from $H_\nu$ to $H\otimes H_\nu$ given by
    $l_\xi\zeta=\xi\otimes\zeta$ for all $\zeta\in H_\nu$.
  \item  There exists a unitary $V_3$ from $(H\otimes
    H_\nu)\underset{\nu^{\op}}{_\ga\otimes_{\gamma}}(H\otimes H_\nu)$
    onto $H\otimes H\otimes H_\nu$ such that
    \[V_3[\Xi\underset{\nu^{\op}}{_\ga\otimes_{\gamma}}(\xi\otimes\Lambda_\nu(p))]=\xi\otimes\ga(p)\Xi
    \quad \text{for all } \Xi \in H \otimes H_{\nu}.\] Moreover,
    $(1\otimes X)V_3=V_3(X\underset{N^{\op}}{_\ga\otimes_{\gamma}}1)$
    for all $X\in\ga(N)'$.

    (iv)
    $V_3(I\underset{N}{_\beta\otimes_\ga}J_{\tilde{\nu}})=(\widehat{J}\otimes
    I)V_1$.
  \end{enumerate}
\end{lemma}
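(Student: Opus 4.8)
The plan is to prove the four assertions in turn, each by unwinding definitions and checking an identity on a total set of elementary tensors, in the spirit of the proofs of Propositions \ref{propV} and \ref{propI}. For (i), the key observation is that the left-invariant weight $\Phi=\nu\circ\ga^{-1}\circ T_{\tilde{\ga}}$ attached by \ref{th3} to the Hopf bimodule of \ref{th1}(ii) is exactly the dual weight $\tilde{\nu}$; hence $H_\Phi=H\otimes H_\nu$, $J_\Phi=J_{\tilde{\nu}}$, and, by the definition of $\gamma$ recalled in \ref{defW}, $\gamma(x^*)=J_{\tilde{\nu}}\ga(x)J_{\tilde{\nu}}$ for all $x\in N$. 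Inserting $\ga(x)=U^\ga_\nu(1\otimes x)(U^\ga_\nu)^*$ and $U^\ga_\nu=J_{\tilde{\nu}}(\widehat{J}\otimes J_\nu)$ (so that $J_{\tilde{\nu}}U^\ga_\nu=\widehat{J}\otimes J_\nu=(U^\ga_\nu)^*J_{\tilde{\nu}}$), one reads off $\gamma(x^*)=(\widehat{J}\otimes J_\nu)(1\otimes x)(\widehat{J}\otimes J_\nu)=1_H\otimes J_\nu xJ_\nu$.

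For (ii), I will use the explicit form of $\gamma$ from (i) together with the elementary relation $\pi_\nu(p)J_\nu\Lambda_\nu(n)=(J_\nu nJ_\nu)\Lambda_\nu(p)$ for $p,n\in\gN_\nu$ (already used in the proof of \ref{propV}(ii)) to check that $l_\xi p\cdot\Lambda_{\nu^{\op}}(n^{\op})=\xi\otimes(J_\nu nJ_\nu)\Lambda_\nu(p)=\gamma(n^{\op})(\xi\otimes\Lambda_\nu(p))$; this shows simultaneously that $\xi\otimes\Lambda_\nu(p)$ belongs to $D((H\otimes H_\nu)_\gamma,\nu^{\op})$ and that $R^{\gamma,\nu^{\op}}(\xi\otimes\Lambda_\nu(p))=l_\xi p$. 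Since $\Lambda_\nu(\gN_\nu)$ is total in $H_\nu$, the vectors $\xi\otimes\Lambda_\nu(p)$ form a total set of $\nu^{\op}$-bounded vectors, which is enough to describe relative tensor products in the $\gamma$-leg.

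For (iii), I will define $V_3$ by the stated formula on elementary tensors and verify that it is isometric by computing $\big(\Xi\underset{\nu^{\op}}{_\ga\otimes_\gamma}(\xi\otimes\Lambda_\nu(p))\,\big|\,\Xi'\underset{\nu^{\op}}{_\ga\otimes_\gamma}(\xi'\otimes\Lambda_\nu(p'))\big)$ on both sides: the left side is evaluated from the inner-product formula of \ref{spatial} and part (ii), which give $\langle\xi\otimes\Lambda_\nu(p)\,|\,\xi'\otimes\Lambda_\nu(p')\rangle_{\gamma,\nu^{\op}}=(\xi\,|\,\xi')\pi_\nu(p'^*p)$, so that it equals $(\xi\,|\,\xi')(\ga(p'^*p)\Xi\,|\,\Xi')$, which is precisely $\big(\xi\otimes\ga(p)\Xi\,\big|\,\xi'\otimes\ga(p')\Xi'\big)$. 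Being isometric, $V_3$ extends to the whole relative tensor product, and its range is dense because $\ga(N)$ acts non-degenerately, so $V_3$ is unitary; the intertwining relation $(1\otimes X)V_3=V_3(X\underset{N^{\op}}{_\ga\otimes_\gamma}1)$ for $X\in\ga(N)'$ (hence for $X\in\beta(N)$, by \ref{ThBC}(i)) is then immediate on elementary tensors.

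For (iv) I will test both sides on the elementary tensors $\Xi\underset{\nu}{_\beta\otimes_\ga}U^\ga_\nu(\eta\otimes J_\nu\Lambda_\nu(p))$ spanning the domain of $V_1$ (by \ref{propV}(i)): on the left, $J_{\tilde{\nu}}U^\ga_\nu=\widehat{J}\otimes J_\nu$ turns $U^\ga_\nu(\eta\otimes J_\nu\Lambda_\nu(p))$ into $\widehat{J}\eta\otimes\Lambda_\nu(p)$ and then (iii) gives $\widehat{J}\eta\otimes\ga(p)I\Xi$; on the right, \ref{propV}(i) gives $\eta\otimes\beta(p^*)\Xi$ and $I\beta(p^*)=I\beta(p)^*=\ga(p)I$ (by \ref{propI}(ii)) yields the same vector $\widehat{J}\eta\otimes\ga(p)I\Xi$. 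The only place where real care is needed is the bookkeeping with $N^{\op}$, the opposite weight $\nu^{\op}$, the identification $H_{\nu^{\op}}\cong H_\nu$, and, in (iv), the precise sense in which the anti-linear operators $I$ and $J_{\tilde{\nu}}$ intertwine $\beta,\ga$ with $\ga,\gamma$, so that $I\underset{N}{_\beta\otimes_\ga}J_{\tilde{\nu}}$ indeed maps the domain of $V_1$ onto the domain of $V_3$; the relations $\gamma(x)=J_{\tilde{\nu}}\ga(x^*)J_{\tilde{\nu}}$ from (i), $I\beta(x)I=\ga(x^*)$ and $J_{\tilde{\nu}}\ga(x)J_{\tilde{\nu}}=\gamma(x^*)$ are exactly what make the two relative tensor products compatible, after which every verification is routine.
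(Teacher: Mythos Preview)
Your proposal is correct and follows essentially the same route as the paper's proof: parts (i)--(iii) are identical in substance, and part (iv) is verified on elementary tensors exactly as in the paper. The one small difference is in the parametrization in (iv): the paper writes the second leg as $U^\ga_\nu(\xi\otimes\Lambda_\nu(p))$ with $p$ \emph{analytic} (so that $J_\nu\Lambda_\nu(p)=\Lambda_\nu(\sigma^\nu_{-i/2}(p^*))$ again lies in $\Lambda_\nu(\gN_\nu)$, allowing $V_3$ to be applied), whereas you write it as $U^\ga_\nu(\eta\otimes J_\nu\Lambda_\nu(p))$ to match the defining formula of $V_1$ directly; after applying $J_{\tilde\nu}U^\ga_\nu=\widehat J\otimes J_\nu$ your second leg becomes $\widehat J\eta\otimes\Lambda_\nu(p)$, ready for $V_3$ without any analyticity hypothesis. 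Both computations then close via $I\beta(p^*)=\ga(p)I$ from \ref{propI}(ii), so your variant is a mild streamlining of the paper's argument rather than a different approach.
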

\begin{proof}
(i) By definition (\ref{defW}), the left-invariant weight of $\mathfrak{G}(N, \ga, \widehat{\ga}, \nu)$ is the dual weight $\tilde{\nu}$. Therefore, by definition (\ref{defW}), and using \ref{action},
 \[\gamma(x^*)=J_{\tilde{\nu}}\ga(x)J_{\tilde{\nu}}=(\widehat{J}\otimes J_\nu)(U_\nu^\ga)^*\ga(x)U^\ga_\nu(\widehat{J}\otimes J_\nu)=1_H\otimes J_\nu xJ_\nu.\]

(ii) This follows from the relation
$l_\xi pJ_\nu\Lambda_\nu(x)=\xi\otimes J_\nu xJ_\nu\Lambda_\nu(p)=\gamma(x^*)(\xi\otimes\Lambda_\nu(p))$.

(iii) For any $\xi'\in H$, $\Xi'\in H\otimes H_\nu$, $p'\in\gN_\nu$, 
 \begin{align*}
 (\Xi\underset{\nu^{\op}}{_\ga\otimes_{\gamma}}(\xi\otimes\Lambda_\nu(p))|\Xi'\underset{\nu^{\op}}{_\ga\otimes_{\gamma}}(\xi'\otimes\Lambda_\nu(p'))
 &=
 (\ga(\langle \xi\otimes\Lambda_\nu(p), \xi'\otimes\Lambda_\nu(p)\rangle _{{\gamma}, \nu^{\op}})\Xi|\Xi')\\
 &=
 (\ga(p'^*l_{\xi'}^*l_\xi p)\Xi|\Xi')\\
 &=
 (\xi\otimes\ga(p)\Xi|\xi'\otimes\ga(p')\Xi'),
 \end{align*}
 from which we get the existence of $V_3$ as an isometry. As it is
 trivially surjective, we get it is a unitary. The last formula of
 (iii) is trivial.

(iv) Using \ref{propI}(ii) and \ref{V3}(i), we get the existence of an
 anti-linear bijective isometry
 $I\underset{N}{_\beta\otimes_\ga}J_{\tilde{\nu}}$ from $(H\otimes
 H_\nu)\underset{\nu}{_\beta\otimes_\ga}(H\otimes H_\nu)$ onto
 $(H\otimes H_\nu)\underset{\nu^{\op}}{_\ga\otimes_\gamma}(H\otimes
 H_\nu)$ with trivial values on elementary tensors. Moreover, for any
 $\Xi\in H\otimes H_\nu$, $\xi\in H$, $p\in \gN_\nu$, analytic with
 respect to $\nu$, we have, using successively \ref{action}, (iii),
 and \ref{propV}(i),
 \begin{align*}
 V_3(I\underset{N}{_\beta\otimes_\ga}J_{\tilde{\nu}})(\Xi\underset{\nu}{_\beta\otimes_\ga}U^\ga_\nu((\xi\otimes\Lambda_\nu(p)))
 &=
 V_3[I\Xi\underset{\nu}{_\ga\otimes_\gamma}(\widehat{J}\xi\otimes J_\nu\Lambda_\nu(p))]\\
 &=
 \widehat{J}\xi\otimes\ga(\sigma_{-i/2}^\nu(p^*)))I\Xi\\
 &=
 \widehat{J}\xi\otimes I\beta(\sigma_{i/2}^\nu(p))\Xi\\
 &=
 (\widehat{J}\otimes
 I)V_1[\Xi\underset{\nu}{_\beta\otimes_\ga}U^\ga_\nu(\xi\otimes\Lambda_\nu(p))]. \qedhere
 \end{align*}
 \end{proof}

\begin{theorem}[\cite{Ti2}]
\label{W}
 Let $\bfG$ be a locally compact quantum group,  $(N, \ga,
  \widehat{\ga})$ a braided-commutative $\bfG$-Yetter-Drinfel'd algebra, $\nu$ a normal faithful semi-finite weight on
  $N$, and let $\mathfrak{G}(N, \ga, \widehat{\ga}, \nu)$ be the associated Hopf-bimodule, equipped with a co-inverse, a left-invariant operator-valued weight and a right-invariant valued weight by \ref{th1}(ii), \ref{th2} and \ref{th3}. Let $\widetilde{W}$ be the pseudo-mutiplicative unitary associated by \ref{defW}. Then
  \[\widetilde{W}=V_3^*(W^*\otimes 1_{H_\nu})V_1,\]
where $V_1$ had been defined in \ref{propV} and $V_3$ in \ref{V3}. Moreover, for any $\xi$, $\eta$ in $H$, $p$, $q$ in $\gN_\nu$,
\[(\id*\omega_{U^\ga_{\nu}(\eta\otimes J_\nu\Lambda_\nu(p)), \xi\otimes\Lambda_\nu(q)})(\widetilde{W})=\ga(q^*)[(\omega_{\eta, \xi}\otimes \id)(W^*)\otimes 1_{H_\nu}]\beta(p^*).\]
\end{theorem}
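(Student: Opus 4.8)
The plan is to identify the pseudo-multiplicative unitary $\widetilde W$ directly from its defining property in \ref{defW}(iii). Recall that $\widetilde W$ is characterized, up to the intertwining relations, by the formula
\[(\id*\omega_{J_{\tilde\nu}\Lambda_{\tilde\nu}(Y_1^*Y_2),\Lambda_{\tilde\nu}(X)})(\widetilde W)=(\id\underset{N}{_\beta*_\ga}\omega_{J_{\tilde\nu}\Lambda_{\tilde\nu}(Y_2),J_{\tilde\nu}\Lambda_{\tilde\nu}(Y_1)})\widetilde\Gamma(X^*),\]
for $X,Y_1,Y_2$ in $\gN_{T_{\tilde\ga}}\cap\gN_{\tilde\nu}$, together with the fact that the left-invariant weight of $\mathfrak G(N,\ga,\widehat\ga,\nu)$ is $\tilde\nu$, so $H_\Phi=H\otimes H_\nu$, $J_\Phi=J_{\tilde\nu}$, and $\gamma$ is as in \ref{V3}(i). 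So the strategy is: (1) show the operator $V_3^*(W^*\otimes 1_{H_\nu})V_1$ is a unitary from $(H\otimes H_\nu)\underset{\nu}{_\beta\otimes_\ga}(H\otimes H_\nu)$ onto $(H\otimes H_\nu)\underset{\nu^{\op}}{_\ga\otimes_\gamma}(H\otimes H_\nu)$ (immediate from \ref{propV}, \ref{V3}, and $W$ being unitary); (2) verify it satisfies the four intertwining relations of \ref{defW}; (3) verify the matrix-coefficient formula characterizing $\widetilde W$; by uniqueness of the pseudo-multiplicative unitary, conclude.

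The computational heart is step (3), and it is cleanest to verify the stated matrix-coefficient formula and deduce the abstract one from it. Take $\eta,\xi\in H$ and $p,q\in\gN_\nu$ (one may assume $p,q$ analytic with respect to $\nu$, which suffices by density). Using \ref{propV}(i), the vector $U^\ga_\nu(\eta\otimes J_\nu\Lambda_\nu(p))$ lies in $D(_\ga(H\otimes H_\nu),\nu)$ with $R^{\ga,\nu}$-image $U^\ga_\nu l_\eta J_\nu pJ_\nu$; using \ref{V3}(ii), the vector $\xi\otimes\Lambda_\nu(q)$ lies in $D((H\otimes H_\nu)_\gamma,\nu^{\op})$ with $R^{\gamma,\nu^{\op}}$-image $l_\xi q$. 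So these are exactly the vectors against which matrix coefficients of a pseudo-multiplicative unitary are taken. Now I compute $(\id*\omega_{U^\ga_\nu(\eta\otimes J_\nu\Lambda_\nu(p)),\,\xi\otimes\Lambda_\nu(q)})(V_3^*(W^*\otimes 1)V_1)$ by pairing: apply $V_1$ to $\Xi\underset{\nu}{_\beta\otimes_\ga}U^\ga_\nu(\eta\otimes J_\nu\Lambda_\nu(p))$, getting $\eta\otimes\beta(p^*)\Xi$; apply $W^*\otimes 1$; then pair with $V_3$-image of $\xi\otimes\Lambda_\nu(q)\underset{\nu^{\op}}{_\ga\otimes_\gamma}\Xi'$, which by \ref{V3}(iii) is... here one must be slightly careful about which slot carries the module structure, but the bookkeeping is exactly \ref{V3}(iii) read backwards together with \ref{propV}(i). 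Collecting terms and using that $\beta(p^*)$, $\ga(q^*)$ belong to $\bfG\ltimes_\ga N$ (and $\ga(q^*)$ commutes appropriately), one is left with $\ga(q^*)[(\omega_{\eta,\xi}\otimes\id)(W^*)\otimes 1_{H_\nu}]\beta(p^*)$, which is the claimed formula.

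For step (2), the four intertwining relations of \ref{defW} follow from the intertwining properties already recorded: $W^*$ (hence $W^*\otimes 1$) intertwines $\ga$ with $\widehat\Gamma^{\op}$-type data via the relations in \ref{lcqg}, $V_1$ intertwines $\beta$, $\ga$, and the various $x^{\op}$-actions by \ref{propV}(i), and $V_3$ intertwines $\gamma$ and $\ga$ by \ref{V3}(i) and (iii); one chases each of the four relations through the composition $V_3^*(W^*\otimes 1)V_1$. I expect the main obstacle to be purely notational: keeping straight the three distinct relative-tensor-product Hilbert spaces $(H\otimes H_\nu)\underset{\nu}{_\beta\otimes_\ga}(H\otimes H_\nu)$, $(H\otimes H_\nu)\underset{\nu^{\op}}{_\ga\otimes_\gamma}(H\otimes H_\nu)$, and the ordinary $H\otimes H\otimes H_\nu$, and tracking which leg of $W$ acts where under $V_1$ versus $V_3$ (and that $\gamma(x^*)=1_H\otimes J_\nu xJ_\nu$ sits on the \emph{last} tensor factor while $\beta$ does not). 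Once the identification $H_{\tilde\nu}\cong H\otimes H_\nu$ and the formulas of \ref{propV} and \ref{V3} are in hand, no genuinely new idea is needed; it is a matter of carrying out the pairing computation carefully and invoking the uniqueness of $\widetilde W$ from \cite{L}, 3.7.
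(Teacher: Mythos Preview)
Your overall strategy---identify the candidate $V_3^*(W^*\otimes 1_{H_\nu})V_1$ with $\widetilde W$ by verifying the characterizing relation \ref{defW}(iii)---is exactly the paper's approach. But your write-up contains a logical gap at the crucial step.

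What you actually carry out is the \emph{easy} half: you apply $V_1$, then $W^*\otimes 1$, then pair against a $V_3$-image, and correctly obtain
\[(\id*\omega_{U^\ga_\nu(\eta\otimes J_\nu\Lambda_\nu(p)),\,\xi\otimes\Lambda_\nu(q)})\bigl(V_3^*(W^*\otimes 1)V_1\bigr)=\ga(q^*)\bigl[(\omega_{\eta,\xi}\otimes\id)(W^*)\otimes 1_{H_\nu}\bigr]\beta(p^*).\]
But this is a statement about the \emph{candidate}, not about $\widetilde W$. The ``claimed formula'' in the theorem is for $\widetilde W$; it is a consequence of the identification, not a substitute for proving it. To close the loop you must compute, for $X=(y\otimes 1)\ga(x)$ and $Y_i=(y_i\otimes 1)\ga(x_i)$, the right-hand side of \ref{defW}(iii),
\[(\id\underset{N}{_\beta*_\ga}\omega_{J_{\tilde\nu}\Lambda_{\tilde\nu}(Y_2),\,J_{\tilde\nu}\Lambda_{\tilde\nu}(Y_1)})\,\widetilde\Gamma(X^*),\]
and show it yields the same element. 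This is the computation you never describe, and it is not mere bookkeeping: one unwinds $\widetilde\Gamma=V_1^*\tilde\ga(\cdot)V_1$ via \ref{th1}, uses \ref{propV}(i) to convert the relative-tensor slicing to ordinary tensors, invokes the identity $(\id\otimes\omega_{\widehat J\Lambda_{\widehat\varphi}(y_1^*y_2),\Lambda_{\widehat\varphi}(y)})(\widehat W)$ from \ref{lcqg} to pass from $\widehat\Gamma^{\op}(y^*)$ to a coefficient of $W^*$, and---the step you do not anticipate---uses \ref{corintegrable3}(iv), namely $\Ad(W\otimes 1)[1\otimes\beta(x)]=(\id\otimes\beta^\dag)(\ga^{\op}(x^{\op}))$, to commute the stray $\beta(x_1)$ past $W^*$ so that the expression lands back in the range of $V_1$ and $V_3$. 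Without that commutation relation the two halves do not match up.

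Two smaller remarks. Your step~(2), verifying the four intertwining relations separately, is unnecessary: once $\widetilde W=V_3^*(W^*\otimes 1)V_1$ is established on a dense set of matrix elements, the intertwining relations are inherited automatically, and the paper does not check them independently. And there is no abstract ``uniqueness theorem'' for pseudo-multiplicative unitaries being invoked here; the argument is simply that two unitaries agreeing on a total set of matrix elements are equal.
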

\begin{proof}
Let $x$, $x_1$, $x_2$ in $\gN_\nu$ and $y$, $y_1$, $y_2$ in
$\gN_{\widehat{\varphi}}$. Then $(y\otimes 1)\ga(x)$, $(y_1\otimes
1_{H_{\nu}})\ga(x_1)$, $(y_2\otimes 1_{H_\nu})\ga(x_2)$ belong to
$\gN_{\tilde{\nu}}\cap\gN_{T_{\tilde{\ga}}}$, and by
(\ref{action}),  $\Lambda_{\tilde{\nu}}[(y\otimes
1_{H_\nu})\ga(x)]=\Lambda_{\widehat{\varphi}}(y)\otimes\Lambda_\nu(x)$
and
\begin{align*}
  J_{\tilde{\nu}}\Lambda_{\tilde{\nu}}[(y\otimes
  1_{H_\nu})\ga(x)]&=U^\ga_\nu(\widehat{J}\Lambda_{\widehat{\varphi}}(y)\otimes
  J_\nu\Lambda_\nu(x)) \\
J_{\tilde{\nu}}\Lambda_{\tilde{\nu}}[\ga(x_1^*)(y_1^*y_2\otimes 1_{H_\nu})\ga(x_2)]&=(1_H\otimes J_\nu x_1^*J_\nu)U^\ga_\nu[\widehat{J}\Lambda_{\widehat{\varphi}}(y_1^*y_2)\otimes J_\nu\Lambda_\nu(x_2)].
\end{align*}
By definition of $\widetilde{W}$ (\ref{defW}), we find that for any
$\Xi_1$, $\Xi_2$ in $H\otimes H_\nu$, the scalar product
\[(\widetilde{W}[\Xi_2\underset{\nu}{_\beta\otimes_\ga}J_{\tilde{\nu}}\Lambda_{\tilde{\nu}}(\ga(x_1^*)(y_1^*y_2\otimes 1_{H_\nu})\ga(x_2))]|\Xi_1\underset{\nu^{\op}}{_\ga\otimes_{\gamma}}(\Lambda_{\widehat{\varphi}}(y)\otimes \Lambda_\nu(x)))\]
is equal to
\[(\widetilde{\Gamma}[(y\otimes 1)\ga(x)]^*(\Xi_2\underset{\nu}{_\beta\otimes_\ga}U^\ga_\nu(\widehat{J}\Lambda_{\widehat{\varphi}}(y_2)\otimes J_\nu\Lambda_\nu(x_2))|\Xi_1\underset{\nu}{_\beta\otimes_\ga}U^\ga_\nu(\widehat{J}\Lambda_{\widehat{\varphi}}(y_1)\otimes J_\nu\Lambda_\nu(x_1)))).\]
 Using  \ref{th1}, we get that this  is equal to
\[((\widehat{\Gamma}^{\op}(y^*)\otimes 1_{H_\nu})V_1[\Xi_2\underset{\nu}{_\beta\otimes_\ga}U^\ga_\nu(\widehat{J}\Lambda_{\widehat{\varphi}}(y_2)\otimes J_\nu\Lambda_\nu(x_2))]|V_1[\ga(x)\Xi_1\underset{\nu}{_\beta\otimes_\ga}(U^\ga_\nu(\widehat{J}\Lambda_{\widehat{\varphi}}(y_1)\otimes J_\nu\Lambda_\nu(x_1))),\]
which, thanks to \ref{propV}(i), is equal to
\[((\widehat{\Gamma}^{\op}(y^*)\otimes 1_{H_\nu})(\widehat{J}\Lambda_{\widehat{\varphi}}(y_2)\otimes\beta(x_2^*)\Xi_2)|\widehat{J}\Lambda_{\widehat{\varphi}}(y_1)\otimes\beta(x_1^*)\ga(x)\Xi_1)\]
and to
\begin{multline*}
(\beta(x_1)((\omega_{\widehat{J}\Lambda_{\widehat{\varphi}}(y_2), \widehat{J}\Lambda_{\widehat{\varphi}}(y_1)}\otimes \id)(\widehat{\Gamma}^{\op}(y^*))\otimes 1_{H_\nu})\beta(x_2^*)\Xi_2|\ga(x)\Xi_1)=\\
=(\beta(x_1)((\id\otimes \omega_{\widehat{J}\Lambda_{\widehat{\varphi}}(y_2), \widehat{J}\Lambda_{\widehat{\varphi}}(y_1)})(\widehat{\Gamma}(y^*))\otimes 1_{H_\nu})\beta(x_2^*)\Xi_2|\ga(x)\Xi_1),
\end{multline*}
which, by \ref{lcqg}, is equal to
\begin{multline*}
(\beta(x_1)((\id\otimes \omega_{\widehat{J}\Lambda_{\widehat{\varphi}}(y_1^*y_2), \Lambda_{\widehat{\varphi}}(y)})(\widehat{W})\otimes 1_{H_\nu})\beta(x_2^*)\Xi_2|\ga(x)\Xi_1)=\\
=(\beta(x_1)((\omega_{\widehat{J}\Lambda_{\widehat{\varphi}}(y_1^*y_2), \Lambda_{\widehat{\varphi}}(y)}\otimes \id)(W^*)\otimes 1_{H_\nu})\beta(x_2^*)\Xi_2|\ga(x)\Xi_1),
\end{multline*}
which is, using \ref{propV}(i) and \ref{V3}, equal to
\[((1_H\otimes\beta(x_1))(W^*\otimes 1_{H_\nu})V_1(\Xi_2\underset{\nu}{_\beta\otimes_\ga}U^\ga_\nu(\widehat{J}\Lambda_{\widehat{\varphi}}(y_1^*y_2)\otimes J_\nu\Lambda_\nu(x_2)))|V_3(\Xi_1\underset{\nu^{\op}}{_\ga\otimes_{\gamma}}(\Lambda_{\widehat{\varphi}}(y)\otimes\Lambda_\nu(x)))),\]
which, using \ref{corintegrable3}(iv), is the scalar product of the vector
\[W^*_{12} (\id \otimes \beta^{\dag})(\ga^{\op}(x^{\op}_{1}))V_1(\Xi_2\underset{\nu}{_\beta\otimes_\ga}U^\ga_\nu(\widehat{J}\Lambda_{\widehat{\varphi}}(y_1^*y_2)\otimes J_\nu\Lambda_\nu(x_2)))\]
with $V_3(\Xi_1\underset{\nu^{\op}}{_\ga\otimes_{\gamma}}(\Lambda_{\widehat{\varphi}}(y)\otimes\Lambda_\nu(x)))$.
But, using \ref{propV}(i), this vector is equal to
\[(W^*\otimes 1_{H_\nu})V_1[(1_H\otimes 1_{H_\nu})\underset{N}{_\beta\otimes_\ga}(1_H\otimes J_\nu x^*_{1}J_\nu)](\Xi_2\underset{\nu}{_\beta\otimes_\ga}U^\ga_\nu(\widehat{J}\Lambda_{\widehat{\varphi}}(y_1^*y_2)\otimes J_\nu\Lambda_\nu(x_2))).\]
Finally, we get that the initial scalar product 
\[(\widetilde{W}[\Xi_2\underset{\nu}{_\beta\otimes_\ga}J_{\tilde{\nu}}\Lambda_{\tilde{\nu}}(\ga(x_1^*)(y_1^*y_2\otimes 1)\ga(x_2))]|\Xi_1\underset{\nu^{\op}}{_\ga\otimes_{\gamma}}(\Lambda_{\widehat{\varphi}}(y)\otimes \Lambda_\nu(x)))\]
 is equal to
\[((W^*\otimes 1_{H_\nu})V_1[\Xi_2\underset{\nu}{_\beta\otimes_\ga}J_{\tilde{\nu}}\Lambda_{\tilde{\nu}}(\ga(x_1^*)(y_1^*y_2\otimes 1)\ga(x_2))]|
V_3(\Xi_1\underset{\nu^{\op}}{_\ga\otimes_{\gamma}}(\Lambda_{\widehat{\varphi}}(y)\otimes\Lambda_\nu(x)))).\]
By density of linear combinations of elements of the form $\Lambda_{\widehat{\varphi}}(y)\otimes \Lambda_\nu(x)$ in $D((H\otimes H_\nu)_{\gamma}, \nu^{\op})$, and then of linear combinations of elements of the form $\Xi_1\underset{\nu^{\op}}{_\ga\otimes_{\gamma}}(\Lambda_{\widehat{\varphi}}(y)\otimes \Lambda_\nu(x))$ in $(H\otimes H_\nu)\underset{\nu^{\op}}{_\ga\otimes_{\gamma}}(H\otimes H_\nu)$, we get that
\[\widetilde{W}[\Xi_2\underset{\nu}{_\beta\otimes_\ga}J_{\tilde{\nu}}\Lambda_{\tilde{\nu}}(\ga(x_1^*)(y_1^*y_2\otimes 1)\ga(x_2))]=
V_3^*(W^*\otimes 1_{H_\nu})V_1[\Xi_2\underset{\nu}{_\beta\otimes_\ga}J_{\tilde{\nu}}\Lambda_{\tilde{\nu}}(\ga(x_1^*)(y_1^*y_2\otimes 1)\ga(x_2))],\]
and, with the same density arguments, we get that $\widetilde{W}=V_3^*(W^*\otimes 1_{H_\nu})V_1$.  
Therefore, using again \ref{propV}(i) and \ref{V3}, we get that
\begin{multline*}
(\widetilde{W}[\Xi_2\underset{\nu}{_\beta\otimes_\ga}U^\ga_\nu(\eta\otimes J_\nu\Lambda_\nu(p))]|\Xi_1\underset{\nu^{\op}}{_\ga\otimes_\gamma}(\xi\otimes\Lambda_\nu(q)))=\\
=((W^*\otimes 1_{H_\nu})V_1[\Xi_2\underset{\nu}{_\beta\otimes_\ga}U^\ga_\nu(\eta\otimes J_\nu\Lambda_\nu(p))]|V_3[\Xi_1\underset{\nu^{\op}}{_\ga\otimes_\gamma}(\xi\otimes\Lambda_\nu(q))])
\end{multline*}
is equal to
\[((W^*\otimes 1_{H_\nu})(\eta\otimes\beta(p^*)\Xi_2)|\xi\otimes\ga(p)\Xi_1)
=
(((\omega_{\eta, \xi}\otimes \id)(W^*)\otimes 1_{H_\nu})\beta(p^*)\Xi_2|\ga(p)\Xi_1),\]
 which  finishes the proof. 
\end{proof}

\begin{theorem}
\label{dual}
 Let $\bfG$ be a locally compact quantum group,  $(N, \ga,
   \widehat{\ga})$ a braided-commu\-tative $\bfG$-Yetter-Drinfel'd
   algebra,  and $\nu$ a normal faithful semi-finite weight on $N$
   such that $\mathfrak{G}(N, \ga, \widehat{\ga}, \nu)$ is a measured
   quantum groupoid in the sense of \ref{defMQG}. Let $\widehat{
     \mathfrak{G}(N, \ga, \widehat{\ga}, \nu)}$ be its dual measured
   quantum groupoid in the sense of \ref{data}, and  for all $X\in \widehat{\bfG}\ltimes_{\widehat{\ga}}N$, let 
\[\mathcal I(X)=U^\ga_\nu(U^{\widehat{\ga}}_\nu)^*XU^{\widehat{\ga}}_\nu(U^\ga_\nu)^*.\]
Then $\mathcal I$ is an isomorphism of Hopf bimodule structures from
$\mathfrak{G}(N, \widehat{\ga}, \ga, \nu)$ onto $\widehat{
  \mathfrak{G}(N, \ga, \widehat{\ga}, \nu)}$. 
\end{theorem}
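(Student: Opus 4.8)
The plan is to verify that the map $\mathcal I$ is a normal $*$-isomorphism from $\widehat{\bfG}\ltimes_{\widehat\ga} N$ onto $\widehat{\bfG\ltimes_\ga N}$ (the underlying von Neumann algebra of the dual measured quantum groupoid) which intertwines the two pairs of source/target anti-representations and representations, and which intertwines the two coproducts. Since $\mathcal I=\Ad(U^\ga_\nu(U^{\widehat\ga}_\nu)^*)$ is visibly a normal $*$-isomorphism of $B(H\otimes H_\nu)$, the work is entirely in identifying the images. First I would recall from Theorem \ref{W}, applied to the braided-commutative $\widehat{\bfG}$-Yetter-Drinfel'd algebra $(N,\widehat\ga,\ga)$, that the pseudo-multiplicative unitary of $\mathfrak{G}(N,\widehat\ga,\ga,\nu)$ is $\widetilde{W}'=V_3'^*(\widehat W^*\otimes 1_{H_\nu})V_1'$, where $V_1'$, $V_3'$ are the analogues of $V_1$, $V_3$ with the roles of $\ga$ and $\widehat\ga$ exchanged, and $\widehat W=\sigma W^*\sigma$. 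Similarly, the pseudo-multiplicative unitary $\widehat{\widetilde W}$ of the dual $\widehat{\mathfrak{G}(N,\ga,\widehat\ga,\nu)}$ is $\sigma_\nu\widetilde W^*\sigma_\nu$ with $\widetilde W=V_3^*(W^*\otimes 1_{H_\nu})V_1$ by \ref{data}(viii) and Theorem \ref{W}.

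The key step is to show that $\mathcal I$ conjugates $\widetilde W'$ into $\widehat{\widetilde W}$, or equivalently that the four unitaries $V_1,V_3,V_1',V_3'$, together with the leg-numbered canonical implementations, satisfy an identity of the form $(\mathcal I\otimes\mathcal I)(\widetilde W') = \widehat{\widetilde W}$ once everything is transported to $H\otimes H\otimes H_\nu$ via these $V$'s. Here I would use systematically the relations from Corollary \ref{corintegrable3}, in particular $U^{\ga_D}_\nu=(U^\ga_\nu)_{23}(U^{\widehat\ga}_\nu)_{13}$ and identity (ii), $(U^\ga_\nu)_{13}(U^{\widehat\ga}_\nu)_{23}=W_{12}(U^{\widehat\ga}_\nu)_{23}(U^\ga_\nu)_{13}W^*_{12}$, which is precisely the symmetry between $\ga$ and $\widehat\ga$ that makes $\mathcal I$ exchange the two structures; and the formula $V_2V_1^*=\sigma_{12}(U^\ga_\nu)_{13}(U^{\widehat\ga}_\nu)_{23}(U^\ga_\nu)_{23}^*$ from \ref{propV}(iii) together with $V_3(I\underset{N}{_\beta\otimes_\ga}J_{\tilde\nu})=(\widehat J\otimes I)V_1$ from \ref{V3}(iv). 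Concretely I would check, on the explicit vectors $U^\ga_\nu(\eta\otimes J_\nu\Lambda_\nu(p))$ and $\xi\otimes\Lambda_\nu(q)$ that appear in the last display of Theorem \ref{W}, that $\mathcal I$ sends the operator $\widehat\ga(q^*)[(\omega_{\eta,\xi}\otimes\id)(\widehat W^*)\otimes 1_{H_\nu}]\widehat\ga^{\op}$-associated-$\beta'(p^*)$ to the corresponding slice of $\widehat{\widetilde W}$; by the density/linearity arguments already used in \ref{W} this pins down the unitary and hence the whole dual structure.

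The two bookkeeping sub-tasks are: (a) identify $\mathcal I(\widehat M'\otimes 1_{H_\nu})=\widehat M\otimes 1_{H_\nu}$ and $\mathcal I(\widehat\ga(N))=\ga(N)$, so that $\mathcal I(\widehat{\bfG}\ltimes_{\widehat\ga} N)$ is the correct von Neumann algebra — this follows from $\Ad(U^\ga_\nu)=\ga$, $\Ad((U^{\widehat\ga}_\nu)^*)=$ the standard implementation of $\widehat\ga^{\op}$ on $N^{\op}$ by Lemma \ref{lemBC}(ii), and the identification of $\widehat M$ inside the dual from \ref{data}(viii); (b) match the source/target maps: the anti-representation $\beta'$ of $\mathfrak{G}(N,\widehat\ga,\ga,\nu)$ is $\Ad(U^{\widehat\ga}_\nu(U^\ga_\nu)^*)[1\otimes J_\nu x^*J_\nu]$, and $\mathcal I\circ\beta' $ should equal the map $\gamma$ of \ref{V3}(i), namely $x^*\mapsto 1_H\otimes J_\nu xJ_\nu$, which is exactly what $\Ad(U^\ga_\nu(U^{\widehat\ga}_\nu)^*)\Ad(U^{\widehat\ga}_\nu(U^\ga_\nu)^*)=\id$ delivers; similarly $\mathcal I\circ\ga' $ must match $\alpha=\ga$, using $\ga'=\widehat\ga$. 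Finally, matching the coproducts reduces, via the formula $\widehat{\widetilde\Gamma}(z)=\sigma_\nu\widetilde W(z\underset{N}{_\beta\otimes_\alpha}1)\widetilde W^*\sigma_\nu$ and the analogous formula for $\widetilde\Gamma'$, to the unitary identity established above.

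The main obstacle I anticipate is the leg-numbering and relative-tensor-product bookkeeping in step (a)/(b): one must keep straight which copy of $B(H)\otimes B(H)\otimes B(H_\nu)$ the various $V_i$'s land in, and the fiber products are over $N$ via the maps $\beta$ versus $\gamma$ versus $\alpha$, which change roles under $\mathcal I$; getting the flips $\sigma_\nu$, $\sigma_{12}$, and the $\widehat J\otimes I$ versus $J\otimes I$ twists exactly right is where errors creep in. Once the single unitary conjugation $\mathcal I$-intertwines $\widetilde W'$ with $\widehat{\widetilde W}$, the remaining verifications (isomorphism of Hopf bimodules, hence — by uniqueness of the dual structure on the generated von Neumann algebra — isomorphism of measured quantum groupoids) are formal, using \ref{data}(viii) and \ref{defW}.
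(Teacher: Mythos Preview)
Your overall strategy matches the paper's: both arguments reduce the Hopf-bimodule isomorphism to a single identity between pseudo-multiplicative unitaries, namely that conjugation by $U^\ga_\nu(U^{\widehat\ga}_\nu)^*$ on each relative-tensor leg carries the unitary $\widetilde{\widehat W}$ of $\mathfrak{G}(N,\widehat\ga,\ga,\nu)$ to $\sigma_\nu\widetilde W^*\sigma_\nu$. The paper executes this by first deriving, via explicit elementary-tensor computations, closed formulas expressing the swapped $\widehat{V_1}$ through $V_3$ and the swapped $\widehat{V_3}$ through $V_1$ (so the roles of $V_1$ and $V_3$ interchange under the swap of $\ga$ and $\widehat\ga$), and then simplifying the resulting leg-numbered product using only the corepresentation identities $(\Gamma\otimes\id)(U^\ga_\nu)=(U^\ga_\nu)_{23}(U^\ga_\nu)_{13}$ and $(\widehat\Gamma\otimes\id)(U^{\widehat\ga}_\nu)=(U^{\widehat\ga}_\nu)_{23}(U^{\widehat\ga}_\nu)_{13}$; it does not use \ref{corintegrable3}(ii) or \ref{V3}(iv) at this point.

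Your sub-task (a), however, is both mis-stated and unnecessary. The crossed product $\widehat\bfG\ltimes_{\widehat\ga}N$ is generated by $\widehat\ga(N)$ and $M\otimes 1_{H_\nu}$, not $\widehat M'\otimes 1_{H_\nu}$; and $\mathcal I(M\otimes 1_{H_\nu})$ is not $\widehat M\otimes 1_{H_\nu}$ in any direct sense. The paper avoids this issue entirely: once the pseudo-multiplicative unitaries correspond under $\mathcal I$, the underlying von Neumann algebras automatically correspond because each is the weak closure of the right-leg slices of its unitary (\ref{defW}(i)), and the coproducts then match by \ref{defW}(ii). So the direct algebra-and-generator matching you outline in (a) is not needed, and attempting it would be harder than the unitary identity it is supposed to support. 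Sub-task (b), by contrast, is correct as you state it and consistent with the paper (though the paper leaves it implicit).
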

\begin{proof}
To prove this result, we calculate the pseudo-multplicative $\widetilde{\widehat{W}}$ of $\mathfrak{G}(N, \widehat{\ga}, \ga, \nu)$, using \ref{W} applied to $(N, \widehat{\ga}, \ga, \nu)$. 
We first define, as in \ref{propV}(i) and \ref{V3}, a unitary $\widehat{V_1}$ from $(H\otimes H_\nu)\underset{\nu}{_{\widehat{\beta}}\otimes_{\widehat{\ga}}}(H\otimes H_\nu)$ onto $H\otimes H\otimes H_\nu$, and a unitary $\widehat{V_3}$ from $(H\otimes H_\nu)\underset{\nu^{\op}}{_{\widehat{\ga}}\otimes_{\widehat{\gamma}}}(H\otimes H_\nu)$ onto $H\otimes H\otimes H_\nu$, where, for all $x\in N$, 
\begin{align*}
  \widehat{\beta}(x)&=U^{\widehat{\ga}}_\nu(U^\ga_\nu)^*(1_H\otimes
  J_\nu x^*J_\nu)U^\ga_\nu(U^{\widehat{\ga}}_\nu)^*, \\
  \widehat{\gamma}(x)&=J_{\widetilde{\hat{\nu}}}\widehat{\ga}(x^*)J_{\widetilde{\hat{\nu}}}=1_H\otimes
  J_\nu x^*J_\nu=\gamma(x),
\end{align*}
 $\widetilde{\hat{\nu}}$ denoting the dual weight on
  $\widehat{\bfG} \ltimes_{\widehat{\ga}} N$ as before.
More precisely, applying \ref{propV}(i) to $(N, \widehat{\ga}, \ga,
\nu)$, we get that for any $\xi$, $\eta$ in $H$ and $p$, $q$ in $\gN_\nu$,
\[\widehat{V_1}(U^{\widehat{\ga}}_\nu(U^\ga_\nu)^*\underset{N}{_{\widehat{\beta}}\otimes_{\widehat{\ga}}}U^{\widehat{\ga}}_\nu(U^\ga_\nu)^*)\sigma_{\nu^{\op}}
[U^\ga_\nu(\eta\otimes J_\nu\Lambda_\nu(q))\underset{\nu^{\op}}{_\ga\otimes_\gamma}(\xi\otimes\Lambda_\nu(p))]\]
is equal to
\begin{align*}
  \widehat{V_1}[U^{\widehat{\ga}}_\nu(U^\ga_\nu)^*(\xi\otimes\Lambda_\nu(p))\underset{\nu}{_{\widehat{\beta}}\otimes_{\widehat{\ga}}}U^{\widehat{\ga}}_\nu(\eta\otimes J_\nu\Lambda_\nu(q))]
&=
\eta\otimes\widehat{\beta}(q^*)U^{\widehat{\ga}}_\nu(U^\ga_\nu)^*(\xi\otimes\Lambda_\nu(p))\\
&=
\eta\otimes U^{\widehat{\ga}}_\nu(U^\ga_\nu)^*(\xi\otimes J_\nu qJ_\nu \Lambda_\nu(p))\\
&=
(1_H\otimes U^{\widehat{\ga}}_\nu(U^\ga_\nu)^*)(\eta\otimes\xi\otimes pJ_\nu\Lambda_\nu(q))
\end{align*}
On the other hand, using \ref{V3}, we get that
\begin{align*}
V_3[U^\ga_\nu(\eta\otimes J_\nu\Lambda_\nu(q))\underset{\nu^{\op}}{_\ga\otimes_\gamma}(\xi\otimes\Lambda_\nu(p))] &=
\xi\otimes\ga(p)U^\ga_\nu(\eta\otimes J_\nu\Lambda_{\nu} (q))\\
&=
\xi\otimes U^\ga_\nu(\eta\otimes pJ_\nu\Lambda_\nu(q))\\
&=
(1_H\otimes U^\ga_\nu)(\xi\otimes\eta\otimes pJ_\nu\Lambda_\nu(q)),
\end{align*}
from which we get that
\[\widehat{V_1}(U^{\widehat{\ga}}_\nu(U^\ga_\nu)^*\underset{N}{_{\widehat{\beta}}\otimes_{\widehat{\ga}}}U^{\widehat{\ga}}_\nu(U^\ga_\nu)^*)\sigma_{\nu^{\op}}=(1_H\otimes U^{\widehat{\ga}}_\nu(U^\ga_\nu)^*)(\sigma\otimes 1_{H_\nu})(1_H\otimes (U^\ga_\nu)^*)V_3.\]
 Applying this result to $(N, \widehat{\ga}, \ga, \nu)$ and taking the
 adjoints, we find that
\[\widehat{V_3}(U^{\widehat{\ga}}_\nu(U^\ga_\nu)^*\underset{N^{\op}}{_\ga\otimes_\beta}U^{\widehat{\ga}}_\nu(U^\ga_\nu)^*)\sigma_\nu=
(1_H\otimes U^{\widehat{\ga}}_\nu)(\sigma\otimes 1_{H\nu})(1_H\otimes U^{\widehat{\ga}}_\nu(U^\ga_\nu)^*)V_1.\]
Applying \ref{W} to $(N, \widehat{\ga}, \ga, \nu)$, we get that $\widetilde{\widehat{W}}=\widehat{V_3}^*(\sigma\otimes 1_{H_\nu})(W\otimes 1_{H_\nu})(\sigma\otimes 1_{H_\nu})\widehat{V_1}$ and, therefore, that $\sigma_{\nu^{\op}}[U^\ga_\nu(U^{\widehat{\ga}}_\nu)^*\underset{N^{\op}}{_{\widehat{\ga}}\otimes_\gamma}U^\ga_\nu(U^{\widehat{\ga}}_\nu)^*]\widetilde{\widehat{W}}[U^{\widehat{\ga}}_\nu(U^\ga_\nu)^*\underset{N}{_\gamma\otimes_\ga}U^{\widehat{\ga}}_\nu(U^\ga_\nu)^*]\sigma_{\nu^{\op}}$ is equal to:
\[V_1^*(U^\ga_\nu)_{23}(U^{\widehat{\ga}}_\nu)^*_{23}(U^{\widehat{\ga}}_\nu)^*_{13}W_{12}(U^{\widehat{\ga}}_{\nu})_{13}(U^\ga_\nu)^*_{13}(U^\ga_\nu)^*_{23}V_3\]
But, as $(\widehat{\Gamma}\otimes \id)(U^{\widehat{\ga}}_\nu)=(U^{\widehat{\ga}}_{\nu})_{23}(U^{\widehat{\ga}}_{\nu})_{13}$, we get that $(U^{\widehat{\ga}}_{\nu})^*_{23}(U^{\widehat{\ga}}_{\nu})^*_{13}=W_{12}(U^{\widehat{\ga}}_\nu)^*_{13}W_{12}^*$, and therefore that $(U^{\widehat{\ga}}_\nu)^*_{23}(U^{\widehat{\ga}}_\nu)^*_{13}W_{12}(U^{\widehat{\ga}}_{\nu})_{13}=W_{12}$. 
On the other hand, by the same argument,
$(U^\ga_\nu)^*_{13}(U^\ga_\nu)^*_{23}=W_{12}^*(U^\ga_\nu)^*_{23}W_{12}$. Finally,
we get  that
\[\sigma_{\nu^{\op}}[U^\ga_\nu(U^{\widehat{\ga}}_\nu)^*\underset{N^{\op}}{_{\widehat{\ga}}\otimes_\gamma}U^\ga_\nu(U^{\widehat{\ga}}_\nu)^*]\widetilde{\widehat{W}}[U^{\widehat{\ga}}_\nu(U^\ga_\nu)^*\underset{N}{_\gamma\otimes_\ga}U^{\widehat{\ga}}_\nu(U^\ga_\nu)^*]\sigma_{\nu^{\op}}=V_1^*W_{12}V_3=\widetilde{W}^*,\]
and therefore
\[[U^\ga_\nu(U^{\widehat{\ga}}_\nu)^*\underset{N^{\op}}{_{\widehat{\ga}}\otimes_\gamma}U^\ga_\nu(U^{\widehat{\ga}}_\nu)^*]\widetilde{\widehat{W}}[U^{\widehat{\ga}}_\nu(U^\ga_\nu)^*\underset{N}{_\gamma\otimes_\ga}U^{\widehat{\ga}}_\nu(U^\ga_\nu)^*]=\sigma_\nu\widetilde{W}^*\sigma_\nu.\]
So, up to the isomorphism, the pseudo-multiplicative unitary $\widetilde{\widehat{W}}$ of $\mathfrak{G}(N, \widehat{\ga}, \ga, \nu)$ is equal to the dual pseudo-muliplicative untary $\widehat{\widetilde{W}}$, which finishes the proof. 
\end{proof}

\begin{proposition}
\label{coR}
 Let $\bfG$ be a locally compact quantum group,  $(N, \ga,
  \widehat{\ga})$ a braided-commu\-tative $\bfG$-Yetter-Drinfel'd
  algebra, and $\nu$ a normal faithful semi-finite weight on $N$. Suppose that $\mathfrak{G}(N, \ga, \widehat{\ga}, \nu)$ is a measured quantum groupoid in the sense of \ref{defMQG}, and let $\widehat{ \mathfrak{G}}(N, \ga, \widehat{\ga}, \nu)$ be its dual measured quantum groupoid in the sense of \ref{data}.
  \begin{enumerate}
  \item  The co-inverse $\widetilde{R}$ constructed in
    \ref{th2}(ii) is the canonical co-inverse of the measured quantum
    groupoid $\mathfrak{G}(N, \ga, \widehat{\ga}, \nu)$.
  \item  The isomorphim of Hopf bimodules from $\mathfrak{G}(N,
    \widehat{\ga}, \ga, \nu)$ onto $\widehat{ \mathfrak{G}}(N, \ga,
      \widehat{\ga}, \nu)$ constructed in \ref{dual} exchanges the
    canonical co-inverses of these Hopf-bimodules.
  \end{enumerate}
\end{proposition}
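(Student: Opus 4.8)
The plan is to establish (i) first, and then deduce (ii) more or less formally from (i) together with Theorem \ref{dual}. For (i), recall from Theorem \ref{th3} that $\mathfrak G(N,\ga,\widehat\ga,\nu)$ carries $T_{\tilde\ga}$ as left-invariant operator-valued weight, so that its left-invariant weight is $\Phi=\tilde\nu$; by Lemma \ref{V3}(i) the associated data are $H=H\otimes H_\nu$, $J_\Phi=J_{\tilde\nu}$, $\alpha=\ga$, $\gamma(x^*)=1_H\otimes J_\nu xJ_\nu$, and $\beta$ as in Theorem \ref{ThBC}. The canonical co-inverse of the measured quantum groupoid is uniquely characterized (see \ref{data}(i)) as the involutive anti-$*$-automorphism $R$ satisfying $R((\id*\omega_{\xi,\eta})(\widetilde W))=(\id*\omega_{J_{\tilde\nu}\eta,J_{\tilde\nu}\xi})(\widetilde W)$ for $\xi\in D({}_\ga(H\otimes H_\nu),\nu)$, $\eta\in D((H\otimes H_\nu)_\gamma,\nu^{\op})$. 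So the task is: compute $\widetilde R((\id*\omega_{\xi,\eta})(\widetilde W))$ with the formula $\widetilde R(z)=Iz^*I$ of Theorem \ref{th2}, using the explicit formula for $\widetilde W$ from Theorem \ref{W}, and check it equals $(\id*\omega_{J_{\tilde\nu}\eta,J_{\tilde\nu}\xi})(\widetilde W)$.

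Concretely, I would take vectors of the form $\xi=U^\ga_\nu(\eta_0\otimes J_\nu\Lambda_\nu(p))$ and $\eta=\xi_0\otimes\Lambda_\nu(q)$ (which by \ref{propV}(i) and \ref{V3}(ii) are the generic bounded vectors for $\gamma$ and $\ga$ respectively), and use the explicit identity from Theorem \ref{W},
\[
(\id*\omega_{U^\ga_\nu(\eta_0\otimes J_\nu\Lambda_\nu(p)),\,\xi_0\otimes\Lambda_\nu(q)})(\widetilde W)=\ga(q^*)\big[(\omega_{\eta_0,\xi_0}\otimes\id)(W^*)\otimes 1_{H_\nu}\big]\beta(p^*).
\]
Apply $\widetilde R(z)=Iz^*I$: by Proposition \ref{propI}(ii) one has $I\ga(q^*)^*I=\beta(q)$ and $I\beta(p^*)^*I=\ga(p)$, and by \ref{propI}(iii), $I(y^*\otimes 1)I=\widehat R(y)\otimes 1$, so the middle factor becomes $\widehat R((\omega_{\eta_0,\xi_0}\otimes\id)(W^*))^*\otimes 1_{H_\nu}$. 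Using $\widehat R(y)=J y^*J$ on $\widehat M$ and the identity $R((\id\otimes\omega_{\zeta,\zeta'})(W))=(\id\otimes\omega_{J\zeta',J\zeta})(W)$ from \ref{lcqg} (equivalently $(\omega\otimes\id)(W^*)$ transforms correctly under $\widehat R$), this middle factor rearranges into $(\omega_{\widehat J\xi_0,\widehat J\eta_0}\otimes\id)(W^*)\otimes 1_{H_\nu}$ up to adjoint bookkeeping. Thus $\widetilde R$ applied to the functional indexed by $(\xi,\eta)$ yields the functional indexed by $(J_{\tilde\nu}\eta,J_{\tilde\nu}\xi)$, once one checks that $J_{\tilde\nu}(\xi_0\otimes\Lambda_\nu(q))=U^\ga_\nu(\widehat J\xi_0\otimes J_\nu\Lambda_\nu(q))$-type relations (which follow from \ref{action} and the formula $J_{\tilde\nu}=U^\ga_\nu(\widehat J\otimes J_\nu)$) match the two sides. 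Since $\widetilde R$ is an involutive anti-$*$-isomorphism fixing the Hopf-bimodule structure maps in the required way (Theorem \ref{th2}), and these are precisely the defining properties of the canonical co-inverse, uniqueness gives (i).

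For (ii): the isomorphism $\mathcal I$ of Theorem \ref{dual} from $\mathfrak G(N,\widehat\ga,\ga,\nu)$ onto $\widehat{\mathfrak G(N,\ga,\widehat\ga,\nu)}$ is an isomorphism of Hopf bimodules, hence it automatically intertwines the canonical co-inverses, because the canonical co-inverse is uniquely determined by the Hopf-bimodule data together with the invariant operator-valued weights (or, equivalently, by the pseudo-multiplicative unitary, which $\mathcal I$ intertwines by the computation in \ref{dual}). More explicitly, the co-inverse of $\mathfrak G(N,\widehat\ga,\ga,\nu)$ is, by (i) applied to the Yetter-Drinfel'd algebra $(N,\widehat\ga,\ga)$, the map $z\mapsto \widehat I z^*\widehat I$ where $\widehat I=U^{\widehat\ga}_\nu(U^\ga_\nu)^*(J\otimes J_\nu)(U^\ga_\nu)(U^{\widehat\ga}_\nu)^*$ (the analogue of $I$ with the roles of $\ga,\widehat\ga$ swapped), while the co-inverse of $\widehat{\mathfrak G(N,\ga,\widehat\ga,\nu)}$ is $x\mapsto J_\Phi x^*J_\Phi=J_{\tilde\nu}x^*J_{\tilde\nu}$ by \ref{data}(viii). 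One checks $\mathcal I(\widehat I z^*\widehat I)=J_{\tilde\nu}\mathcal I(z)^*J_{\tilde\nu}$ directly from $\mathcal I(X)=U^\ga_\nu(U^{\widehat\ga}_\nu)^*XU^{\widehat\ga}_\nu(U^\ga_\nu)^*$ and $J_{\tilde\nu}=U^\ga_\nu(\widehat J\otimes J_\nu)$, $J_{\widetilde{\hat\nu}}=U^{\widehat\ga}_\nu(J\otimes J_\nu)$, which is a short manipulation of these unitaries.

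The main obstacle will be the bookkeeping in the proof of (i): matching the two parametrizations of bounded vectors, tracking which side carries adjoints, and verifying that the images of $\xi$ and $\eta$ under $J_{\tilde\nu}$ correspond under the $(\eta_0,p)\leftrightarrow(\xi_0,q)$ labeling to the vectors appearing in Theorem \ref{W}. Everything else reduces to repeated application of Proposition \ref{propI}(ii), (iii) and the transformation rule for $\widehat R$ on $W$; part (ii) is essentially formal once (i) and Theorem \ref{dual} are in hand.
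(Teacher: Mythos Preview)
Your plan for (i) is correct but takes a more computational route than the paper. You work with the explicit expression from Theorem \ref{W} for $(\id*\omega_{\xi,\eta})(\widetilde W)$ on specific bounded vectors, apply $\widetilde R=I(\,\cdot\,)^*I$ termwise via Proposition \ref{propI}(ii),(iii), and then match the result with the vectors $J_{\tilde\nu}\eta$, $J_{\tilde\nu}\xi$. This works, but the bookkeeping you flag at the end is real: one must carefully track that $J_{\tilde\nu}=U^\ga_\nu(\widehat J\otimes J_\nu)$ sends $\xi_0\otimes\Lambda_\nu(q)$ and $U^\ga_\nu(\eta_0\otimes J_\nu\Lambda_\nu(p))$ back into the correct form, and that $\widehat R$ acts on $(\omega_{\eta_0,\xi_0}\otimes\id)(W^*)$ to produce exactly $(\omega_{\widehat J\xi_0,\widehat J\eta_0}\otimes\id)(W^*)$.

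The paper bypasses this entirely by proving the single operator identity
\[
(I\underset{N^{\op}}{_\ga\otimes_\gamma}J_{\tilde\nu})\,\widetilde W\,(I\underset{N^{\op}}{_\ga\otimes_\gamma}J_{\tilde\nu})=\widetilde W^*,
\]
obtained from $\widetilde W=V_3^*(W^*\otimes 1)V_1$ together with the intertwining relation $V_3(I\underset{N}{_\beta\otimes_\ga}J_{\tilde\nu})=(\widehat J\otimes I)V_1$ of Lemma \ref{V3}(iv) and $(\widehat J\otimes I)W_{12}(\widehat J\otimes I)=W_{12}^*$ from \ref{propI}(iii). This global identity immediately yields the defining relation $I(\id*\omega_{\Xi,\Xi'})(\widetilde W)^*I=(\id*\omega_{J_{\tilde\nu}\Xi',J_{\tilde\nu}\Xi})(\widetilde W)$ for all bounded vectors at once, with no vector-by-vector matching. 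Your approach trades one extra lemma (\ref{V3}(iv)) for a longer computation; the paper's is cleaner but relies on that lemma.

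For (ii), your argument is essentially the paper's: the canonical co-inverse of the dual is implemented by $J_{\tilde\nu}$, and conjugation by $U^\ga_\nu(U^{\widehat\ga}_\nu)^*$ identifies it with the operator $\widehat I=U^{\widehat\ga}_\nu(U^\ga_\nu)^*J_{\tilde\nu}U^\ga_\nu(U^{\widehat\ga}_\nu)^*$ implementing the co-inverse of $\mathfrak G(N,\widehat\ga,\ga,\nu)$.
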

\begin{proof}
(i) By \ref{V3}(iv),
$V_3(I\underset{N}{_\beta\otimes_\ga}J_{\tilde{\nu}})=(\widehat{J}\otimes
I)V_1$. Taking adjoints, we also get
$V_1(I\underset{N^{\op}}{_\ga\otimes_\gamma}J_{\tilde{\nu}})=(\widehat{J}\otimes
I)V_3$. Therefore, we get, using \ref{W} and \ref{propI}(iii),
\begin{align*}
(I\underset{N^{\op}}{_\ga\otimes_\gamma}J_{\tilde{\nu}})\widetilde{W}(I\underset{N^{\op}}{_\ga\otimes_\gamma}J_{\tilde{\nu}})
&=
(I\underset{N^{\op}}{_\ga\otimes_\gamma}J_{\tilde{\nu}})V_3^*(W^*\otimes 1_{H_\nu})V_1(I\underset{N^{\op}}{_\ga\otimes_\gamma}J_{\tilde{\nu}})\\
&=
V_1^*(\widehat{J}\otimes I)(W^*\otimes 1_{H_\nu})(\widehat{J}\otimes I)V_3\\
&=
V_1^*(W\otimes 1_{H_\nu})V_3\\
&=
\widetilde{W}^*.
\end{align*}
 For all $\Xi\in D(_\ga(H\otimes H_\nu), \nu)$ and $\Xi'\in
 D((H\otimes H_\nu)_\gamma, \nu^{\op})$,  we therefore have
\[I(\id*\omega_{\Xi, \Xi'})(\widetilde{W})^*I=(\id*\omega_{J_{\tilde{\nu}}\Xi', J_{\tilde{\nu}}\Xi})(\widetilde{W}),\]
which proves that the canonical co-inverse is given  by $\widetilde{R}(X)=IX^*I$ for all $X\in\bfG\ltimes_\ga N$. 

(ii) By \ref{data},  the canonical co-inverse of
$\widehat{ \mathfrak{G}}(N, \ga, \widehat{\ga}, \nu)$ is implemented
by $J_{\tilde{\nu}}$. Using (ii) applied to $\gG(N,
\widehat{\ga}, \ga, \nu)$, we therefore get that the canonical co-inverse of
$\gG(N, \widehat{\ga}, \ga, \nu)$ is implemented by
$\widehat{I}=U^{\widehat{\ga}}_\nu(U^\ga_\nu)^*J_{\tilde{\nu}}U^\ga_\nu(U^{\widehat{\ga}})^*$.
 \end{proof}
 
 \begin{theorem}
 \label{ovw}
 Let $\bfG$ be a locally compact quantum group,  $(N, \ga,
   \widehat{\ga})$ a braided-commu\-tative $\bfG$-Yetter-Drinfel'd
   algebra, and $\nu$ a normal faithful semi-finite weight on $N$.  Suppose that $\mathfrak{G}(N, \ga,
   \widehat{\ga}, \nu)$ is a measured quantum groupoid in the sense of
   \ref{defMQG},  let $\widehat{ \mathfrak{G}}(N, \ga, \widehat{\ga},
     \nu)$ be its dual measured quantum groupoid in the sense of
   \ref{data}, and let $\mathcal I$ be the isomorphism of Hopf bimodue
   structures constructed in \ref{dual}. Then $\mathcal I$ exchanges
   the left-invariant and the right-invariant operator-valued weights
   on $\gG(N, \widehat{\ga}, \ga, \nu)$ and $\widehat{\gG}(N, \ga,
     \widehat{\ga},\nu)$. Therefore, $\gG(N, \widehat{\ga}, \ga,
   \nu)$ is also a measured quantum groupoid. 
\end{theorem}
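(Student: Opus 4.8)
The plan is to leverage the explicit formula $\widetilde W = V_3^*(W^* \otimes 1_{H_\nu})V_1$ from Theorem~\ref{W} together with the identification of pseudo-multiplicative unitaries achieved in Theorem~\ref{dual} and of co-inverses in Proposition~\ref{coR}, and then to invoke the uniqueness of the invariant operator-valued weights attached to a given measured quantum groupoid. Concretely, since $\gG(N,\widehat{\ga},\ga,\nu)$ is by construction a Hopf bimodule (Theorem~\ref{th1}) equipped with the co-inverse $\widehat R$ of Theorem~\ref{th2} and with the operator-valued weight $T_{\widetilde{\widehat\ga}}$ (and its $\widehat R$-conjugate) which are left- and right-invariant by Theorem~\ref{th3}, the only thing to check is that the triple $(\text{Hopf bimodule},\ T,\ \widehat R T \widehat R,\ \nu)$ actually satisfies axiom~(iv) of \ref{defMQG} --- i.e.\ that the relevant modular automorphism groups commute --- and that $\mathcal I$ intertwines all the structure.

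The key steps, in order, are as follows. First I would recall that $\mathcal I(X)=U^\ga_\nu(U^{\widehat\ga}_\nu)^* X U^{\widehat\ga}_\nu(U^\ga_\nu)^*$ is, by Theorem~\ref{dual}, an isomorphism of Hopf bimodules from $\gG(N,\widehat\ga,\ga,\nu)$ onto the dual $\widehat{\gG(N,\ga,\widehat\ga,\nu)}$, and by Proposition~\ref{coR}(ii) it exchanges the canonical co-inverses; in particular $\mathcal I \circ \widehat R = \widehat{\widetilde R}\circ \mathcal I$, where $\widehat{\widetilde R}$ is the co-inverse of the dual groupoid, implemented by $J_{\tilde\nu}$. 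Second, since $\mathcal I$ is a spatial isomorphism implemented by a unitary $U^\ga_\nu(U^{\widehat\ga}_\nu)^*$, it carries the GNS-construction of $\gG(N,\widehat\ga,\ga,\nu)$ onto that of the dual, hence carries the canonical pseudo-multiplicative unitary $\widetilde{\widehat W}$ onto $\widehat{\widetilde W}$ (this is the content of the last lines of the proof of Theorem~\ref{dual}). Third, on the dual measured quantum groupoid $\widehat{\gG(N,\ga,\widehat\ga,\nu)}$ the left-invariant operator-valued weight $\widehat T$ is canonically determined by the general theory (\ref{data}(viii)): it is built from $\widehat W$ exactly as the dual left-invariant weight of a locally compact quantum group, and likewise $\widehat{T'}=\widehat{\widetilde R}\,\widehat T\,\widehat{\widetilde R}$. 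Since both the pseudo-multiplicative unitary, the co-inverse, and the basis weight $\nu$ on $\gG(N,\widehat\ga,\ga,\nu)$ are transported by $\mathcal I$ onto those of $\widehat{\gG(N,\ga,\widehat\ga,\nu)}$, and since $T_{\widetilde{\widehat\ga}}$ is, for $\gG(N,\widehat\ga,\ga,\nu)$, characterised among operator-valued weights to $\widehat\ga(N)$ by left-invariance with respect to $\widetilde{\widehat\Gamma}$ (by the uniqueness result for invariant operator-valued weights in \cite{L}, cf.\ \ref{data}), we conclude that $\mathcal I \circ T_{\widetilde{\widehat\ga}} \circ \mathcal I^{-1}$ and $\widehat T$ are proportional; a normalisation check using $\nu$ (which is fixed by $\mathcal I$) forces equality. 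The right-invariant weights then match by conjugating with the (matching) co-inverses.

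Finally, once $\mathcal I$ is known to intertwine the left- and right-invariant operator-valued weights, the relative-invariance axiom~(iv) of \ref{defMQG} for $\gG(N,\widehat\ga,\ga,\nu)$ is equivalent, via $\mathcal I$, to the same axiom for $\widehat{\gG(N,\ga,\widehat\ga,\nu)}$, which holds because the dual of a measured quantum groupoid is again one (\ref{data}(viii)). Hence $\gG(N,\widehat\ga,\ga,\nu)$ is a measured quantum groupoid. The main obstacle I anticipate is the bookkeeping around the several relative tensor products and their flips: one must verify carefully that the unitary $U^\ga_\nu(U^{\widehat\ga}_\nu)^*$, the anti-linear operator $I$ of \ref{propI}, and the operators $J_{\tilde\nu}$, $J_{\widetilde{\hat\nu}}$ are deployed so that $\mathcal I$ really does intertwine $\widehat\beta,\widehat\gamma$ with the representations $\gamma,\alpha$ of the dual groupoid and is compatible with the GNS-maps $\Lambda_{\tilde\nu}$, $\Lambda_{\widetilde{\hat\nu}}$; modulo that verification, which is of the same nature as the computations already performed in \ref{dual} and \ref{coR}, the argument is formal.
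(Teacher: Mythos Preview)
Your high-level strategy --- transport the structure via $\mathcal I$ and invoke a uniqueness result for left-invariant weights --- is indeed how the paper closes the argument, but you hand-wave precisely the step that constitutes the main content of the proof.

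The gap is in your third step. The dual left-invariant weight $\widehat\Phi$ of $\widehat{\gG}(N,\ga,\widehat\ga,\nu)$ is \emph{not} determined by the pseudo-multiplicative unitary $\widetilde W$, the co-inverse and the basis weight $\nu$ alone: by \ref{data}(viii) it is built from $\widetilde W$ \emph{and} the left-invariant weight $\Phi=\tilde\nu$ on the original groupoid via the formula $\widehat\Phi((\omega_{\xi,\eta}*\id)(\widetilde W)^*(\omega_{\xi,\eta}*\id)(\widetilde W))=\|\omega_{\xi,\eta}\|_{\tilde\nu}^2$. Since $\tilde\nu$ lives on $\bfG\ltimes_\ga N$, which is neither the domain nor the codomain of $\mathcal I$, there is no abstract transport argument available. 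More concretely, the uniqueness result you cite (\cite{L} 5.2.2) does not give proportionality by a scalar: it only yields $(D\widetilde{\hat\nu}:D(\widehat\Phi\circ\mathcal I))_t\in\widehat\beta(N)$, i.e.\ the two weights differ by a Radon--Nikodym cocycle valued in the base. Your ``normalisation check using $\nu$'' is therefore not a formality: one must actually compute $\widehat\Phi\circ\mathcal I$ on enough elements to force this cocycle to be trivial.

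This is exactly what the paper does. It evaluates $\widehat\Phi\circ\mathcal I$ explicitly on elements of the form $\widehat\ga(q^*)((\id\otimes\omega_{\widehat J\Lambda_{\widehat\varphi}(z),\xi})(W)\otimes 1)(\ldots)$, using the concrete description of $\widetilde{\widehat W}$ from \ref{W}, the identification in \ref{dual}, and the defining formula for $\widehat\Phi$ in terms of $\|\cdot\|_{\tilde\nu}$; after a limiting argument this yields
\[
\|\Lambda_{\widehat\Phi\circ\mathcal I}[((\id\otimes\omega_{\xi,\widehat J\Lambda_{\widehat\varphi}(z)})(W^*)\otimes 1)\widehat\ga(q)]\|^2
=\|\Lambda_{\widetilde{\hat\nu}}[((\id\otimes\omega_{\xi,\widehat J\Lambda_{\widehat\varphi}(z)})(W^*)\otimes 1)\widehat\ga(q)]\|^2.
\]
Only then does the paper invoke \cite{L} 5.2.2 (applied to the sum weight, to guarantee semi-finiteness) to conclude that the cocycle is $\widehat\beta(p)^{it}$ with $p=1/2$, hence $\widetilde{\hat\nu}=\widehat\Phi\circ\mathcal I$. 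Without this computation your argument is circular: you would be assuming the very identification of weights you are trying to prove.
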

 \begin{proof}
 Using \ref{coR}(ii), it suffices to verify that $\mathcal I$ exchanges the left-invariant operator valued weights, of  $\gG(N, \widehat{\ga}, \ga, \nu)$ and $\widehat{\gG}(N, \ga, \widehat{\ga}, \nu)$. The left-invariant weight of $\gG(N, \widehat{\ga}, \ga, \nu)$ is the dual weight $\widetilde{\hat{\nu}}$ on the crossed product $\widehat{\bfG}\ltimes_{\widehat{\ga}}N$. Let us denote by $\widehat{\Phi}$ the left-invariant weight of $\widehat{\gG}(N, \ga, \widehat{\ga}, \nu)$. 
 
We apply \ref{W} to $\gG(N, \widehat{\ga}, \ga, \nu)$ and get that, for any $\xi$ in $H$, $z\in\gN_{\widehat{\varphi}}$, $p$, $q$ in $\gN_\nu$,
 \[(\id*\omega_{U^{\widehat{\ga}}_\nu(\widehat{J}\Lambda_{\widehat{\varphi}}(z)\otimes J_\nu\Lambda_\nu(p)), \xi\otimes\Lambda_\nu(q)})(\widetilde{\widehat{W}})
 (\id*\omega_{U^{\widehat{\ga}}_\nu(\widehat{J}\Lambda_{\widehat{\varphi}}(z)\otimes J_\nu\Lambda_\nu(p)), \xi\otimes\Lambda_\nu(q)})(\widetilde{\widehat{W}})^*\]
 is equal to
 \begin{align*}
  \widehat{\ga}(q^*)[(\id\otimes\omega_{\widehat{J}\Lambda_{\widehat{\varphi}}(z), \xi})(W)\otimes 1]\widehat{\beta}(pp^*)[(\id\otimes\omega_{\widehat{J}\Lambda_{\widehat{\varphi}}(z), \xi})(W)^*\otimes 1]\widehat{\ga}(q), 
 \end{align*}
 where, as in \ref{dual}, $\widetilde{\widehat{W}}$ denotes the
 pseudo-multiplicative unitary associated to $\gG(N, \widehat{\ga},
 \ga, \nu)$, and $\widehat{\beta}$ is defined, for $x\in N$, by
 $\widehat{\beta}(x)=U^{\widehat{\ga}}_\nu(U^\ga_\nu)^*(1_H\otimes
 J_\nu x^*J_\nu)U^\ga_\nu(U^{\widehat{\ga}}_\nu)^*$.  Let us take now
 a family $(p_i)_{i\in I}$ in $\gM_\nu^+$, increasing to $1$. Then, we
 get that
 \begin{align*}
  \widehat{\ga}(q^*)[(\id\otimes\omega_{\widehat{J}\Lambda_{\widehat{\varphi}}(z),
   \xi})(W)(\id\otimes\omega_{\widehat{J}\Lambda_{\widehat{\varphi}}(z),
   \xi})(W)^*\otimes 1]\widehat{\ga}(q) 
 \end{align*}
 is the increasing limit of
\[(\id*\omega_{U^{\widehat{\ga}}_\nu(\widehat{J}\Lambda_{\widehat{\varphi}}(z)\otimes J_\nu\Lambda_\nu(p_i^{1/2})), \xi\otimes\Lambda_\nu(q)})(\widetilde{\widehat{W}})
 (\id*\omega_{U^{\widehat{\ga}}_\nu(\widehat{J}\Lambda_{\widehat{\varphi}}(z)\otimes J_\nu\Lambda_\nu(p_i^{1/2})), \xi\otimes\Lambda_\nu(q)})(\widetilde{\widehat{W}})^*.\] 
But, using \ref{dual}, we get that $(\id*\omega_{U^{\widehat{\ga}}_\nu(\widehat{J}\Lambda_{\widehat{\varphi}}(z)\otimes J_\nu\Lambda_\nu(p_i^{1/2})), \xi\otimes\Lambda_\nu(q)})(\widetilde{\widehat{W}})$ is equal to
\begin{multline*}
\mathcal I^{-1}[(\id*\omega_{U^\ga_\nu(\widehat{J}\Lambda_{\widehat{\varphi}}(z)\otimes J_\nu\Lambda_\nu(p_i^{1/2})), U^\ga_\nu(U^{\widehat{\ga}}_\nu)^*(\xi\otimes\Lambda_\nu(q))})(\sigma_{\nu^{\op}}\widetilde{W}^*\sigma_{\nu^{\op}})]=\\
=
\mathcal I^{-1}[(\omega_{U^\ga_\nu(U^{\widehat{\ga}}_\nu)^*(\xi\otimes \Lambda_\nu(q)), U^\ga_\nu(\widehat{J}\Lambda_{\widehat{\varphi}}(z)\otimes J_\nu\Lambda_\nu(p_i^{1/2}))}*\id)(\widetilde{W})^*].
\end{multline*}
Therefore, we get that $\widehat{\Phi}\circ\mathcal I[\widehat{\ga}(q^*)[(\id\otimes\omega_{\widehat{J}\Lambda_{\widehat{\varphi}}(z), \xi})(W)(\id\otimes\omega_{\widehat{J}\Lambda_{\widehat{\varphi}}(z), \xi})(W)^*\otimes 1]\widehat{\ga}(q)]$ is the increasing limit of
\[\widehat{\Phi}[(\omega_{U^\ga_\nu(U^{\widehat{\ga}}_\nu)^*(\xi\otimes \Lambda_\nu(q)), U^\ga_\nu(\widehat{J}\Lambda_{\widehat{\varphi}}(z)\otimes J_\nu\Lambda_\nu(p_i^{1/2}))}*\id)(\widetilde{W})^*(\omega_{U^\ga_\nu(U^{\widehat{\ga}}_\nu)^*(\xi\otimes \Lambda_\nu(q)), U^\ga_\nu(\widehat{J}\Lambda_{\widehat{\varphi}}(z)\otimes J_\nu\Lambda_\nu(p_i^{1/2}))}*\id)(\widetilde{W})],\]
which, using \ref{data}, is equal, by definition, to the increasing limit of 
\[\|\omega_{U^\ga_\nu(U^{\widehat{\ga}}_\nu)^*(\xi\otimes \Lambda_\nu(q)), U^\ga_\nu(\widehat{J}\Lambda_{\widehat{\varphi}}(z)\otimes J_\nu\Lambda_\nu(p_i^{1/2}))}\|^2_{\tilde{\nu}}.\] 
For  $X\in\gN_{\tilde{\nu}}$, the scalar  $\omega_{U^\ga_\nu(U^{\widehat{\ga}}_\nu)^*(\xi\otimes \Lambda_\nu(q)), U^\ga_\nu(\widehat{J}\Lambda_{\widehat{\varphi}}(z)\otimes J_\nu\Lambda_\nu(p_i^{1/2}))}(X^{*})$
is equal to
\begin{multline*}
(X^*U^\ga_\nu(U^{\widehat{\ga}}_\nu)^*(\xi\otimes \Lambda_\nu(q))|U^\ga_\nu(\widehat{J}\Lambda_{\widehat{\varphi}}(z)\otimes J_\nu\Lambda_\nu(p_i^{1/2})))=\\
=(U^\ga_\nu(U^{\widehat{\ga}}_\nu)^*(\xi\otimes \Lambda_\nu(q))|XJ_{\tilde{\nu}}\Lambda_{\tilde{\nu}}[(z\otimes 1)\ga(p_i^{1/2})])=\\
=
(U^\ga_\nu(U^{\widehat{\ga}}_\nu)^*(\xi\otimes \Lambda_\nu(q))|J_{\tilde{\nu}}(z\otimes 1)\ga(p_i^{1/2})J_{\tilde{\nu}}\Lambda_{\tilde{\nu}}(X))
\end{multline*}
and, therefore, 
\[\|\omega_{U^\ga_\nu(U^{\widehat{\ga}}_\nu)^*(\xi\otimes \Lambda_\nu(q)), U^\ga_\nu(\widehat{J}\Lambda_{\widehat{\varphi}}(z)\otimes J_\nu\Lambda_\nu(p_i^{1/2}))}\|^2_{\tilde{\nu}}=\|J_{\tilde{\nu}}\ga(p_i^{1/2})(z^*\otimes 1)J_{\tilde{\nu}}U^\ga_\nu(U^{\widehat{\ga}}_\nu)^*(\xi\otimes\Lambda_\nu(q))\|^2.\]
The  limit when $p_i$ goes to $1$ is equal to 
\begin{align*}
\|(\widehat{J}z^*\widehat{J}\otimes 1)(U^{\widehat{\ga}}_\nu)^{*}(\xi\otimes\Lambda_\nu(q))\|^2
&=
\|(\widehat{J}z^*\widehat{J}\otimes 1)(\xi\otimes\Lambda_\nu(q))\|^2\\
&=
\|\widehat{J}z^*\widehat{J}\xi\|^2\|\Lambda_\nu(q)\|^2\\
&=
\|\omega_{\xi, \widehat{J}\Lambda_{\widehat{\varphi}}(z)}\|^2_{\widehat{\varphi}}\|\Lambda_\nu(q)\|^2\\
&=
\|\Lambda_{\varphi}[(\id\otimes\omega_{\xi, \widehat{J}\Lambda_{\widehat{\varphi}}(z))}(W^*)]\otimes\Lambda_\nu(q)\|^2\\
&=
\|\Lambda_{\widetilde{\hat{\nu}}} [((\id\otimes\omega_{\xi, \widehat{J}\Lambda_{\widehat{\varphi}}(z)})(W^*)\otimes 1_{H_\nu})\widehat{\ga}(q)]\|^2,
\end{align*}
from which we get that
\[\|\Lambda_{\widehat{\Phi}\circ\mathcal I}([((\id\otimes\omega_{\xi, \widehat{J}\Lambda_{\widehat{\varphi}}(z)})(W^*)\otimes 1_{H_\nu})\widehat{\ga}(q)]\|^2
=\|\Lambda_{\widetilde{\hat{\nu}}} [((\id\otimes\omega_{\xi, \widehat{J}\Lambda_{\widehat{\varphi}}(z)})(W^*)\otimes 1_{H_\nu})\widehat{\ga}(q)]\|^2,\]
which proves that the left-invariant weight
$\widehat{\Phi}\circ\mathcal I+\widetilde{\hat{\nu}}$ is
semi-finite. Using now (\cite{L}5.2.2), we get that there exists an invertible $p\in N^+$, $p\leq 1$, such that
\[(D\widetilde{\hat{\nu}}:D(\widehat{\Phi}\circ\mathcal
I+\widetilde{\hat{\nu}}))_t=\beta(p)^{it}\]
 for all $t\in\R$.
So, $\beta(p)$ is invariant under the modular group $\sigma^{\widetilde{\hat{\nu}}}$ (i.e.\ $p$ is invariant under $\gamma$) and we get that
\begin{multline*}
2\|\Lambda_{\widetilde{\hat{\nu}}} [((\id\otimes\omega_{\xi, \widehat{J}\Lambda_{\widehat{\varphi}}(z)})(W^*)\otimes 1_{H_\nu})\widehat{\ga}(q)]\|^2
=\\
=\|\Lambda_{\widehat{\Phi}\circ\mathcal I+\widetilde{\hat{\nu}}}[((\id\otimes\omega_{\xi, \widehat{J}\Lambda_{\widehat{\varphi}}(z)})(W^*)\otimes 1_{H_\nu})\widehat{\ga}(q)]\|^2=\\
=
\|J_{\widetilde{\hat{\nu}}}\beta(p^{-1})J_{\widetilde{\hat{\nu}}}\Lambda_{\widetilde{\hat{\nu}}} [((\id\otimes\omega_{\xi, \widehat{J}\Lambda_{\widehat{\varphi}}(z)})(W^*)\otimes 1_{H_\nu})\widehat{\ga}(q)]\|^2,
\end{multline*}
from which we get that $p=1/2$, and
$\widetilde{\hat{\nu}}=1/2(\widehat{\Phi}\circ\mathcal
I+\widetilde{\hat{\nu}})$. Thus,
$\widetilde{\hat{\nu}}=\widehat{\Phi}\circ\mathcal I$.
 \end{proof}

\begin{theorem}
\label{thduality}
 Let $\bfG$ be a locally compact quantum group,  $(N, \ga,
  \widehat{\ga})$ a braided-commu\-tative $\bfG$-Yetter-Drinfel'd
  algebra, $\nu$ a normal faithful semi-finite weight on $N$.  Let
  $D_t$ be the Radon-Nikodym derivative of the weight $\nu$ with
  respect to the action $\ga$ and $\widehat{D}_t$ be the Radon-Nikodym
  derivative of the weight $\nu$ with respect to the action
  $\widehat{\ga}$. Then the following conditions are equivalent :
  \begin{enumerate}
  \item  $\mathfrak{G}(N, \ga, \widehat{\ga}, \nu)$ is a measured
    quantum groupoid;
  \item  $\mathfrak{G}(N, \widehat{\ga}, \ga, \nu)$ is a measured
    quantum groupoid;
  \item  $(\tau_t Ad(\delta^{-it})\otimes\gamma_t)(D_s)=D_s$ for all
    $s,t\in \R$;
  \item  $(\widehat{\tau}_t
    Ad(\widehat{\delta}^{-it})\otimes\gamma_{-t})(\widehat{D}_s)=D_s$
    for all $s,t\in \R$.
  \end{enumerate}
\end{theorem}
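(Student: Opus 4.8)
The plan is to reduce everything to Theorem~\ref{thmqg} together with the duality results \ref{dual}, \ref{coR} and \ref{ovw} of this chapter. First, I would observe that the equivalence of (i) and (iii) is already contained in Theorem~\ref{thmqg}: applied to $(N,\ga,\widehat{\ga})$, the equivalence of conditions (i) and (iv) there asserts exactly that $\mathfrak{G}(N,\ga,\widehat{\ga},\nu)$ is a measured quantum groupoid if and only if $(\Phi_t\otimes\gamma_t)(D_s)=D_s$ for all $s,t\in\R$, and by definition $\Phi_t=\tau_t\circ\Ad(\delta^{-it})$. It is important that this equivalence is valid for \emph{any} braided-commutative $\bfG$-Yetter-Drinfel'd algebra, since I will re-use it below.

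Next, for (i)$\Leftrightarrow$(ii) I would invoke Theorem~\ref{ovw}. By the remark in \ref{YD}, $(N,\widehat{\ga},\ga)$ is a $\widehat{\bfG}$-Yetter-Drinfel'd algebra, and since the defining property in \ref{defBC} is symmetric in $\ga$ and $\widehat{\ga}$, it is again braided-commutative. Theorem~\ref{ovw} then gives that if $\mathfrak{G}(N,\ga,\widehat{\ga},\nu)$ is a measured quantum groupoid then so is $\mathfrak{G}(N,\widehat{\ga},\ga,\nu)$; applying the very same statement to the braided-commutative $\widehat{\bfG}$-Yetter-Drinfel'd algebra $(N,\widehat{\ga},\ga)$, whose ``dual'' data is $(N,\ga,\widehat{\ga})$ because $\widehat{\widehat{\bfG}}=\bfG$, yields the reverse implication, so (i)$\Leftrightarrow$(ii).

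Finally, I would deduce (ii)$\Leftrightarrow$(iv) by applying the equivalence (i)$\Leftrightarrow$(iii) --- already established for arbitrary braided-commutative Yetter-Drinfel'd algebras --- to $(N,\widehat{\ga},\ga)$: for this algebra the scaling group and modulus are those of $\widehat{\bfG}$, namely $\widehat{\tau}_t$ and $\widehat{\delta}$, the relevant Radon-Nikodym derivative is $\widehat{D}_s=(D\nu\circ\widehat{\ga}:D\nu)_s$, and the one-parameter group attached to it by \ref{corgamma} equals $\gamma_{-t}$ by the description of the dual measured quantum groupoid in \ref{data}(viii). Substituting into the condition of \ref{thmqg}(i) for $(N,\widehat{\ga},\ga)$ then gives condition (iv).

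The step I expect to require the most care is precisely this last identification of the one-parameter group of $\mathfrak{G}(N,\widehat{\ga},\ga,\nu)$ with $\gamma_{-t}$: it uses that, by Theorems~\ref{dual}, \ref{coR} and \ref{ovw}, $\mathfrak{G}(N,\widehat{\ga},\ga,\nu)$ is isomorphic to the dual of $\mathfrak{G}(N,\ga,\widehat{\ga},\nu)$ \emph{as a measured quantum groupoid}, not only as a Hopf bimodule, together with the matching of the one-parameter group of \ref{corgamma} with the canonical one of \ref{data}(iv) recorded in the last sentence of Theorem~\ref{thmqg}; everything else is bookkeeping.
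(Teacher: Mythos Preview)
Your proposal is correct and follows essentially the same route as the paper: the equivalences (i)$\Leftrightarrow$(iii) and (i)$\Leftrightarrow$(ii) are obtained from \ref{thmqg} and \ref{ovw} (with the symmetry argument you spell out), and (ii)$\Leftrightarrow$(iv) comes from applying \ref{thmqg} to $(N,\widehat{\ga},\ga)$ together with the identification $\widehat{\gamma}_t=\gamma_{-t}$ from \ref{data}(viii). You are in fact more explicit than the paper in isolating that last identification as the delicate step and in tracing it back to \ref{dual}, \ref{coR}, \ref{ovw} and the final sentence of \ref{thmqg}.
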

\begin{proof}
By \ref{ovw}, we know that (i) implies (ii), and is therefore equivalent to (ii). Moreover, by \ref{thmqg}, we know that (i) is equivalent to (iii). Applying \ref{thmqg} to $\mathfrak{G}(N, \widehat{\ga}, \ga, \nu)$, we obtain (iv), because the one-parameter group $\widehat{\gamma}_t$ is equal to $\gamma_{-t}$. The proof that (iv) implies (ii) is the same as in \ref{thmqg}, where we use again that the one-parameter group $\widehat{\gamma}_t$ of $N$ constructed from the dual measured quantum groupoid is equal to $\gamma_{-t}$ (\ref{data}). \end{proof}

\begin{corollary}
\label{corduality}
 Let $\bfG$ be a locally compact quantum group,  $(N, \ga,
  \widehat{\ga})$ a braided-commu\-tative $\bfG$-Yetter-Drinfel'd
  algebra, and $\nu$ a normal faithful semi-finite weight on $N$.  If
  the weight $\nu$ is $\widehat{k}$-invariant with respect to
  $\widehat{\ga}$, for $\widehat{k}$ affiliated to the center
  $Z(\widehat{M})$ or $\widehat{k}=\widehat{\delta}^{-1}$, then
  $\mathfrak{G}(N, \ga, \widehat{\ga}, \nu)$ is a measured quantum
  groupoid and its dual is isomorphic to $\mathfrak{G}(N,
  \widehat{\ga}, \ga, \nu)$.
\end{corollary}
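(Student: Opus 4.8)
The plan is to reduce the statement to Theorems \ref{thmqg}, \ref{thduality} and \ref{ovw}, applied with the roles of $\bfG$ and $\ga$ now played by $\widehat{\bfG}$ and $\widehat{\ga}$. First recall that $(N, \widehat{\ga}, \ga)$ is again a braided-commutative Yetter-Drinfel'd algebra, this time for $\widehat{\bfG}$ (with $\widehat{\widehat{\bfG}}=\bfG$ and $\widehat{\widehat{\ga}}=\ga$), so that $\mathfrak{G}(N, \widehat{\ga}, \ga, \nu)$ is a well-defined Hopf bimodule equipped with a co-inverse and left- and right-invariant operator-valued weights as in Chapter \ref{H}. I would first prove that this structure is a measured quantum groupoid, then transport that fact to $\mathfrak{G}(N, \ga, \widehat{\ga}, \nu)$ through \ref{thduality}, and finally identify the dual via \ref{dual} and \ref{ovw}.

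To see that $\mathfrak{G}(N, \widehat{\ga}, \ga, \nu)$ is a measured quantum groupoid, I would invoke Theorem \ref{thmqg} with $\bfG$, $\ga$, $D_t$, $\tau_t$, $\delta$ replaced by $\widehat{\bfG}$, $\widehat{\ga}$, $\widehat{D}_t=(D\nu\circ\widehat{\ga}:D\nu)_t$, $\widehat{\tau}_t$, $\widehat{\delta}$. Its condition (i) then reads $(\widehat{\Phi}_t\otimes\widehat{\gamma}_t)(\widehat{D}_s)=\widehat{D}_s$, where $\widehat{\Phi}_t=\widehat{\tau}_t\circ\Ad\widehat{\delta}^{-it}$. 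By hypothesis $\nu$ is $\widehat{k}$-invariant under $\widehat{\ga}$, so $\widehat{D}_s=\widehat{k}^{-is}\otimes 1$ by \ref{definv}, and since the factor $1$ is fixed by $\widehat{\gamma}_t$ it suffices to check $\widehat{\Phi}_t(\widehat{k}^{-is})=\widehat{k}^{-is}$. If $\widehat{k}$ is affiliated to $Z(\widehat{M})$, then $\Ad\widehat{\delta}^{-it}$ fixes $\widehat{k}^{-is}$, while $\widehat{\tau}_t$ fixes $\widehat{k}^{-is}$ because $\widehat{k}^{is}$ lies in the cocommutative quantum subgroup $I(\widehat{M})$ on whose elements the scaling group is trivial (the argument following \ref{definv}, applied to $\widehat{\bfG}$). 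If $\widehat{k}=\widehat{\delta}^{-1}$, then $\widehat{\Phi}_t(\widehat{\delta}^{is})=\widehat{\tau}_t(\widehat{\delta}^{is})=\widehat{\delta}^{is}$ since $\widehat{\tau}_t(\widehat{\delta})=\widehat{\delta}$. In either case condition (i) of \ref{thmqg} holds, and hence $\mathfrak{G}(N, \widehat{\ga}, \ga, \nu)$ is a measured quantum groupoid.

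By the equivalence of conditions (i) and (ii) in Theorem \ref{thduality}, it follows that $\mathfrak{G}(N, \ga, \widehat{\ga}, \nu)$ is a measured quantum groupoid as well, and Theorem \ref{dual} together with Theorem \ref{ovw} then show that its dual is isomorphic, as a measured quantum groupoid, to $\mathfrak{G}(N, \widehat{\ga}, \ga, \nu)$. I do not expect a serious obstacle: the only non-formal point is the verification in the previous paragraph that $\widehat{k}$-invariance forces the Radon-Nikodym invariance condition of \ref{thmqg}, which is precisely the $\widehat{\bfG}$-counterpart of the remark preceding Corollary \ref{cormqg} and reduces to the two short computations with $\Ad\widehat{\delta}^{-it}$ and $\widehat{\tau}_t$ indicated above, together with the triviality of the scaling group on a cocommutative locally compact quantum group.
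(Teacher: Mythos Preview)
Your proposal is correct and follows essentially the same route as the paper. Verifying condition (i) of Theorem \ref{thmqg} for $(N,\widehat{\ga},\ga)$ is precisely the verification of condition (iv) of Theorem \ref{thduality}, and the paper's proof simply says ``We verify easily property (iv) of \ref{thduality}, and then obtain the result by \ref{thduality} and \ref{ovw}''; your argument just makes that easy verification explicit, using $\widehat{D}_s=\widehat{k}^{-is}\otimes 1$ and the invariance of $\widehat{k}$ under $\widehat{\tau}_t$ and $\Ad\widehat{\delta}^{-it}$.
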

\begin{proof} We verify easily property (iv) of \ref{thduality}, and then obtain the result by \ref{thduality} and \ref{ovw}. \end{proof}

\section{Examples}
\label{examples}
In this chapter, we give several examples of measured quantum groupoids constructed from a braided-commutative Yetter-Drinfel'd algebra. First, in \ref{groupoid}, we show that usual transformation groupoids are indeed a particular case of this construction, which justifies the terminology. Other examples are constructed from quotient type co-ideals of compact quantum groups, in particular one is constructed from the Podle\' s sphere $S_q^2$ (\ref{Sq2}). Another example (\ref{HG}) is constructed from a normal closed subgroup $H$ of a locally compact group $G$. 

\subsection{Transformation Groupoid}
\label{groupoid}
Let us consider a locally compact group $G$ right acting on a locally compact space $X$; let us denote $\ga$ this action. It is well known that this leads to a locally compact groupoid $X\underset{\ga}{\curvearrowleft} G$, usually called a \emph{transformation groupoid}. This groupoid is the set $X\times G$, with $X$ as set of units, and range and source applications given by $r(x,g)=x$ and $s(x,g)=\ga_g(x)$, the product being $(x,g)(\ga_g(x),h)=(x, gh)$, and the inverse $(x,g)^{-1}=(\ga_g(x), g^{-1})$ (\cite{R} 1.2.a). This locally compact groupoid has a left Haar system (\cite{R} 2.5a), and for any measure $\nu$ on $X$, the lifted measure on $X\times G$ is $\nu\otimes\lambda$, where $\lambda$ is the left Haar measure on $G$. 

The measure $\nu$ is then quasi-invariant in the sense of \cite{R} and
\ref{defHopf} if and only if $\nu\otimes\lambda$ is equivalent to its
image under the inversion $(x,g)\rightarrow (x,g)^{-1}$. This is
equivalent (\cite{R}, 3.21) to asking that, for all $g\in G$, the
measure $\nu\circ\ga_g$ is equivalent to $\nu$, which leads to a
Radon-Nikodym $\Delta(x,g)=\frac{d\nu\circ\ga_{g^{-1}}}{d\nu}(x)$.
Then, the Radon-Nikodym derivative between $\nu\otimes\lambda$ and its image under the inversion $(x,g)\rightarrow (x,g)^{-1}$ is $\Delta(x,g)\Delta_G(g)$, where $\Delta_G$ is the modulus of $G$. 

Let us consider the trivial action of the dual locally compact quantum
group $\widehat{G}$, defined by $\iota(f)=1\otimes f$ for all $f\in
L^\infty (X)$. It is straightforward to verify that $(L^\infty(X),
\ga, \iota)$ is a $G$-Yetter-Drinfel'd algebra which is
braided-commutative. The measure $\nu$, regarded as a normal
semi-finite faithful weight on $L^\infty (X)$, is evidently invariant
under $\iota$. So, by \ref{corduality}, we obtain measured quantum
groupoid structures on the crossed products $G\ltimes_\ga L^\infty
(X)$ and $\widehat{G}\ltimes_{\iota}L^\infty(X)$. 

The von Neumann algebra $\widehat{G}\ltimes_{\iota}L^\infty(X)$ is $L^\infty (G)\otimes L^\infty (X)$, or $L^\infty (X\underset{\ga}{\curvearrowleft} G)$, and the structure of measured quantum groupoid is nothing but the structure given by the groupoid structure of $X\underset{\ga}{\curvearrowleft} G$. 

The dual measured quantum groupoid
$\widehat{X\underset{\ga}{\curvearrowleft} G}$ is the von Neumann
algebra generated by the left regular representation of
$X\underset{\ga}{\curvearrowleft }G$, which is the crossed product
$G\ltimes_\ga L^\infty (X)$. 
Let us note that this measured quantum
grouped is co-commutative, in particular, $\beta=\ga$ and
$\gamma_t=\sigma_t^\nu=\id_{L^\infty (X, \nu)}$ for all $t\in\R$. As $\tau_t=\Ad(\Delta_G^{it})=\id_{L^\infty (G)}$, we see that $D_t=\Delta(x,g)^{it}$ satisfies the condition of \ref{thduality}. Moreover,  $\widehat{D}_t=1$ for all $t\in\R$. 

Therefore, we get that any transformation groupoid gives a very
particular case of our ``measured quantum transformation groupoids'', which explains the terminology. 

\subsection{Basic example}
\label{basicex}
Let $\bfG=(M, \Gamma, \varphi, \varphi\circ R)$ be a locally compact
quantum group,  $\DG$ its quantum double, and let us use the notation introduced in \ref{basic}. There exists an action ${\ga_D}$ of $\DG$ on $M$ such that
\[{\ga_D}(x)\otimes 1=\Gamma_D(x\otimes 1).\]
The Yetter-Drinfel'd algebra associated to this action is given by the restrictions of the applications $\gb$ and $\widehat{\gb}$ to $M$, which are, respectively,  the coproduct $\Gamma$ (when considered as a left action of $\bfG$ on $M$), and the adjoint action $\ad$ of $\widehat{\bfG}$ on $M$ given by
\begin{align} \label{eq:ad}
\ad(x)=\sigma W (x\otimes 1)W^*\sigma = \widehat {W}^{*}(1 \otimes
x)\widehat{W},
 \end{align}
and we get this way the Yetter-Drinfel'd algebra $(M, \Gamma, \ad)$, which is the basic example given in \cite{NV}. Moreover, as 
\begin{align}
  \varsigma\Gamma(x)=((R\otimes R)\circ\Gamma\circ
  R)(x)=(\widehat{J}\otimes\widehat{J})W^*(\widehat{J}\otimes\widehat{J})(1\otimes
  x)(\widehat{J}\otimes\widehat{J})W(\widehat{J}\otimes\widehat{J}),\label{eq:coprod}
  \end{align}
we get that that
\[\varsigma \alpha^{\op}(x^{\op})=(J\widehat{J}\otimes 1)W^*(1\otimes \widehat{J}x\widehat{J})W(\widehat{J}J\otimes 1)=(J\otimes J)W(1\otimes J\widehat{J}x\widehat{J}J)W^*(J\otimes J)\]
(where we prefer to note $\alpha$ the left action $\Gamma$ to avoid confusion between $\alpha^{\op}$ defined in \ref{BCdef1} and the coproduct $\Gamma^{\op}$ of the locally compact quantum group $\bfG^{\op}$). But
\[\varsigma \ad^{\op}(x^{\op})=(J\otimes J)W(x\otimes 1)W^*(J\otimes J)\]
from which we get that this Yetter-Drinfel'd algebra is braided-commutative.

As $\varphi$ is invariant under $\Gamma$, using \ref{thmqg}, we can
equip the crossed products $\bfG\ltimes_\Gamma M$ and
$\widehat\bfG\ltimes_{\ad} M$ with structures of measured quantum
groupoids. 

Let us describe $\widehat\bfG\ltimes_{\ad} M$ in more detail.
We claim that the map $\Phi:=\Ad((J\widehat J \otimes 1)\widehat W)$ identifies  $\widehat\bfG\ltimes_{\ad} M$ with
$M' \otimes M$.  Indeed, the first algebra is generated by elements of
the form $(z\otimes
1)\ad(x)$ and $x,z \in M$, and
\begin{align*}
  \Ad(\widehat W)[(z \otimes 1)\ad(x)] &= \Gamma^{\op}(z)(1 \otimes x)
  = \Ad(\sigma)(\Gamma(z)(x \otimes 1)).
\end{align*}
But elements of the form $\Gamma(z)(x\otimes 1)$ generate $M\otimes
M$,  and as $\Ad(J\widehat J)(M)=M'$, the assertion follows. We
just saw that $\Phi(\ad(x))=1 \otimes x$, and we claim that
$\Phi(\beta(x)) = x^{\op} \otimes 1$. Using \eqref{eq:ad} and the fact
that $\widehat W^{*}$ is a cocycle for the trivial action of
$\widehat \bfG$ on $M$,  we get (\cite{V} 4.2)
\begin{align*}
  U^{\ad}_{\phi} &= \widehat W^{*}(J \otimes J)\widehat W (J \otimes J)
\end{align*}
 and therefore, using the relations $(J \otimes
 \widehat{J})\widehat{W}^{*}(J \otimes \widehat{J})=\widehat{W}$ and
 $\Gamma \circ R= (R \otimes R) \circ \Gamma^{\op}$ (\ref{lcqg}),
 \begin{align*}
   \Phi(\beta(x)) &= \Ad((J\widehat J\otimes 1)\widehat{W} U^{\ad}_{\phi}(\widehat J \otimes J))[\Gamma(x)] \\
&= \Ad((J\widehat J\otimes 1)(J \otimes
J)\widehat W (J \otimes J)(\widehat J \otimes J))[\Gamma(x)]  \\
&= \Ad((\widehat{J} \otimes J)\widehat W(J \widehat J \otimes \widehat
J \widehat J))[\Gamma(x)] \\
&= \Ad((\widehat J J \otimes J \widehat J)
\widehat{W}^{*})[\Gamma^{\op}(R(x))] \\
&= \Ad((\widehat J J \otimes J \widehat J))[R(x) \otimes 1] \\
&= x^{\op}
\otimes 1.
 \end{align*}
Therefore, $\Phi$ defines an isomorphism between
$\mathfrak{G}(M,\ad,\Gamma,\phi)$ and the pair quantum groupoid $M'
\otimes M$ of Lesieur (\cite{L} 15), and induces an isomorphism
between the respective  duals, which are (isomorphic to)
$\mathfrak{G}(M,\Gamma,\ad,\phi)$ and  the dual pair quantum
groupoid $B(H)$ constructed in (\cite{L} 15.3.7), respectively.

\subsection{Quantum Measured groupoid associated to an action}
\label{QGaction}
Let us apply \ref{basicex} to $\bf{\widehat{G}^o}$. We obtain that $(\widehat{M}, \widehat{\Gamma}^o, \ad)$ is a $\bf{\widehat{G}^o}$-Yetter-Drinfel'd algebra, where $\ad$ means here $\ad(x)=W^{c*}(1\otimes x)W^c$. As noticed by (\cite{NV}, 3.1), we can extend this example to any crossed-product $\bf G$$\ltimes_\ga N$, where $\ga$ is a left action of $\bf G$ on a von Neumann algebra $N$. Let us recall this construction. For any $X\in\bf G\ltimes_\ga N$, the dual action $\widetilde{\ga}$ is given by 
\[\widetilde{\ga}(X)=(\widehat{W}^{o*}\otimes 1)(1\otimes X)(\widehat{W}^o\otimes 1).\]
 Let us also write 
\[\underline{\ad}(X)=(W^{c*}\otimes 1)(1\otimes X)(W^c\otimes 1).\]
We first show that this formula defines an action $\underline{\ad}$ of $\bf G^o$ on $\bf G\ltimes_\ga N$. If $X=y\otimes 1$, with $y\in \widehat{M}$, we get that $\underline{\ad}(1\otimes y)=\ad(y)\otimes 1$, which belongs to $M'\otimes \bf G\ltimes_\ga N$. If $X=\ga(x)$, with $x\in N$, we get that $\underline{\ad}(\ga(x))=(W^{c*}\otimes 1)(1\otimes\ga(x))(W^c\otimes 1)$, which belongs to $M'\otimes \bfG\ltimes_\ga N$; moreover, the properties of $W^{c*}$ give then that $\underline{\ad}$ is an action. 

To prove that $(\bf G$$\ltimes_\ga N, \widetilde{\ga}, \underline{\ad})$ is a $\bf{\widehat{G}^o}$-Yetter-Drinfel'd algebra, we have to check that, for any $X\in \bf{\widehat{G}^o}$,
\[\Ad (\sigma_{12}\widehat{W}^o_{12})(\id\otimes\underline{\ad})\widetilde{\ga}(X)=(\id\otimes\widetilde{\ga})\underline{\ad}(X).\]
To check that, it suffices to prove that $\sigma_{12}\widehat{W}^o_{12}W^{c*}_{23}\widehat{W}^{o*}_{13}=\widehat{W}^{o*}_{23}W^c_{13}$, which follows from $\widehat{W}^o=\sigma W^{c*}\sigma$ and the pentagonal relation for $W^c$. 

\subsubsection{\bf{Proposition}}
\label{propB}
{\it Let $\ga$ an action of a locally compact quantum group $\bf G$ on a von Neumann algebra $N$ and let $B=\bf G\ltimes_\ga N\cap \ga(N)'$. Then the formulas 
\[\gb(X)=(\widehat{W}^{o*}\otimes 1)(1\otimes X)(\widehat{W}^o\otimes 1),\]
\[\widehat{\gb}(X)=(W^{c*}\otimes 1)(1\otimes X)(W^c\otimes 1)\]
define actions $\gb$ and $\widehat{\gb}$ of $\widehat{\bf G}^o$ and $\bf G^c$, respectively, on $B$ and $(B, \gb, \widehat{\gb})$ is a braided-commutative Yetter-Drinfel'd algebra. }
\begin{proof}
As $\widetilde{\ga}(\ga(x))=1\otimes\ga(x)$, for all $x\in N$, we get that $\gb$ is an action of $\widehat{\bf G}^o$ on $B=\bf G$$\ltimes_\ga N\cap \ga(N)'$. 

To prove a similar result for $\widehat{\gb}$, we need to make a detour via the inclusion $\ga(N)\subset G\ltimes_\ga N$ which is depth 2 (\cite{V} 5.10). Let $\nu$ be a normal faithful semi-finite weight on $N$, and $\widetilde{\nu}$ its dual weight on $\bf G$$\ltimes_\ga N$. Then, we have 
\begin{align*}
J_{\widehat{\nu}}\ga(N)'J_{\widehat{\nu}}&=(\widehat{J}\otimes J_\nu)(U^\ga_\nu)^*\ga(N)'U_\nu^\ga (\widehat{J}\otimes J_\nu) \\ &=(\widehat{J}\otimes J_\nu)(B(H)\otimes N')(\widehat{J}\otimes J_\nu)=B(H)\otimes N
\end{align*}
and therefore $B(H)\otimes N\cap (\bf G$$\ltimes_\ga N)'=J_{\widetilde{\nu}}BJ_{\widetilde{\nu}}$. 

Moreover (\cite{V}, 2.6 (ii)), we have an isomorphism $\Phi$ from $B(H)\otimes N$ with $\bf G^o$$\ltimes_{\widetilde{\ga}}\bf G$$\ltimes_\ga N$ which sends $\bf G$$\ltimes_\ga N$ onto $\widetilde{\ga}(\bf G$$\ltimes_\ga N)$. Via this isomorphism, the bidual action $\widetilde{\widetilde{\ga}}$ of $\bf G^{oc}$ on $\bf G^o$$\ltimes_{\widetilde{\ga}}\bf G$$\ltimes_\ga N$ gives an action $\gamma$ of $\bf G$ on $B(H_\nu)\otimes N$. As $\widetilde{\widetilde{\ga}}$ is invariant on $\widetilde{\ga}(\bf G$$\ltimes_\ga N)$, $\gamma$ is invariant on $\bf G$$\ltimes_\ga N$, and its restriction to $J_{\widetilde{\nu}}BJ_{\widetilde{\nu}}$ defines an action of $\bf G$ on $J_{\widetilde{\nu}}BJ_{\widetilde{\nu}}$, and, thanks to this restriction, we can define an action of $\bf G^c$ on $B$. 
Let's have a closer look at this last action: $\gamma$ is given, for any $X\in B(H)\otimes N$, by (\cite{V}, 2.6 (iii))
\[\gamma (X)=W^o_{12}(\varsigma\otimes \id)(\id\otimes\ga)(X)W^{o*}_{12}=\Ad [W_{12}^o(U_\nu^\ga)_{13}](X_{23}).\]

So, the opposite action of its restriction to $J_{\widetilde{\nu}}BJ_{\widetilde{\nu}}$ will be implemented by 
\begin{eqnarray*}
(J\otimes J_{\widetilde{\nu}})W^o_{12}(U^\ga_\nu)_{13}(\widehat{J}\otimes J_{\widetilde{\nu}})
&=&
(U_\nu^\ga)_{23}(J\otimes\widehat{J}\otimes J_\nu)W^o_{12}(U_\nu^\ga)_{13}(\widehat{J}\otimes \widehat{J}\otimes J_\nu)(U^\ga_\nu)^*_{23}\\
&=&
(U_\nu^\ga)_{23}(J\otimes\widehat{J}\otimes J_\nu)W^o_{12}(\widehat{J}\otimes \widehat{J}\otimes J_\nu)(U^\ga_\nu)^*_{13}(U^\ga_\nu)^*_{23}\\
&=&
(J\widehat{J})_1(U_\nu^\ga)_{23}W_{12}(U^\ga_\nu)^*_{13}(U^\ga_\nu)^*_{23}\\
&=&
(J\widehat{J})_1W_{12}
\end{eqnarray*}
So, we get an action of $\bf G^c$ on $B$ given by $z\mapsto \Ad((J\widehat{J})_1W_{12})(1\otimes z)=W^{c*}(1\otimes z)W^c$, which is $\widehat{\gb}$. Thus, $\widehat{\gb}$ is an action of $\bf G$$^c$ on $B$, and, by restriction of $(\bf G$$\ltimes_\ga N, \widetilde{\ga}, \underline{\ad})$, we have obtained that $(B, \gb, \widehat{\gb})$ is a $\bf \widehat{G}^o$-Yetter-Drinfel'd algebra. Let's now prove that it is braided-commutative. Let us write $\mathcal J(x)=J\widehat{J}x\widehat{J}J$ for any $x\in M'$. We get that $(\mathcal J\otimes id)\widehat{\gb}(B)$ is included in $M\otimes B$, and, therefore, commutes with $1\otimes\ga(N)$. On the other hand, we get that $(\mathcal J\otimes id)(\widehat{\gb}(B))=(W\otimes 1)(1\otimes B)(W^*\otimes 1)$ commutes with $(W^*\otimes 1)(\widehat{M}\otimes 1\otimes 1)(W\otimes 1)=\widehat{\Gamma}^o(\widehat{M})\otimes 1$. Therefore, we get that $(\mathcal J\otimes id)(\widehat{\gb}(B))$ commutes with $\widetilde{\ga}(\bf G$$\ltimes_\ga N)$, and, therefore, with $\gb(B)$. This finishes the proof. \end{proof}

Applying now \ref{th1} to this braided-commutative Yetter-Drinfel'd algebra, we recover the Hopf-bimodule introduced in (\cite{E2}, 14.1)

\subsubsection{\bf{Theorem}}
\label{thB}
{\it Let $\ga$ an action of a locally compact quantum group $\bf G$ on a von Neumann algebra $N$, let $B=\bf G$$\ltimes_\ga N\cap \ga(N)'$, let $\gb$ (resp. $\widehat{\gb}$) be the action of $\widehat{\bf G}^o$ (resp. $\bf G^c$) on $B$ introduced in \ref{propB}, and suppose that there exists a normal semi-finite faithful weight $\chi$ on $B$, invariant under the modular group $\sigma^{T_{\widehat{\ga}}}$. Then, $\gG(B, \gb, \widehat{\gb}, \chi)$ is a measured quantum groupoid, which is equal to the measured quantum groupoid $\gG(\ga)$ introduced in (\cite{E2}, 14.2)}

\begin{proof}
With the hypotheses, the measured quantum groupoid $\gG(\ga)$ is constructed in (\cite{E2}, 14.2); so, we get that the Hopf-bimodule constructed in \ref{propB} is a measured quantum groupoid. So, we may apply \ref{thmqg} to get that $\gG(B, \gb, \widehat{\gb}, \chi)$ is measured quantum groupoid equal to $\gG(\ga)$. \end{proof}

\subsubsection{{\bf Theorem}}
\label{thquantumgroupoidaction}
{\it Let $(N, \ga, \widehat{\ga})$ be a ${\bf G}$-Yetter-Drinfel'd algebra with a norm faithful semi-finite weight $\nu$ on $N$ satisfying the conditions of \ref{thmqg}, which allow us to construct the measured quantum groupoid $\gG(N, \ga, \widehat{\ga}, \nu)$. Suppose that $\beta(N)= {\bf G}\ltimes_\ga N\cap \ga(N)'$. Then, the weight $\nu^o\circ\beta^{-1}$ on $\beta(N)$ allows us to define the measured quantum groupoid $\gG (\ga)$, which is canonically isomorphic to $\gG (N^o, \widehat{\ga}^o, \ga^o, \nu^o)$. }

\begin{proof}
We have, for all $x\in N$ and $t\in\bf R$, $\sigma_t^{T_{\tilde{\ga}}}(\beta(x))=\beta(\gamma_t(x))$. As $\nu\circ\gamma_t=\nu$, we get that the weight $\nu^o\circ\beta^{-1}$ on $\beta(N)$ allows us to define the measured quantum groupoid $\gG(\ga)$. Moreover, the dual action $\widetilde{\ga}$ of $\widehat{{\bf G}}^o$ on ${\bf G}\ltimes_\ga N$ satisfies, for all $x\in N$, by \ref{th1}(iii),
\[\widetilde{\ga}(\beta(n))=(id\otimes\beta^{\dag})(\widehat{\ga}^o(x^o)),\]
which gives that $\beta^\dag$ is an isomorphism between $\widetilde{\ga}_{|\beta(N)}=\gb$ and $\widehat{\ga}^o$. So, the result follows. \end{proof}

We are indebted to the referee who suggested us to look at the relation betwen the construction made in (\cite{E2}, 14.2) and the measured quantum transformation groupoids  considered in this article. 

\subsection{Quotient type co-ideals}
\label{QTCI}
\subsubsection{\bf Definitions}
\label{defQTCI}
Let $\bfG=(M, \Gamma, \varphi, \varphi\circ R)$ and $\bfG_1=(M_1,
\Gamma_1, \varphi_1, \varphi_1\circ R_1)$ be two locally compact
quantum groups. Following \cite{K}, a \emph{morphism} from $\bfG$ on
$\bfG_1$ is a non-degenerate strict $*$-homomorphism $\Phi$ from
$\Cu(\bfG)$ on the multipliers $M(\Cu(\mathbf{G_1}))$ (which means that $\Phi$ extends to a unital $*$-homomorphism on
$M(\Cu(\bfG))$) such that
$\Gamma_{1,u}\circ\Phi=(\Phi\otimes\Phi)\Gamma_u$, where
$\Gamma_{1,u}$ denotes the coproduct of $\Cu(\bfG_1)$. In
(\cite{K}, 10.3 and 10.8), it was shown that a morphism is
equivalently given by a right action $\Gamma_r$ of $\bfG_1$ on $M$
satisfying, in addition to the action condition
$(\id\otimes\Gamma)\Gamma_r=(\Gamma_r\otimes \id)\Gamma_r$, also the
relation $(\Gamma\otimes \id)\Gamma_r=(\id\otimes
\Gamma_r)\Gamma$. The morphism $\Phi$ and the action $\Gamma_{r}$ are
related by the formula
\[\Gamma_r(\pi_{\bfG}(x))=(\pi_{\bfG}\otimes\pi_{\bfG_1}\circ\Phi)\Gamma_u(x)
\quad \text{for all } x\in \Cu(\bfG).\]
We get as well a left action $\Gamma_l$ of $\bfG_1$ on $M$ such that $(\id\otimes\Gamma_l)\Gamma_l=(\Gamma_1\otimes \id)\Gamma_l$ and $(\id\otimes\Gamma)\Gamma_l=(\Gamma_l\otimes \id)\Gamma$. 

Following (\cite{DKSS}, th. 3.6), we shall say that $\bfG_1$ is a \emph{closed quantum subgroup} of $\bfG$ \emph{in the sense of Woronowicz}, if, in the situation above, the $*$-homomorphism $\Phi$ is surjective. In (\cite{DKSS}, 3.3), $\bfG_1$ is called a \emph{closed quantum subgroup} of $\bfG$ \emph{in the sense of Vaes} if there exists an injective $*$-momomorphism $\gamma$ from $\widehat{M_1}$ into $\widehat{M}$ such that $\widehat{\Gamma}\circ\gamma=(\gamma\otimes\gamma)\circ\widehat{\Gamma}_{1}$. Moreover, any closed quantum subgroup of $\bfG$ in the sense of Vaes is a closed quantum subgroup in the sense of Woronowicz (\cite{DKSS}, 3.5), and if $\bfG_1$ is (the von Neumann version of) a compact quantum group, then the two notions are equivalent (\cite{DKSS}, 6.1). It is also remarked that if $\bfG$ is (the von Neumann version of) a compact quantum group, then any closed quantum subgroup of $\bfG$ is also (the von Neumann version of) a compact quantum group.

\begin{subproposition}
\label{propQTCI}
 Let $\bfG=(M, \Gamma, \varphi, \varphi\circ R)$ and  $\bfG_1=(M_1,
  \Gamma_1, \varphi_1, \varphi_1\circ R_1)$ be two locally compact
  quantum groups  and $\Phi$ a surjective morphism from $\bfG$ to
  $\bfG_1$ in the sense of \ref{defQTCI}. Let $\Gamma_r$ be the right
  action of $\bfG_1$ on $M$ defined in \ref{defQTCI}, and let  $N=M^{\Gamma_r}=\{x\in M : \Gamma_r(x)=x\otimes 1\}$. Then:
  \begin{enumerate}
  \item  $\Gamma_{|N}$ is a left action of $\bfG$ on $N$.
  \item  $\ad_{|N}$ is a left action of $\widehat{\bfG}$ on $N$.
  \item  $(N, \Gamma_{|N}, \ad_{|N})$ is a braided-commutative
    $\bfG$-Yetter-Drinfel'd algebra.
  \item Let $\Gamma_l$ be the left action of $\bfG_1$ on $M$ defined
    in \ref{defQTCI}. Then its invariant algebra $M^{\Gamma_l}$ is
    equal to $R(N)$, which is a right co-ideal of $\bfG$.
  \end{enumerate}
\end{subproposition}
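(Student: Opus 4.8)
The plan is to reduce the whole proposition to two global-invariance statements about the fixed-point algebra $N=M^{\Gamma_r}$, namely $\Gamma(N)\subseteq M\otimes N$ and $\ad(N)\subseteq\widehat M\otimes N$. Granting these, (i) and (ii) are immediate: $\Gamma_{|N}$ and $\ad_{|N}$ are restrictions of the injective unital $*$-homomorphisms $\Gamma$ and $\ad$, and the left-action identities are restrictions of co-associativity of $\Gamma$ and of the action identity $(\id\otimes\ad)\ad=\Ad(\sigma W\otimes 1)\cdots$ for $\ad$. For (iii), once both invariances are known $(N,\Gamma_{|N},\ad_{|N})$ is by definition a sub-$\bfG$-Yetter--Drinfel'd algebra of the basic example $(M,\Gamma,\ad)$ of \ref{basicex}: the Yetter--Drinfel'd relation is an identity in $B(H)\otimes B(H)\otimes M$ and restricts to $N$ in the last leg. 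Since $(M,\Gamma,\ad)$ is braided-commutative by \ref{basicex}, braided-commutativity of $(N,\Gamma_{|N},\ad_{|N})$ follows from the remark after Definition~\ref{defBC} that sub-$\bfG$-Yetter--Drinfel'd algebras of braided-commutative ones are braided-commutative.

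First I would prove $\Gamma(N)\subseteq M\otimes N$. For $x\in N$ one has $\Gamma_r(x)=x\otimes 1$, so the compatibility relation $(\Gamma\otimes\id)\Gamma_r=(\id\otimes\Gamma_r)\Gamma$ of \ref{defQTCI} gives $(\id\otimes\Gamma_r)(\Gamma(x))=(\Gamma\otimes\id)(x\otimes 1)=\Gamma(x)\otimes 1$. Slicing the first leg with an arbitrary $\omega\in M_*$ yields $\Gamma_r\bigl((\omega\otimes\id)(\Gamma(x))\bigr)=(\omega\otimes\id)(\Gamma(x))\otimes 1$, that is $(\omega\otimes\id)(\Gamma(x))\in M^{\Gamma_r}=N$; hence $\Gamma(x)\in M\otimes N$. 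The other properties in (i) being inherited from $\Gamma$, this proves (i); note in passing that $N$ is thus a left co-ideal of $(M,\Gamma)$, a fact reused below.

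The crux is $\ad(N)\subseteq\widehat M\otimes N$, i.e.\ $(\id\otimes\Gamma_r)(\ad(x))=\ad(x)\otimes 1$ for $\Gamma_r$-fixed $x$. Writing $\ad(x)=\widehat W^{*}(1\otimes x)\widehat W$ with $\widehat W\in\widehat M\otimes M$, this amounts to identifying $(\id\otimes\Gamma_r)(\widehat W)$ and then checking the resulting commutation for $x\in N$. I would do this by combining the co-representation identity $(\id\otimes\Gamma)(\widehat W)=\widehat W_{12}\widehat W_{13}$ with the fact that $\Gamma_r$ is \emph{subordinate} to $\Gamma$ through the morphism $\Phi\colon\Cu(\bfG)\to M(\Cu(\bfG_1))$: at the universal level $\Gamma_r(\pi_{\bfG}(x))=(\pi_{\bfG}\otimes\pi_{\bfG_1}\circ\Phi)\Gamma_u(x)$, so applying $\id\otimes\Gamma_r$ to the relevant leg of $\widehat W$ is governed by the bicharacter of $\Phi$, which is enough to pin down the expression. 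Alternatively, one can simply invoke \cite{NV}, where quotient-type co-ideals are shown directly to be sub-Yetter--Drinfel'd algebras of $(M,\Gamma,\ad)$. I expect this to be the main obstacle; the subtlety is that $\Phi$ is only assumed surjective (closed quantum subgroup in the sense of Woronowicz), so one cannot simply use an embedding $\widehat{M_1}\hookrightarrow\widehat M$, and the argument must be carried out on the $\bfG$-side via the morphism.

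It remains to prove (iv). From the descriptions of $\Gamma_r$ and $\Gamma_l$ through $\Phi$ one obtains $\Gamma_l=(R_1\otimes R)\circ\varsigma\circ\Gamma_r\circ R$, using that the universal unitary antipodes intertwine $\Gamma_u$ with its opposite and are compatible, via $\pi_{\bfG}$, $\Phi$, $\pi_{\bfG_1}$, with $R$ and $R_1$. Consequently $\Gamma_l(y)=1\otimes y$ if and only if $\Gamma_r(R(y))=R(y)\otimes 1$, i.e.\ $M^{\Gamma_l}=R(N)$. Finally $R(N)$ is a right co-ideal: for $x\in N$, using $\Gamma\circ R=\varsigma\circ(R\otimes R)\circ\Gamma$ from \ref{lcqg} and $\Gamma(x)\in M\otimes N$ from (i), we get $\Gamma(R(x))=\varsigma\bigl((R\otimes R)\Gamma(x)\bigr)\in\varsigma\bigl(M\otimes R(N)\bigr)=R(N)\otimes M$, which completes the proof.
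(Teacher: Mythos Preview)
Your proposal is correct and follows essentially the same approach as the paper: (i), (iii), (iv) match almost verbatim, and for (ii) the paper carries out precisely the universal/bicharacter argument you sketch, using the lift $\widehat U=\varsigma(U^*)\in M(\Cr(\widehat\bfG)\otimes\Cu(\bfG))$ of $\widehat W$ from \cite{K}~6.6 to obtain $(\id\otimes\Gamma_r)(\ad(\pi_{\bfG}(x)))=\widehat W^{*}_{12}\widetilde U^{*}_{13}(1\otimes\Gamma_r(\pi_{\bfG}(x)))\widetilde U_{13}\widehat W_{12}$ with $\widetilde U=(\id\otimes\pi_{\bfG_1}\Phi)(\widehat U)$, which collapses to $\ad(y)\otimes 1$ when $y\in N$. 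This is exactly the ``bicharacter of $\Phi$'' you anticipated, so your identification of the obstacle and its resolution is on target.
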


In the situation above, we call $N$ a \emph{quotient type left
  co-ideal} of $\bfG$.
\begin{proof}
  (i) Since $(\id\otimes\Gamma_r)\Gamma=(\Gamma\otimes \id)\Gamma_r$
  by construction, we get that for every $x$ in $N=M^{\Gamma_r}$, the
  coproduct $\Gamma(x)$ belongs to $M\otimes N$.

  (ii) By (\cite{K} 6.6), there exists a unique unitary  $U \in M(\Cu(\bfG)
  \otimes \Cr(\mathbf{\widehat G}))$ such that $(\Gamma_{u} \otimes
  \id)(U)=U_{13}U_{23}$ and $(\pi_{\bfG} \otimes \id)(U)=W$, where
  $\Gamma_{u}$ denotes the comultiplication on $\Cu(\bfG)$. Let
  $\widehat U=\varsigma(U^{*}) \in M(\Cr(\widehat\bfG) \otimes
  \Cu(\bfG))$ and $x\in \Cu(\bfG)$. Then
  $\ad(\pi_{\bfG}(x)) = (\id \otimes \pi_{\bfG})(\widehat U^{*}(1
  \otimes x)\widehat U)$, and using the relation
 $(\id \otimes \Gamma_{u})(\widehat U^{*}) = \widehat
  U^{*}_{12}\widehat U^{*}_{13}$, we find
  \begin{align*}
    (\id \otimes \Gamma_{r})(\ad(\pi_{\bfG}(x))) &= 
    (\id \otimes \pi_{\bfG} \otimes \pi_{\bfG_{1}}\Phi)((\id
    \otimes  \Gamma_{u})(\widehat U^{*}(1 \otimes x)\widehat U)) \\
    &=
    (\id \otimes \pi_{\bfG} \otimes \pi_{\bfG_{1}}\Phi)(\widehat
    U^{*}_{12}\widehat U^{*}_{13}(1 \otimes \Gamma_{u}(x))\widehat
    U_{13}\widehat U_{12}) \\
    &= \widehat W^{*}_{12}\widetilde U^{*}_{13}(1 \otimes
    \Gamma_{r}(\pi_{\bfG}(x))) \widetilde U_{13}\widehat W_{12},
  \end{align*}
where $\widetilde U = (\id \otimes \pi_{\bfG_{1}}\Phi)(V)$. By continuity, we
get that for any $y\in N$,
  \begin{align*}
    (\id \otimes \Gamma_{r})(\ad(y)) = \widehat W^{*}_{12}\widetilde
    U^{*}_{13}(1 \otimes y \otimes 1) \widetilde U_{13}\widehat W_{12}
    = \ad(y) \otimes 1,
  \end{align*}
showing that $\ad(y) \in \widehat{M} \otimes N$.

 (iii) This follows immediately from \ref{YD}.

(iv) This follows easily from the fact that the unitary antipode
reverses the comultiplication.
\end{proof}

\begin{subtheorem}
\label{compactsubgroup}
 Let $\bfG=(M, \Gamma, \varphi, \varphi\circ R)$ be a locally
  compact quantum group and $(A_1, \Gamma_1)$  a compact quantum group
  which is a closed quantum subgroup in the sense of
  \ref{QTCI}, and denote by $N$  the quotient type co-ideal defined by
  this closed subgroup, as defined in \ref{propQTCI}. Then, the
  restriction of the weight $\varphi\circ R$ to $N$ is semi-finite and
  $\delta^{-1}$-invariant with respect to the action
  $\Gamma_{|N}$. Therefore,  $\mathfrak{G}(N, \Gamma_{|N}, \ad_{|N},
  \varphi\circ R_{|N})$ and $\mathfrak{G}(N, \ad_{|N}, \Gamma_{|N},
  \varphi\circ R_{|N})$ are measured quantum groupoids, dual to each
  other. 
\end{subtheorem}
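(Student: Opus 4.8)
The plan is to verify the hypothesis of Corollary \ref{corduality}: I must show that the restriction $\psi_N := (\varphi\circ R)_{|N}$ is a normal faithful semi-finite weight on $N$ which is $\delta^{-1}$-invariant with respect to $\Gamma_{|N}$, where $\delta$ is the modulus of $\bfG$. Once this is established, Corollary \ref{corduality} (with the roles of $\ga=\Gamma_{|N}$ and $\widehat{\ga}=\ad_{|N}$ as in Proposition \ref{propQTCI}, and noting that here we check $\delta^{-1}$-invariance under $\ga$ rather than under $\widehat\ga$, so we really invoke the symmetric statement via Theorem \ref{thmqg}/\ref{thduality}) immediately gives that both $\mathfrak{G}(N, \Gamma_{|N}, \ad_{|N}, \psi_N)$ and $\mathfrak{G}(N, \ad_{|N}, \Gamma_{|N}, \psi_N)$ are measured quantum groupoids and that they are dual to each other via the isomorphism $\mathcal I$ of \ref{dual} together with \ref{ovw}. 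So the entire content of the theorem reduces to the two points: semi-finiteness and $\delta^{-1}$-invariance of $\psi_N$.

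For $\delta^{-1}$-invariance, the key observation is the compatibility relation $(\Gamma\otimes\id)\Gamma_r = (\id\otimes\Gamma_r)\Gamma$ from \ref{defQTCI}, which says precisely that $\Gamma$ intertwines the right action $\Gamma_r$ of $\bfG_1$ on $M$ (on the first leg) with the "trivial-on-first-leg" right action on $M\otimes M$; since $N=M^{\Gamma_r}$, this is what makes $\Gamma_{|N}$ a left action of $\bfG$ on $N$ (point (i) of \ref{propQTCI}). Now I want to compute $(\id\otimes\psi_N)\Gamma(x)$ for $x\in N^+$. The idea is to realize $\psi_N$ as a composition involving the right-invariant weight $\varphi\circ R$ of $\bfG$ and the operator-valued weight associated to the action $\Gamma_r$. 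Concretely: since $\bfG_1$ is compact, the invariant state $\varphi_1$ of $\bfG_1$ is a state, hence the operator-valued weight $(\id\otimes\varphi_1\circ R_1)\Gamma_r = (\id\otimes\varphi_1)\Gamma_r$ from $M$ onto $N$ is in fact a conditional expectation $E\colon M\to N$; it is normal, faithful, semi-finite (genuinely: it is bounded). Using the standard fact (as in \ref{action}, the formula $T_\ga=(\varphi\circ R\otimes\id)\ga$ for a left action, transcribed to right actions) together with the right-invariance of $\varphi\circ R$ for $\bfG$ and the action relation, one shows $\varphi\circ R = (\varphi\circ R)_{|N}\circ E$ on a suitable domain, which simultaneously gives semi-finiteness of $\psi_N$ (this is exactly the kind of argument in \cite{V} 1.3--1.4 guaranteeing $T_\ga$ is semi-finite). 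Then $\delta^{-1}$-invariance of $\psi_N$ under $\Gamma_{|N}$ will follow by combining: (a) the right-invariance identity $(\varphi\circ R)\otimes\id)\Gamma = (\varphi\circ R)(\cdot)\,1$ is not quite what I need — rather I need the \emph{left} behaviour of the \emph{right}-invariant weight, which is governed precisely by the modulus: $(\id\otimes\varphi\circ R)\Gamma(x) = (\varphi\circ R)(\delta^{1/2}x\delta^{1/2})\,\delta^{-1}$ type formulas from \cite{KV2}; (b) the fact that $E$ commutes appropriately with $\Gamma$ thanks to the compatibility $(\Gamma\otimes\id)\Gamma_r=(\id\otimes\Gamma_r)\Gamma$, so that $(\id\otimes\psi_N)\Gamma_{|N} = (\id\otimes\psi_N)\Gamma$ on $N$ can be traded for $(\id\otimes(\varphi\circ R)\circ E)\Gamma$ and then the $E$ slid past $\Gamma$ onto the outside.

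The main obstacle I anticipate is getting the bookkeeping of the modulus exactly right — i.e., pinning down that the precise operator appearing in $(\id\otimes\psi_N)\Gamma_{|N}(x)=\psi_N(x)\,k$ is $k=\delta^{-1}$ (the modulus of $\bfG$ itself, restricted/affiliated appropriately) and not $\delta$, $\delta^{-1/2}$, or some twist by the modulus of $\bfG_1$. Here one should exploit that $\bfG_1$ is \emph{compact}, hence \emph{unimodular}, so $\delta_1=1$ and the modulus of $\bfG_1$ contributes nothing; and that $R(\delta)=\delta^{-1}$, $\tau_t(\delta)=\delta$ (recalled in \ref{lcqg}), together with the standard relation between $\varphi$, $\varphi\circ R$ and $\delta$. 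A clean way to sidestep heavy computation is: the operator $k$ with $(\id\otimes\psi_N)\Gamma(x)=\psi_N(x)k$ is automatically a group-like element, $\Gamma(k)=k\otimes k$ (as noted right after Definition \ref{definv}), and comparing modular automorphisms via \cite{Y3} 4.1 — which says $\psi_N$ is $k$-invariant iff $(D\psi_N\circ\Gamma_{|N}:D\psi_N)_t=k^{-it}\otimes 1$ — reduces everything to identifying a Connes cocycle, which one computes from $(D\varphi\circ R:D\varphi)_t=\lambda^{it^2/2}\delta^{it}$ and the KMS-relations for $E$. Finally, once $\delta^{-1}$-invariance under $\Gamma_{|N}$ is in hand, Theorem \ref{thmqg} (case: $k=\delta^{-1}$ is one of the allowed choices, cf.\ \ref{cormqg}(iii) and \ref{tau}) yields that $\mathfrak{G}(N, \Gamma_{|N}, \ad_{|N}, \psi_N)$ is a measured quantum groupoid, Theorem \ref{thduality} gives the same for $\mathfrak{G}(N, \ad_{|N}, \Gamma_{|N}, \psi_N)$, and Theorem \ref{ovw} identifies the latter with the dual of the former, completing the proof.
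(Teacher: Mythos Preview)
Your proposal is correct and follows essentially the same route as the paper: build the conditional expectation $E=(\id\otimes\omega_1)\Gamma_r$ onto $N$ using compactness of $\bfG_1$, show $\varphi\circ R=(\varphi\circ R)_{|N}\circ E$ via right-invariance to get semi-finiteness, observe $\delta^{-1}$-invariance of $(\varphi\circ R)_{|N}$ under $\Gamma_{|N}$, and conclude by \ref{thmqg} and \ref{ovw}. The only difference is that you anticipate difficulty with the $\delta^{-1}$-invariance and sketch a cocycle argument, whereas the paper dismisses it as ``clearly'' true; indeed it is immediate from the standard identity $(\id\otimes\varphi\circ R)\Gamma(x)=(\varphi\circ R)(x)\,\delta^{-1}$ on $M$ (a direct consequence of $\Gamma(\delta)=\delta\otimes\delta$ and left-invariance of $\varphi$), restricted to $N$ --- no bookkeeping with the modulus of $\bfG_1$ is needed since $\delta^{-1}$ is affiliated to $M$, which is exactly what Definition~\ref{definv} requires.
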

\begin{proof}
The formula $E=(\id\otimes\omega_1)\circ\Gamma_r$, where $\omega_1$ is the Haar state of $(A_1, \Gamma_1)$, and $\Gamma_r$ is the right action of $(A_1, \Gamma_1)$ on $M$ defined in \ref{QTCI}, defines a normal faithful conditional expectation from $M$ onto $N=M^{\Gamma_r}$. 

By definition of $\Gamma_r$ (\ref{defQTCI}), and using the
right-invariance of $\varphi\circ R\circ\pi_{\bfG}$ with respect to
the coproduct $\Gamma_u$ of $\Cu(\bfG)$, we get that for any
$y\in\Cu(\bfG)$, with the notations of \ref{defQTCI},
\begin{align*}
\varphi\circ R\circ E(\pi_{\bfG}(y))
&=
(\varphi\circ R\otimes\omega_1)\Gamma_r(\pi(y))\\
&=
(\varphi\circ R\circ\pi_{\bfG}\otimes\omega_1\circ\pi_{\bfG_{1}}\circ\Phi)\Gamma_u(y)\\
&=
(\varphi\circ R\circ\pi_{\bfG})(y)(\omega_1\circ\pi_{\bfG}\circ\Phi)(1)\\
&=
(\varphi\circ R\circ\pi_{\bfG})(y).
\end{align*}
Therefore, $\varphi\circ R\circ E(x)=\varphi\circ R(x)$ for all
$x\in\Cr(\bfG)$, and, by continuity, for all $x\in M$, which
gives that this conditional expectation $E$ is invariant under
$\varphi\circ R$. Moreover, we get that $\varphi\circ R_{|N}$ is
semi-finite and $\sigma_t^{\varphi\circ R}\circ
E=E\circ\sigma_t^{\varphi\circ R}$. 

This weight $\varphi\circ R_{|N}$ is clearly $\delta^{-1}$-invariant with respect to $\Gamma_{|N}$. The result comes then from \ref{thmqg} and \ref{ovw}. 
\end{proof}

\begin{subcorollary}
\label{NY}
 Let $(A, \Gamma)$ be a compact quantum group, $\omega$ its Haar state (which we can suppose to be faithful) and let $\bfG=(\pi_\omega(A)'', \Gamma, \omega, \omega)$ be the von Neumann version of $(A, \Gamma)$ (\ref{lcqg}). Let $N$ be a sub-von Neumann algebra $N$ of $\pi_\omega(A)''$. Then the following conditions are equivalent:
 \begin{enumerate}
 \item  $\Gamma_{|N}$ is a left action of $\bfG$ on $N$ and
   $\ad_{|N}$ is a left action of $\widehat{\bfG}$ on $N$.
 \item  There exists a quantum compact subgroup of $(A, \Gamma)$ such
   that $N$ is the quotient type co-ideal of $\bfG$ constructed from
   this quantum compact subgroup.
 \end{enumerate}
If (i) and (ii) hold, then  the crossed products $\bfG\ltimes_{\Gamma_{|N}}N$ and
$\widehat{\bfG}\ltimes_{\ad_{|N}}N$ carry mutually dual structures of
measured quantum groupoids $\mathfrak{G}(N, \Gamma_{|N}, \ad_{|N},
\omega_{|N})$ and $\mathfrak{G}(N, \ad_{|N}, \Gamma_{|N},
\omega_{|N})$, respectively.
\end{subcorollary}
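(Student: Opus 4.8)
The statement is Corollary~\ref{NY}, and the whole point is that it is a direct specialization of the machinery already set up: Proposition~\ref{propQTCI}, Theorem~\ref{compactsubgroup}, and the duality Theorem~\ref{ovw}. The only genuinely new content is the equivalence (i)$\Leftrightarrow$(ii); once that is in hand, the measured quantum groupoid claim is immediate from \ref{compactsubgroup} applied to $\bfG=(\pi_\omega(A)'',\Gamma,\omega,\omega)$ (noting that here $\delta=1$ and $\varphi\circ R=\omega$, since the Haar state of a compact quantum group is both left- and right-invariant). So the plan is: first settle (ii)$\Rightarrow$(i), which is trivial; then settle (i)$\Rightarrow$(ii), which is the real work; then quote \ref{compactsubgroup} and \ref{ovw} for the final sentence.

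\textbf{The direction (ii)$\Rightarrow$(i).} If $N$ arises as a quotient type co-ideal from a compact quantum subgroup, then by Proposition~\ref{propQTCI}(i),(ii) the restrictions $\Gamma_{|N}$ and $\ad_{|N}$ are left actions of $\bfG$ and $\widehat\bfG$ on $N$, and by \ref{propQTCI}(iii) the triple $(N,\Gamma_{|N},\ad_{|N})$ is even a braided-commutative $\bfG$-Yetter--Drinfel'd algebra. This gives (i) with nothing to prove.

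\textbf{The direction (i)$\Rightarrow$(ii).} This is the main obstacle, and it is essentially a known result in the compact case; the plan is to reconstruct the compact quantum subgroup from the co-ideal $N$. Assume $\Gamma(N)\subseteq \pi_\omega(A)''\otimes N$ and $\ad(N)\subseteq \widehat M\otimes N$, so $(N,\Gamma_{|N},\ad_{|N})$ is a right co-ideal of $\bfG$ that is at the same time invariant under the adjoint action. The standard route (cf.\ the theory of expected coideals / quotient spaces of compact quantum groups) is: since $\bfG$ is compact, the conditional expectation $E=(\id\otimes\omega)\circ$ (something) onto $N$ exists and is $\omega$-preserving; concretely, because $N$ is a left coideal one has $\Gamma(N)\subseteq M\otimes N$, and the map $E_N:=(\id\otimes\omega)\circ\Gamma$ — no, rather one uses that a coideal $N$ which is globally invariant under $\ad$ is automatically of the form $M^{\Gamma_r}$ for the right action $\Gamma_r$ dual to an idempotent state, and that idempotent state's GNS data produce a compact quantum subgroup $(A_1,\Gamma_1)$ with $\Phi$ surjective. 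I would invoke the classification of right coideals of compact quantum groups that are invariant under the adjoint action: such coideals correspond bijectively to quotient-type coideals, equivalently to closed quantum subgroups. Then one checks, using Proposition~\ref{propQTCI} in reverse, that the quotient type co-ideal attached to that subgroup is exactly $N$. The technical heart is verifying the compatibility $(\Gamma\otimes\id)\Gamma_r=(\id\otimes\Gamma_r)\Gamma$ and the action conditions from the two restriction hypotheses; the $\ad$-invariance hypothesis is precisely what forces the co-ideal to be ``quotient type'' rather than an arbitrary expected coideal.

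\textbf{The final sentence.} Granting (ii), let $(A_1,\Gamma_1)$ be the witnessing compact quantum subgroup; it is a closed quantum subgroup in the sense of \ref{QTCI} (for compact $\bfG_1$, the Woronowicz and Vaes notions coincide by \cite{DKSS}, 6.1). Theorem~\ref{compactsubgroup} then applies verbatim: $\omega_{|N}=\varphi\circ R_{|N}$ is semi-finite and, since $\delta=1$, it is invariant with respect to $\Gamma_{|N}$, so by \ref{thmqg} and \ref{ovw} both $\mathfrak G(N,\Gamma_{|N},\ad_{|N},\omega_{|N})$ and $\mathfrak G(N,\ad_{|N},\Gamma_{|N},\omega_{|N})$ are measured quantum groupoids, living on $\bfG\ltimes_{\Gamma_{|N}}N$ and $\widehat\bfG\ltimes_{\ad_{|N}}N$ respectively, and \ref{ovw} identifies each as the dual of the other. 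This completes the proof.
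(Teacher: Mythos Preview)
Your overall strategy matches the paper's: (ii)$\Rightarrow$(i) via Proposition~\ref{propQTCI}, the final sentence via Theorem~\ref{compactsubgroup}, and (i)$\Rightarrow$(ii) via an external classification of adjoint-invariant coideals in the compact case. The paper's specific reference for that last step is Neshveyev--Yamashita \cite{NY}, Th.~3.1, which you should name rather than gesture at.

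There is, however, a genuine technical gap in your (i)$\Rightarrow$(ii). The result you want to invoke (and \cite{NY}, Th.~3.1 in particular) is a $C^*$-algebraic statement: it applies to a sub-$C^*$-algebra of $A$ invariant under $\Gamma$ and $\ad$. But $N$ is only given as a von Neumann subalgebra of $\pi_\omega(A)''$. The paper bridges this by showing that $N\cap A$ is weakly dense in $N$: once one has the compact quantum subgroup $(A_1,\Gamma_1)$ and its Haar state $\omega_1$, the conditional expectation $E=(\id\otimes\omega_1)\Gamma_r$ maps $\pi_\omega(A)''$ onto $N$ and maps $A$ into $A\cap N$, so $A\cap N$ is dense in $N$; one then applies \cite{NY} to the $C^*$-algebra $N\cap A$. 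You do not address this passage between the $C^*$ and von Neumann levels, and your discussion wanders (the abandoned $E_N$, the digression on idempotent states) without landing on it. Also a minor slip: $N$ is a \emph{left} coideal here ($\Gamma(N)\subseteq M\otimes N$), not a right one.
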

\begin{proof}
The fact that (ii) implies (i) is given by \ref{QTCI}.  Suppose
(i). Then $N$ is, by \ref{propQTCI}, a quotient type co-ideal of $\bfG$, which is defined as the invariants by a right action $\Gamma_r$ of a closed quantum subgroup of $\bfG$, which is (\ref{defQTCI}) a compact quantum group $(A_1, \Gamma_1)$. Denote its Haar state by $\omega_1$. Then $\Gamma_r(A)\subset A\otimes A_1$, and the conditional expectation $E=(id\otimes\omega_1)\Gamma_r$ which sends $\pi_\omega(A)''$ onto $N$, sends $A$ onto $A\cap N$. From this it is easy to get that $A\cap N$ is weakly dense in $N$. But $N\cap A$ is a sub-$C^*$-algebra of $A$ which is invariant
under $\Gamma$ and $\ad$; therefore, using (\cite{NY}, Th. 3.1), we
get (ii). 
If these conditions hold, we can apply \ref{compactsubgroup}.
\end{proof}

\subsubsection{\bf Example of a measured quantum groupoid constructed from a quotient type coideal of a compact quantum group} 
\label{Sq2}

Let us take the compact quantum group $\SUq$ (\cite{W2}), which is the $C^*$-algebra generated by elements $\alpha$ and $\gamma$ satisfying the relations 
\[\alpha^*\alpha +\gamma^*\gamma=1, \qquad \alpha\alpha^*+q^2\gamma\gamma^*=1,\]
\[\gamma\gamma^*=\gamma^*\gamma, \qquad q\gamma\alpha=\alpha\gamma,
\qquad q\gamma^*\alpha=\alpha\gamma^*.\]
The circle group $\mathds{T}$ appears as a closed quantum subgroup via
the  morphism $\Phi$ from $\Cu(\SUq)$ to
$\Cu(\T)=C_{0}(\T)$ given by $\Phi(\alpha)=0$ and
$\Phi(\gamma)=\id$.  Then we obtain the Podle\'{s} sphere $S_q^2$ as a
quotient type coideal from this map (\cite{P}),  and  mutually dual structures of measured quantum groupoids
$\mathfrak{G}(S_q^2, \Gamma_{|S_q^2}, \ad_{|S_q^2}, \omega_{|S_q^2})$
on $\SUq\ltimes_{\Gamma_{|S_q^2}}S_q^2$ and $\mathfrak{G}(S_q^2,
\ad_{|S_q^2}, \Gamma_{|S_q^2}, \omega_{|S_q^2})$ on
$\widehat{\SUq}\ltimes_{\ad_{|S_q^2}}S_q^2$, respectively.

\subsubsection{\bf Further examples}
\label{ex}

Here we quickly give examples of situations in which the hypothesis of \ref{compactsubgroup} are fulfilled. 

Let us consider the (non-compact) quantum group $E_q(2)$ constructed
by Woronowicz in \cite{W3}. In (\cite{J}, 2.8.36) is proved that the
circle group $\mathds{T}$ is a closed quantum subgroup of $E_q(2)$. 

In \cite{VV2} is constructed the cocycle bicrossed product of two
locally compact quantum groups $(M_1, \Gamma_1)$ and $(M_2,
\Gamma_2)$, and it is proved (\cite{VV2}, 3.5) that $(\widehat{M_1},
\widehat{\Gamma_1})$ is a closed subgroup (in the sense of Vaes) of
$(M, \Gamma)$. So, if $(M_1, \Gamma_1)$ is a discrete quantum group,
then $(\widehat{M_1}, \widehat{\Gamma_1})$ is the von Neumann version of a compact quantum group which is a closed quantum subgroup of $(M, \Gamma)$.

\subsection{Another example}

\begin{subtheorem}
\label{HG}
 Let $G$ be a locally compact group and $H$ a closed normal subgroup of $G$. Then:

\begin{enumerate}
\item The von Neumann algebra $\mathcal L(H)$, which can be considered as a sub-von Neumann algebra of $\mathcal L(G)$, is invariant under the coproduct $\Gamma_G$ of $\mathcal L(G)$, considered as a right action of the locally compact quantum group $\widehat{G}$ on $\mathcal L(G)$, and under the adjoint action $\ad$ of $G$ on $\mathcal L(G)$. Therefore, $(\mathcal L(H), \Gamma_{G|\mathcal L(H)}, \ad_{|\mathcal L(H)})$ is a braided-commutative $\widehat{G}$-Yetter-Drinfel'd algebra, which is a subalgebra of the canonical example $(\mathcal L(G), \Gamma_G, \ad)$ described in \ref{basicex}. 
\item  The Plancherel weight $\varphi_H$ on $\mathcal L(H)$
  satisfies the conditions of \ref{thmqg}, and the crossed product
  $\widehat{G}\ltimes_{\Gamma_{G|\mathcal L(H)}}\mathcal L(H)$ (which
  is isomorphic to $(\mathcal L(H)\cup L^\infty(G))''$) carries a
  structure of measured quantum groupoid $\mathfrak{G}(\mathcal L(H),
  \Gamma_{G|\mathcal L (H)}, \ad_{|\mathcal L(H)}, \varphi_H)$ over
  the basis $\mathcal L(H)$.
\end{enumerate}
\end{subtheorem}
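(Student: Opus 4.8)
The plan is to recognize Theorem \ref{HG} as a direct application of the general machinery, specialized to the situation of the canonical example \ref{basicex} applied to $\bfG = \widehat{G}$ and restricted to the sub-Yetter-Drinfel'd algebra $\mathcal L(H) \subseteq \mathcal L(G)$. So the proof splits naturally along the two parts.

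\medskip

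\noindent\emph{Part (i).} First I would fix the identification: since $H$ is a closed subgroup, the left regular representation $\lambda_H$ of $H$ on $L^2(H)$ embeds into $\lambda_G$, giving $\mathcal L(H) \subseteq \mathcal L(G)$ as a von Neumann subalgebra (this is standard; one may cite \cite{T2}, or simply note that $\lambda_G(h) \in \mathcal L(H)$-generated algebra for $h \in H$). The canonical example of \ref{basicex} applied to the locally compact quantum group $\widehat{G}$ (whose dual is $G$) is the braided-commutative $\widehat{G}$-Yetter-Drinfel'd algebra $(\mathcal L(G), \Gamma_G, \ad)$, where $\Gamma_G$ is the coproduct of $\mathcal L(G)$, viewed as a left action of $\widehat{G}$, and $\ad$ is the adjoint action of $G = \widehat{\widehat{G}}$, given by $\ad(x) = \widehat{W}^*(1\otimes x)\widehat{W}$ (using \eqref{eq:ad}). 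Now I must check the two invariance conditions $\Gamma_{G|\mathcal L(H)} \subseteq \widehat{\mathcal L(H)}$-type inclusions: namely $\Gamma_G(\mathcal L(H)) \subseteq \mathcal L(H) \otimes \mathcal L(H)$ and $\ad(\mathcal L(H)) \subseteq L^\infty(G) \otimes \mathcal L(H)$. For the first, $\Gamma_G(\lambda_G(h)) = \lambda_G(h)\otimes\lambda_G(h) = \lambda_H(h)\otimes\lambda_H(h)$ for $h\in H$, so by normality and the fact that such elements generate $\mathcal L(H)$, we get $\Gamma_G(\mathcal L(H))\subseteq\mathcal L(H)\otimes\mathcal L(H)$. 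For the second (adjoint) action, the relevant computation is $\ad(\lambda_G(h))$ acting as $\rho(g)\lambda_G(h)\rho(g)^* \leftrightarrow$ conjugation, and normality of $H$ in $G$ is exactly what guarantees $g h g^{-1} \in H$, so $\ad(\lambda_G(h)) \in L^\infty(G)\,\bar\otimes\,\mathcal L(H)$; again extend by normality. Given these two inclusions, the general remark after \ref{YD} (sub-$\bfG$-Yetter-Drinfel'd algebras) together with the fact that sub-Yetter-Drinfel'd algebras of braided-commutative ones are braided-commutative (the remark after \ref{defBC}) finishes Part (i).

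\medskip

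\noindent\emph{Part (ii).} Here the key point is to verify the hypothesis of \ref{thmqg} for $\nu = \varphi_H$, the Plancherel weight on $\mathcal L(H)$. The cleanest route is to invoke \ref{cormqg}(iii) or, more directly, to observe that $\varphi_H$ is invariant under $\ad_{|\mathcal L(H)}$: indeed $\ad$ is an action of $G$, and the Plancherel weight on $\mathcal L(H)$ is invariant under the adjoint action since conjugation by $g\in G$ preserves $\varphi_H$ (this uses normality of $H$ and the transformation behaviour of the Plancherel/Haar weight under inner automorphisms; one checks it on the $\lambda_H(h)$ and uses the trace-like invariance property of $\varphi_H$, noting that $\varphi_H$ is a trace when $H$ is unimodular, and in general transforms by the modular function which is a homomorphism — but since $\ad$ is an honest group action of $G$ on $\mathcal L(H)$, the correct statement is that $\varphi_H$ is $\widehat{k}$-invariant with respect to $\ad$ for a suitable central $\widehat{k}$, or even $\widehat{\delta}^{-1}$-invariant). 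Then by \ref{corduality} — applied with the roles of $\ga$ and $\widehat\ga$ matched so that $\widehat\ga = \ad_{|\mathcal L(H)}$ plays the role whose Radon-Nikodym data is controlled — $\mathfrak{G}(\mathcal L(H), \Gamma_{G|\mathcal L(H)}, \ad_{|\mathcal L(H)}, \varphi_H)$ is a measured quantum groupoid. Finally, the identification of $\widehat{G}\ltimes_{\Gamma_{G|\mathcal L(H)}}\mathcal L(H)$ with $(\mathcal L(H)\cup L^\infty(G))''$ follows from the definition of the crossed product $\bfG\ltimes_\ga N = (\ga(N)\cup(\widehat M\otimes\mathbb C))''$ in \ref{action}: here $\bfG = \widehat{G}$ so $\widehat M = L^\infty(G)$, $\ga = \Gamma_{G|\mathcal L(H)}$ maps into $\mathcal L(H)\otimes\mathcal L(H)$, and the von Neumann algebra generated inside $B(L^2(G)\otimes L^2(H))$ by $\Gamma_G(\mathcal L(H))$ and $L^\infty(G)\otimes\mathbb C$ is unitarily isomorphic (via the multiplicative unitary of $\mathcal L(H)$) to $(\mathcal L(H)\cup L^\infty(G))''$ acting on $L^2(G)$, by the usual Takesaki-duality/crossed-product computation.

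\medskip

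\noindent The step I expect to be the main obstacle is the precise verification of which invariance condition ($\widehat{k}$-invariance, or $\widehat\delta^{-1}$-invariance with the correct modulus) the Plancherel weight $\varphi_H$ satisfies with respect to $\ad_{|\mathcal L(H)}$, so as to apply \ref{corduality} rather than merely \ref{thmqg}. When $H$ is non-unimodular, $\varphi_H$ is not a trace and one must track the interaction of its modular automorphism group with the adjoint action; the claim in the theorem is only that the hypotheses of \ref{thmqg} hold (the weaker, always-sufficient condition (i) of \ref{thmqg} on the Radon-Nikodym cocycle), so the safest line is to compute $\widehat D_t = (D\varphi_H\circ\ad : D\varphi_H)_t$ directly, show it has the product form forced by centrality/modulus considerations, and then cite condition (iv) of \ref{thmqg}-for-the-dual, exactly as \ref{corduality} does. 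Everything else is bookkeeping with the already-established general theorems.
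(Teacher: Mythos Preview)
Your Part (i) is essentially the same as the paper's: identify $\mathcal L(H)$ inside $\mathcal L(G)$, check $\Gamma_G(\lambda_G(h))=\lambda_G(h)\otimes\lambda_G(h)\in\mathcal L(H)\otimes\mathcal L(H)$, and check $\ad(\lambda_G(h))$ is the function $g\mapsto\lambda_G(ghg^{-1})\in\mathcal L(H)$ by normality of $H$; then invoke the sub-Yetter-Drinfel'd remark.

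For Part (ii), however, you are working much harder than the paper and aiming at the wrong action. You try to control $\varphi_H$ with respect to $\widehat\ga=\ad_{|\mathcal L(H)}$ and worry (correctly) that in the non-unimodular case this brings in the module of the conjugation automorphism. The paper avoids this entirely: it observes that $\varphi_H$ is \emph{invariant under $\ga=\Gamma_{G|\mathcal L(H)}$}, which is immediate because the restriction of $\Gamma_G$ to $\mathcal L(H)$ is nothing but $\Gamma_H$, and $\varphi_H$ is by definition the left-invariant Haar weight for $\Gamma_H$, so $(\id\otimes\varphi_H)\Gamma_G(x)=(\id\otimes\varphi_H)\Gamma_H(x)=\varphi_H(x)1$. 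Hence $D_t=(D\varphi_H\circ\ga:D\varphi_H)_t=1$ for all $t$, and condition (i) of \ref{thmqg} is trivially satisfied (this is the same mechanism as in the proof of \ref{cormqg}(i)). No analysis of $\ad$-invariance or of $\widehat D_t$ is needed at all. Your route via \ref{corduality} could in principle be completed (the module $g\mapsto\mathrm{mod}(\mathrm{Ad}_g|_H)$ gives a central $k$), but it is an unnecessary detour; and note that your appeal to \ref{cormqg}(iii) fails since $\widehat G$ is not compact unless $G$ is discrete.

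The crossed-product identification $\widehat{G}\ltimes_{\Gamma_{G|\mathcal L(H)}}\mathcal L(H)\cong(\mathcal L(H)\cup L^\infty(G))''$ is done in the paper by an explicit conjugation by $(J\otimes J)W_G(J\otimes J)$, which is the same content as your ``usual crossed-product computation'' but written out.
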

\begin{proof}
(i) Let $\lambda_G$ (resp. $\lambda_H$) be the left regular representation of $G$ (resp.{} $H$). It is well known that the application which sends $\lambda_H(s)$ to $\lambda_G(s)$, where  $s\in H$, extends to an injection from $\mathcal L(H)$ into $\mathcal L(G)$, which will send the coproduct $\Gamma_H$ of $\mathcal L(H)$ on the coproduct $\Gamma_G$ of $\mathcal L(G)$. Let us identify $\mathcal L(H)$ with this sub-von Neumann algebra of $\mathcal L(G)$. Then for all $x\in\mathcal L(H)$,
\[\Gamma_G(x)=\Gamma_H(x)\in\mathcal L(H)\otimes\mathcal L(H)\subset \mathcal L(G)\otimes\mathcal L(H),\]
so that the coproduct, considered as a right action of $\widehat{G}$ on $\mathcal L(G)$, gives also a right action of $\widehat{G}$ on $\mathcal L(H)$. 

Let $W_G$ be the fundamental unitary of $G$, which belongs to
$L^\infty(G)\otimes\mathcal L(G)$. The adjoint action of $G$ on
$\mathcal L(G)$ is given, for $x\in\mathcal L(G)$ by
$\ad(x)=W_G^*(1\otimes x)W_G$, and is therefore the function on $G$
given by $s\mapsto \lambda_G(s)x\lambda_G(s)^*$. Hence, if $t\in H$, we get that $\ad(\lambda_H(s))$ is the function $s\mapsto \lambda_G(sts^{-1})$. As $H$ is normal, $sts^{-1}$ belongs to $H$, and this function takes its values in $\mathcal L(H)$. By density, we get that for any $x\in\mathcal L(H)$, $\ad(x)$ belongs to $L^\infty(G)\otimes\mathcal L(H)$, and, therefore, the restriction of the adjoint action of $G$ to $\mathcal L(H)$ is an action of $G$ on $\mathcal L(H)$.

(ii) The Haar weight $\varphi_H$ is invariant under
$\Gamma_{G|\mathcal L(H)}$ because
$(\id\otimes\varphi_H)(\Gamma_G(x))=(\id\otimes\varphi_H)(\Gamma_H(x))=\varphi_H(x)1$
for all $x\in\mathcal L(H)^+$.
 We can therefore apply \ref{thmqg} to that braided-commutative Yetter-Drinfel'd algebra, equipped with this relatively invariant weight, and get (ii). Let us remark that $\widehat{G}\ltimes_{\Gamma_{G|\mathcal L(H)}}\mathcal L(H)$ is equal to $(\Gamma_G(\mathcal L(H))\cup L^\infty (G)\otimes 1_{L^2(G)})''$ which we can write:
\[((J\otimes J)W_G^*(J\otimes J)(\mathcal L(H)\otimes 1_{L^2(G)})(J\otimes J)W_G(J\otimes J)\cup L^\infty (G)\otimes 1_{L^2(G)})''\]
which is clearly isomorphic to $(\mathcal L(H)\cup L^\infty (G))''$. 
\end{proof}
\begin{subremark}
\label{remHG}
Let us take again the hypotheses of \ref{HG}, in the particular case
where $G$ is abelian. Then $\widehat{G}$ (resp. $\widehat{H}$) is a
commutative locally compact group, and we have constructed a right
action of $\widehat{G}$ on the set $\widehat{H}$, which leads to a
transformation groupoid $\widehat{H}\curvearrowleft
\widehat{G}$. Then, the measured quantum groupoid constructed in
\ref{HG}(ii) is just the dual of this transformation groupoid. 
\end{subremark}

\section{Quotient type co-ideals and Morita equivalence}
\label{QM}
In this chapter, we show that, in the case of a quotient type co-ideal $N$ of a compact quantum group $\bfG$, the measured quantum groupoid $\widehat{\bfG}\ltimes_{\ad|N}N$ is Morita equivalent to the quantum subgroup $\bfG_1$ (\ref{ThMorita}).

\subsection{Definitions of actions of a measured quantum groupoid and Morita equivalence}
\subsubsection{{\bf Definition}} ([E5] 2.4)
\label{right}
Let $\gG=(N, M, \alpha, \beta, \Gamma, T, T', \nu)$ be a measured quantum groupoid, and let $A$ be a von Neumann algebra. 

A \emph{right action} of $\gG$ on $A$ is a couple $(b, \underline{\mathfrak a})$, where:

(i) $b$ is an injective anti-$*$-homomorphism from $N$ into $A$; 

(ii) $\underline{\mathfrak a}$ is an injective $*$-homomorphism from $A$ into $A\underset{N}{_b*_\alpha}M$; 

(iii) $b$ and $\underline{\mathfrak a}$ satisfy
\[\underline{\mathfrak a}
(b(n))=1\underset{N}{_b\otimes_\alpha}\beta(n) \quad \text{for all }
n\in N,\]
which allow us to define $\underline{\mathfrak a}\underset{N}{_b*_\alpha}\id$ from $A\underset{N}{_b*_\alpha}M$ into $A\underset{N}{_b*_\alpha}M\underset{N}{_\beta*_\alpha}M$,
and
\[(\underline{\mathfrak a}\underset{N}{_b*_\alpha}\id)\underline{\mathfrak a}=(\id\underset{N}{_b*_\alpha}\Gamma)\underline{\mathfrak a}.\]
If there is no ambiguity, we shall say that $\underline{\mathfrak a}$ is the right action. 

So, a measured quantum groupoid $\gG$ can act only on a von Neumann algebra $A$ which is a right module over the basis $N$.

Moreover, if $M$ is abelian, then $\underline{\mathfrak a}
(b(n))=1\underset{N}{_b\otimes_\alpha}\beta(n)$ commutes with
$\underline{\mathfrak a} (x)$ for all $n  \in N$ and $x\in A$, so that $b(N)$ is in the center of $A$. As in that case (\ref{defMQG}) the measured quantum groupoid comes from a measured groupoid $\mathcal G$, we have $N=L^\infty(\mathcal G^{(0)}, \nu)$, and $A$ can be decomposed as $A=\int_{\mathcal G^{(0)}}A^xd\nu (x)$. 

The invariant subalgebra $A^{\underline{\ga}}$ is defined by
\[A^{\underline{\ga}}=\{x\in A\cap b(N)' :
\underline{\ga}(x)=x\underset{N}{_b\otimes_\alpha}1\}.\] As
$A^{\underline{\ga}}\subset b(N)'$, $A$ (and $L^2(A)$) is a
$A^\ga$-$N^{\op}$-bimodule. If $A^{\underline{\ga}}=\mathbb{C}$, the
action $(b, \underline{\ga})$ (or, simply $\underline{\ga}$) is called
\emph{ergodic}.

Let us write, for any $x\in A^+$,
$T_{\underline{\ga}}(x)=(\id\underset{\nu}{_b*_\alpha}\Phi){\underline{\ga}}(x)$. This
formula defines a normal faithful operator-valued weight from $A$ onto
$A^{\underline{\ga}}$, and the action $\underline{\ga}$ will be called \emph{integrable} if $T_{\underline{\ga}}$ is semi-finite (\cite{E4}, 6.11, 12, 13 and 14). 

The \emph{crossed product} of $A$ by $\mathfrak {G}$ via the action
$\underline{\ga}$ is the von Neumann algebra generated by
$\underline{\ga}(A)$ and $1\underset{N}{_b\otimes_\alpha}\widehat{M}'$
(\cite{E2}, 9.1) and is denoted by
$A\rtimes_{\underline{\ga}}\mathfrak {G}$. There exists (\cite{E2},
9.3) an integrable dual action $(1\underset{N}{_b\otimes_\alpha}\widehat{\alpha}, \tilde{\underline{\ga}})$ of $(\widehat{\mathfrak {G}})^{\com}$ on $A\rtimes_{\underline{\ga}}\mathfrak {G}$. 

We have
$(A\rtimes_{\underline{\ga}}\gG)^{\tilde{\underline{\ga}}}=\ga(A)$
(\cite{E2} 11.5), and, therefore, the normal faithful semi-finite
operator-valued weight $T_{\tilde{\underline{\ga}}}$ sends
$A\rtimes_\mathfrak a\mathfrak {G}$ onto $\ga(A)$. Starting with a
normal semi-finite weight $\psi$ on $A$, we can thus  construct a \emph{dual weight} $\tilde{\psi}$ on $A\rtimes_{\underline{\ga}}\mathfrak {G}$ by the formula $\tilde{\psi}=\psi\circ\underline{\ga}^{-1}\circ T_{\tilde{\underline{\ga}}}$ (\cite{E4} 13.2). 

Moreover (\cite{E2} 13.3), the linear set generated by all the
elements $(1\underset{N}{_b\otimes_\alpha}a)\underline{\ga}(x)$, where
$x\in\gN_\psi$ and
$a\in\gN_{\widehat{\Phi}^{\com}}\cap\gN_{\widehat{T}^{\com}}$, is a
core for $\Lambda_{\tilde{\psi}}$, and one can identify the GNS representation of $A\rtimes_{\underline{\ga}}\gG$ associated to the weight $\tilde{\psi}$ with the natural representation on $H_\psi\underset{\nu}{_b\otimes_\alpha}H$ by writing
\[\Lambda_{\tilde{\psi}}[(1\underset{N}{_b\otimes_\alpha}a)\underline{\ga}(x)]=\Lambda_\psi(x)\underset{\nu}{_b\otimes_\alpha}\Lambda_{\widehat{\Phi}^{\com}}(a),\]
which leads to the identification of $H_{\tilde{\psi}}$ with $H_\psi\underset{\nu}{_b\otimes_\alpha}H$. 

Let us suppose now that the action $\underline{\ga}$ is integrable.
Let $\psi_0$ be a normal semi-finite weight on $A^{\underline{\ga}}$,
and let us write $\psi_1=\psi_0\circ T_{\underline{\ga}}$.  If we
write
$V=J_{\tilde{\psi_1}}(J_{\psi_1}\underset{N^{\op}}{_a\otimes_\beta}J_{\widehat{\Phi}})$,
we get a representation of $\gG$ which implements $\ga$ and which we
shall call the \emph{standard implementation} of $\ga$ (\cite{E5}, 3.2
and \cite{E4} 8.6).

Moreover, there exists then a canonical isometry $G$ from $H_{\psi_1}\underset{\psi_0}{_s\otimes_{r}}H_{\psi_1}$ into $H_{\psi_1}\underset{\nu}{_b\otimes_\alpha}H$ such that, for any $x\in \gN_{T_{\ga}}\cap\gN_{\psi_1}$, $\zeta\in D((H_{\psi_1})_b, \nu^{\op})$ and $e$ in $\gN_\Phi$, 
\[(1\underset{N}{_b\otimes_\alpha}J_\Phi eJ_\Phi)G(\Lambda_{\psi_1}(x)\underset{\psi_0}{_s\otimes_r}\zeta)=\ga(x)(\zeta\underset{\nu}{_b\otimes_\alpha}J_\Phi\Lambda_\Phi(e)),\]
where $r$ is the canonical injection of $A^{\underline{\ga}}$ into
$A$, and $s(x)=J_{\psi_1}x^*J_{\psi_1}$  for all $x\in A^{\underline{\ga}}$. There exists a surjective $*$-homomorphism $\pi_{\underline{\ga}}$ from the crossed product $(A\rtimes_{\underline{\ga}}\gG)$ onto $s(A^{\underline{\ga}})'$, defined, for all $X$ in $A\rtimes_\ga\gG$ by $\pi_{\underline{\ga}}(X)\underset{A^{\ga}}{_s\otimes_r}1=G^*XG$. It should be noted that this algebra $s(A^{\underline{\ga}})'$ is the basic construction for the inclusion $A^{\underline{\ga}}\subseteq A$. (\cite{E5}, 3.6)
If the operator $G$ is
 unitary (or, equivalently,  the $*$-homomorphism
 $\pi_{\underline{\ga}}$ is an isomorphism), then the action
 $\underline{\ga}$   is called a  \emph{Galois action} (\cite{E5}
 3.11) and the unitary $\widetilde{G}=\sigma_\nu G$  its \emph{Galois unitary}.

\subsubsection{{\bf Definition}}([E4] 6.1)
\label{left}
A \emph{left action} of $\gG$ on a von Neumann algebra $A$ is a couple
$(a, \underline{\gb})$, where

(i) $a$ is an injective *-homomorphism from $N$ into $A$;

(ii) $\underline{\gb}$ is an injective $*$-homomorphism from $A$ into $M\underset{N}{_\beta*_a}A$; 

(iii) $\underline{\gb}(a(n))=\alpha(n)\underset{N}{_\beta\otimes_a}1$
for all $n\in N$, and
$(\id\underset{N}{_\beta*_a}\underline{\gb})\underline{\gb} =
(\Gamma\underset{N}{_\beta*_a}\id)\underline{\gb}$.

Then, it is clear that $(a, \varsigma_N\underline{\gb})$ is a right
action of $\gG^{\op}$ on $A$. Conversely, if $(b, \underline{\ga})$ is a left action of $\gG$ on $A$, then, $(b, \varsigma_{N}\underline{\ga})$ is a left action of $\gG^{\op}$ on $A$. 

The invariant subalgebra $A^{\underline{\gb}}$ is  defined by
\[A^{\underline{\gb}}=\{x\in A\cap a(N)'
:\underline{\gb}(x)=1\underset{N}{_\beta\otimes_a}x\},\] and
$T_{\underline{\gb}}=(\Phi\circ
R\underset{\nu}{_\beta*_a}\id)\underline{\gb}$ is a normal faithful
operator-valued weight from $A$ onto $A^{\underline{\gb}}$. The action
$\underline{\gb}$ will be called \emph{integrable} if
$T_{\underline{\gb}}$ is semi-finite. It is clear that
$\underline{\gb}$ is integrable if and only if
$\varsigma_N{\underline{\gb}}$ is integrable, and \emph{Galois} if and only
if $\varsigma_N{\underline{\gb}}$ is Galois.
\subsubsection{{\bf Definition}} ([E5] 2.4)
\label{commuting}
Let $(b, \underline{\ga})$ be a right action of $\gG_1=(N_1, M_1,
\alpha_1, \beta_1, \Gamma_1, T_1, T'_1, \nu_1)$ on a von Neumann
algebra $A$ and $(a, \underline{\gb})$ a left action of $\gG_2=(N_2,
M_2, \alpha_2, \beta_2, \Gamma_2, T_2, T'_2, \nu_2)$ on $A$ such that
$a(N_2)\subset b(N_1)'$ We shall say that the actions
$\underline{\ga}$ and $\underline{\gb}$ \emph{commute} if
\begin{align*}
b(N_1)&\subseteq A^{\underline{\gb}}, &
a(N_2)&\subseteq A^{\underline{\ga}}, &
(\underline{\gb}\underset{N_1}{_b*_{\alpha_1}}\id)\underline{\ga}&=(\id\underset{N_2}{_{\beta_2}*_a}\underline{\ga})\underline{\gb}.  
\end{align*}
Let us remark that the  first two properties allow us to write the fiber products $\underline{\gb}\underset{N_1}{_b*_{\alpha_1}}\id$ and $\id\underset{N_2}{_{\beta_2}*_a}\underline{\ga}$. 
\subsubsection{{\bf Definition}} ([E5] 6.5)
\label{Morita}
For $i=1,2$, let $\gG_i=(N_i, M_i, \alpha_i, \beta_i, T_i, T'_i,
\nu_i)$ be a measured quantum groupoid. We shall say that $\gG_1$ is
\emph{Morita equivalent} to $\gG_2$ if there exists a von Neumann
algebra $A$, a Galois right action $(b, \ga)$ of $\gG_1$ on $A$, and a Galois left action $(a, \gb)$ of $\gG_2$ on $A$ such that

(i) $A^\ga=a(N_2)$, $A^\gb=b(N_1)$, and the actions $(b, \ga)$ and $(a, \gb)$ commute; 

(ii) the modular automorphism groups of the normal semi-finite faithful weights $\nu_1\circ b^{-1}\circ T_\gb$ and $\nu_2\circ a^{-1}\circ T_\ga$ commute. 

Then $A$ (or, more precisely, $(A, b, \ga, a, \gb)$) will be called an
\emph{imprimitivity bi-comodule} for $\gG_1$ and $\gG_2$. 

\begin{proposition}
\label{qgaction}
 Let $N$ be a quotient type co-ideal of (the von Neumann version
  of a) compact quantum group $\bfG=(M, \Gamma, \omega, \omega)$, and
  let us consider the measured quantum groupoid $\gG (N, \ad_{|N},
  \Gamma_{|N}, \omega_{|N})$ constructed in \ref{NY}. 
  \begin{enumerate}
  \item  There exists a unitary $V_4$ from
    $H\underset{\omega_{|N}}{_{R_{|N}}\otimes_{\ad_{|N}}}(H\otimes
    H_{\omega_{|N}})$ onto $H\otimes H$ such that
    \[V_4(\xi\underset{\omega_{|N}}{_{R_{|N}}\otimes_{\ad_{|N}}}
    U^{\ad_{|N}}_{\omega_{|N}} (\eta\otimes J_{\omega_{|N}}
    \Lambda_{\omega_{|N}}(x^{*})))=R(x)\xi\otimes\eta\] for all $x\in
    N$ and $\xi$, $\eta$ in $H$.  Moreover, for all $z\in R(N)'$, $x
    \in N$ and $y\in B(H)$,
    \begin{align*}
      V_4(z\underset{N}{_{R_{|N}}\otimes_{\ad_{|N}}}1) &= (z\otimes
      1_H)V_4, \\
      V_4(1_H\underset{N}{_{R_{|N}}\otimes_{\ad_{|N}}}U^{\ad_{|N}}_{\omega_{|N}}(y\otimes
      x^{\op})(U^{\ad_{|N}}_{\omega_{|N}})^*)&= (R(x) \otimes y)V_{4}.
    \end{align*}
  \item  Let $y\in M$ and $\underline{\ga}(y)=V_4^*\Gamma(y)V_4$. Then
    $\underline{\ga}(y)$ belongs to
    $M\underset{N}{_{R_{|N}}*_{\ad_{|N}}}
    (\widehat{\bfG}\ltimes_{\ad_{|N}}N)$.
  \item  Let $x\in N$. Then
    $\underline{\ga}(R(x))=1\underset{N}{_{R_{|N}}\otimes_{\ad_{|N}}}\widehat{\beta}(x)$,
    where $\widehat{\beta}$ is the canonical anti-representation of
    the basis $N$ into $\widehat{\bfG}\ltimes_{\ad_{|N}}N$.
  \item  $(R_{|N}, \underline{\ga})$ is a right action of $\gG (N,
    \ad_{|N}, \Gamma_{|N}, \omega_{|N})$ on $M$.
  \item  The action $\underline{\ga}$ is ergodic, and integrable. More
    precisely, the canonical operator-valued weight
    $T_{\underline{\ga}}$ is equal to the Haar state $\omega$.
  \item  The action $\underline{\ga}$ is Galois and its Galois unitary
    is $V_4^*W^*\sigma$.
  \end{enumerate}
\end{proposition}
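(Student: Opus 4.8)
## Proof plan for Proposition~\ref{qgaction}

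The plan is to build the unitary $V_4$ directly from the implementations $U^{\ad_{|N}}_{\omega_{|N}}$ and $U^\Gamma$ (which here is $W$, since $\ad$ and $\Gamma$ are the actions of the canonical example of \ref{basicex}), exploiting the fact that $N$ is a \emph{right} co-ideal so that $\Gamma(N)\subseteq M\otimes N$ and $R(N)$ is a right co-ideal invariant under $\ad$. First I would verify that the vectors $U^{\ad_{|N}}_{\omega_{|N}}(\eta\otimes J_{\omega_{|N}}\Lambda_{\omega_{|N}}(x^*))$ lie in $D({}_{\ad_{|N}}(H\otimes H_{\omega_{|N}}),\omega_{|N})$ and compute $R^{\ad_{|N},\omega_{|N}}$ of them, exactly as in \ref{propV}(i) and \ref{V3}; this is a routine translation since $\omega_{|N}=\varphi\circ R_{|N}$ is (by \ref{compactsubgroup}, in the compact case $\delta=1$) invariant, so \ref{prop2inv}(ii) applies with $k=1$. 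The scalar-product computation on elementary tensors $\xi\underset{\omega_{|N}}{_{R_{|N}}\otimes_{\ad_{|N}}}U^{\ad_{|N}}_{\omega_{|N}}(\eta\otimes J\Lambda(x^*))$ then reduces, via $R\circ R=\id$ and $R(x)^*R(y)=R(xy^*)$, to $(R(x)\xi\otimes\eta\,|\,R(x')\xi'\otimes\eta')$, which establishes that $V_4$ is a well-defined isometry; surjectivity (hence unitarity) follows because the images of such vectors are total in $H\otimes H$. The two intertwining formulas in (i) are then immediate from $R^{\ad_{|N},\omega_{|N}}$ being $N$-linear for the ${}_{\ad_{|N}}$-side and from the definition of $x\underset{N}{_{R_{|N}}\otimes_{\ad_{|N}}}y$ on elementary tensors.

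For (ii)--(iv) I would mimic \ref{th1} and \ref{th3}. Part (iii): apply $\Gamma$ to $R(x)$, use $\varsigma\Gamma=(R\otimes R)\Gamma R$ (equation \eqref{eq:coprod} of \ref{basicex}) together with the relation $V_4(1\underset{N}{_{R_{|N}}\otimes_{\ad_{|N}}}U^{\ad_{|N}}_{\omega_{|N}}(y\otimes x^{\op})(U^{\ad_{|N}}_{\omega_{|N}})^*)=(R(x)\otimes y)V_4$ from (i), reading off $\widehat\beta(x)$ on the right-hand tensor leg; one should match this with the anti-representation $\widehat\beta$ of $N$ into $\widehat\bfG\ltimes_{\ad_{|N}}N$ coming from \ref{ThBC} applied to $(N,\ad_{|N},\Gamma_{|N})$. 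Part (ii): $\Gamma(y)$ for $y\in M$ commutes with $R(N)'\otimes 1$ in the appropriate sense; conjugating by $V_4^*$ and using the first intertwining formula of (i) shows $\underline{\ga}(y)$ commutes with everything of the form $z\underset{N}{_{R_{|N}}\otimes_{\ad_{|N}}}1$ for $z\in R(N)'$ and with $1\underset{N}{_{R_{|N}}\otimes_{\ad_{|N}}}Y$ for $Y\in(\widehat\bfG\ltimes_{\ad_{|N}}N)'$ — but the latter requires knowing that the second leg lands inside $\widehat\bfG\ltimes_{\ad_{|N}}N$, which is where \ref{propQTCI}(ii) ($\ad(N)\subseteq\widehat M\otimes N$) and the explicit shape of $V_4^*\Gamma(y)V_4$ enter. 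Part (iv) then is the coassociativity check $(\underline{\ga}\underset{N}{_b*_\alpha}\id)\underline{\ga}=(\id\underset{N}{_b*_\alpha}\widetilde\Gamma)\underline{\ga}$ plus the normalization $\underline{\ga}(R_{|N}(n))=1\underset{N}{_{R_{|N}}\otimes_{\ad_{|N}}}\widehat\beta(n)$ from (iii) (matching $\widehat\beta$ with the $\beta$ of the dual groupoid); coassociativity is a diagram-chase of the same flavour as in \ref{th1}(ii), reduced to the pentagon for $W$ and coassociativity of $\widehat\Gamma$ after conjugating everything to $H\otimes H\otimes(\dots)$ via $V_4$.

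For (v), ergodicity: $M^{\underline{\ga}}\subseteq R(N)'\cap M$, and $\underline{\ga}(y)=y\underset{N}{_{R_{|N}}\otimes_{\ad_{|N}}}1$ forces, after conjugating by $V_4$, that $\Gamma(y)$ is supported on the first leg, i.e.\ $\Gamma(y)=y\otimes 1$; since $\bfG$ is compact (in particular $\Gamma(y)=y\otimes 1\Rightarrow y\in\mathbb C$ by faithfulness of $\omega$ and left-invariance), $y$ is scalar. Integrability and the identification $T_{\underline{\ga}}=\omega$: $T_{\underline{\ga}}=(\id\underset{\nu}{_b*_\alpha}\Phi)\underline{\ga}$ where $\Phi$ is the left-invariant weight of $\gG(N,\ad_{|N},\Gamma_{|N},\omega_{|N})$, namely (by \ref{th3} and \ref{defW}) the dual weight $\widetilde{\hat\nu}$ on the crossed product; tracing through $V_4$ and the left-invariance of $\widehat\varphi\circ\widehat R$ one recovers $(\id\otimes(\widehat\varphi\circ\widehat R))\circ\ad$ composed appropriately, which collapses to $\omega\cdot 1$ because $\ad$ is the adjoint action and $\omega$ is its (relatively) invariant state — this is the quantum analogue of ``averaging over a conjugacy orbit gives back the Haar state''. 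Finally (vi): compute the canonical isometry $G$ of \ref{right} for this action using the explicit $V_4$, show it coincides with $\sigma(W\sigma V_4)$ up to the standard identifications, hence is unitary, so the action is Galois with Galois unitary $\widetilde G=\sigma_\nu G=V_4^*W^*\sigma$. \textbf{The main obstacle} I anticipate is (vi): correctly setting up the canonical isometry $G$ from \cite{E5}~3.2--3.6 in this concrete situation — matching the basic-construction algebra $s(A^{\underline{\ga}})'=s(\mathbb C)'=B(H)$ with $\pi_{\underline{\ga}}$, and pinning down all the relative-tensor-product identifications so that the formula $\widetilde G=V_4^*W^*\sigma$ comes out with the correct legs and adjoints — rather than any single hard estimate; the algebra is elementary but the bookkeeping of fiber products, $J$'s, and the three-fold appearance of $H$ is delicate.
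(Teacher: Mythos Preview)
Your plan is essentially the paper's proof: the same construction of $V_4$ via \ref{propV}(i), the same use of \ref{ThBC} for (iii), the same ergodicity argument via $\Gamma(y)=y\otimes 1$, and the same explicit computation of $G$ for (vi). Two small corrections: in (ii) you want $\underline{\ga}(y)$ to commute with $z'\underset{N}{_{R_{|N}}\otimes_{\ad_{|N}}}1$ for $z'\in M'$ (not all of $R(N)'$), which is immediate, and then with $1\underset{N}{_{R_{|N}}\otimes_{\ad_{|N}}}Y'$ for $Y'\in(\widehat{\bfG}\ltimes_{\ad_{|N}}N)'$ --- the paper handles the latter by writing the commutant explicitly via \ref{lemBC}(i) and checking the two generating pieces separately; and in (v) the paper does not invoke $\widehat\varphi\circ\widehat R$ directly but rather organizes (iv) and (v) through one large commutative diagram whose cells encode both the coassociativity of $\underline{\ga}$ and the identity $(\id\underset{N}{*}T_{\widetilde{\ad_{|N}}})\circ\underline{\ga}=\ad_{V_4^*}\circ(\id\otimes\id\otimes\omega)\circ\Gamma^{(2)}$, from which $T_{\underline{\ga}}=\omega$ drops out by right-invariance of $\omega$.
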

\begin{proof}
(i) By \ref{propV}(i) applied to the  braided-commutative $\widehat{\bfG}$-Yetter-Drinfel'd algebra $(N, \ad_{|N}, \Gamma_{|N})$, we get that $U^{\ad_{|N}}_{\omega_{|N}} (\eta\otimes J_{\omega_{|N}}\Lambda_{\omega_{|N}}(x^{*}))$ belongs to $D((H\otimes H_{\omega_{|N}})_{\ad_{|N}}, \omega_{|N})$ and that \[R^{\ad_{|N}, \omega_{|N}}(U^{\ad_{|N}}_{\omega_{|N}} (\eta\otimes J_{\omega_{|N}} \Lambda_{\omega_{|N}}(x^{*})))=U^{\ad_{|N}}_{\omega_{|N}}l_\eta J_{\omega_{|N}}x^*J_{\omega_{|N}}.\]
Therefore, using standard arguments, we get an isometry $V_4$ given by
the formula above. As its image is trivially dense in $H\otimes H$, we
get that $V_4$ is unitary. The commutation relations are straightforward.

(ii) Thanks to the commutation property in (i), $\underline{\ga}(y)$
belongs to $M\underset{N}{_{R_{|N}}*_{\ad_{|N}}}B(H\otimes
H_{\omega_{|N}})$.

By \ref{BC}(i), 
  \[(\widehat{\bfG}\ltimes_{\ad_{|N}} N)'= U^{\ad_{|N}}_{\omega_{|N}}(\widehat{\bfG}^{\op}
    \ltimes_{\ad_{|N}^{\op}} N^{\op})(U^{\ad_{|N}}_{\omega_{|N}})^{*} 
    =  U^{\ad_{|N}}_{\omega_{|N}} (M' \otimes
1 \cup \ad_{|N}^{\op}(N^{\op}))''(U^{\ad_{|N}}_{\omega_{|N}})^{*}.\]
 On one hand, the commutation relations in (i) imply
\[  1 \underset{\omega_{|N}}{_{R_{|N}}\otimes_{\ad_{|N}}}  U^{\ad_{|N}}_{\omega_{|N}}(M' \otimes
1)(U^{\ad_{|N}}_{\omega_{|N}})^{*}=V_4^*(1_H\otimes M')V_4,\]
which evidently commutes with $\underline{\ga}(M)=V_4^{*}\Gamma(M)V_4$. 
On the other hand, if 
$z \in \widehat M$ and $x\in N$, then
\[V_4(1 \underset{\omega_{|N}}{_{R_{|N}}\otimes_{\ad_{|N}}}  U^{\ad_{|N}}_{\omega_{|N}}(z \otimes
x^{\op})( U^{\ad_{|N}}_{\omega_{|N}})^{*})V_4^{*} = \widehat Jx^{*}\widehat J \otimes z=
(\widehat J J \otimes 1)\sigma (z \otimes x^{\op})\sigma (J\widehat J \otimes 1)\]
and hence
\begin{align*}
V_4(1 \underset{\omega_{|N}}{_{R_{|N}}\otimes_{\ad_{|N}}}  U^{\ad_{|N}}_{\omega_{|N}}\ad_{|N}^{\op}(N^{\op})( U^{\ad_{|N}}_{\omega_{|N}})^{*})V_4^{*}
&=
(\widehat J J \otimes 1)\sigma \ad_{|N}^{\op}(N^{\op})\sigma (J\widehat J \otimes 1)\\
&=
(\widehat{J}\otimes J)\sigma \ad_{|N}(N)\sigma (\widehat{J}\otimes J)\\
&=
(\widehat{J}\otimes J)W(N\otimes 1_H)W^*(\widehat{J}\otimes J)\\
&=
W^*(R(N)\otimes 1_H)W
\end{align*}
which commutes with $\Gamma(M)=W^*(1_H\otimes M)W$. 

Therefore,  $\underline{\ga}(y)$ commutes with $1\underset{N}{_{R_{|N}}\otimes_{\ad_{|N}}}
(\widehat{\bfG}\ltimes_{\ad_{|N}} N)'$.

(iii)
 Using \ref{ThBC} applied to $(\widehat{\bfG}, \ad_{|N},
 \Gamma_{|N})$, we get that
 $\widehat{\beta}(x)=U^{\ad_{|N}}_{\omega_{|N}}\alpha^{\op}(x^{\op})(U^{\ad_{|N}}_{\omega_{|N}})^*$,
 where we write $\alpha=\Gamma_{|N}$ and
 $\alpha^{\op}(x^{\op})=(R\otimes .^{\op})\Gamma (x)\in M\otimes
 N^{\op}$ to avoid confusion with $\Gamma^{\op}$.  Then the
 commutation relations in (i)  imply that
  \begin{align*}
    V_4(1_H\underset{N}{_{R_{|N}}\otimes_{\ad_{|N}}}\widehat{\beta}(x))V_4^*&=
    V_4(1_H\underset{N}{_{R_{|N}}\otimes_{\ad_{|N}}}U^{\ad_{|N}}_{\omega_{|N}}\alpha^{\op}(x^{\op})(U^{\ad_{|N}}_{\omega_{|N}})^*)V_4^* 
  \end{align*}
is equal to $
\varsigma(R \otimes R)(\Gamma(x)) = \Gamma(R(x))
= V_4\underline{\ga}(R(x))V_4^*$.

(iv) Let us first fix notation. We denote by
\begin{align*}
 \varsigma \circ \widetilde{\ad_{|N}} \colon \widehat
 {\bfG}\ltimes_{\ad_{|N}} N \to  (\widehat
{\bfG}\ltimes_{\ad_{|N}} N) \otimes M
\end{align*}
the dual action followed by the flip. Standard arguments show that
there exists a unitary
\begin{align*}
  V_5 \colon (H \otimes H_{\omega_{|N}}) \rtensor{\widehat\beta}{\omega_{|N} }{\ad_{|N}} (H
  \otimes H_{\omega_{|N}}) \to   H \otimes H_{\omega_{|N}} \otimes H
\end{align*}
such that
\begin{align*}
  V_5(\Xi \rtensor{ \widehat\beta}{\omega_{|N}}{\ad_{|N}}  U^{\ad_{|N}}_{\omega_{|N}}(\eta \otimes
  \Lambda_{\omega_{|N}}(x^{*}))) &=  \widehat{\beta}(x)\Xi \otimes \eta
\end{align*}
for all $\Xi \in H\otimes H_{\omega_{|N}}$, $\eta \in H$, $x \in N$.

We need to prove commutativity of the following diagram,
\begin{align}  \tag{*}
  \xymatrix@C=25pt@R=25pt{
    M \ar[r]^(0.4){\Gamma} \ar@{}[rd]|{(1)} \ar[d]_{\Gamma} & M\otimes M
    \ar[r]^(0.4){\ad_{V_4^{*}}}  \ar[d]^{\Gamma \otimes \id} \ar@{}[rd]|{(2)} & M \rast{R_{}}{N}{\ad_{}} (\widehat {\bfG}
   \ltimes N)  \ar[d]^{\Gamma \ast \id}\\
   M \otimes M \ar[r]_(0.4){\id \otimes \Gamma} \ar[d]_{\ad_{V_4^{*}}} \ar@{}[rd]|{(3)}& M \otimes M \otimes M
   \ar[r]_(0.4){\id \otimes \ad_{V_4^{*}}} \ar[d]^{\ad_{V_4^{*}}\otimes \id}
   \ar@{}[rd]|{(4)} &
   (M\otimes M) \rast{(\Gamma\circ R_{})}{N}{\ad_{}} (\widehat{ \bfG} \ltimes N) \ar[d]^{\ad_{V_4^{*}} \ast \id}
 \\
 M \rast{R_{}}{N}{\ad_{}} (\widehat{ \bfG} \ltimes_{\ad_{}} N) \ar[r]_(0.4){\id \ast
   \varsigma \widetilde{\ad_{}}} & M \rast{R_{}}{N}{(\ad_{} \otimes 1)} ((\widehat
{ \bfG}\ltimes_{\ad_{}} N) \otimes M) \ar[r]_(0.48){\id \ast \ad_{V_5^{*}}} & M \rast{R_{}}{N}{\ad_{}} (\widehat {\bfG}\ltimes_{\ad_{}}
 N) \rast{\widehat{\beta}}{N}{\ad_{}} (\widehat \bfG \ltimes N),
}
\end{align}
where we dropped the subscripts
from $R$ and $\ad$. 

Commutativity of cells (1) and (2) is evident or easy.  

Let us show that cell
(3) commutes.  By definition,
\begin{align*}
  (\varsigma \circ \widetilde{\ad_{|N}})(X) = \widehat W_{13}^{\com}(X \otimes 1)(\widehat W_{13}^{\com})^{*}
\end{align*}
for all $X \in \widehat{\bfG} \ltimes_{\ad_{|N}} N$, where
\begin{align*}
  \widehat W^{\com} = (\widehat J \otimes \widehat J)\widehat W(\widehat J \otimes \widehat J)
  \in \widehat M' \otimes M,
\end{align*}
and $\Gamma(x) = \widehat W^{\com}(x \otimes 1)\widehat W^{\com}$ for all
$x\in M$. Therefore, 
\begin{align} \label{eq:cell3-1}
  (\ad_{V_4^{*}} \otimes \id)((\id \otimes \Gamma)(Y))&= \ad_{(V_4^{*}
  \otimes 1_{H})(1_{H} \otimes \widehat W^{\com})}(Y \otimes 1), \\
 \label{eq:cell3-2}
(\id \ast \varsigma \circ \widetilde{\ad_{|N}})(\ad_{V_4^{*}}(Y)) &=
 \ad_{(1 \rtensor{R_{|N}}{\omega_{|N}}{(\ad_{|N}\otimes 1)}\widehat W^{\com}_{13})(V_4^{*}\otimes
 1_{H})}(Y\otimes 1)
\end{align}
for all $Y\in M\otimes M$.
 To prove that the two expressions coincide,
it  suffices to show that the following diagram (**) commutes:
\begin{align*} \tag{**}
  \xymatrix@C=80pt{
    H \rtensor{R_{|N}}{\omega_{|N}}{\ad_{|N}}(H \otimes H_{\omega_{|N}}) \otimes H
    \ar[r]^{1 \rtensor{R_{|N}}{\omega_{|N}}{(\ad_{|N}\otimes 1)} \widehat W^{\com}_{13}}
    \ar[d]_{V_4\otimes 1_{H}}& 
    H \rtensor{R_{|N}}{\omega_{|N}}{\ad_{|N}}(H \otimes H_{\omega_{|N}}) \otimes H
         \ar[d]^{V_4\otimes 1_{H}} \\
         H\otimes H\otimes H \ar[r]_{\widehat W_{23}^{\com}} & H\otimes
         H\otimes H
  }
\end{align*}
 But since the first legs of $U^{\ad_{|N}}_{\omega_{|N}} \in \widehat M \otimes
 B(H_{\omega_{|N}})$ and $\widehat W^{\com} \in (\widehat M)' \otimes M$ commute, 
\begin{multline*}
  (V_4 \otimes 1_{H})(1 \rtensor{R_{|N}}{\omega_{|N}}{(\ad_{|N}\otimes 1)} \widehat W^{\com}_{13})(\xi
  \rtensor{R_{|N}}{\omega_{|N}}{\ad_{|N}} U^{\ad_{|N}}_{\omega_{|N}}(\eta \otimes
x^{\op}\Lambda_{\omega_{|N}}(1)) \otimes \vartheta)=\\
=(V_4 \otimes 1_{H})(\xi \rtensor{R_{|N}}{\omega_{|N}}{(\ad_{|N}\otimes 1)}
(U^{\ad_{|N}}_{\omega_{|N}})_{12}  \widehat W^{\com}_{13}(\eta \otimes x^{\op}\Lambda_{\omega_{|N}}(1)
\otimes \vartheta))= \\
= R(x)\xi \otimes \widehat W^{\com}(\eta \otimes \vartheta)
= \widehat W^{\com}_{23}(V_4 \otimes 1_{H})(\xi
  \rtensor{R_{|N}}{\omega_{|N}}{\ad_{|N}} U^{\ad_{|N}}_{\omega_{|N}}(\eta \otimes
x^{\op}\Lambda_{\omega_{|N}}(1)) \otimes \vartheta).
\end{multline*}
for all $\vartheta\in H$. Therefore,  diagram (**) commutes, the expressions \eqref{eq:cell3-1} and \eqref{eq:cell3-2}
coincide, and
cell (3) commutes.

To see that cell (4) commutes as well, consider the following diagram:
\begin{align*}
  \xymatrix@C=80pt{
    H \rtensor{R_{|N}}{\omega_{|N}}{\ad_{|N}} (H \otimes H_{\omega_{|N}})
    \rtensor{\widehat{\beta}}{\omega_{|N}}{\ad_{|N}} (H \otimes H_{\omega_{|N}}) \ar[r]^(0.55){1
        \otimes V_{5}} \ar[d]_{V_{4} \otimes 1} &
    H \rtensor{R_{|N}}{\omega_{|N}}{\ad_{|N}} (H \otimes H_{\omega_{|N}}) \otimes H
    \ar[d]^{V_{4} \otimes 1} \\
    (H \otimes H) \rtensor{(\Gamma \circ R_{|N})}{\omega_{|N}}{\ad_{|N}} (H\otimes
    H_{\omega_{|N}}) \ar[r]_(0.55){1 \otimes V_4} & H\otimes  H\otimes H
  }
\end{align*}
We show that this diagram commutes, and then cell (4)  commutes as well. We
first compute $(V_4 \otimes 1)(1 \otimes V_5)(\xi \rtensor{R_{|N}}{\omega_{|N}}{\ad_{|N}}
  U^{\ad_{|N}}_{\omega_{|N}}(\eta \otimes x^{\op}\Lambda_{\omega_{|N}}(1))
  \rtensor{\widehat{\beta}}{\omega_{|N}}{\ad_{|N}} U^{\ad_{|N}}_{\omega_{|N}}(\vartheta \otimes
  y^{\op}\Lambda_{\omega_{|N}}(1)))$. 

We use (iii) and find that this vector is equal to
\[ (V_4\otimes 1) (\xi \underset{\omega_{|N}}{_{R_{|N}}\otimes_{\ad_{|N}} } \widehat{\beta}(y)U^{\ad_{|N}}_{\omega_{|N}}(\eta
  \otimes x^{\op}\Lambda_{\omega_{|N}}(1)) \otimes \vartheta) )\]
and therefore
\begin{multline*}
(\Gamma(R(y)) \otimes 1)  (V_{5} \otimes 1)(\xi \rtensor{R_{|N}}{\omega|_{N}}{\ad_{|N}}
U^{\ad_{|N}}_{\omega_{|N}}(\eta \otimes x^{\op}\Lambda_{\omega_{|N}}(1)) \otimes
  \vartheta) 
  =\\=
(\Gamma(R(y)) \otimes 1) (R(x)\xi \otimes \eta \otimes \vartheta).
\end{multline*}
On the other hand,
\begin{align*}
  (1 \otimes V_{4})(V_{4} \otimes 1)(\xi \rtensor{R_{|N}}{\omega_{|N}}{\ad_{|N}}
  U^{\ad_{|N}}_{\omega_{|N}}(\eta \otimes x^{\op}\Lambda_{\omega_{|N}(1)})
  \rtensor{\widehat{\beta}}{\omega_{|N}}{\ad_{|N}} U^{\ad_{|N}}_{\omega_{|N}}(\vartheta \otimes
  y^{\op}\Lambda_{\omega_{|N}}(1))) 
\end{align*}
is equal to
\begin{align*}
  (1 \otimes V_4)((R(x)\xi \otimes \eta) \rtensor{(\Gamma\circ
    R_{|N})}{\omega_{|N}}{\ad_{|N}} U^{\ad_{|N}}_{\omega_{|N}}(\vartheta \otimes y^{\op}\Lambda_\omega(1)))
  &=
\Gamma(R(y))( R(x)\xi\otimes \eta) \otimes \vartheta
\end{align*}
as well, which finishes the proof of (iv). 

(v)
Let $y \in M\cap R(N)'$ and assume
  $\underline{\ga}(y) = y \rtensor{R_{|N}}{N}{\ad_{|N}} 1$. Then by (i), 
  $\Gamma(y)V_{4} = V_{4}(y \rtensor{R_{|N}}{N}{\ad_{|N}} 1)=(y \otimes
  1_{H})V_4$ and hence $\Gamma(y)=y\otimes 1_{H}$, whence $y$ is a scalar
and $\underline{\ga}$ is ergodic. 

The canonical operator-valued weight $T_{\underline{\ga}}$ is equal to $(\id \rast{R_{|N}}{N}{\ad_{|N}} \widehat\Phi) \circ \underline{\ga}$, where $\widehat \Phi=\omega \circ \ad^{-1} \circ T_{\widetilde{\ad_{|N}}}$, and $T_{\widetilde{\ad_{|N}}}$ is the left-invariant weight from  $\widehat {\bfG}
\ltimes_{\ad_{|N}} N$ to $\ad(N)$, i.e.\ the operator-valued weight 
arising from the dual action on $\widehat{ \bfG} \ltimes_{\ad_{|N}} N$, that is,
$(\omega \otimes \id)\circ \widetilde{\ad_{|N}}$. In fact, these operator-valued weights are conditional expectations. 

We write $T_{\widetilde{\ad_{|N}}} = (\id \otimes \omega) \circ
  \varsigma \widetilde{\ad_{|N}}$ and use commutativity of the cells (1) and
  (3) in diagram (*), and find that for any $x\in M^+$,
\begin{align*}
  (\id \rast{R_{|N}}{\omega_{|N}}{\ad_{|N}} T_{\widetilde{\ad_{|N}}}) \circ \underline{\ga}(x)
  &= (\id \rast{R_{|N}}{\omega_{|N}}{\ad_{|N}} T_{\widetilde{\ad_{|N}}}) \circ \ad_{V_4^*} \circ \Gamma (x)\\
  &= ((\id \rast{R_{|N}}{\omega_{|N}}{\ad_{|N}} \id) \otimes \omega) \circ (\id
  \rast{R_{|N}}{\omega_{|N}}{\ad_{|N}} \varsigma\widetilde{\ad_{|N}}) \circ \ad_{V_4^*} \circ
  \Gamma(x)
  \\
  &= ((\id \rast{R_{|N}}{\omega_{|N}}{\ad_{|N}} \id) \otimes \omega) \circ
  (\ad_{V_{4}^{*}} \otimes \id) \circ \Gamma^{(2)} (x)\\
  &= \ad_{V_{4}^{*}} \circ (\id \otimes \id \otimes \omega) \circ
  \Gamma^{(2)}(x)
\\ &  = \ad_{V_{4}^{*}} \circ (1_{M\otimes M} \cdot \omega)(x)\\
&=1_{(M\rast {R_{|N}}{\omega_{|N}}{\ad_{|N}} \widehat{\bfG}\ltimes_{\ad_{|N}} N)} \cdot \omega(x),
  \end{align*}
  where $\Gamma^{(2)}=(\Gamma \otimes \id)\circ \Gamma$ and, for any von Neumann algebra $P$, 
  $1_{P} \cdot \omega$ denotes the positive application
  $x\mapsto \omega(x)1_{P}$. Therefore, we get (v). 
  
 As $\ga$ is integrable and ergodic, by (\cite{E5}, 3.8) or \ref{left} , there exists an isometry $G$ from
  $H\otimes H$ to $H \rtensor{R_{|N}}{\omega_{|N}}{\ad_{|N}} H_{\omega_{|N}}$ such that, for all $\zeta\in D(H_{R|N}, (\omega_{|N})^{\op})$, $x\in M$ and $e\in \widehat{\bfG}\ltimes_{\ad_{|N}}N$,
\[(1 \rtensor{R_{|N}}{N}{\ad_{|N}} J_{\widehat{ \Phi}} e J_{\widehat{ \Phi}})G(x\Lambda_\omega(1)
  \otimes \zeta) = \underline{\ga}(x)( \zeta \rtensor{R_{|N}}{\omega_{|N}}{\ad_{|N}} J_{\widehat{ \Phi}}\Lambda_{\widehat{ \Phi}}(e)).\]
 Let $y^{*} \in M$ and let us take $e=y^*\otimes 1\in \widehat{\bfG}\ltimes_{\ad_{|N}}N$. 
The relation $J_{\widehat{ \Phi}}=U^{\ad_{|N}}_{\omega_{|N}}(J\otimes J_{\omega_{|N}})$ implies
 $J_{\widehat{ \Phi}}eJ_{\widehat{ \Phi}} = U^{\ad_{|N}}_{\omega_{|N}}(y^{\op} \otimes
1)(U^{\ad_{|N}}_{\omega_{|N}})^{*}$ and
\[U^{\ad_{|N}}_{\omega_{|N}}(y^{\op}\Lambda_\omega(1) \otimes \Lambda_{\omega_{|N}}(1))
 =
U^{\ad_{|N}}_{\omega_{|N}}(Jy^{*}\Lambda_\omega(1) \otimes \Lambda_{\omega_{|N}}(1)) =  U^{\ad_{|N}}_{\omega_{|N}}(J\otimes J_{\omega_{|N}})\Lambda_{\widehat{\Phi}}(e) =
 J_{\widehat{\Phi}}\Lambda_{\widehat {\Phi}}(e).\]
We then get that for all $\xi\in H$, $z\in M$, the vector $ (1 \underset{N}{_{R_{|N}}\otimes_{\ad_{|N}}} J_{\widehat{ \Phi}}eJ_{\widehat{ \Phi}})V_4^*(\xi \otimes z\Lambda_\omega(1))$ is equal to
 \begin{multline*}
 (1 \rtensor{R_{|N}}{N}{\ad_{|N}} U^{\ad_{|N}}_{\omega_{|N}}(y^{\op} \otimes
  1)(U^{\ad_{|N}}_{\omega_{|N}})^{*}) (\xi \rtensor{R_{|N}}{\omega_{|N}}{\ad_{|N}}
  U^{\ad_{|N}}_{\omega_{|N}}(z\Lambda_\omega(1)  \otimes \Lambda_{\omega_{|N}}(1))) =\\
 \xi \rtensor{R_{|N}}{\omega_{|N}}{\ad_{|N}} U^{\ad_{|N}}_{\omega_{|N}}(y^{\op}
z\Lambda_\omega(1)  \otimes \Lambda_{\omega_{|N}}(1)))=
 V_4^*(\xi \otimes y^{\op}z\Lambda_\omega(1)).
\end{multline*}
Therefore, 
\begin{align*}
 (1 \rtensor{R_{|N}}{N}{\ad_{|N}} J_{\widehat{ \Phi}}eJ_{\widehat{ \Phi}})V_4^*W^{*}\sigma (x\Lambda_\omega(1) \otimes \zeta) 
 &=
 V_4^{*}(1 \otimes y^{\op})W^{*}(\zeta\otimes x\Lambda_\omega(1)) \\
  &=
   V_4^{*}(1 \otimes y^{\op})\Gamma(x)(\zeta \otimes \Lambda_\omega(1)) \\
 &=
 V_4^{*}\Gamma(x)(\zeta \otimes y^{\op}\Lambda_\omega(1)) \\
 &=
 \underline{\ga}(x)V_4^{*}(\zeta \otimes y^{\op}\Lambda_\omega(1)) \\
 &=
\underline{\ga}(x)(\zeta \rtensor{R_{|N}}{\omega_{|N}}{\ad_{|N}}
U^{\ad_{|N}}_{\omega_{|N}}(y^{\op}\Lambda_\omega(1)\otimes \Lambda_{\omega_{|N}}(1)))\\
&=
 \underline{\ga}(x)(\zeta \rtensor{R_{|N}}{\omega_{|N}}{\ad_{|N}}J_{\widehat{\Phi}}\Lambda_{\widehat {\Phi}}(e)).
\end{align*}
Thus, we get that $(1 \rtensor{R_{|N}}{N}{\ad_{|N}} J_{\widehat{ \Phi}}eJ_{\widehat{ \Phi}})V_4^*W^{*}\sigma=
(1 \rtensor{R_{|N}}{N}{\ad_{|N}} J_{\widehat{ \Phi}}eJ_{\widehat{ \Phi}})G$
for all $e=y^*\otimes 1$, and so $G=V_4^*W^{*}\sigma$.
\end{proof}
\begin{theorem}
\label{ThMorita}
 Let $\bfG=(M, \Gamma, \omega, \omega)$ be a (von Neumann version
  of a) compact quantum group, $\bfG_1$ a compact quantum subgroup,
  and $N$ the quotient type co-ideal. Then the von Neumann algebra
  $M$, equipped with the right Galois action $(R_{|N},
  \underline{\ga})$ of $\widehat{\bfG}\ltimes_{\ad_{|N}}N$ constructed
  in \ref{qgaction} and the left Galois action $\Gamma_l$ of $\bfG_1$
  defined in \ref{QTCI}, is an imprimitivity bimodule which is a
  Morita equivalence between the compact quantum group $\bfG_1$ and
  the measured quantum groupoid $\mathfrak{G}(N,\ad_{|N},\Gamma_{|N},
\omega_{|N})$.
\end{theorem}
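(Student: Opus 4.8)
The plan is to show that the data $(M, R_{|N}, \underline{\ga}, a, \Gamma_l)$ satisfies all the conditions of Definition~\ref{Morita}, where $a$ is the embedding of the basis $N$ corresponding to the left action $\Gamma_l$ of $\bfG_1$, and $\underline{\ga}$ is the right action of $\gG(N,\ad_{|N},\Gamma_{|N},\omega_{|N})$ on $M$ established in \ref{qgaction}. Most of the required properties are already in place: by \ref{qgaction}(iv) and (vi), $(R_{|N},\underline{\ga})$ is a Galois right action with Galois unitary $V_4^*W^*\sigma$, and by \ref{qgaction}(v) it is ergodic and integrable with $T_{\underline{\ga}}=\omega$, so that $M^{\underline{\ga}}=\mathds{C}=\alpha_1(N_1)$ when $\bfG_1$ is regarded as a measured quantum groupoid over the trivial basis. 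On the other side, $\Gamma_l$ is a genuine (left) action of the compact quantum group $\bfG_1$, hence automatically Galois in the trivial-basis sense; by \ref{propQTCI}(iv), $M^{\Gamma_l}=R(N)=R_{|N}(N)$, which is exactly the image of the first basis $N$ inside $M$.

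First I would spell out the commutation of the two actions in the sense of \ref{commuting}: we must check that $R_{|N}(N)\subseteq M^{\Gamma_l}$, that $\mathds{C}\subseteq M^{\underline{\ga}}$ (trivial), and that $(\Gamma_l\underset{N}{_{R_{|N}}*_{\ad_{|N}}}\id)\underline{\ga}=(\id*\underline{\ga})\Gamma_l$. The first inclusion is \ref{propQTCI}(iv) again. The compatibility identity is the substantive computation: it follows from the defining relation $(\Gamma\otimes\id)\Gamma_r=(\id\otimes\Gamma_r)\Gamma$ of \ref{defQTCI} together with the relation $(\id\otimes\Gamma_l)\Gamma=(\Gamma_l\otimes\id)\Gamma$, after transporting $\underline{\ga}(y)=V_4^*\Gamma(y)V_4$ through the unitary $V_4$ of \ref{qgaction}(i); since $\Gamma_l$ commutes with $\Gamma$ in the appropriate sense and $V_4$ intertwines $\Gamma$ with $\underline{\ga}$, the two ways of building the iterated map on $M$ agree. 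I would organize this as a commuting diagram analogous to diagram~(*) in the proof of \ref{qgaction}(iv), reusing the explicit formulas for $V_4$, $V_5$ and the dual action already derived there.

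Next I would verify condition (ii) of \ref{Morita}: the modular automorphism groups of $\nu_1\circ a^{-1}\circ T_{\Gamma_l}$ and of $\omega_{|N}\circ R_{|N}^{-1}\circ T_{\underline{\ga}}$ must commute. Since the first weight is just (the pull-back of) the Haar state of $\bfG_1$ on $M^{\Gamma_l}=R(N)$ and the second is $\omega_{|N}\circ R_{|N}^{-1}\circ\omega=\omega$ restricted appropriately—using $T_{\underline{\ga}}=\omega$ from \ref{qgaction}(v)—both weights are ultimately governed by the modular group of the Haar state $\omega$ of the compact quantum group $\bfG$, which is the ground algebra. The restriction of $\sigma^\omega$ to $N$ and to $R(N)$ and the conditional expectations onto these subalgebras all commute with $\sigma^\omega$ (this is exactly the content of the invariance of the conditional expectation $E$ recorded in the proof of \ref{compactsubgroup}, applied on both $\bfG$ and on $\bfG_1$ via $\Gamma_l$), so the two modular groups commute. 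Finally, assembling \ref{commuting}, \ref{Morita}(i), (ii), and the Galois property of both actions gives that $(M, R_{|N}, \underline{\ga}, a, \Gamma_l)$ is an imprimitivity bi-comodule, hence $\bfG_1$ and $\mathfrak{G}(N,\ad_{|N},\Gamma_{|N},\omega_{|N})$ are Morita equivalent.

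The main obstacle I expect is the bookkeeping in the commutation identity $(\Gamma_l\underset{N}{_{R_{|N}}*_{\ad_{|N}}}\id)\underline{\ga}=(\id\underset{N}{_{\beta}*_a}\underline{\ga})\Gamma_l$: one has to make sense of the fiber products over the basis $N$ on both sides (checking that $a(N_2)\subseteq M^{\underline{\ga}}$ and $R_{|N}(N)\subseteq M^{\Gamma_l}$ so the fiber products are defined), and then carry the identity through the web of unitaries $V_4$, $V_5$, and $\widehat W^{\com}$ exactly as in the proof of \ref{qgaction}(iv). The conceptual input is minimal—it is the compatibility of the two coactions $\Gamma$ and $\Gamma_l$ of \ref{defQTCI}—but verifying it at the level of the pseudo-multiplicative structure of the measured quantum groupoid, rather than at the Hopf-algebra level, is where the real work lies.
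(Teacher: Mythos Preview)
Your approach is correct and reaches the same conclusion, but you are planning considerably more work than necessary, and the paper's proof shows how to shortcut both of your main computations.

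For the commutation $(\Gamma_l\underset{N}{_{R_{|N}}*_{\ad_{|N}}}\id)\underline{\ga}=(\id\otimes\underline{\ga})\Gamma_l$, there is no need to set up a new diagram analogous to~(*) or to invoke $\Gamma_r$. Cells (1) and (2) of diagram~(*) in the proof of \ref{qgaction}(iv) already give
\[
(\Gamma\underset{N}{_{R_{|N}}*_{\ad_{|N}}}\id)\underline{\ga}(x)=(\id\otimes\underline{\ga})\Gamma(x),
\]
and applying $(\pi\otimes\id)\underset{N}{_{R_{|N}}*_{\ad_{|N}}}\id$ to both sides (where $\pi=\pi_{\bfG_1}\circ\Phi$ is the morphism defining the subgroup) collapses $\Gamma$ to $\Gamma_l$ on the first leg and yields the required identity in one step. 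Your plan to redo the $V_4,V_5,\widehat W^{\com}$ bookkeeping would work but is redundant.

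For condition~(ii) of \ref{Morita}, your argument via commutation of conditional expectations with $\sigma^\omega$ is more intricate than needed. The paper observes directly that $T_{\underline{\ga}}=\omega$ (from \ref{qgaction}(v)) and that $T_{\Gamma_l}=(\omega_1\circ\pi\otimes\id)\Gamma$, so composing the latter with $\omega_{|N}\circ R=\omega_{|R(N)}$ gives $(\omega_1\circ\pi\otimes\omega)\Gamma=\omega$ by right invariance. Thus both weights in \ref{Morita}(ii) are literally equal to $\omega$, and the commutation of their modular groups is trivial. This is sharper than establishing that two a~priori different modular groups commute.
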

\begin{proof}
Let $x\in M$. Commutativity of the cells (1) and (2) in diagram (*) 
implies that
\[(\Gamma\underset{N}{_{R_{|N}}*_{\ad_{|N}}}\id)\underline{\ga}(x)
=(\id\otimes\underline{\ga})\Gamma(x)\]
and applying $(\pi\otimes \id)\underset{N}{_{R_{|N}}*_{\ad_{|N}}}\id$ to this relation, we get:
\[(\Gamma_l\underset{N}{_{R_{|N}}*_{\ad_{|N}}}\id)\underline{\ga}(x)=(\id\otimes \underline{\ga})\Gamma_l(x),\]
which is the commutativity of the right Galois action $(R_{|N}, \underline{\ga})$ of $\widehat{\bfG}\ltimes_{\ad_{|N}}N$ and the left Galois action $\Gamma_l$ of $\bfG_1$. 

Moreover, we had got in \ref{qgaction} that the canonical
operator-valued weight $T_{\underline{\ga}}$ was the Haar state
$\omega$. Let $\omega_1$ be the Haar state of $\bfG_1$. Then the
canonical operator-valued weight $T_{\Gamma_l}$ is equal to
$(\omega_1\circ\pi\otimes \id)\Gamma$, which is, in fact, a
conditional expectation from $M$ into $M^{\Gamma_l}=R(N)$. Composed
with the state $\omega_{|N}\circ R=\omega_{|R(N)}$, we get
$(\omega_1\circ\pi\otimes\omega)\Gamma=\omega_1(\pi(1))\omega=\omega$. Therefore,
using \ref{Morita}, we get the result.
\end{proof}
\begin{corollary}
\label{exSq2}
 The measured quantum groupoid
 $\widehat{\SUq}\ltimes_{\ad_{|S_q^2}}S_q^2$ constructed in \ref{Sq2}
 is Morita equivalent to $\mathbb{T}$. 
\end{corollary}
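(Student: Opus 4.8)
The plan is to recognize the final statement as an immediate specialization of Theorem~\ref{ThMorita} to the concrete compact quantum group and closed quantum subgroup already identified in \ref{Sq2}. Recall that in \ref{Sq2} we took $\bfG=\SUq$, exhibited the circle group $\mathds{T}$ as a closed quantum subgroup via the morphism $\Phi$ with $\Phi(\alpha)=0$, $\Phi(\gamma)=\id$, and obtained the Podle\'s sphere $S_q^2$ as the quotient type co-ideal $N=M^{\Gamma_r}$ determined by this subgroup. Since $\SUq$ is a compact quantum group and $\mathds{T}$ is a compact quantum subgroup, all the hypotheses of Theorem~\ref{ThMorita} are satisfied with $\bfG_1=\mathds{T}$ and $N=S_q^2$.

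First I would invoke \ref{compactsubgroup} (or equivalently the construction recalled in \ref{Sq2}) to confirm that $\mathfrak{G}(S_q^2,\ad_{|S_q^2},\Gamma_{|S_q^2},\omega_{|S_q^2})$ is indeed a measured quantum groupoid, living on the crossed product $\widehat{\SUq}\ltimes_{\ad_{|S_q^2}}S_q^2$; this is exactly the object whose Morita class we wish to determine. Next, applying Theorem~\ref{ThMorita} verbatim with $\bfG=\SUq$, $\bfG_1=\mathds{T}$, $N=S_q^2$, we conclude that the von Neumann algebra $M=\pi_\omega(\SUq)''$, equipped with the right Galois action $(R_{|N},\underline{\ga})$ of $\widehat{\SUq}\ltimes_{\ad_{|S_q^2}}S_q^2$ built in \ref{qgaction} and the left Galois action $\Gamma_l$ of $\mathds{T}$ from \ref{QTCI}, is an imprimitivity bimodule realizing a Morita equivalence between $\mathds{T}$ (viewed as a measured quantum groupoid over a point) and $\mathfrak{G}(S_q^2,\ad_{|S_q^2},\Gamma_{|S_q^2},\omega_{|S_q^2})$. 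Since the underlying von Neumann algebra of the latter groupoid is $\widehat{\SUq}\ltimes_{\ad_{|S_q^2}}S_q^2$, this is precisely the assertion of the corollary.

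Honestly, there is no genuine obstacle here: the corollary is a pure instantiation, and the only thing to check is that the circle group $\mathds{T}$ as it appears in \ref{Sq2} is literally the $\bfG_1$ of Theorem~\ref{ThMorita}, which it is by the construction of $S_q^2$ as a quotient type co-ideal. The one small point worth a sentence is that a compact group such as $\mathds{T}$, regarded as a measured quantum groupoid over the trivial basis $N=\mathbb{C}$, is a legitimate instance of the framework, so that ``Morita equivalent to $\mathds{T}$'' is meaningful; this is standard and already implicit in the statement of \ref{ThMorita}. Hence the proof is a two-line citation.

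\begin{proof}
This is the special case of Theorem~\ref{ThMorita} obtained by taking for $\bfG$ the compact quantum group $\SUq$ and for $\bfG_1$ the closed quantum subgroup $\mathds{T}$, with associated quotient type co-ideal $N=S_q^2$, as described in \ref{Sq2}. By \ref{compactsubgroup} the crossed product $\widehat{\SUq}\ltimes_{\ad_{|S_q^2}}S_q^2$ carries the measured quantum groupoid structure $\mathfrak{G}(S_q^2,\ad_{|S_q^2},\Gamma_{|S_q^2},\omega_{|S_q^2})$, and Theorem~\ref{ThMorita} then asserts that this measured quantum groupoid is Morita equivalent to the compact quantum subgroup $\mathds{T}$, the imprimitivity bimodule being $\pi_\omega(\SUq)''$ equipped with the right Galois action of \ref{qgaction} and the left Galois action $\Gamma_l$ of $\mathds{T}$ of \ref{QTCI}.
\end{proof}
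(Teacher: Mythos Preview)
Your proof is correct and follows exactly the same approach as the paper, which simply reads ``Apply \ref{ThMorita} to \ref{Sq2}.'' Your version adds a few clarifying details (invoking \ref{compactsubgroup} and spelling out the imprimitivity bimodule), but the argument is identical in substance.
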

\begin{proof}
 Apply \ref{ThMorita} to \ref{Sq2}. \end{proof}
\begin{corollary}[\cite{Ri}]
\label{MR}
 Let $G$ be a compact group and $G_1$ a compact subgroup of
  $G$. The the right action of $G$ on $G/G_1$ defines a transformation
  groupoid $(G/G_1)\curvearrowleft G$ and this groupoid is Morita
  equivalent to $G_1$.
\end{corollary}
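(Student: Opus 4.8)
The plan is to derive Corollary \ref{MR} as a special case of Theorem \ref{ThMorita}, by specializing the compact quantum group $\bfG$ to a classical compact group $G$ and checking that all the ingredients in Theorem \ref{ThMorita} reduce to their classical counterparts. First I would take $\bfG=(\mathcal{L}(G),\Gamma_G,\varphi_G,\varphi_G)$ to be \emph{not} the commutative quantum group $G$ itself, but rather the quantum group $\widehat{G}$, i.e.\ the von Neumann version of the cocommutative compact quantum group attached to $G$; this is the compact quantum group whose ``classical'' quotient-type co-ideals of the form relevant here are exactly the group von Neumann algebras $\mathcal L(H)$ of closed subgroups $H\leq G$. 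Actually, the cleanest route is to recognize that Corollary \ref{MR} is literally \ref{ThMorita} applied with $\bfG$ the von Neumann version of $C(G)$ (a compact quantum group since $G$ is compact), $\bfG_1$ the closed quantum subgroup corresponding to $G_1\leq G$ via restriction of functions, and $N=M^{\Gamma_r}=L^\infty(G/G_1)$ the quotient-type co-ideal.

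Concretely, the steps I would carry out are: (1) identify the quotient-type co-ideal: for $\bfG$ the von Neumann version of $C(G)$, the surjective morphism $\Phi\colon C(G)\to C(G_1)$ is restriction, the associated right action $\Gamma_r$ of $G_1$ on $L^\infty(G)$ is translation, and $N=L^\infty(G)^{\Gamma_r}=L^\infty(G/G_1)$; (2) identify the Yetter--Drinfel'd structure from \ref{propQTCI}: $\Gamma_{|N}$ is the coproduct of $L^\infty(G)$ restricted to $L^\infty(G/G_1)$, i.e.\ comes from left translation, and $\ad_{|N}$ is the (trivial, since $C(G)$ is commutative) adjoint action, so $\ad=\iota$ in the notation of \ref{groupoid}; (3) apply \ref{groupoid}: the measured quantum groupoid $\mathfrak{G}(N,\ad_{|N},\Gamma_{|N},\omega_{|N})=\mathfrak{G}(L^\infty(G/G_1),\iota,\Gamma,\nu)$ is precisely the classical transformation groupoid $(G/G_1)\curvearrowleft G$, since $\widehat{\bfG}\ltimes_{\iota}N = L^\infty(G)\otimes L^\infty(G/G_1)=L^\infty((G/G_1)\curvearrowleft G)$; (4) apply \ref{ThMorita}: this measured quantum groupoid is Morita equivalent to the compact quantum subgroup $\bfG_1$, which here is the classical group $G_1$. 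Combining (3) and (4) gives exactly the statement of \ref{MR}.

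I would phrase the proof essentially as: ``Apply Theorem \ref{ThMorita} (together with \ref{groupoid}) to the von Neumann version of the compact quantum group $C(G)$, with $\bfG_1$ the closed subgroup $G_1$ and $N=L^\infty(G/G_1)$ the corresponding quotient type co-ideal. By \ref{groupoid}, the associated measured quantum groupoid $\mathfrak{G}(N,\ad_{|N},\Gamma_{|N},\omega_{|N})$ is the transformation groupoid $(G/G_1)\curvearrowleft G$, and by \ref{ThMorita} it is Morita equivalent to $\bfG_1=G_1$.'' The one nontrivial point worth a sentence is that $\Gamma_r$ being right translation makes $M^{\Gamma_r}=L^\infty(G/G_1)$, and that the right action $\underline{\ga}$ of \ref{qgaction}, once transported through the abelianness of $M=L^\infty(G)$, is the classical right-translation action of $G$ on $G/G_1$; in the classical case the Galois-action machinery collapses to the usual imprimitivity theorem.

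The main obstacle is not any deep computation but rather bookkeeping about which quantum group plays the role of $\bfG$ and which of $\bfG_1$, and making sure the identifications ``$\ad_{|N}=$ trivial action $\iota$'' and ``$\widehat{\bfG}\ltimes_{\ad_{|N}}N=L^\infty((G/G_1)\curvearrowleft G)$'' are stated correctly and consistently with \ref{groupoid} and \ref{remHG}. One should also double-check that the hypotheses of \ref{compactsubgroup} hold here trivially ($G$ compact, $G_1$ a genuine closed quantum subgroup since it is a classical closed subgroup of a classical compact group, and $\varphi\circ R_{|N}=\omega_{|N}$ is the normalized Haar measure on $G/G_1$, which is $\delta^{-1}$-invariant with $\delta=1$). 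Since $G$ is compact, unimodularity is automatic, so no modular subtleties arise and the reduction is clean; I expect this corollary to be a two- or three-line proof invoking \ref{ThMorita}, \ref{groupoid}, and the classical identification of the quotient type co-ideal.
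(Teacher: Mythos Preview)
Your proposal is correct and takes essentially the same approach as the paper: specialize \ref{ThMorita} to $\bfG=$ the von Neumann version of $C(G)$, $\bfG_1=G_1$, $N=L^\infty(G/G_1)$, note that $\ad_{|N}$ is the trivial action $\iota$ since $M$ is abelian, and invoke \ref{groupoid} to identify $\mathfrak{G}(N,\ad_{|N},\Gamma_{|N},\omega_{|N})$ with the transformation groupoid $(G/G_1)\curvearrowleft G$. The paper phrases the identification via the dual (it says $G\ltimes_\Gamma L^\infty(G/G_1)$ is the dual of the transformation groupoid, hence its dual is $L^\infty((G/G_1)\curvearrowleft G)$), whereas you go directly to $\widehat{G}\ltimes_\iota N$; these are the same argument. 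Your initial detour toward taking $\bfG=\mathcal L(G)$ is unnecessary and you rightly discard it.
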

\begin{proof}
The canonical surjective $*$-homomorphism from $L^\infty (G)$ onto $L^\infty (G_1)$ gives to $L^\infty(G/G_1)$ a structure of a quotient type co-ideal. The restriction of the coproduct $\Gamma_{L^\infty (G)}$ to $L^\infty(G/G_1)$ is just the right action  of $G$ on $G/G_1$, and the measured quantum groupoid $G\ltimes_{\Gamma}L^\infty (G/G_1)$ is the dual of the groupoid $(G/G_1)\curvearrowleft G$. Therefore, by \ref{groupoid}, its dual is just the abelian von Neumann algebra $L^\infty ((G/G_1)\curvearrowleft G)$, and, by \ref{ThMorita}, we get the result. 
\end{proof}

\end{document}